\numberwithin{equation}{section}       
\theoremstyle{plain}
\newtheorem{prop}{Proposition}[section]
\newtheorem{coro}[prop]{Corollary}
\newtheorem{lemm}[prop]{Lemma}
\newtheorem{fact}[prop]{Fact}
\newtheorem{ques}[prop]{Question}
\newtheorem{theoroman}{Theorem}
\newtheorem{theoalph}{Theorem}
\newtheorem{coroalph}[theoalph]{Corollary}
\theoremstyle{definition}
\newtheorem{defi}[prop]{Definition}
\theoremstyle{remark}
\newtheorem{rema}[prop]{Remark}
\newtheoremstyle{citing}
  {3pt}
  {3pt}
  {\itshape}
  {}
  {\bfseries}
  {.}
  {.5em}
  {\thmnote{#3}}
\theoremstyle{citing}
\newtheorem*{generic}{}
\newcommand{\partn}[1]{{\smallskip \noindent \textbf{#1.}}}
\newcommand{\C}{\mathbb{C}}
\newcommand{\R}{\mathbb{R}}
\newcommand{\cP}{\mathcal{P}}
\newcommand{\fB}{\mathfrak{B}}
\newcommand{\fD}{\mathfrak{D}}
\newcommand{\fG}{\mathfrak{G}}
\newcommand{\fH}{\mathfrak{H}}
\newcommand{\fL}{\mathfrak{L}}
\newcommand{\sA}{\mathscr{A}}
\newcommand{\sC}{\mathscr{C}}
\newcommand{\sF}{\mathscr{F}}
\newcommand{\sL}{\mathscr{L}}
\newcommand{\sM}{\mathscr{M}}
\newcommand{\sP}{\mathscr{P}}
\newcommand{\sQ}{\mathscr{Q}}
\newcommand{\hD}{\widehat{D}}
\newcommand{\hQ}{\widehat{Q}}
\newcommand{\hU}{\widehat{U}}
\newcommand{\hV}{\widehat{V}}
\newcommand{\hW}{\widehat{W}}
\newcommand{\hY}{\widehat{Y}}
\newcommand{\tB}{\widetilde{B}}
\newcommand{\tI}{\widetilde{I}}
\newcommand{\tU}{\widetilde{U}}
\newcommand{\tV}{\widetilde{V}}
\newcommand{\tW}{\widetilde{W}}
\newcommand{\tX}{\widetilde{X}}
\newcommand{\tY}{\widetilde{Y}}
\newcommand{\tgamma}{\widetilde{\gamma}}
\newcommand{\teta}{\widetilde{\teta}}
\renewcommand{\=}{ : = }
\DeclareMathOperator{\diam}{diam}
\DeclareMathOperator{\dist}{dist}
\DeclareMathOperator{\HD}{HD}
\DeclareMathOperator{\BD}{\overline{BD}}
\DeclareMathOperator{\Crit}{Crit}
\newcommand{\CC}{\overline{\C}}
\newcommand{\map}{f}
\newcommand{\CJ}{\Crit'(\map)}
\newcommand{\HDhyp}{\HD_{\operatorname{hyp}}}
\DeclareMathOperator{\mmod}{mod} 
\DeclareMathOperator{\dom}{dom}
\newcommand{\la}{\lambda}
\newcommand{\eps}{\varepsilon}
\newcommand{\wtm}{\widetilde{m}}
\newcommand{\ov}{\overline}
\DeclareMathOperator{\Cr}{Cr} 
\DeclareMathOperator{\dep}{dep}
\DeclareMathOperator{\bad}{bad}
\DeclareMathOperator{\hyp}{hyp}
\newcommand{\Jcon}{J_{\operatorname{con}}} 
\newcommand{\fBr}{\mathfrak{B}^{\operatorname{rel}}}
\newcommand{\poincare}{\delta_{\operatorname{Poin}}}
\newcommand{\taubase}{2}
\newcommand{\koebefactor}{A}
\begin{document}

\title[Weakly hyperbolic one-dimensional maps]{Statistical properties of one-dimensional maps under weak hyperbolicity assumptions
\\ \medskip
Propri{\'e}t{\'e}s statistiques des applications unidimensionelles sous des hypoth{\`e}ses d'hyperbolicit{\'e} faibles}
\author[J. Rivera-Letelier]{Juan Rivera-Letelier$^\dag$}
\author[W. Shen]{Weixiao Shen$^\ddag$}
\thanks{$^\dag$ Partially supported by Research Network on Low Dimensional Systems, PBCT/CONICYT, Chile. Gratefully acknowledges the hospitality of the University of Science and Technology of China and the National University of Singapore.}
\thanks{$^\ddag$ Partially supported by a start-up grant from National University of Singapore (Grant No. R-146-000-128-133).}
\address{$\dag$ Juan Rivera-Letelier, Facultad de Matem{\'a}ticas, Pontificia Universidad Cat{\'o}lica de Chile, Avenida Vicu{\~n}a Mackenna~4860, Santiago, Chile}
\email{riveraletelier@mat.puc.cl}
\address{$\ddag$ Weixiao Shen, Block S17, 10 Lower Kent Ridge Road, Singapore~119076, Singapore}
\email{matsw@nus.edu.sg}

\maketitle

\selectlanguage{english}
\renewcommand{\abstractname}{Abstract}
\begin{abstract}
For a real or complex one-dimensional map satisfying a weak
hyperbolicity assumption, we study the existence and statistical
properties of physical measures, with respect to geometric reference
measures.
We also study geometric properties of these measures.
\end{abstract}

\selectlanguage{frenchb}
\renewcommand{\abstractname}{R\'esum\'e}
\begin{abstract}
Pour une application unidimensionelle r{\'e}elle ou complexe satisfaisant une hypoth{\`e}se d'hyperbolicit{\'e} faible, on {\'e}tudie l'existence et des propri{\'e}t{\'e}s statistiques des mesures physiques, par rapport {\`a} une mesure de r{\'e}f{\'e}rence g{\'e}om{\'e}trique.
On {\'e}tudie aussi des propri{\'e}t{\'e}s g{\'e}om{\'e}triques de ces mesures.
\end{abstract}
\selectlanguage{english}

\section{Introduction}
We study statistical properties of real and complex one-dimensional maps, under weak hyperbolicity assumptions.
For such a map~$f$ we are interested in the existence and statistical properties of an invariant probability measure~$\nu$, supported on the Julia set of~$f$, that is absolutely continuous with respect to a natural reference measure.
The reference measure~$\mu$ could be the Lebesgue measure on the phase space, or more generally a conformal measure supported on the Julia set.
Such a measure~$\nu$, when ergodic, has the important property of being a \emph{physical measure}  with respect to~$\mu$.
That is, for a subset~$E$ of the phase space that has positive measure with respect to~$\mu$, the measure~$\nu$ describes the asymptotic distribution of each forward orbit of~$f$ starting at a point in~$E$.

For maps that are uniformly hyperbolic on their Julia sets, the
pioneering work of Sina{\u{\i}}, Ruelle, and Bowen~\cite{Sin72,Bow75,Rue76}
gives a satisfactory solution to these problems. See also
\cite{Sul83} for an analysis closer to the approach here. However, a
one-dimensional map with a critical point in its Julia set fails to
be uniformly hyperbolic in a severe way. In order to control the
effect of critical points in the Julia set, people often assume
strong expansion along the orbits of critical values. See for
example~\cite{ColEck80,Mis81,BenCar85,NowvSt91,KelNow92,You92,BruLuzvSt03} in
the real setting, and~\cite{Prz98,GraSmi98,GraSmi09} in the complex
setting.
See~\cite{BonDiaVia05} for a broad view.

For smooth interval maps, Bruin, Luzzatto and van Strien gave mixing rates upper bounds closely related to the growth of derivatives at the critical values~\cite{BruLuzvSt03}.
Our results reveal that, rather surprisingly, the mixing rates can be much faster than the growth of derivatives at critical values: an interval map~$f$ satisfying the \emph{Large Derivatives} condition \index{large derivatives condition}
$$ \lim_{n\to\infty} |Df^n(v)| = \infty,
\text{ for each critical value~$v$ of~$f$ in the Julia set}$$
together with other mild conditions, has a super-polynomially mixing absolutely continuous invariant measure.

In the complex setting we show a similar result for a non-renormalizable polynomial~$f$.
These are the first non-exponential upper bounds for mixing rates in the complex setting.
For a general rational map~$f$ without parabolic cylces we show that the summability condition with exponent~$1$ is enough to guarantee the existence of a super-polynomially mixing absolutely continuous invariant measure.

We shall now state two results, one for the real case and another for the complex case, and make comparisons with previous results.
In order to avoid technicalities, we state these results in a more restricted situation than what we are able to handle.
See~\S\ref{ss:backward contracting maps} for a more general formulation of our results and for precisions.

Recall that given a continuous map~$f$ acting on a compact metric space~$X$, an $f$-invariant Borel probability measure~$\nu$
is called \emph{(strongly) mixing} \index{mixing} if for all $\varphi,
\psi\in L^2(X, \nu)$,
$$\sC_n(\varphi, \psi)\=\int_X \varphi\circ f^n \psi d\nu-\int_X
\varphi d\nu\int_X \psi d\nu\to 0$$ as $n\to\infty$.
Given $\gamma > 0$, we
say that~$\nu$ is \emph{polynomially mixing of exponent~$\gamma$} \index{polynomially mixing}
if for
each essentially bounded function~$\varphi$ and each H\"older
continuous function~$\psi$, there exists a constant $C(\varphi,
\psi) > 0$ such that
$$|\sC_n(\varphi, \psi)|
\le
C(\varphi, \psi)n^{-\gamma},
\text{ for all }n=1,2,\ldots.$$
Moreover, we say that~$\nu$ is \emph{super-polynomially mixing} \index{super-polynomially mixing} if for all $\gamma > 0$ it is polynomially mixing of exponent~$\gamma$.

\begin{theoroman}\label{t:super-polynomial real}
Let~$X$ be a compact interval and let $f : X \to X$ be a topologically exact~$C^3$ multimodal map with non-flat critical points, having only hyperbolic repelling periodic points.
Assume that for each critical value~$v$ of~$f$ we have
$$\lim_{n \to \infty} |Df^n(v)| = \infty. $$
Then~$f$ has a unique invariant probability measure that is absolutely continuous with respect to Lebesgue measure.
Moreover, this invariant measure is super-polynomially mixing.
\end{theoroman}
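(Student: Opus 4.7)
My plan is to construct a Markov tower (in the sense of Young) over a neighborhood of the critical set $\Crit(f)\cap J(f)$, show that its return time has super-polynomially decaying tails with respect to Lebesgue measure, and then extract the invariant measure $\nu$ and its statistical properties from this tower. The broad scheme is standard in the subject; the challenge is to carry it out under only the Large Derivatives hypothesis, which is considerably weaker than the Collet--Eckmann-type conditions used in most previous constructions.

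\textbf{Backward contraction and nice couples.} First I would upgrade the pointwise statement $|Df^n(v)|\to\infty$ to a geometric ``backward contracting'' condition: for every sufficiently small $r>0$, pullbacks of the $r$-neighborhood of $\Crit(f)\cap J(f)$ along branches ending at a critical point shrink uniformly in diameter. In the $C^3$ multimodal setting with only hyperbolic repelling periodic orbits this can be obtained by combining Ma\~n\'e's hyperbolicity theorem off the critical set with the Koebe principle on pullbacks that avoid $\Crit(f)$; the Large Derivatives hypothesis ensures that any branch passing close to a critical value is followed by a long expanding stretch that compensates the initial contraction. Backward contraction then allows me to construct a \emph{nice couple} $(\hat V,V)$ around $\Crit(f)\cap J(f)$, i.e.\ nested open sets whose boundaries avoid their own forward orbits, and consequently a piecewise monotone Markov induced map $F=f^R\colon W\to V$ (for an appropriate $W\subset V$) whose branches have uniformly bounded distortion via Koebe.

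\textbf{Super-polynomial tail estimates.} The core of the proof, and in my view the principal obstacle, is to show that $m\{R>n\}$ decays faster than any polynomial, where $m$ denotes Lebesgue measure. The strategy is to follow a Lebesgue-typical orbit and decompose the time it spends outside $V$ into alternating ``hyperbolic'' and ``critical'' epochs. Away from $\Crit(f)$ one has exponential expansion by Ma\~n\'e, so hyperbolic epochs contribute geometrically. Each critical epoch must eventually escape to $V$ thanks to backward contraction, but the time to escape depends on how slowly $|Df^n(v)|$ tends to infinity. The crucial point is that backward contraction is a \emph{uniform} geometric statement, so even though the growth along critical orbits may be arbitrarily slow, the measure of the set of points trapped in long critical epochs is controlled by the geometric series of pullback diameters. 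A careful combinatorial bookkeeping of nested bad cylinders, exploiting the exponential number of admissible itineraries against the super-exponential shrinking of deep pullbacks, then yields tails faster than any polynomial.

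\textbf{Conclusion.} Once super-polynomial tails for $R$ and uniform distortion of the branches of $F$ are established, Young's tower theory produces an absolutely continuous $F$-invariant measure on $W$ with density bounded away from $0$ and $\infty$, and hence an $f$-invariant probability $\nu$ on $X$ that is absolutely continuous with respect to Lebesgue. Young's coupling argument then delivers the super-polynomial decay of correlations between essentially bounded and H\"older observables. Uniqueness of $\nu$ among absolutely continuous invariant probabilities follows from topological exactness of $f$ together with ergodicity of the tower, since any such measure must charge $W$ and then be forced to coincide with $\nu$ on the Markov extension.
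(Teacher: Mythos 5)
Your overall architecture (Large Derivatives $\Rightarrow$ backward contraction $\Rightarrow$ nice couples and an induced Markov map with bounded distortion $\Rightarrow$ super-polynomial tails $\Rightarrow$ Young's tower) is exactly the paper's route, and the first and last steps are sound: the implication from Large Derivatives to backward contraction is \cite[Theorem~1]{BruRivShevSt08} (quoted here as Fact~\ref{prop:ldbc}), and the passage from tails to mixing is indeed \cite{You99}. The genuine gap is in your central paragraph, the tail estimate. Two of its key assertions do not hold under the stated hypotheses. First, there is no ``super-exponential shrinking of deep pullbacks'': backward contraction with constant $r$ forces the scale to drop by a factor $r^{-1}$ only at each \emph{critical passage}, and the number of such passages up to time $n$ is not bounded below by any function of $n$. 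What is actually true --- and is the ``rather surprising'' content of Theorem~\ref{t:polynomial shrinking} --- is that pull-backs of small balls shrink only \emph{polynomially}, $\diam(W)\lesssim n^{-\beta}$, with an exponent $\beta$ that tends to infinity as $r\to\infty$; even this requires a delicate balance (Proposition~\ref{prop:shrink}) between the bound $r^{-\kappa_0\nu}$ coming from $\nu$ critical passages and the modulus $\exp(-\kappa_0^{\nu}(A_0s+\mu))$ accumulated in the expanding stretches, degraded by a factor $\kappa_0<\ell^{-1}$ at each critical passage. Second, your ``hyperbolic epoch / critical epoch'' decomposition with binding periods is essentially the scheme of~\cite{BruLuzvSt03}, and under Large Derivatives alone it yields no rate: the length of a critical epoch is governed by how fast $|Df^n(v)|$ grows, which here may be arbitrarily slow. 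This is precisely why~\cite{BruLuzvSt03} had to assume $|Df^n(v)|/n^{\alpha}\to\infty$ to obtain super-polynomial mixing, and why the present paper replaces the epoch decomposition by a different mechanism.

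Concretely, the missing ingredients are: (i) the quantitative shrinking statement (Theorem~\ref{t:polynomial shrinking}); (ii) the bound $\delta_{\bad}(f)=0$ on the badness exponent (Theorem~\ref{t:finiteness of bad}), which controls $\sum_m\sum_{W\in\fB_m(V)}d(W)\diam(W)^t$ for every $t>0$ via a recursive estimate over children of a hierarchy of nice sets $V_0\supset V_1\supset\cdots$ --- a crude comparison of ``exponentially many itineraries'' against diameter decay fails, since the number of bad pull-backs at depth $m$ can grow exponentially while their diameters decay only polynomially; and (iii) the Whitney-decomposition argument (Theorem~\ref{t:tail estimate}) that converts (i) and (ii) into the tail bound $\sum_{m(W)\ge m}\diam(W)^{\HD(J(f))}\lesssim\sum_{n\ge m}n^{-\beta(\HD(J(f))-t)}$, which is super-polynomial because $\beta$ can be taken arbitrarily large and $t$ arbitrarily small. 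Without these three steps your sketch proves, at best, what was already known from the binding-period method.
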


The topological exactness is assumed to obtain uniqueness and the mixing property of the absolutely continuous invariant measure.
For an interval map as in the theorem, the existence of the absolutely continuous invariant measure was proved before in~\cite{BruShevSt03,BruRivShevSt08}, although the argument in this paper provides an alternative proof.
As mentioned above, our result on mixing rates significantly strengthens the previous result~\cite{BruLuzvSt03}, where super-polynomial mixing rates were only proved under the condition that for each~$\alpha > 0$ and each critical value~$v$ of~$f$, we have $|Df^n(v)|/n^\alpha\to\infty$.
In fact, only assuming $\liminf_{n \to \infty} |Df^n(v)|$ sufficiently large, our methods provide a definite polynomial mixing rate.

We now state a result for a complex rational map~$f$ of degree at least two.
Often the Lebesgue measure of the Julia set~$J(f)$ of~$f$ is zero.
So the Lebesgue measure cannot be used as a reference measure in general.
Instead people often use a conformal measure on the Julia set as a reference
measure.
Following Sullivan~\cite{Sul83}, we use conformal measures of exponent $\HD(J(f))$ as geometric reference measures, where
~$\HD(J(f))$ denotes the Hausdorff dimension of~$J(f)$.

\begin{theoroman}\label{t:super-polynomial complex}
Let~$f$ be either one of the following:
\begin{enumerate}
\item[1.]
an at most finitely renormalizable polynomial of degree at least two, that has only hyperbolic periodic points, and such that for each critical value~$v$ of~$f$ in the Julia set,
$$ \lim_{n \to \infty} |Df^n(v)| = \infty; $$
\item[2.]
a complex rational map of degree at least two, without parabolic cycles, and such that for each critical value~$v$ of~$f$ in the Julia set,
$$ \sum_{n = 1}^{\infty} \frac{1}{|Df^n(v)|} < \infty. $$
\end{enumerate}
Then~$f$ has a unique conformal measure~$\mu$ of exponent~$\HD(J(f))$; this measure is supported on the conical Julia set and its Hausdorff dimension is equal to~$\HD(J(f))$.
Furthermore, there is a unique invariant probability measure~$\nu$ that is absolutely continuous with respect to~$\mu$, and the measure~$\nu$ is super-polynomially mixing.
\end{theoroman}

Recall that for an integer~$s \ge 1$, a complex polynomial~$f$ is \emph{renormalizable of period~$s$} \index{renormalizable}
if there are Jordan disks~$U \Subset V$ such that the following hold:
\begin{itemize}
\item
$f^s: U\to V$ is proper of degree at least two;
\item
the set $\{z\in U: f^{sn}(z) \in U \text{ for all } n=1,2,\ldots\}$ is a connected proper subset of $J(f)$;
\item
for each critical point $c$ of $f$, there exists at most one $j\in \{0,1,\ldots, s-1\}$ with $c\in
f^j(U)$.
\end{itemize}
We say that~$f$ is \emph{infinitely renormalizable} if there are infinitely many~$s$ for which~$f$ is renormalizable of period~$s$.

For a complex polynomial~$f$, hypothesis~$1$ of Theorem~\ref{t:super-polynomial complex} is weaker than hypothesis~$2$.

Theorem~\ref{t:super-polynomial complex} gives the first non-exponential mixing rates in the complex setting.
As for the existence of the absolutely continuous invariant measure, this result gives a significant improvement of the previous result of Graczyk and Smirnov~\cite[Theorem~$4$]{GraSmi09}.
Their result applies to a rational map~$f$ satisfying the following strong form of the summability condition, for a sufficiently small~$\alpha \in (0, 1)$,
\begin{equation}
\label{e:strong summability}
\sum_{n = 1}^{\infty} \frac{n}{|Df^n(v)|^\alpha} < \infty, \text{ for every critical value~$v$ of~$f$ in~$J(f)$}.
\end{equation}
For each~$\alpha \in (0, 1)$, the Fibonacci quadratic polynomial~$f_0$ fails to satisfy this condition, although for every~$\alpha > 0$
$$ \sum_{n = 1}^{\infty} \frac{1}{|Df_0^n(v)|^\alpha} < \infty,
\text{ where~$v$ is the finite critical value of~$f_0$}, $$
see Remark~\ref{r:Fibonacci}.
So Theorem~\ref{t:super-polynomial complex} implies that the Fibonacci quadratic polynomial~$f_0$ has a super-polynomially mixing absolutely continuous invariant measure.

\begin{rema}
In the proof of Theorems~\ref{t:super-polynomial real} and~\ref{t:super-polynomial complex} we construct the absolutely continuous invariant measure by way of an inducing scheme with a super-polynomial tail estimate and some additional technical properties, see~\S\ref{ss:reduced statements}.
The results of~\cite{You99} imply that this measure is super-polynomially mixing
and that it satisfies the Central Limit Theorem for H{\"o}lder continuous observables.
It also follows that the absolutely continuous invariant measure has other statistical properties, such
as the Local Central Limit Theorem, and the Almost Sure Invariance
Principle, see \emph{e.g.}~\cite{Gou05,MelNic05,MelNic08,Tyr05}.
\end{rema}

For a map~$f$ satisfying the hypotheses of Theorem~\ref{t:super-polynomial real} or Theorem~\ref{t:super-polynomial complex} we show the density of the absolutely continuous invariant
measure has the following regularity: if we denote by~$\ell$ the
maximal order of a critical point of~$f$ in the Julia set, then for
each~$p \in (0, \ell / (\ell - 1))$ the invariant density belongs to
the space~$L^p$.
We note that for each~$p > \ell / (\ell - 1)$ the invariant density does not belong to~$L^p$, see Remark~\ref{r:optimality L^p}.
In the real case the regularity of
the invariant density was shown in~\cite[Main Theorem]{BruRivShevSt08}; see also~\cite{NowvSt91} for the case of
unimodal maps satisfying a summability condition with a certain
exponent.
In the complex setting our result seems to be the first
unconditional one. For rational maps satisfying a summability
condition with a sufficiently small exponent, a similar result  was
shown in~\cite[Corollary~$10.1$]{GraSmi09} under an integrability
assumption on the conformal measure~$\mu$ that was first formulated in~\cite{Prz98}.
Actually, in the complex case we shall prove for each~$\varepsilon > 0$ the following regularity of the conformal measure~$\mu$: for every sufficiently small~$\delta > 0$ we have for every $x \in J(f)$,
$$ \delta^{\HD(J(f)) + \varepsilon}
\le
\mu(B(x, \delta)) \le \delta^{\HD(J(f)) - \varepsilon}.$$
The lower bound is~\cite[Theorem~$1$]{LiShe08a}, while the upper bound is new and
implies the integrability condition for each exponent~$\eta<\HD(J(f))$, see~\eqref{e:integrability condition} in~\S\ref{ss:backward contracting maps}.

Let us say a few words on our strategy.
Prior to this work, it has been shown that a map satisfying the assumptions of
Theorem~\ref{t:super-polynomial real} or of
Theorem~\ref{t:super-polynomial complex}
has the following two expanding properties: ``\emph{expansion away from critical points}'' and ``\emph{backward contraction}''.
Roughly speaking, the first property means that outside any given neighborhood of the critical points the map is uniformly hyperbolic; the second property means that a return domain to a ball of radius~$\delta$ centered at a critical value is much smaller than~$\delta$.
See~\S\ref{ss:backward contracting maps} for the precise definitions and references, as well as our
``Main  Theorem'' stated for maps satisfying these two expanding properties.

In this paper, we provide a finer quantification of the expansion features of a map that satisfies the two properties stated above.
Firstly, we show that the components of the preimages of a small ball intersecting the Julia set shrink \emph{at least at a super-polynomial rate} (Theorem~\ref{t:polynomial shrinking} in~\S\ref{subsec:psc}).
This unexpected result represents a significant improvement on the estimate of the same type in \cite[Proposition~$7.2$]{GraSmi09}, for rational maps satisfying the summability condition with a sufficiently small exponent.
In our proof, the moduli of annuli are used to estimate diameter of sets.
The situation in the real setting is much trickier than in the complex one, due to a significant control loss of the modulus of a thin annulus under pull-back, see Lemma~\ref{lem:Rmodulus} and the remark before it.
We develop a ``quasi-chain'' construction to treat this problem.
Secondly, we introduce a dimension-like parameter we call ``\emph{badness exponent}'', that measures the combined size of all ``bad pull-backs'' of a suitably chosen small neighborhood of the critical points.
A bad pull-back is a pull-back that is not contained in any diffeomorphic pull-back.
This notion was first introduced in \cite{PrzRiv07} and it has some resemblance with the pull-backs corresponding to a backward orbit of a critical point ``with sequence 11\ldots 1'', as used in \cite{{GraSmi98},GraSmi09}.
Using the local Markov structure ({\em nice sets}) provided by the backward contracting property, we show that the badness exponent is \emph{zero} (Theorem~\ref{t:finiteness of bad} in~\S\ref{subsec:be}).
A direct consequence of this result is that the conical Julia set has codimension zero in the Julia set (Corollary~\ref{cor:HDhyp} in~\S\ref{ss:dimensionestimate}).

These expanding properties are converted to statistical properties of the system through the construction of an induced Markov map.
The approach is conventional but the construction is often technical. In this paper, this is done by applying techniques developed in \cite{PrzRiv07, PrzRiv11} with modification.
We obtain a tail estimate in terms of the rate of shrinking of components of preimages of small sets, and of the badness exponent only (Theorem~\ref{t:tail estimate} in~\S\ref{sss:inducing}).
We also obtain the existence and regularity of a geometric conformal measure.
The existence of a super-polynomially mixing absolutely continuous invariant measure then follows from a
well-known result of Young~\cite{You99}.

The result on the regularity of the invariant density is obtained through an upper
bound of the Poincar{\'e} series (Theorem~\ref{t:regularity density}
in~\S\ref{sss:regularity density}).

Finally let us mention a few by-products of our approach. For a map satisfying the hypotheses of Theorem~\ref{t:super-polynomial real} or of Theorem~\ref{t:super-polynomial complex}, we show that several notions of fractal dimension of the Julia set coincide (Theorem~\ref{t:fractal dimensions}).
For a complex polynomial that is expanding away from critical points and backward contracting, we show that the Julia set is locally connected when connected (Corollary~\ref{c:local connectivity}), has Hausdorff dimension less than~$2$ (Corollary~\ref{coro:inducing}) and is holomorphically removable (Theorem~\ref{t:holomorphic removability}).

\section{The Main Theorem and reduced statements}\label{s:reduced statements}
In this section we recall the definition of the properties ``expanding away from critical points'' and ``backward contracting'', and then state our Main Theorem (\S\ref{ss:backward
contracting maps}) from which we deduce
Theorems~\ref{t:super-polynomial real} and~\ref{t:super-polynomial complex} as direct consequences. Then we state five intermediate
results, Theorems~\ref{t:polynomial shrinking},~\ref{t:finiteness of
bad}, \ref{t:tail estimate} and~\ref{t:regularity density}, and
Corollary~\ref{coro:inducing} (\S\ref{ss:reduced statements}), and
deduce the Main Theorem (\S\ref{ss:proof of Main Theorem}). Finally,
in~\S\ref{ss:further results} we state some further results, which
are proved in~\S\ref{s:regularity density}.

The proofs of Theorems~\ref{t:polynomial
shrinking},~\ref{t:finiteness of bad}, and~\ref{t:tail estimate} are
independent, and are shown in~\S\S\ref{s:polynomial shrinking},
\ref{s:finiteness of bad}, and~\ref{s:tail estimate}, respectively.
Corollary~\ref{coro:inducing} is deduced from Theorem~\ref{t:tail
estimate} in \S\ref{ss:proof of inducing}. The proof of
Theorem~\ref{t:regularity density} depends on
Corollary~\ref{coro:inducing}, and it is given
in~\S\ref{s:regularity density}.

\subsection{The Main Theorem}
\label{ss:backward contracting maps}
We say that a map~$f : X \to X$
from a compact interval~$X$ of~$\R$ into itself is \emph{of
class~$C^3$ with non-flat critical points} if~$f$ is of class~$C^1$
on~$X$ and satisfies the following properties:
\begin{itemize}
\item $f$ is of class~$C^3$ outside $\Crit(f) \= \{x\in X : Df(x)=0\}$;
\item for each $c \in \Crit(f)$, there exists a number $\ell_c>1$ (called \emph{the order of~$f$ at~$c$}) \index{order}
and diffeomorphisms $\phi, \psi$ of~$\R$ of class~$C^3$ with $\phi(c)=\psi(f(c))=0$ such that
$$|\psi\circ f(x)|=|\phi(x)|^{\ell_c}$$
holds on a neighborhood of~$c$ in~$X$.
\end{itemize}

We use~$\sA_{\R}$ \index{$\sA_{\R},\sA_{\C},\sA$}to denote the collection of all~$C^3$ interval
maps with non-flat critical points, without neutral periodic points, and that are \emph{boundary-anchored}, \emph{i.e.}, for each~$x \in \partial\dom (f)$, we have~$f(x)\in \partial \dom (f)$ and~$Df(x) \neq 0$.
The last condition is convenient when considering pull-backs of
sets.

We use~$\sA_{\C}$ to denote the collection of all rational maps of
degree at least~$2$ without neutral periodic points.
As in the real case, for $f \in \sA_{\C}$ we
denote by~$\Crit(f)$ \index{$\Crit$} the set of critical points of~$f$, and for
each~$c \in \Crit(f)$ we denote by~$\ell_c$ the local degree of~$f$
at~$c$, that we will also call \emph{the order of~$f$ at~$c$}.

For $f\in\sA\=\sA_{\R}\cup \sA_{\C}$, we denote by~$\dom(f)$ the
Riemann sphere~$\CC$ if~$f$ is a complex map, and the compact
interval where~$f$ is defined otherwise.
The \emph{Julia set} \index{Julia set} \index{$J(f)$}
$J(f)$ of~$f$ is, by definition, the set of all
points $x\in \dom (f)$ with the following property: for any
neighborhood~$U$ of~$x$, the family $\{f^n|U\}_{n=0}^\infty$ is not
equicontinuous.
This is a forward invariant compact set.
We shall mainly be interested in the dynamics on~$J(f)$, where the chaotic
dynamical behavior concentrates.
It is known that for $f\in\sA_{\C}$, the set~$J(f)$ is the closure of repelling periodic
points and that for $f\in\sA_{\R}$, the set~$J(f)$ is the complement of the basins of periodic attractors.
See~\cite{Mil06} and \cite{dMevSt93} for more background.

For maps without critical points in the Julia set all of our results are either well-known or vacuous.
So for~$f \in \sA$ we will implicitly assume that the set
$$ \Crit'(f) \= \Crit(f)\cap J(f) $$
\index{$\Crit'$}
is nonempty.
We also put
$$ \ell_{\max}(f) \= \max \{ \ell_c : c \in \CJ \}. $$
\index{$\ell_{\max}$}
Given $\ell > 1$ we will denote by~$\sA(\ell)$ (resp.
$\sA_{\R}(\ell)$, $\sA_{\C}(\ell)$) 
\index{$\sA_{\R}(\ell)$, $\sA_{\C}(\ell)$,$\sA(\ell)$}
the class of all those~$f \in
\sA$ (resp. $\sA_{\R}$, $\sA_{\C}$) such that~$\ell_{\max}(f) \le
\ell$.

\begin{defi}\label{def:eafcp}
We say that a map~$f \in \sA$ is \emph{expanding away from
critical points} \index{expanding away from critical points},  if for every neighborhood~$V'$ of~$\CJ$ the
map~$f$ is uniformly expanding on the set
$$
A=\{ z \in J(f) : \text{ for every $n \ge 0$, $f^n(z) \not \in
V'$} \},
$$
\emph{i.e.}, there exist constants $C>0$ and $\lambda>1$ such that
for any $z\in A$ and $n\ge 0$, we have $|Df^n(z)|\ge
C\lambda^n$.
\end{defi}

A theorem of Ma{\~n}{\'e} asserts that every map~$f\in\sA_{\R}$ is expanding away from critical points, see~\cite{Man85b}.
Although the analogous statement for a map~$f$ in~$\sA_{\C}$ is false in general, it does hold if we assume in addition that~$f$ is a polynomial that is at most finitely renormalizable, see~\cite{KozvSt09}, and also~\cite{QiuYin09} for the totally disconnected case.

We will now recall the ``backward contraction property'' introduced
in~\cite{Riv07} in the case of rational maps, and
in~\cite{BruRivShevSt08} in the case of interval maps.
Let~$f \in \sA$ be given.
When studying $f\in\sA_{\R}$, we use the standard metric on the interval~$\dom(f)$, while when studying~$f\in\sA_{\C}$, we shall use the spherical metric on~$\CC$.
For a critical point~$c$ and~$\delta > 0$ we denote by~$\tB(c, \delta)$
the connected component of~$f^{-1}(B(f(c), \delta))$ containing~$c$.

\begin{defi}\label{def:bc}
Given a constant~$r > 1$ we will say a map~$f \in \sA$ is
\emph{backward contracting with constant~$r$} \index{backward contracting} if there is a constant~$\delta_0 > 0$ such that for every $\delta \in (0, \delta_0)$, every~$c \in
\CJ$, every integer~$m \ge 0$, and every connected component~$W$
of~$f^{-m}(\tB(c, r \delta))$,
$$ \dist(W, f(\Crit(f))) \le \delta \text{ implies } \diam (W) < \delta. $$
Furthermore, we say that~$f$ is \emph{backward contracting} if, for every~$r > 1$, it is backward contracting with constant~$r$.
\end{defi}

\begin{rema} The specific choice of metric we used is not
important. If we use a different \emph{conformal metric}, then we
obtain a different class of backward contracting maps for which the
results in this paper hold with the same proof.
Observe that the Koebe principle is still valid independently of the choice of the conformal metric used to measure norms,
since the ratio of two different conformal
metrics is a positive continuous function.
Furthermore, cross-ratios are only affected by a bounded multiplicative constant, so the results from~\S\ref{ss:modulus} remain essentially unchanged.
\end{rema}

For a map $f$ in~$\sA$ and~$\alpha > 0$, a \emph{conformal measure
of exponent~$\alpha$} \index{conformal measure} for~$f$ is a Borel probability measure on~$\dom(f)$ such that for each Borel set~$U$ on which~$f$ is
injective,
$$ \mu(f(U)) = \int_U |Df|^\alpha d\mu. $$
On the other hand, the \emph{conical Julia set~$\Jcon(\cdot)$
of~$f$} \index{conical Julia set} \index{$\Jcon(f)$}
is the set of all those points~$x\in J(f)$ for which
there is a constant~$\delta > 0$ and infinitely many integers~$m \ge 1$
satisfying the following property: $f^m$ is a diffeomorphism
between the connected component of~$f^{-m}(B(f^m(x), \delta))$
containing~$x$ and~$B(f^m(x), \delta)$.

For a subset~$A$ of~$\R$, or of the Riemann sphere~$\CC$,
we denote by~$\HD(A)$ \index{$\HD(\cdot)$, $\overline{\text{BD}}(\cdot)$, $\underline{\text{BD}}(\cdot)$} the \emph{Hausdorff dimension} of~$A$ and by
$\overline{\text{BD}}(A)$ (resp. $\underline{\text{BD}}(A)$) the
upper (resp. lower) box counting dimensions. See for
example~\cite{Fal90} for the definitions.
The Hausdorff dimension of a Borel probability measure $\mu$ on
$X=\R$ (or~$\CC$) is defined as
$$\HD(\mu)
\=
\inf \{\HD(Y): Y\subset X, \mu(Y)=1\}.$$

By definition, a \emph{hyperbolic set}~$A$ \index{hyperbolic set}
of $f \in \sA$ is a
forward invariant compact set on which~$f$ is uniformly
expanding.
The \emph{hyperbolic dimension} \index{hyperbolic dimension} of a map $f \in \sA$ is by definition,
\begin{equation*}
\HDhyp(f) \= \sup \left\{ \HD(A) : A \text{ is a hyperbolic
set}\right\}.
\end{equation*}
\index{$\HDhyp(\cdot)$}

Recall that for $f\in \sA_{\C}$, the map $f: J(f)\to J(f)$ is
topologically exact, \emph{i.e.}, for any nonempty open subset~$U$
of~$J(f)$ there exists an integer~$N\ge 1$ such that $f^N(U)=J(f)$.
It is however too restrictive to assume an interval map to be
topologically exact on the Julia set and boundary-anchored
simultaneously.
For this reason we introduce the following definition.
We say that a map $f\in\sA_\R$ is \emph{essentially topologically exact on $J(f)$} \index{essentially topologically exact}
if there exists a forward invariant
compact interval~$X_0$ containing all critical points of~$f$ such
that $f: J(f|X_0)\to J(f|X_0)$ is topologically exact and such that
the interior of the compact interval $\dom (f)$ is contained in
$\bigcup_{n=0}^\infty f^{-n}(X_0)$.

\begin{generic}[Main Theorem]
For every $\ell > 1, h > 0, \varepsilon>0$, $\gamma > 1$ and~$p \in \left(0, \tfrac{\ell}{\ell - 1} \right)$ there is a constant~$r > 1$
such that the following properties hold. Let $f \in \sA(\ell)$ be
backward contracting with constant~$r$, expanding away from critical
points, and such that $\HD(J(f))\ge h$.
Suppose furthermore in the case $f\in \sA_{\R}$ that~$f$ is essentially topologically exact on the Julia set.
Then the following hold:
\begin{enumerate}
\item[1.]
The hyperbolic dimension $\HDhyp(f)$ is equal to $\HD(J(f))$ and there is a conformal measure~$\mu$ of exponent~$\HD(J(f))$ for~$f$ that is ergodic, supported on $\Jcon(f)$, satisfies $\HD(\mu)=\HD(J(f))$ and is such that for every sufficiently small~$\delta > 0$ and every~$x \in J(f)$,
\begin{equation}\label{e:regularity conformal}
\mu (B(x, \delta))
\le
\delta^{\HD(J(f)) - \varepsilon}.
\end{equation}
Furthermore, any other conformal measure for~$f$ supported on~$J(f)$ is of exponent strictly larger than~$\HD(J(f))$ and supported on a set of Hausdorff dimension less than~$h$.
\item[2.]
There is a unique invariant probability measure~$\nu$ that is absolutely continuous with respect to~$\mu$, and this invariant measure is polynomially mixing of exponent~$\gamma$.
Furthermore, the density of~$\nu$ with respect to~$\mu$ belongs to~$L^p(\mu)$.
\end{enumerate}
\end{generic}

Note that if $J(f)$ has positive Lebesgue measure, then the measure~$\mu$ is proportional to the Lebesgue measure, since after suitable normalization the Lebesgue measure on $J(f)$ is clearly a conformal measure of exponent $\HD(J(f))$.
In fact, this is already the case if~$J(f)$ has the same Hausdorff dimension as the domain of $f$.
See part~$1$ of Corollary~\ref{coro:inducing} in~\S\ref{sss:conformal and invariant}.

Note that~\eqref{e:regularity conformal} implies that for each~$\eta
\in (0, \HD(J(f)) - \varepsilon)$ the conformal measure~$\mu$
satisfies the following integrability condition, first introduced
in~\cite{Prz98}; see also~\cite[\S$10$]{GraSmi09}.
There is a constant~$C > 0$ such that for each~$x_0 \in J(f)$,
\begin{equation}
  \label{e:integrability condition}
  \int_{J(f)} \dist(x, x_0)^{-\eta} d\mu(x) \le C.
\end{equation}

Let us now deduce Theorems~\ref{t:super-polynomial real}
and~\ref{t:super-polynomial complex} from the Main Theorem and the following fact.
\begin{fact} \label{prop:ldbc}
A map~$f$ is backward contracting if one of the following holds:
\begin{itemize}
\item[1.] $f\in \sA_{\R}$ and for all $c\in\Crit'(f)$, we have $|Df^n(f(c))|\to \infty$ as $n\to\infty$;
\item[2.] $f\in \sA_{\C}$ is a polynomial that is at most finitely renormalizable and is such that for all $c\in\Crit'(f)$, we have $|Df^n(f(c))| \to \infty$ as $n \to \infty$;
\item[2'.]
$f\in \sA_{\C}$ is a rational map such that for all $c\in\Crit'(f)$, we have
$\sum_{n=0}^\infty |Df^n(f(c))|^{-1}< \infty$.
\end{itemize}
\end{fact}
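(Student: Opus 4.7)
The plan is to argue all three cases by a common contradiction scheme that combines the Koebe distortion principle with the prescribed growth of derivatives along critical orbits. Fix $r > 1$ and assume~$f$ is not backward contracting with constant~$r$. Then there are sequences $\delta_n \downarrow 0$, $c_n \in \CJ$, $m_n \ge 0$, and components~$W_n$ of $f^{-m_n}(\tB(c_n, r\delta_n))$ satisfying $\dist(W_n, f(\Crit(f))) \le \delta_n$ but $\diam(W_n) \ge \delta_n$. Since $f \colon \tB(c_n, r\delta_n) \to B(f(c_n), r\delta_n)$ is proper of degree~$\ell_{c_n}$ ramified only at~$c_n$, one has $\diam(\tB(c_n, r\delta_n)) \asymp \delta_n^{1/\ell_{c_n}}$ and the target $B(f(c_n), r\delta_n)$ enjoys definite Koebe space inside $B(f(c_n), r'\delta_n)$ for any $r' > r$. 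If the $m_n$ were bounded, the finiteness of $\Crit(f)$ and continuity of the iterates $f^k$ would directly forbid such bad pullbacks for~$\delta_n$ small, so I pass to a subsequence with $m_n \to \infty$.

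In each of the three cases the hypotheses exclude neutral cycles in~$J(f)$, and then via Ma\~n\'e's theorem in case~$1$, the Kozlovski--van Strien rigidity theorem referenced in the excerpt in case~$2$, and standard summability--based distortion estimates in case~$2'$, the map~$f$ is expanding away from critical points. Combining this uniform expansion with Koebe distortion on the $r'$-enlargement of~$\tB$, the pullback chain for~$W_n$ cannot avoid every small critical neighborhood, else $\diam(W_n) \to 0$. Because $W_n$ lies within~$\delta_n$ of $f(\Crit(f))$, there is a critical point~$c'$ such that the orbit $W_n, f(W_n), \ldots, f^{m_n}(W_n) = \tB(c_n, r\delta_n)$ shadows $f(c'), f^2(c'), \ldots, f^{m_n+1}(c')$. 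Applying the shrinking--neighborhood technique of~\cite{Riv07} to absorb distortion from subsequent critical passages, Koebe then yields
\[
\diam(W_n) \;\lesssim\; \frac{\diam(\tB(c_n, r\delta_n))}{|Df^{m_n}(f(c'))|} \;\asymp\; \frac{\delta_n^{1/\ell_{c_n}}}{|Df^{m_n}(f(c'))|}.
\]

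In case~$2'$, the summability condition $\sum_n |Df^n(f(c'))|^{-1} < \infty$ directly yields the sharp telescopic bound of~\cite[\S 4]{Riv07}, incompatible with $\diam(W_n) \ge \delta_n$. In cases~$1$ and~$2$, the Large Derivatives hypothesis combined with the relation between~$m_n$ and~$\delta_n$ enforced by uniform expansion along the non-critical portion of the chain shows that $|Df^{m_n}(f(c'))|$ eventually exceeds $\delta_n^{(1-\ell_{c_n})/\ell_{c_n}}$, rendering the right-hand side above $o(\delta_n)$ and again contradicting $\diam(W_n) \ge \delta_n$.

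The main obstacle is controlling Koebe distortion across an unbounded number of critical passages, since each passage naively contributes a factor comparable to the relevant critical order. The resolution, developed in~\cite{Riv07} for the complex setting and adapted to interval maps in~\cite{BruRivShevSt08}, is the shrinking--neighborhood scheme combined with the use of the critical neighborhoods~$\tB$ rather than round disks, ensuring that the cumulative distortion telescopes into the single derivative $|Df^{m_n}(f(c'))|$ appearing above. Given this framework, the three cases differ only in how the prescribed asymptotic behavior of derivatives along critical orbits feeds into the final contradiction.
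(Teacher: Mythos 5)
The paper does not actually prove this fact: it simply cites \cite{BruRivShevSt08} (Theorem~1), \cite{LiShe0911} (Theorem~A) and \cite{Riv07} (Theorem~A) for cases 1, 2 and 2$'$ respectively, remarking only that the first reference's hypothesis (no periodic attractor) can be relaxed without changing its proof. Your proposal instead attempts to reprove these three substantial theorems in one sweep, and as written it has genuine gaps.

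The central gap is the step where you claim that, because $\dist(W_n, f(\Crit(f)))\le\delta_n$, the whole chain $W_n, f(W_n),\ldots,f^{m_n}(W_n)=\tB(c_n,r\delta_n)$ ``shadows'' a single critical orbit $f(c'),f^2(c'),\ldots$, so that all distortion telescopes into the single factor $|Df^{m_n}(f(c'))|$. The hypothesis only places the \emph{initial} set $W_n$ near a critical value; the chain may pass through small neighbourhoods of several different critical points at many intermediate times, and at each such passage the forward orbit of a \emph{different} critical value takes over. Controlling the accumulated loss over an unbounded number of such passages is precisely the combinatorial core of the cited proofs (an induction on critical passages with a careful bookkeeping of scales), and it cannot be absorbed into a one-shot Koebe estimate of the form you display. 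Second, in cases 1 and 2 the hypothesis $|Df^n(f(c))|\to\infty$ carries no rate, so your concluding inequality $|Df^{m_n}(f(c'))|\ge\delta_n^{(1-\ell_{c_n})/\ell_{c_n}}$ does not follow: ``uniform expansion along the non-critical portion of the chain'' gives no useful relation between $m_n$ and $\delta_n$ when the chain spends most of its time near critical orbits, and extracting backward contraction from mere divergence of the derivatives (rather than summability) is exactly the difficulty that \cite{LiShe0911} is devoted to. Finally, in case 2$'$ you invoke ``expanding away from critical points'' for arbitrary rational maps satisfying the summability condition; this is not available a priori (the paper itself stresses that Ma\~n\'e's theorem fails for maps in $\sA_{\C}$ in general), and the argument of \cite{Riv07} does not rely on it.
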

\begin{proof}
These are~\cite[Theorem~$1$]{BruRivShevSt08},
\cite[Theorem~A]{LiShe10b} and~\cite[Theorem~A]{Riv07},
respectively. The first result is stated for maps without a periodic
attractor, but the proof works without change under the current
assumption.
\end{proof}
\begin{proof}[Proof of Theorem~\ref{t:super-polynomial real}]
We may extend $f:X\to X$ to be a boundary-anchored map
$\tilde{f}:\tX\to \tX$ with all periodic points repelling,
$\Crit(\tilde{f})=\Crit(f)$ and such that $\textrm{int}(\tX)\subset
\bigcup_{n=0}^\infty \tilde{f}^{-n}(X)$.
Then~$\tilde{f}$ is essentially topologically exact on its Julia set.
By part~$1$ of Fact~\ref{prop:ldbc}, $\tilde{f}$ is backward contracting.
By Ma{\~n}e's theorem, $\tilde{f}$ is expanding away from critical points.
So, by the Main Theorem, $\tilde{f}$ has an invariant measure~$\nu$ that is absolutely continuous with
respect to the Lebesgue measure, and that is super-polynomially mixing.
(Since $J(\tilde{f})=\tX$ has positive measure, the conformal measure $\mu$
is proportional to the Lebesgue measure on~$\tX$.)
Note that~$\nu$ is supported on~$X$, so it is an invariant measure of~$f$ with the desired properties.
\end{proof}
\begin{proof}[Proof of Theorem~\ref{t:super-polynomial complex}]
By parts~$2$ and~$2'$ of Fact~\ref{prop:ldbc}, $f$ is backward contracting.
The fact that~$f$ is expanding away from critical points is known: this follows from either~\cite{KozvSt09} or~\cite[Corollary~$8.3$]{Riv07} in the first case and from \cite[proof of Lemma~$3.1$]{Prz98} in the second case.
Since $\HD(J(f))>0$, applying the Main Theorem completes the proof.
\end{proof}
\begin{rema}
\label{r:Fibonacci}
Let~$f_0(z) = z^2 + c$ be the Fibonacci quadratic polynomial studied in~\cite{LyuMil93}.
This map satisfies the summability condition for every exponent~$\alpha > 0$,
$$ \sum_{n = 1}^\infty \frac{1}{|Df_0^n(c)|^{\alpha}} < \infty, $$
see~\cite[Lemma~$5.9$]{LyuMil93}.
Using the results of~\cite{LyuMil93} we will show that for every~$\alpha \in (0, 1)$,
\begin{equation}
\label{e:divergence}
\sum_{n = 1}^\infty \frac{n}{|Df_0^n(c)|^{\alpha}} = \infty.
\end{equation}
Let $ \{ u(k) \}_{k = 1}^\infty$ be the \emph{sequence of Fibonacci numbers} defined by~$u(1) = 1$, $u(2) = 2$, and for~$k \ge 3$ defined recursively by~$u(k) = u(k - 1) + u(k - 2)$.
If we put~$\varphi = (1 + \sqrt{5})/2$, then an induction argument shows that for every integer~$k \ge 1$ we have~$u(k) \ge \varphi^{k - 1}$.
By~\cite[Lemma~$5.8$]{LyuMil93} there is a constant~$C > 0$ such that for every~$k \ge 1$ we have~$|Df^{u(k)}(c)| \le C 2^{2k/3}$ and thus for each~$\alpha \in (0, 1)$,
$$ \frac{u(k)}{|Df^{u(k)}(c)|^{\alpha}}
\ge
C \varphi^{k - 1} 2^{-2 \alpha k / 3}
\ge
C \varphi^{-1} (\varphi 2^{-2/3})^k \ge C \varphi^{-1}. $$
This proves~\eqref{e:divergence}.
\end{rema}
\subsection{Reduced statements}
\label{ss:reduced statements}

\subsubsection{Polynomial Shrinking}
\label{subsec:psc}

\begin{defi}\label{def:psc}
Given a sequence $\Theta=\{\theta_n\}_{n=1}^\infty$ of positive
numbers, we say that a map~$f \in \sA$ satisfies the
\emph{$\Theta$-Shrinking Condition}, \index{$\Theta$-Shrinking Condition}
if there exist constants~$\rho> 0$ and~$C>0$ such that for every~$x \in J(f)$ and every integer~$m \ge
1$, the connected component~$W$ of~$f^{-m}(B(f^m(x), \rho))$
containing~$x$ satisfies
$$ \diam(W) \le C \theta_m. $$

Given~$\beta \ge 0$ we say that $f$ satisfies the \emph{Polynomial
Shrinking Condition with exponent~$\beta$}, \index{Polynomial Shrinking Condition} if $f$ satisfies the
$\Theta$-Shrinking Condition with $\Theta \= \{ n^{-\beta} \}_{n =
1}^{\infty}$.
\end{defi}
\begin{theoalph}\label{t:polynomial shrinking}
For every $\ell > 1$ and $\beta > 0$ there is a constant~$r > 1$ such that each map in $\sA(\ell)$ that is expanding away from critical points and that is backward contracting with constant~$r$ satisfies the Polynomial Shrinking Condition with exponent~$\beta$.
\end{theoalph}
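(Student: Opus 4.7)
Fix $\ell > 1$ and $\beta > 0$. The plan is to choose $r = r(\ell, \beta)$ sufficiently large so that the strong contraction produced by backward contraction at critical returns, combined with the uniform expansion of Definition \ref{def:eafcp} along stretches that avoid critical points, yields contraction at least as fast as $m^{-\beta}$ over $m$ pullback iterates.

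\textbf{Setup.} Let $f \in \sA(\ell)$ be as in the hypothesis, let $\delta_0 > 0$ be the threshold from Definition \ref{def:bc}, and pick a small neighborhood $V'$ of $\Crit'(f)$ so that Definition \ref{def:eafcp} supplies a uniform expansion constant $\lambda = \lambda(V') > 1$ along orbits avoiding $V'$. Choose $\rho > 0$ much smaller than both $\delta_0$ and the scale of $V'$. Given $x \in J(f)$ and $m \ge 1$, set $y = f^m(x)$ and let $W_k$ denote the connected component of $f^{-(m-k)}(B(y,\rho))$ containing $f^k(x)$, for $0 \le k \le m$. The goal is to bound $\diam(W_0)$.

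\textbf{Step 1 (decomposition).} Partition $\{0,1,\ldots,m-1\}$ into \emph{critical indices} $k_1 < k_2 < \cdots < k_N$, where $f^{k_j}(x) \in V'$, and \emph{escape indices}, where $f^{k}(x) \notin V'$. Let $E$ denote the total escape time, so $E = m - N$.

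\textbf{Step 2 (escape stretches).} On a maximal escape stretch, $W_k, W_{k+1}, \ldots, W_{k+s}$ all avoid $\Crit(f)$, so $f^{s} \colon W_k \to W_{k+s}$ is a diffeomorphism. A Koebe-type distortion estimate together with the uniform expansion of Definition \ref{def:eafcp} along the orbit $f^k(x),\ldots,f^{k+s-1}(x)$ gives
$$\diam(W_k) \le C\,\lambda^{-s}\,\diam(W_{k+s}).$$

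\textbf{Step 3 (critical returns).} At a critical index $k_j$, $f^{k_j}(x)$ is close to some $c_j \in \Crit'(f)$, so $W_{k_j+1} = f(W_{k_j})$ sits close to $f(c_j) \in f(\Crit(f))$. Choosing $\delta_j$ comparable to $\diam(W_{k_j+1})/r$, we arrange that $W_{k_j+1} \subseteq B(f(c_j), r\delta_j)$; the pullback component of $\tB(c_j, r\delta_j)$ containing $f^{k_j}(x)$ then contains $W_{k_j}$, and Definition \ref{def:bc} applied with these choices of $c_j$ and $\delta_j$ yields $\diam(W_{k_j}) < \delta_j$. In words, each critical return contracts by a factor $\sim r^{-1}$ relative to $\diam(W_{k_j+1})$.

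\textbf{Step 4 (combining).} Telescoping the bounds from Steps 2 and 3 along the sequence $W_m \supset f(W_{m-1}) \supset \cdots$, one obtains
$$\diam(W_0) \le C\rho \cdot r^{-N} \lambda^{-E}.$$
Choosing $r$ sufficiently large (depending on $\ell, \beta, \lambda$), the right-hand side is bounded by $\min(r,\lambda)^{-m/2}$; it is in particular at most $C' m^{-\beta}$, which establishes the Polynomial Shrinking Condition with exponent $\beta$.

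\textbf{Main obstacle.} The delicate step is Step 3. The backward contracting property compares the pullback diameter to an auxiliary parameter $\delta$ entering the target $\tB(c, r\delta)$, and it is nontrivial to match $\delta$ with the actual scale of $W_{k_j+1}$ so that the gain $r^{-1}$ is realized as a genuine contraction of $\diam(W_{k_j})$ rather than absorbed into the choice of target. When critical returns cluster --- a ``shadowing phase'' where the forward orbit stays close to a critical orbit for many consecutive iterates --- one must apply the backward contraction property in nested fashion and verify that the $r^{-1}$ factors from successive returns compose correctly. A secondary difficulty is distortion control in Step 2: the Koebe space required for bounded distortion can degenerate if an escape stretch immediately follows a deep critical return, and one must either enlarge the initial target or exploit a long preceding expansion to restore a definite Koebe buffer before invoking the Koebe principle.
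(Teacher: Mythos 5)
Your proposed final bound $\diam(W_0)\le C\rho\, r^{-N}\lambda^{-E}\le \min(r,\lambda)^{-m/2}$ is exponential in $m$, and this is too strong to be true. It would mean that every backward contracting map that is expanding away from critical points satisfies an exponential shrinking condition, which is essentially the Topological Collet--Eckmann condition; but Theorem~\ref{t:polynomial shrinking} is designed to cover maps satisfying only the Large Derivatives condition, many of which are not TCE (compare Remark~\ref{r:TCE badness exponent}). The theorem asserts a polynomial rate, with $r$ depending on $\beta$, precisely because a uniform exponential rate fails for this class.

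The gap is in Step 4, and it is exactly the issue you dismiss as a ``secondary difficulty'': the constants in Step 2 do not remain bounded under telescoping. The contraction you can extract from an escape stretch is governed by the Koebe space (modulus of an annulus) surrounding the piece at the end of the stretch, and each passage of the \emph{pullback chain} through a critical point of order $\ell_c$ divides that modulus by $\ell_c$ (Lemmas~\ref{lem:Cmodulus} and~\ref{lem:Rmodulus}). After $\nu$ such passages the exponential rate recoverable from all the escape stretches combined is multiplied by $\kappa_0^{\nu}$ with $\kappa_0<\ell^{-1}$, i.e.\ one only gets a bound of the form $\exp(-\kappa_0^{\nu}(A_0 s+\mu))$, not $\lambda^{-E}$ with a uniform constant; this degradation cannot be absorbed into $C$. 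The paper's Proposition~\ref{prop:shrink} therefore proves two bounds \emph{separately} and takes their minimum: $r^{-\kappa_0\nu}$, obtained by nesting the backward contraction at the successive critical passages of the chain (note these are passages of the pullback components through $\Crit'(f)$, which can be far fewer than your $N$ returns of the forward orbit to $V'$), and $\exp(-\kappa_0^{\nu}(A_0 s+\mu))$ from expansion with degraded modulus. Optimizing the minimum over $\nu$ yields only $m^{-\beta}$ with $\beta$ of order $\kappa_0\log r/\log\kappa_0^{-1}$. Your Step 3 also misreads Definition~\ref{def:bc}: the property contracts pullbacks of the full critical piece $\tB(c,r\delta)$ lying within distance $\delta$ of $f(\Crit(f))$, and the gain $r^{-1}$ is realized on an absolute scale (starting from a ball of definite radius $3\eta_0$ and giving $\tB(\Crit'(f),3r^{-\nu}\eta_0)$ after $\nu$ nested applications), not as a multiplicative factor against $\diam(W_{k_j+1})$ that could be composed with the expansion factors.
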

In what follows, for a map $f \in \sA$ we denote by~$\beta_{\max}(f)$ \index{$\beta_{\max}(\cdot)$} the best polynomial shrinking exponent of~$f$; i.e., the supremum of all $\beta\ge 0$ for which~$f$ satisfies the Polynomial Shrinking Condition with exponent~$\beta$.
So~$\beta_{\max}(f) = 0$ means that~$f$ is not polynomially shrinking with any positive exponent.\footnote{Note that every map in~$\sA$ satisfies the Polynomial Shrinking Condition with exponent~$\beta = 0$.}

The following result is a direct consequence of
Theorem~\ref{t:polynomial shrinking}, of \cite[Theorem~$2$]{Mih11},
and of~\cite[Corollary~$8.3$]{Riv07}.
\begin{coro}[Local connectivity]
\label{c:local connectivity} For every integer~$\ell \ge 2$ there
is an~$r > 1$ such that, for any $f\in \sA_{\C}(\ell)$ that is backward contracting with constant~$r$ the Julia set of~$f$ is locally connected when it is connected.
\end{coro}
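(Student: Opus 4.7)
The plan is to derive this corollary as an immediate combination of Theorem~A with the two external inputs cited in the excerpt. Fix an integer $\ell\ge 2$ and apply Theorem~A with any choice of shrinking exponent, say $\beta=1$: this yields a constant $r>1$ such that every map in $\sA(\ell)$ which is expanding away from critical points and backward contracting with constant~$r$ satisfies the Polynomial Shrinking Condition with exponent~$1$. This $r$ will be the $r$ of the corollary.

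Now let $f\in\sA_{\C}(\ell)$ be backward contracting with constant~$r$ and without parabolic periodic points, and assume that $J(f)$ is connected. The first step is to promote backward contraction to the hypothesis required by Theorem~A: this is exactly what \cite[Corollary~8.3]{Riv07} provides, since it asserts that a backward contracting rational map with no parabolic periodic points is expanding away from critical points. Applying Theorem~A then gives that $f$ satisfies the Polynomial Shrinking Condition with exponent~$1$; in particular, for some $\rho>0$ the diameters of the connected components of $f^{-m}(B(y,\rho))$ tend to zero uniformly in $y\in J(f)$ as $m\to\infty$.

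The final step is to invoke \cite[Theorem~2]{Mih0803}, which asserts local connectivity of a connected Julia set as soon as such a uniform shrinking-of-components property holds. Plugging in the shrinking supplied by Theorem~A, together with the standing assumption that $J(f)$ is connected, this yields the local connectivity of $J(f)$ and completes the argument.

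There is essentially no obstacle here beyond matching hypotheses: the one small point to verify is that the shrinking output by Theorem~A (balls $B(f^m(x),\rho)$ centered on the orbit) is of the form demanded by Mihalache's theorem; this is immediate because every small ball centered on $J(f)$ is of that form at time $m=0$, and iterating backward along any branch gives the shrinking of arbitrary pull-back components. Since the polynomial rate is far stronger than the qualitative shrinking that Mihalache requires, no further quantitative work is needed.
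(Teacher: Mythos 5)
Your proposal is correct and follows exactly the route the paper intends: the paper states this corollary is a direct consequence of Theorem~\ref{t:polynomial shrinking}, of \cite[Theorem~2]{Mih0803}, and of~\cite[Corollary~8.3]{Riv07}, and your argument combines these three inputs in precisely that order (using \cite[Corollary~8.3]{Riv07} to supply the expansion-away-from-critical-points hypothesis of Theorem~\ref{t:polynomial shrinking}, and then feeding the resulting uniform shrinking into Mihalache's criterion). No gaps.
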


\subsubsection{Badness exponent}
\label{subsec:be}
Let us start by introducing ``nice sets''.\footnote{In the case~$f$ is an interval map, the concept of nice set we use here differs from the usual concept of ``nice interval''. A nice interval is an interval~$V$ such that for every integer~$n \ge 1$, we have~$f^n(\partial V) \cap V = \emptyset$.
Thus, a nice set is a neighborhood of~$\Crit'(f)$ formed by a union of nice intervals that satisfy some additional properties.}
For~$f \in \sA$, a
set~$V$, and an integer~$m \ge 1$, each connected component
of~$f^{-m}(V)$ is called a \emph{pull-back of~$V$ by~$f^m$}. \index{pull-back}
\begin{defi}
  For a map $f \in \sA$,
  we will say that $V\subset \dom (f)$ is a \emph{nice set} \index{nice set} if the following hold:
  \begin{itemize}
  \item
$\overline{V}$ is disjoint from the forward orbits
  of critical points not in~$J(f)$ and periodic orbits not in $J(f)$;
  \item $V\supset \Crit'(f)$;
\item
each connected component of~$V$ is an open interval (resp. topological disk) and
  contains precisely one point in $\Crit'(f)$;
  \item
for every integer~$n \ge 1$ we have~$f^n(\partial V) \cap V = \emptyset$.
  \end{itemize}
For~$c \in \Crit'(f)$ we denote by~$V^c$ the connected component
of~$V$ containing~$c$. A nice set~$V$ is called \emph{symmetric} \index{symmetric}
if for each~$c \in \Crit'(f)$ we have $f(\partial V^c) \subset \partial
f(V^c)$.
  Moreover, a \emph{(symmetric) nice couple for $f$} \index{nice couple}  is a pair of (symmetric) nice sets~$(\hV, V)$
  such that $\overline{V} \subset \hV$, and such that for every integer~$n \ge 1$ we have~$f^n(\partial V) \cap \hV = \emptyset$.
\end{defi}

The following fact is proved for maps in~$\sA_{\C}$ in
\cite[Proposition~$6.6$]{Riv07}.
See Lemma~\ref{l:very nice} for the
general case.
\begin{fact}\label{f:nice couples}
For each~$\ell > 1$ there is a constant~$r > 1$ such that each $f
\in \sA(\ell)$ that is backward contracting with constant~$r$
possesses arbitrarily small (symmetric) nice couples.
\end{fact}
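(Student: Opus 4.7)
The plan is to adapt the construction of \cite[Proposition~6.6]{Riv07} so that it works uniformly for both real and complex maps in $\sA(\ell)$. First I would fix a small scale $\delta>0$ and, for each $c\in\Crit'(f)$, consider the one-parameter family of candidate sets $\{\tB(c,t) : t\in[\delta,2\delta]\}$. Each $\tB(c,t)$ is automatically ``symmetric'' in the required sense, since by construction $f(\tB(c,t))=B(f(c),t)$ so $f(\partial \tB(c,t))\subset \partial B(f(c),t)=\partial f(\tB(c,t))$. A candidate nice couple will then take the form $V=\bigcup_{c}\tB(c,t_c)$ and $\hV=\bigcup_{c}\tB(c,r t_c)$, for radii $t_c\in[\delta,2\delta]$ to be chosen.

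Second, I would use backward contraction with a sufficiently large constant $r$ (depending on $\ell$ via the required bounded distortion near critical points) to produce the key ``pull-back containment'' property of a nice couple. If $W$ is a pull-back of $\tB(c',r t_{c'})$ by some $f^m$ that meets $V^{c}=\tB(c,t_c)$, then $f(W)$ lies within distance $O(\delta)$ of $f(c)\in f(\Crit(f))$, so by Definition~\ref{def:bc} applied at scale $\delta'$ slightly smaller than $t_{c'}$, one gets $\diam(f(W))<\delta'$, and iterating this one scale down shows $W$ itself is small. Together with a Koebe-type control, this forces $W\subset V^{c}$ provided $r$ was chosen large enough. This step supplies both the couple condition and a uniform bound $\diam V^c,\diam\hV^c=O(\delta)$.

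Third, I would arrange the niceness condition $f^n(\partial V)\cap V=\emptyset$ by perturbing the $t_c$'s inside $[\delta,2\delta]$. For a fixed $c$ and $c'$, the set of $t$ for which some iterate $f^n(\partial \tB(c,t))$ meets $\tB(c',t_{c'})$ is contained in a countable set: it corresponds to $t$ such that a preimage of $\partial\tB(c',t_{c'})$ passes through $\partial\tB(c,t)$. A Baire or Fubini argument, performed simultaneously over the finitely many critical points, then produces radii $t_c$ avoiding all bad values. In the real case one also has to keep $V$ away from $\partial\dom(f)$, which is a finite $f$-invariant set disjoint from $J(f)$ by the boundary-anchored assumption, so this causes no trouble. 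The main obstacle is really the simultaneous niceness for all pairs $(c,c')$ and all $n$, combined with the requirement that the enlargement from $V$ to $\hV$ by factor $r$ does not spoil any of the above; this forces the argument to be done with a quantitative margin in the backward contraction constant, which is precisely the role of taking $r$ large depending on $\ell$. The detailed implementation is what will appear in Lemma~\ref{l:very nice}.
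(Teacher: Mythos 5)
There is a genuine gap in your third step. The set of radii $t$ for which some iterate $f^n(\partial \tB(c,t))$ meets the \emph{open} set $\tB(c',t_{c'})$ is not countable: for each fixed $n$ the map $t \mapsto f^n(\partial\tB(c,t))$ is continuous (in the Hausdorff sense), so the condition that this compact curve intersects an open set is an \emph{open} condition in $t$; the bad set for each $n$ is therefore either empty or uncountable, and for a recurrent critical point the union over $n$ can exhaust all of $[\delta,2\delta]$. (You may be conflating this with the codimension-one condition that $f^n(\partial\tB(c,t))$ meets the \emph{boundary} $\partial\tB(c',t_{c'})$.) This is precisely the core difficulty in producing nice sets, and no Baire or Fubini selection of radii can resolve it. A related problem affects your second step: showing that pull-backs of $\hV$ meeting $V$ are \emph{small} does not show they are \emph{contained in} $V$ — a small pull-back can straddle $\partial V^c$ — so the nice-couple property also does not follow from diameter bounds plus Koebe.

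The paper's route (Lemma~\ref{l:very nice}, building on \cite[Proposition~3]{BruRivShevSt08} and \cite[Proposition~6.5]{Riv07}) avoids radius selection entirely by a saturation construction: one first obtains a nice set $\hV$ at scale $r\delta$, and then defines $V^c$ as the union of $\tB(c,\delta)$ with all return domains of $\hV$ that intersect it (filled in, in the complex case). Backward contraction is what keeps the adjoined return domains small, so that $V^c \subset \tB(c,2\delta)$; and by construction every point of $\partial V$ has forward orbit avoiding $\hV$, which yields in one stroke both the niceness of $V$ and the requirement that pull-backs of $\hV$ meeting $V$ lie inside $V$. If you want to salvage your approach, you should replace the perturbation of radii by this dynamical enlargement of the candidate balls.
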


Fix~$f \in \sA$ and a set~$V$.
If $W$ is a pull-back of~$V$ by $f^m$, we define an integer~$d_V(W)
\ge 1$ \index{$d_{\cdot}(\cdot)$}
in the following way:
\begin{itemize}
\item
If $f$ is a rational map, then~$d_V(W)$ is the degree of~$f^m : W
\to f^m(W)$, \emph{i.e.}, the maximal cardinality of $f^{-m}(x)\cap
W$ for $x\in V$.
\item
If~$f$ is an interval map, then $d_V(W) \= 2^{N}$, where $N$ is the
number of those $j \in \{ 0, \ldots, m - 1 \}$ such that the
connected component of~$f^{-(m-j)}(V)$ containing~$f^j(W)$
intersects $\Crit(f)$.
\end{itemize}
For a component $W$ of $V$, we define $d_V(W)=1$. When~$V$ is clear from the context, we shall often drop the
subscript $V$, and write $d(W)$ instead of $d_V(W)$.

Let~$V$ be an open set and let~$W$ be a pull-back of~$V$ by $f^m$.
If~$f^{m}$ is a diffeo\-morphism between~$W$ and a component
of~$V$, then we say that~$W$ is a \emph{diffeomorphic pull-back
of~$V$}. Note that in the case when~$f$ is a rational map, this
occurs if and only if~$f^{m}$ is univalent on~$W$. \index{diffeomorphic pull-back}

\begin{defi}
\label{def:badpb}
Given~$f \in \sA$ and an open set~$V$, we will say that a
pull-back~$W$ of~$V$ by $f^m$, $m\ge 1$, is \emph{bad} \index{bad pull-back} if, for every
integer~$m' \in \{ 1, \ldots, m \}$ such that~$f^{m'}(W) \subset~V$,
the pull-back of~$V$ by~$f^{m'}$ containing~$W$ is not
diffeomorphic.
Furthermore, we denote by~$\fB_m(V)$ \index{$\fB_m(\cdot)$} the collection of
all bad pull-backs of~$V$ by $f^m$ and put
$$\delta_{\bad} (V)
\= \inf \left\{ t>0: \sum_{m=1}^\infty\sum_{W\in \fB_m(V)}
d_V(W)\diam (W)^t <\infty \right\}.$$
\end{defi}
\index{$\delta_{\bad}(\cdot)$}
The \emph{badness exponent of~$f$} \index{badness exponent} is defined as
\begin{equation}\label{eqn:badexp}
\delta_{\bad} (f) \= \inf \{\delta_{\bad}(V): V \text{ is a nice set of }f\}.
\end{equation}

We shall prove in Lemma~\ref{lem:db} that $\delta_{\bad}(V)\le
\delta_{\bad}(V')$ for any nice sets $V\subset V'$.
Thus if we have a sequence of nice sets $V_1\supset V_2\supset \cdots \searrow\Crit'(f)$, then $\delta_{\bad}(f)=\lim_{n \to \infty}
\delta_{\bad}(V_n)$.

\begin{theoalph}\label{t:finiteness of bad}
For every $\ell > 1$ and $t > 0$ there is a constant~$r \ge 2$ such
that for each map $f \in \sA(\ell)$ that is backward contracting
with constant $r$, we have $\delta_{\bad}(f)<t$.
\end{theoalph}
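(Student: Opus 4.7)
Fix $\ell > 1$ and $t > 0$. The plan is to choose $r > 1$ sufficiently large so that, for any $f \in \sA(\ell)$ backward contracting with constant $r$, one can exhibit a single nice set $V$ with $\delta_{\bad}(V) < t$. The monotonicity $\delta_{\bad}(V) \le \delta_{\bad}(V')$ for $V \subset V'$ claimed in Lemma~\ref{lem:db}, together with Fact~\ref{f:nice couples} providing arbitrarily small symmetric nice couples, lets us take $V$ as small as we wish. Specifically, fix a symmetric nice couple $(\hV, V)$ in which each component $V^c$ is contained in $\tB(c,\delta_c)$ for $\delta_c$ small enough that the backward contraction hypothesis yields: every pull-back of $\tB(c, r \delta_c)$ meeting $f(\Crit(f))$ has diameter less than $\delta_c$.

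The argument rests on a recursive decomposition of bad pull-backs through first returns to $V$. First note that a pull-back $W \in \fB_m(V)$ is bad if and only if its first-return container---the component $U_1$ of $f^{-n_1}(V^{c_1})$ containing $W$, where $n_1\ge 1$ is the first time $f^{n_1}(W) \subset V^{c_1}$---is non-diffeomorphic, since non-diffeomorphicity of a pull-back persists under further iteration. Then $f^{n_1}(W)$ is itself a pull-back of $V$ lying inside $V^{c_1}$, whose first-return container inside $V^{c_1}$ is another non-diffeomorphic pull-back $U_2$, and so on. After $k$ iterations one obtains a chain $U_1, \ldots, U_k$ of non-diffeomorphic first-return pull-backs that together determine $W$.

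At each stage, the non-diffeomorphicity of $U_i$ places a critical point in its orbit, while the landing $U_i \to V^{c_i} \subset \tB(c_i, \delta_{c_i})$ makes the backward contraction hypothesis applicable, forcing $\diam U_i \le \delta_{c_i}$. Using Koebe distortion on the univalent portions of $f^{n_i - n_{i-1}}$ together with the power law $x \mapsto x^{\ell_{c_i}}$ near critical points, one propagates this through the chain to an estimate $\diam W \le C \lambda^k$, where $\lambda = \lambda(r)$ tends to $0$ as $r \to \infty$. The degree is multiplicative, $d_V(W) \le \ell^k$, and the number of admissible chains of length $k$ is bounded by $N^k$ for a constant $N = N(V)$. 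Consequently,
\[
\sum_{m\ge 1}\sum_{W\in\fB_m(V)} d_V(W)\,\diam(W)^t \;\le\; \sum_{k\ge 1}\bigl(N \ell C^t \lambda(r)^t\bigr)^k,
\]
which converges as soon as $r$ is chosen so large that $N\ell C^t \lambda(r)^t < 1$.

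The main obstacle will be the geometric estimate $\diam W \le C\lambda^k$, specifically the distortion control for $f^{n_i - n_{i-1}}$ on $U_i$ in the presence of critical points: the power law at a critical point amplifies small scales and could, in principle, defeat the shrinking from backward contraction. The intended resolution exploits the annular buffer $\hV \setminus V$ afforded by the nice couple---backward contraction applied at the scale $r\delta_c \to \delta_c$ already absorbs any degree-dependent nonlinearity on the critical scale, while the pull-back restricted to $\hV \setminus V$ admits uniform Koebe distortion estimates. Bounding the counting constant $N$ is a secondary issue, addressed by a standard covering argument for the first-return map to $V$ once $V$ is small enough.
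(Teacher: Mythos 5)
There is a genuine gap here, in fact two. First, your structural characterization of bad pull-backs is wrong. Badness requires that for \emph{every} return time $m'$ the pull-back of $V$ by $f^{m'}$ containing $W$ be non-diffeomorphic, and non-diffeomorphicity does \emph{not} ``persist under further iteration'': the pull-back of $V$ by $f^{m_2}$ containing $W$ is a proper subset of the one by $f^{m_1}$ (for $m_1<m_2$) and may avoid the critical point that made the larger one non-diffeomorphic (in the real case, $W$ may lie entirely on one side of that critical point). For the same reason the individual pieces $U_2,\dots,U_k$ of your chain need not be non-diffeomorphic return branches at all: the non-diffeomorphicity of the composite pull-back at return time $m_i$ can be carried entirely by a critical point passed before time $m_{i-1}$, in which case $U_i$ can be any of the infinitely many return domains containing the image of an earlier critical point. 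Your bound of $N^k$ admissible chains implicitly assumes each $U_i$ is drawn from the finitely many critical return branches, so the decomposition simply fails to cover the set of bad pull-backs, and no finite $N$ exists for the corrected decomposition.

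Second, and more fundamentally, the estimate $\diam (W)\le C\lambda^k$ with $\lambda(r)\to0$ is not available by the route you sketch. The degree of $f^{m_k}$ on a bad pull-back grows with $k$, and every modulus or Koebe estimate across a critical passage loses a factor $\ell_c^{-1}$ (Lemmas~\ref{lem:Cmodulus} and~\ref{lem:Rmodulus}). Along a chain with $k$ critical passages the modulus gained at the $i$-th return is damped by $\ell^{-\nu_i}$ with $\nu_i$ growing in $i$, so the total modulus separating $W$ from $U_1$ is bounded by a convergent geometric series rather than growing linearly in $k$; no fixed annular buffer $\hV\setminus V$ can absorb an unbounded degree. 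This is precisely the difficulty the paper's proof is built around: it introduces a tower of nice sets $V_0\supset V_1\supset\cdots$ at scales $\tau^n\delta_0$ with $\tau=\taubase^{-\ell_{\max}(f)}$, chosen so that the Koebe contraction across $n-i$ scales (of order $\taubase^{i-n}$) beats the accumulated degree loss; it replaces your return chains by the child decomposition of Lemma~\ref{l:child decomposition}, recursing on pull-backs of children $Y$ whose diameters have a small summable total by Lemma~\ref{l:children}; and it closes the argument with a double induction on the time cutoff $m$ and across the scales $n$. None of these ingredients appears in your outline, and without them the geometric series at the end has no justification.
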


\begin{rema}
\label{r:fat Fibonacci}
Given~$\ell > 2$ close to~$2$, let~$f$ be a Fibonacci unimodal map whose critical point~$c$ has order~$\ell$.
Then~$f$ gives an example of a map that is backward contracting with a large constant and such that~$\delta_{\bad}(f) > 0$.
In fact, the results of~\cite{KelNow95} imply that such a map $f$ is backward contracting with a large constant while the postcritical set~$\omega(c)$ has positive Hausdorff dimension.
On the other hand, $f$ is persistently recurrent, so $\omega(c)$ is contained in~$J(f) \setminus \Jcon(f)$.
Thus, by Lemma~\ref{lem:confmeas},
$$ \delta_{\bad}(f)
\ge
\HD(J(f) \setminus \Jcon(f))
\ge
\HD(\omega(c))
>
0. $$
\end{rema}
\begin{rema}\label{r:TCE badness exponent}
The arguments in Lemmas~$7.1$ and~$7.2$ of~\cite{PrzRiv07} show that the
badness exponent of a rational map satisfying the Topological Collet-Eckmann (TCE) condition is zero.
When restricted to the class of rational maps with a unique critical point in the Julia set, the TCE condition is equivalent to the Collet-Eckmann condition~\cite{Prz00}.
So Theorem~\ref{t:finiteness of bad} is significantly stronger within this class of maps.
\end{rema}

In view of the results that follow, it would be interesting to have an answer for the following question:
\begin{ques}
For $f\in\sA$, does~$\beta_{\max}(f)=\infty$ imply $\delta_{\bad}(f)=0$?
\end{ques}

\subsubsection{Canonical induced Markov map}
\label{sss:inducing}
Let~$\sA^*$ \index{$\sA^*$} be the set of $f\in \sA$ that satisfies the following:
\begin{enumerate}
\item [(A1)] $f$ is expanding away from critical points;
\item [(A2)] $\Crit'(f)\not=\emptyset$ and~$f$ has arbitrarily small symmetric nice couples;
\item [(A3)] if $f\in \sA_{\R}$, then $f$ is essentially topologically exact on the Julia set.
\end{enumerate}
Moreover, put~$\sA^*_{\R} \= \sA^* \cap \sA_\R$ and~$\sA^*_{\C} \= \sA^* \cap \sA_\C$.

Through an inducing scheme, we can convert Theorems~\ref{t:polynomial shrinking} and~\ref{t:finiteness of bad} into statistical properties
of maps $f\in \sA^*$.
The following
definitions appeared first in~\cite{PrzRiv07}.

Given a nice couple $(\hV, V)$ of $f$, we say that an integer~$m \ge
1$ is \emph{a good time} for a point~$x$ if~$f^m(x) \in V$ and if
the pull-back of~$\hV$ containing~$x$ is diffeomorphic. We denote
by~$D$ the set of all those points in~$V$ having a good time, and
for each~$x \in D$ we denote by~$m(x)$ the least good time of~$x$.
Note that $m(x)$ is constant in any component $W$ of $D$, so $m(W)$
makes sense.
We say that~$m(x)$ (resp. $m(W)$) is the \emph{canonical inducing time} \index{canonical inducing time}
of~$x$ (resp.~$W$) with respect to $(\hV, V)$.
The \emph{canonical induced map associated to the nice couple~$(\hV, V)$} \index{canonical induced map}
is by definition the map~$F : D \to V$ defined
by~$F(x) = f^{m(x)}(x)$. We denote by~$J(F)$ the maximal invariant
set of~$F$; that is the set of all those points in~$V$ having infinitely many good times.

We say that a sequence $\{\theta_n\}_{n=1}^\infty$ of positive numbers is \emph{slowly varying} \index{slowly varying}
if
$\theta_n/\theta_{n+1}\to 1$ as $n\to\infty$. For instance, $\{n^{-\beta}\}_{n=1}^\infty$ and $\{\exp({-\sigma n^\alpha})\}_{n=1}^\infty$ are slowly varying for any $\beta, \sigma,\alpha>0$, but for each~$\theta \in (0, 1)$ the sequence $\{\theta^n\}_{n=1}^\infty$ is not slowly varying.

\begin{theoalph}
\label{t:tail estimate}
Fix $f\in\sA^*$. If $\delta_{\bad}(f)<\HD(J(f))$, then $\HD(J(f))=\HD_{\hyp}(f)$.
Moreover, for each sufficiently small nice couple~$(\hV, V)$, the associated canonical induced map $F: D\to V$ satisfies:
$$\HD(J(F)\cap V^c)=\HD(J(f)), \text{ for all } c\in \Crit'(f).$$
Furthermore, fix $t\in (\delta_{\bad}(f), \HD(J(f)))$ and assume
that~$f$ satisfies the $\Theta$-Shrinking Condition for some slowly
varying and monotone decreasing sequence of positive numbers $\Theta
= \{\theta_n\}_{n=1}^\infty$. Then for each sufficiently small
symmetric nice couple $(\hV, V)$,  there exists a constant $\alpha_0
= \alpha_0(\hV, V) \in (t, \HD(J(f)))$ such that, for all $\alpha
\ge \alpha_0$ and $\sigma \in [0, \alpha - t)$, there is a
constant~$C > 0$ such that, for each $Y \subset V$ and each integer~$m \ge 1$,
$$ \sum_{W \in \fD \, : \,  m(W) \ge m, \, W \subset Y} \diam (W)^\alpha
\le
C \diam(Y)^\sigma \sum_{n=m}^\infty \theta_n ^{\alpha - t - \sigma}, $$
where $\fD$ is the collection of all components of~$D$.
\end{theoalph}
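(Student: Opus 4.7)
The plan is to prove the two halves of the theorem independently. For the structural part ($\HDhyp(f)=\HD(J(f))$ and $\HD(J(F)\cap V^c)=\HD(J(f))$), pick a small symmetric nice couple $(\hV,V)$ with $\delta_{\bad}(V)<\HD(J(f))$, available by Fact~\ref{f:nice couples} together with the monotonicity of $\delta_{\bad}$ on nice sets. The induced map $F\colon D\to V$ is a conformal Markov system with uniformly bounded Koebe distortion coming from the gap $V\subset\hV$. Applying a Bowen-formula/variational principle argument to finite expanding Markov subsystems of $F$ concentrated in a prescribed $V^c$, combined with the expanding-away-from-critical-points hypothesis and (essential) topological exactness, would yield hyperbolic subsets of $f$ meeting every $V^c$ and of Hausdorff dimension arbitrarily close to $\HD(J(f))$; this simultaneously gives $\HDhyp(f)=\HD(J(f))$ and the density claim.

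For the quantitative tail estimate I would use a Whitney-type decomposition of $\fD$ in the spirit of~\cite{PrzRiv0806}. Given $W\in\fD$ with $k=m(W)\ge m$ and $W\subset Y$, define its \emph{last bad index} $j(W)\in\{1,\ldots,k\}$ to be the largest $j$ such that the pull-back $W^{\sharp}$ of $V$ by $f^j$ containing $W$ belongs to $\fB_j(V)$; components for which no such $j$ exists are first-generation and can be estimated directly from the $\Theta$-Shrinking Condition applied on $V$. By the maximality of $j(W)$, the pull-back of $V$ by $f^{k-j(W)}$ containing $f^{j(W)}(W)$ is diffeomorphic, so Koebe on the extension to $\hV$ controls the distortion of $f^{k-j(W)}$ on $f^{j(W)}(W)$. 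The $\Theta$-Shrinking Condition then yields $\diam(W^{\sharp})\le K\theta_j$ and $\diam(f^{j(W)}(W))\le K\theta_{k-j}$.

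Grouping components $W\in\fD$ by their $W^{\sharp}$ and lifting through the branched cover $f^{j}\colon W^{\sharp}\to V^{c'}$ of degree $d_V(W^{\sharp})$, one obtains an inner bound of the form
\[
\sum_{\substack{W\in\fD,\,W^{\sharp}\text{ fixed}\\ m(W)\ge m}}\diam(W)^{\alpha}\le C_1\, d_V(W^{\sharp})\,\diam(W^{\sharp})^{t+\sigma}\,\theta_j^{\alpha-t-\sigma}\,\Xi(\max(m-j,0)),
\]
where $\Xi(N):=\sum_{n\ge N}\theta_n^{\alpha-t-\sigma}$; the two factors of $\theta$ encode respectively the Shrinking Condition on $W^{\sharp}$ and the Shrinking Condition on the good components lifted from $V^{c'}$. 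Summing over $W^{\sharp}$ and $j$: bound $\diam(W^{\sharp})^{\sigma}\le\diam(Y)^{\sigma}$ (since $W^{\sharp}\subset Y$), use the finiteness of $\sum_{j,W^{\sharp}}d_V(W^{\sharp})\diam(W^{\sharp})^{t}$ (from $t>\delta_{\bad}(V)$), and collapse the remaining $\theta_j^{\alpha-t-\sigma}\,\Xi(\max(m-j,0))$ into a single $\Xi(m)$ via a telescoping/convolution estimate that invokes slow variation and monotonicity of $\Theta$. This produces the stated bound.

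The main obstacle will be the combinatorial and analytic bookkeeping around the assignment $W\mapsto W^{\sharp}$ and the final convolution: one must verify that the assignment is well-defined and only finite-to-one after accounting for the degree factor $d_V$ (which in the interval case is multiplicative along chains of non-diffeomorphic pull-backs and must be controlled by $\ell_{\max}(f)\le\ell$), and one must justify the uniform bound $\sum_j \theta_j^{\alpha-t-\sigma}\,\Xi(\max(m-j,0))\le C\,\Xi(m)$ for every $m$. This last step is precisely where the slow variation and monotonicity of $\Theta$ become essential, and where $\alpha_0(\hV,V)$ must be taken large enough—yet still strictly less than $\HD(J(f))$—to control simultaneously the Koebe distortion constant from the fixed gap $V\subset\hV$ and the decay rate required for the convolution to converge.
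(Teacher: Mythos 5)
Your overall architecture is the right one and matches the paper's: split $\fD$ according to a maximal ``bad'' pull-back, pay for the bad part with $t>\delta_{\bad}$, and pay for the diffeomorphic tail with the $\Theta$-Shrinking Condition and the exponential decay of landing domains. But the displayed ``inner bound'' for the components $W$ sitting over a fixed $W^{\sharp}$ is exactly the step that does not follow from what you wrote, and it is the crux of the whole proof. The map $f^{j}\colon W^{\sharp}\to V^{c'}$ is a branched cover, and near its critical values the inverse branches have \emph{unbounded} distortion: pulling back a landing domain $U$ that lies at distance $s$ from a critical value of local degree $\ell$ multiplies relative diameters by a factor $\asymp s^{1/\ell-1}$, so $\sum_W\diam(W)^\alpha$ is not controlled by $d_V(W^{\sharp})$ times the corresponding sum downstairs. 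The paper resolves this with a genuine Whitney decomposition of $f(V)\setminus E$, where $E$ is the set of critical values of $f^{\tm+1}\colon\tY\to f(\hV)$ (here $\tY$ is a bad pull-back of $\hV$, not of $V$ --- the nice-couple gap is what makes the inverse branches over each Whitney square $Q$ have bounded distortion, since $Q''$ avoids $E$), followed by a counting argument showing that the number of Whitney squares with a given hyperbolic return time $n_Q$ is at most $C(\#E+1)$ with $\#E\lesssim\log d(\tY)+1$. This is where the weight $D(\tY)=d(\tY)(\log d(\tY)+1)$ comes from, and it is why the summability hypothesis $t>\delta_{\bad}(\hV)$ has room to absorb the logarithm. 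You name the Whitney decomposition but the estimate you actually write down bypasses it, so as it stands the argument has a hole precisely at the branched-cover distortion.

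Two smaller points. First, for the structural half you cannot start from Bowen's formula on finite subsystems of $F$: that only gives $\HDhyp(f)\ge\HD(J(F))$, and the nontrivial content is that $\HD(J(F)\cap V^c)=\HD(J(f))$ in the first place. The paper proves this by showing $\HD\bigl((J(f)\cap V)\setminus J(F)\bigr)<\HD(J(f))$: points returning infinitely often to $V$ but with no good time are covered, for every $m$, by $\bigcup_{j\ge m}\bigcup_{\tW\in\fB_j(\hV)}\tW$, so their dimension is at most $\delta_{\bad}(\hV)$, while points returning finitely often land (up to a countable set) in preimages of the hyperbolic set $K(V)\cap J(f)$, whose dimension is $<\HDhyp(f)$ by topological exactness. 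You need some version of this before any variational-principle approximation applies. Second, your final convolution $\sum_j\theta_j^{\alpha-t-\sigma}\,\Xi(\max(m-j,0))\le C\,\Xi(m)$ is plausible for polynomially decaying $\Theta$ but is not automatic for a general slowly varying monotone sequence; the paper sidesteps it because the per-$\tY$ estimate already produces $\sum_{i\ge m}\theta_i^{\alpha-t}$ directly (the only convolution that appears is against an exponentially decaying kernel coming from Lemma~\ref{lem:finitelanding}, which is harmless). If you keep your grouping you must either prove that convolution inequality for the stated class of $\Theta$ or restructure so it is not needed.
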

\begin{rema}
Of course, the latter part of the theorem is useful only when $\sum_{n = 1}^\infty \theta_n^\eta < \infty$ for some $\eta \in (0, \HD(J(f))-\delta_{\bad}(f))$.
\end{rema}
\begin{rema}
\label{r:exponential tail estimate}
If for an exponentially decreasing sequence~$\Theta$ the map~$f$ in Theorem~\ref{t:tail estimate} satisfies the $\Theta$\nobreakdash-Shrinking Condition, then Theorem~\ref{t:tail estimate} allows one to obtain an exponential tail estimate, as follows.
As~$f$ is certainly super-polynomially shrinking, Theorem~\ref{t:tail estimate} shows that there exists a constant~$\alpha\in (0, \HD(J(f)))$ such that $K\=\sum_{W \in \fD \, : \, m(W)\ge 1}
\diam (W)^\alpha$ is finite, and thus
$$\sum_{W\in\fD \, : \, m(W)\ge m}\diam (W)^{\HD(J(f))}
\le
K \max_{W\in\fD \, : \, m(W)\ge m}\diam (W)^{\HD(J(f))-\alpha}$$
is exponentially small in~$m$.
Notice that we lose control of the exponent significantly.
A similar argument was used in the proof of~\cite[Theorem~C]{PrzRiv07}.
\end{rema}

\subsubsection{Conformal and invariant measures}
\label{sss:conformal and invariant}
Define
\begin{equation}\label{eqn:pf}
\gamma(f) \= \beta_{\max}(f) \left(\HD(J(f))-\delta_{\bad}(f)\right).
\end{equation}
We use the following convention: the product of $+\infty$ with a
real number $a$ is $+\infty$ (resp. $0$, $-\infty$) if $a>0$ (resp.
$a=0$, $a<0$).
So $\gamma(f) > 0$ is equivalent to
$$ \delta_{\bad}(f) < \HD(J(f))
\text{ and }
\beta_{\max}(f) > 0. $$

Since for~$f \in \sA^*$ we have~$\HD(J(f))\ge \HD_{\hyp}(f)>0$, Theorems~\ref{t:polynomial shrinking} and~\ref{t:finiteness of bad} imply that when~$f$ is backward contracting we have~$\gamma(f) = \infty$.

\begin{coroalph}\label{coro:inducing}
For~$f \in \sA^*$ the following properties hold.
\begin{enumerate}
 \item[1.] If $\gamma(f)>1$, then either $\HD(J(f))< \HD (\dom (f))$ or~$J(f)$ has a nonempty interior.
Moreover, there exists a conformal measure~$\mu$ of exponent~$\HD(J(f))$ for~$f$ that is ergodic, supported on the conical Julia set,
satisfies $\HD(\mu)=\HD(J(f))$, and is such that for each~$\varepsilon
> \delta_{\bad}(f) + \beta_{\max}(f)^{-1}$ the following holds: for each sufficiently small $\delta > 0$ we have for every $x \in J(f)$,
\begin{equation}
  \label{e:upper regularity conformal}
\mu(B(x, \delta)) \le \delta^{\HD(J(f)) - \varepsilon}.
\end{equation}
Furthermore, any other conformal measure for~$f$ supported on~$J(f)$ is of exponent strictly larger than~$\HD(J(f))$ and supported on a set of Hausdorff dimension less than or equal to~$\delta_{\bad}(f)$.
 \item[2.] If $\gamma(f)>2$, then there is an invariant probability measure $\nu$
that is absolutely continuous with respect to~$\mu$ and this invariant measure~$\nu$ is polynomially mixing of each exponent~$\gamma \in (0, \gamma(f)-2)$.
\end{enumerate}
\end{coroalph}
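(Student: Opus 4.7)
The plan is to build both the conformal measure $\mu$ and the absolutely continuous invariant measure $\nu$ through the canonical inducing scheme attached to a small symmetric nice couple $(\hV, V)$, exploiting Theorem \ref{t:tail estimate} to produce quantitative Perron--Frobenius bounds for the associated induced map $F$, and then invoking Young's tower theorem \cite{You99} to obtain the polynomial mixing rate of $\nu$. Since $f \in \sA^*$, axiom (A2) supplies arbitrarily small symmetric nice couples, and the hypothesis $\gamma(f) > 1$ forces both $\delta_{\bad}(f) < \HD(J(f))$ and $\beta_{\max}(f) > 0$. The first conclusion of Theorem \ref{t:tail estimate} already yields $\HD(J(f)) = \HDhyp(f)$ and $\HD(J(F) \cap V^c) = \HD(J(f))$ for every $c \in \CJ$, while the second conclusion provides, for any $\beta < \beta_{\max}(f)$, any $t \in (\delta_{\bad}(f), \HD(J(f)))$, and admissible $\alpha, \sigma$, the key summability
\[
\sum_{\substack{W \in \fD,\, W \subset Y \\ m(W) \ge m}} \diam(W)^\alpha
\le C \diam(Y)^\sigma \sum_{n \ge m} n^{-\beta(\alpha - t - \sigma)}.
\]

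For the conformal measure I would apply the construction of \cite{PrzRiv07} at the exponent $\alpha = \HD(J(f))$: the above summability bounds the transfer operator of $F$ and allows one to produce an $F$-conformal Borel measure $\hat\mu$ on $V$, which is then spread by iterated $f$-preimages to an $f$-conformal measure $\mu$ of the same exponent on $\dom(f)$. Ergodicity, the inclusion $\supp\mu \subset \Jcon(f)$, and $\HD(\mu) = \HD(J(f))$ follow from the equality $\HD(J(F) \cap V^c) = \HD(J(f))$ combined with the mass distribution principle. The pointwise bound \eqref{e:upper regularity conformal} is derived from the summability with $Y = B(x,\delta)$: given $\varepsilon > \delta_{\bad}(f) + \beta_{\max}(f)^{-1}$, pick $t$ close to $\delta_{\bad}(f)$, $\beta$ close to $\beta_{\max}(f)$, $\sigma$ small and $\alpha = \HD(J(f))$ so that $t + \sigma + \beta^{-1} < \varepsilon$ and $\beta(\alpha - t - \sigma) > 1$; a Koebe distortion argument then converts the diameter summability into $\mu(B(x,\delta)) \le \delta^{\HD(J(f)) - \varepsilon}$ for every sufficiently small $\delta$.

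The dichotomy in part~1 I would settle by contradiction: if $\HD(J(f)) = \HD(\dom(f))$ then $\mu$ has the same exponent as the reference (Lebesgue or spherical) measure, so up to a scalar $\mu$ coincides with the reference measure restricted to $J(f)$, and $J(f)$ carries positive reference measure; combined with essential topological exactness (A3) in the real case or the automatic topological exactness of $f|_{J(f)}$ in the complex case and with the expanding-away property (A1), this forces $J(f)$ to have non-empty interior. For part~2, given $\gamma \in (0, \gamma(f) - 2)$, I would fix the parameters in the tail estimate with $\beta(\alpha - t - \sigma) > \gamma + 2$; taking $Y = V$ yields $\mu(\{x \in V : m(x) \ge n\}) = O(n^{-(\gamma + 1)})$, which together with the Markov property and bounded distortion of $F$ is precisely the hypothesis of Young's theorem \cite{You99}. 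Young's construction then supplies a unique $F$-invariant probability $\hat\nu \ll \mu|_V$ with polynomial decay of correlations of exponent $\gamma$, and the standard saturation procedure under iterates of $f$ lifts $\hat\nu$ to the desired $\nu \ll \mu$ with the same mixing rate; uniqueness of $\nu$ follows from ergodicity of $\mu$ and the bijective correspondence between $f$- and $F$-invariant absolutely continuous measures. The main obstacle is the passage, in the second paragraph, from the combinatorial diameter-tail summability of Theorem \ref{t:tail estimate} to a \emph{pointwise} regularity estimate valid uniformly in $x \in J(f)$: an arbitrary ball $B(x,\delta)$ is crossed by pull-backs of many different generations, bad and diffeomorphic mixed, and the precise dependence on $\diam(Y)^\sigma$ in the tail estimate is indispensable to telescope their conformal masses without losing the sharp exponent.
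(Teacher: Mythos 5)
Your overall architecture --- canonical inducing scheme for a small symmetric nice couple, Theorem~\ref{t:tail estimate} for the tail summability, the conformal-measure construction of~\cite{PrzRiv07} (Lemma~\ref{lem:confmeas} in the paper), and Young's tower theorem for part~2 --- is exactly the paper's route. But there is a genuine gap precisely where you flag ``the main obstacle'': the derivation of the uniform pointwise bound~\eqref{e:upper regularity conformal}. You propose to apply the tail estimate ``with $Y = B(x,\delta)$'', but Theorem~\ref{t:tail estimate} only applies to subsets $Y$ of $V$, and an arbitrary $x \in J(f)$ (for instance $x \in K(V)$, whose orbit never enters $V$) gives a ball to which the estimate simply does not apply; even when $B(x,\delta) \subset V$, the components of $D$ that cross $\partial B(x,\delta)$ are not controlled. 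The missing tool is Lemma~\ref{lem:definitesize}, which uses hypothesis (A1) (expansion away from critical points) to produce, for every $x \in J(f)$ and $\delta$, an iterate $n$ with bounded distortion on $B(x,\kappa_0\delta)$ such that either $f^n(B(x,\delta))$ reaches a definite scale (in which case conformality gives $\mu(B(x,\delta)) \le C\delta^{\HD(J(f))}$ directly), or $f^n(B(x,\kappa_0\delta)) \subset V$ and contains every component of $D$ meeting $f^n(B(x,\delta))$. Only in the second case does one apply the tail estimate, with $Y = f^n(B(x,\kappa_0\delta))$, $\alpha = \HD(J(f))$, $m = 1$ and $\sigma = \sigma_0$ chosen in $\left(\HD(J(f)) - \varepsilon,\ \HD(J(f)) - \delta_{\bad}(f) - \beta_{\max}(f)^{-1}\right)$, together with $\mu(V \setminus D) = 0$ and the Koebe bound $\mu(W) \le C\,\diam(W)^{\HD(J(f))}$ for $W \in \fD$, and then pull back by conformality. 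Note also that your parameter bookkeeping is off: the exponent of $\delta$ in the final bound is the exponent $\sigma$ of the factor $\diam(Y)^\sigma$, so $\sigma$ must be taken \emph{large} (close to $\HD(J(f)) - \delta_{\bad}(f) - \beta_{\max}(f)^{-1}$), not ``small'', and the relevant constraints are $\sigma > \HD(J(f)) - \varepsilon$ and $\beta(\alpha - t - \sigma) > 1$, which force $\varepsilon > t + \beta^{-1}$; your condition ``$t + \sigma + \beta^{-1} < \varepsilon$'' is not the right one.

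Two smaller points. In part~2, Young's theorem yields the stated mixing rate only for an aperiodic tower; the paper must first shrink $(\hV,V)$ so that the return times of $F$ to a single component $V^{\widetilde{c}}$ have greatest common divisor $1$ (\cite[Lemma~4.1]{PrzRiv07}) and then apply Young's results to the first return map of $F$ to $V^{\widetilde{c}}$ --- your sketch omits this. For the dichotomy in part~1 the paper simply quotes~\cite[\S8.2]{GraSmi09}: it is a direct consequence of the existence of a conformal measure of exponent $\HD(J(f))$ supported on the conical Julia set; your contradiction argument is in the right spirit but is not what is needed.
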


\subsubsection{Regularity of the invariant density}
\label{sss:regularity density}
Given~$f \in \sA$, let~$q(f)$ be the infimum of those constants~$q > 0$ for which there is a constant~$C > 0$ such that the following property holds: for each~$x \in J(f)$, $\delta > 0$, $m \ge 1$, and each pull-back~$W$ of~$B(x, \delta)$ by~$f^m$ such that~$f^m : W \to B(x, \delta)$ is a diffeomorphism whose distortion is bounded by~$2$, we have
$$ \diam(W) \le C \delta^{1/q}. $$
We clearly have~$q(f) \ge \ell_{\max}(f)$.
The following is a simple consequence of~\cite[Proposition~$2$]{LiShe08a}, see~Lemma~\ref{lem:blsquantify}.
\begin{fact}\label{f:asymptotic criticality}
For each~$\ell > 1$ and~$q > \ell$ there is a constant~$r > 1$ such that if~$f \in \sA^*(\ell)$ is backward contracting with constant~$r$, then~$q(f) <
q$.
In particular, if~$f$ is backward contracting, then $q(f)=\ell_{\max}(f)$.
\end{fact}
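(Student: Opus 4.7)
The plan is to deduce Fact~\ref{f:asymptotic criticality} from a quantitative form of~\cite[Proposition~2]{LiShe08}, recorded later in the paper as Lemma~\ref{lem:blsquantify}. Informally, that quantitative statement asserts: for each $\ell > 1$ there is an exponent $q(r,\ell)$, tending to $\ell$ as $r \to \infty$, such that every $f \in \sA^*(\ell)$ backward contracting with constant~$r$ has the property that any diffeomorphic pull-back~$W$ of a sufficiently small ball $B(x,\delta)$, $x \in J(f)$, with distortion bounded by~$2$, satisfies $\diam(W) \le C\,\delta^{1/q(r,\ell)}$, for a constant~$C$ depending only on $f$. Taking this at face value, the first assertion of the fact is immediate: given $q > \ell$, choose $r = r(\ell, q)$ so large that $q(r, \ell) < q$; then $q(f) < q$ for every $f \in \sA^*(\ell)$ backward contracting with constant~$r$.

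For the ``in particular'' clause, if $f$ is backward contracting for every constant, then applying the previous step with $\ell = \ell_{\max}(f)$ gives $q(f) < q$ for every $q > \ell_{\max}(f)$, hence $q(f) \le \ell_{\max}(f)$. I would establish the matching lower bound $q(f) \ge \ell_{\max}(f)$ by an explicit construction. Pick $c_0 \in \Crit'(f)$ with $\ell_{c_0} = \ell_{\max}(f)$. Since $J(f)$ has no isolated points (using topological exactness in the complex case and essential topological exactness from condition~(A3) in the real case), select a sequence $z_k \in J(f) \setminus \{c_0\}$ with $z_k \to c_0$, and set $r_k \= |z_k - c_0|$. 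The non-flat normal form $|\psi \circ f| = |\phi|^{\ell_{c_0}}$ near $c_0$, combined with a Koebe-type distortion estimate, shows that for $\delta_k = \varepsilon\,|f(z_k) - f(c_0)| \asymp r_k^{\ell_{c_0}}$ with $\varepsilon$ small but fixed, the branch of $f^{-1}$ sending $f(z_k)$ back to $z_k$ is a diffeomorphism on $B(f(z_k),\delta_k)$ whose distortion is at most~$2$, and whose image $W_k$ has diameter $\asymp r_k \asymp \delta_k^{1/\ell_{c_0}}$. Letting $k \to \infty$ rules out any exponent $q < \ell_{\max}(f)$ in the definition of $q(f)$.

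The main obstacle is therefore purely to establish the quantitative dependence of the exponent on the backward contracting constant~$r$; this is the content of Lemma~\ref{lem:blsquantify} and is precisely where~\cite[Proposition~2]{LiShe08} enters. The remainder of the argument reduces to a standard normal-form computation near a critical point combined with Koebe distortion control, together with the perfectness of $J(f)$ coming from topological exactness.
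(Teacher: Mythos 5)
Your proposal is correct and follows the paper's own route: the first assertion is exactly the intended ``simple consequence'' of Lemma~\ref{lem:blsquantify} (take $\kappa\in(1/q,\ell^{-1})$ and use that $\diam(f(B(x,\delta)))\lesssim\delta$ since $f$ is Lipschitz), and the ``in particular'' clause then follows from the inequality $q(f)\ge\ell_{\max}(f)$, which the paper simply asserts as clear. Your normal-form computation near a maximal-order critical point is a correct justification of that lower bound, so the only difference from the paper is that you spell out a step it leaves implicit.
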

\begin{theoalph}\label{t:regularity density}
Let~$f \in \sA^*$ be such that~$\gamma(f) > 2$, and let~$\mu$ be the
conformal measure and~$\nu$ the invariant measure given by
Corollary~\ref{coro:inducing}. Then for each
$$ p \in \left( 1, q(f) \frac{1 - (\delta_{\bad}(f) + \beta_{\max}(f)^{-1})\HD(J(f))^{-1}}{q(f) - 1 + (\delta_{\bad}(f) + 2 \beta_{\max}(f)^{-1})\HD(J(f))^{-1}} \right), $$
the density of~$\nu$ with respect to~$\mu$ belongs to~$L^p(\mu)$.
\end{theoalph}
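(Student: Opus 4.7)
The plan is to estimate the density $h := d\nu/d\mu$ in $L^p(\mu)$ via the saturation representation of $\nu$ provided by Corollary~\ref{coro:inducing}. Fix a small symmetric nice couple $(\hV, V)$ and let $F: D \to V$ be the associated induced Markov map. Writing the $F$-invariant probability measure $\tilde\nu$ as a density bounded above and below with respect to $\mu|_V$, and applying conformality of $\mu$ along the diffeomorphic branches $f^{m(W)}|_W$, one obtains, for $\alpha := \HD(J(f))$, the pointwise expression
$$ h(y) \asymp \sum_{W \in \fD}\sum_{j=0}^{m(W)-1}\mathbf{1}_{f^j(W)}(y)\,\bigl|Df^j\bigl((f^j|_W)^{-1}(y)\bigr)\bigr|^{-\alpha}. $$
Bounding $\|h\|_{L^p(\mu)}$ then reduces to estimating the $L^p$-norm of this sum.

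I would organize the pull-backs by dyadic scale, setting $\fD_n := \{W \in \fD : 2^{-n-1}<\diam(W)\le 2^{-n}\}$ and letting $h_n$ be the corresponding partial sum, so that $\|h\|_p \le \sum_n \|h_n\|_p$. To estimate each $\|h_n\|_p$ I combine three inputs. \emph{(a)} Fact~\ref{f:asymptotic criticality}, together with Koebe distortion on the diffeomorphic pull-back of $\hV$ containing $W$, gives, for any $q > q(f)$, both the pointwise upper bound $|Df^j|^{-1} \lesssim \diam(W)\,\diam(f^j(W))^{-1/q}$ on $W$ and bounded relative distortion of $|Df^j|$ on $W$. \emph{(b)} The regularity estimate~\eqref{e:upper regularity conformal} converts diameters into $\mu$-measure through $\mu(f^j(W)) \lesssim \diam(f^j(W))^{\alpha-\varepsilon}$ for any $\varepsilon > 0$. \emph{(c)} Combining Theorems~\ref{t:polynomial shrinking} and~\ref{t:tail estimate}, for every $\beta < \beta_{\max}(f)$, every $\alpha' > \delta_{\bad}(f)$ and every sufficiently small $\sigma > 0$, one obtains a tail bound of the form $\sum_{W\in\fD,\, m(W)\ge m}\diam(W)^{\alpha'+\sigma}\lesssim \sum_{n\ge m}n^{-\beta(\alpha'+\sigma-\delta_{\bad}(f)-\sigma')}$ for auxiliary $\sigma' > 0$.

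Integrating $|Df^j|^{-\alpha p}$ against $\mu$ on $f^j(W)$ using \emph{(a)} and \emph{(b)} reduces $\int h_n^p\, d\mu$ to a sum over pairs $(W,j)$ of terms of the form $\diam(W)^{\alpha p}\diam(f^j(W))^{\alpha-\varepsilon-\alpha p/q}$. Summing first in $j$ (geometrically, by the shrinking rates of the intermediate diameters), then in $W\in\fD_n$ via \emph{(c)}, and carefully matching scales yields $\|h_n\|_p^p \lesssim 2^{-n\eta(p)}$ for some $\eta(p) > 0$ precisely when $p$ lies strictly below the threshold stated in the theorem; the explicit form of that threshold emerges by optimizing the slack parameters $\alpha' \to \delta_{\bad}(f)$, $\beta \to \beta_{\max}(f)$, $q \to q(f)$, $\varepsilon,\sigma,\sigma' \to 0$, the twin appearances of $\beta_{\max}(f)^{-1}$ in the denominator arising from applying the tail estimate of \emph{(c)} both to control $\sum_{W\in\fD_n}$ and to convert the tail in $m$ into a tail in the scale $n$.

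The main technical obstacle is executing step \emph{(a)} on pull-backs $f^j|_W$ that are \emph{not} diffeomorphic: one must decompose the orbit $W, f(W), \ldots, f^j(W)$ at its visits to small neighborhoods of $\Crit'(f)$, apply Koebe only on the intervening diffeomorphic segments using the outer nice set $\hV$, and bookkeep how the distortion loss at each critical visit compounds with the non-linear shrinking quantified by $q(f)$. This accumulation is exactly what forces $q(f)$ into the formula in the precise way it appears, and its delicate balance with $\delta_{\bad}(f)$ and $\beta_{\max}(f)^{-1}$ produces the stated range of admissible exponents $p$.
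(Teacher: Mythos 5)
Your overall strategy --- writing the invariant density as the Young-tower sum over the induced map $F$ and estimating its $L^p(\mu)$ norm directly, scale by scale --- is genuinely different from the paper's. The paper never touches the tower density. It instead proves the uniform weak-type bound $\mu(f^{-n}(A)) \le C_0\,\mu(f(A))^{1/q}$ for all Borel $A\subset J(f)$ and all $n\ge 0$, by writing $\mu(f^{-n}(A))=\int_A \cP_n(z;\HD(J(f)))\,d\mu(z)$ and bounding the truncated Poincar\'e series through the bad pull-backs of $\hV$ (Proposition~\ref{prop:cPzinV}); there $q(f)$ enters via $\xi_m(z)\lesssim(|Df(z)|\Delta_m(z))^{1/q}$, one factor of $\beta_{\max}(f)^{-1}$ enters via the integrability condition derived from~\eqref{e:upper regularity conformal} and H\"older's inequality, and the other via the convergence of $\sum_m \sL_m^{\bad}(\hV;t+2\beta^{-1})(m+2)$. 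The measure $\nu$ is then a Ces\`aro limit of $f^i_*\mu$, and the weak-type bound yields $d\nu/d\mu\in L^p(\mu)$ for $p<q/(q-1)$. Your route is closer in spirit to the real-case argument of~\cite{BruRivShevSt08}, but as written it has two genuine gaps.

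First, the pointwise bound in \emph{(a)} is false. For $W\in\fD$ and $0\le j\le m(W)$, the Koebe space supplied by $\hV$ gives the two-sided estimate $|Df^j|^{-1}\asymp\diam(W)/\diam(f^j(W))$ on $W$; your claimed upper bound $\diam(W)\,\diam(f^j(W))^{-1/q}$ is smaller than this by the factor $\diam(f^j(W))^{1-1/q}$ and therefore cannot hold when $j$ is small and $\diam(f^j(W))$ is small. What $q(f)$ actually provides is $\diam(W)\lesssim\diam(f^j(W))^{1/q}$, hence $|Df^j|^{-1}\lesssim\diam(f^j(W))^{1/q-1}$ --- a different inequality, and the exponents downstream change accordingly. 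Second, and more seriously, the reduction of $\int h_n^p\,d\mu$ to a sum over pairs $(W,j)$ of the individual integrals of $g_{W,j}:=|Df^j\circ(f^j|_W)^{-1}|^{-\alpha}$ raised to the power $p$ is invalid: a typical point of $J(f)$ lies in infinitely many of the sets $f^j(W)$, and by convexity $\sum_{(W,j)}\int g_{W,j}^p\,d\mu$ is a \emph{lower} bound for $\int\bigl(\sum_{(W,j)} g_{W,j}\bigr)^p d\mu$, not an upper bound; the Minkowski step you perform across dyadic scales does not remove the unbounded overlap within a scale. The divergence of the density at the critical values --- precisely the phenomenon that determines the $L^p$ threshold and forces $q(f)$ into the formula --- is produced by the accumulation of these overlapping terms, so this cannot be dismissed by ``matching scales''. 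To repair the approach one would have to resum the overlapping contributions into explicit singularities $\dist(\cdot,y)^{-\theta}$ along the forward critical orbit with summable weights before applying Minkowski, or else follow the paper's route, where the overlap is absorbed into the Poincar\'e series and controlled by H\"older's inequality against $\int\dist(\cdot,x_0)^{-\eta}\,d\mu\le C$.
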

\begin{rema}
\label{r:optimality L^p} For a map $f \in \sA^*$ that is backward
contracting, the theorem implies that the density of~$\nu$ with
respect to~$\mu$ is in $L^p(\mu)$ for all
$$ p <
p(f) \= q(f) / (q(f) - 1)
=
\ell_{\max}(f)/(\ell_{\max}(f)-1). $$
If~$J(f)$ has nonempty interior in~$\dom(f)$, then this estimate is optimal in the sense that the density never belongs to the space~$L^{p(f)}(\mu)$, as we shall now explain.
In this case, $\mu$ is a rescaling (of a restriction) of the Lebesgue measure and the Lyapunov exponent of~$\nu$ is strictly positive and its density is bounded from below by
a positive constant almost everywhere in~$J(f)$, see~\cite{Led81} or~\cite[Theorem~$6$]{Dob1304} for the real case, and~\cite{Led84} or~\cite[Theorem~$8$]{Dob12} for the complex case.
It thus follows from the invariance of~$\nu$ and the
conformality of~$\mu$ that, if we denote by~$h \in \{1, 2 \}$ the dimension
of~$\dom(f)$ and by~$c \in J(f)$ a critical point of~$f$ of maximal
order, then there is a constant~$C > 0$ such that the density is bounded from below by the function~$C
\dist(\cdot, f(c))^{- h / p(f)}$, on a set of full
Lebesgue measure in~$J(f)$. Thus the density cannot belong
to~$L^{p(f)}$.

Using the lower bound on the conformal measure given
by~\cite[Theorem~$1$]{LiShe08a}, a similar argument shows that if~$f \in \sA^*_{\C}$
is backward contracting and~$p > p(f)$, then the invariant density
does not belong to $L^p(\mu)$.
\end{rema}
\begin{ques}
Suppose $f\in\sA^*$ is such that~$\gamma(f) = \infty$, and let~$\mu$
and~$\nu$ be as in Corollary~\ref{coro:inducing}. Is it true that
$d\nu/d\mu\not\in L^{p(f)}(\mu)$?
\end{ques}
We state the following corollary for future reference.
For the definition of the TCE condition, see for example~\cite{NowPrz98} in the real case and~\cite{PrzRoh98} in the complex case.
\begin{coro}
\label{c:TCE}
Let~$f \in \sA$ be a map satisfying the TCE condition and that it is not uniformly hyperbolic.
In the real case, assume furthermore that~$f$ is essentially topologically exact on~$J(f)$.
Then~$f$ belongs to~$\sA^*$ and~$\gamma(f) = \infty$.
Moreover, if we denote by~$\mu$ the conformal measure and~$\nu$
the invariant measure given by Corollary~\ref{coro:inducing}, then~$\nu$ is exponentially mixing, for each~$p \in \left( 0, q(f)/ (q(f) - 1) \right)$ the density of~$\nu$ with respect to~$\mu$ belongs to~$L^p(\mu)$, and for each~$\varepsilon > 0$ we have for every sufficiently small~$\delta > 0$ and every~$x \in J(f)$
$$ \mu(B(x, \delta)) \le \delta^{\HD(J(f)) - \varepsilon}. $$
\end{coro}
When~$f$ is in~$\sA_{\C}$ the assertion that~$\nu$ is exponentially mixing is shown in~\cite[Theorem~C]{PrzRiv07}; the remaining assertions of the corollary are new.
\begin{proof}
By~\cite{NowPrz98} in the real case and~\cite{PrzRoh98} in the complex one, there is an exponentially decreasing sequence~$\Theta$
such that~$f$ satisfies the~$\Theta$\nobreakdash-Shrinking Condition.
Thus $\beta_{\max}(f) = \infty$ and~$f$ is expanding away from critical points.
On the other hand, $f$ has arbitrarily small
nice couples by~\cite[Theorem~E]{PrzRiv07}, and we also have~$\delta_{\bad}(f) = 0$, as pointed out in Remark~\ref{r:TCE badness exponent}.
This proves that~$f$ is in~$\sA^*$ and that~$\gamma(f) = \infty$.
In particular, $f$ satisfies the hypotheses of Corollary~\ref{coro:inducing} and Theorem~\ref{t:regularity density}.
Denote by~$\mu$ the conformal measure and~$\nu$ the invariant measure given by Corollary~\ref{coro:inducing}.
That~$\nu$ is exponentially mixing is given by the exponential tail estimate in Remark~\ref{r:exponential tail estimate}, combined with well-known arguments (similar to those used in the proof of part~$2$ of Corollary~\ref{coro:inducing}).
The remaining assertions of the corollary are given by Corollary~\ref{coro:inducing} and Theorem~\ref{t:regularity density}.
\end{proof}

\subsection{Proof of the Main Theorem}\label{ss:proof of Main Theorem}
Now we will complete the proof of the Main Theorem.
If $\Crit'(f)=\emptyset$, then~$f$ is uniformly expanding on $J(f)$ and
the statements of the Main Theorem are well-known. So we assume that
$\Crit'(f)\not=\emptyset$.

Let $\ell > 1$, $h > 0$, $\varepsilon > 0$, $\gamma> 1$ and  $p
\in \left( 0, \tfrac{\ell}{\ell - 1} \right)$ be given, and let~$q
\in \left( \ell, \tfrac{p}{p - 1} \right)$.
By Theorems~\ref{t:polynomial shrinking} and~\ref{t:finiteness of bad}
and by Facts~\ref{f:nice couples} and~\ref{f:asymptotic criticality}, there exists a constant~$r>2$ such that if~$f \in \sA(\ell)$ is backward contracting with constant~$r$ and satisfies the other assumptions of the Main Theorem, then $f\in\sA^*(\ell)$,
$\beta_{\max}(f)> 2(\gamma+2)/h$, $\delta_{\bad}(f)<h/2$,
$\delta_{\bad}(f) + \beta_{\max}(f)^{-1} < \varepsilon$, $q(f) < q$ and
$$ q \frac{1 - (\delta_{\bad}(f) + \beta_{\max}(f)^{-1})\HD(J(f))^{-1}}{q - 1 + (\delta_{\bad}(f) + 2 \beta_{\max}(f)^{-1})\HD(J(f))^{-1}} > p. $$
Hence $\delta_{\bad}(f) < \HD(J(f))$ and by Theorem~\ref{t:tail
estimate} we have~$\HD(J(f)) = \HD_{\hyp}(f)$.
On the other hand $\gamma(f)> \gamma+2$, and by Corollary~\ref{coro:inducing} and
Theorem~\ref{t:regularity density} there exist a conformal
measure~$\mu$ and an invariant measure~$\nu$ with the desired
properties.
Moreover, the ergodicity of~$\mu$ implies that~$\nu$ is
the only invariant measure of~$f$ that is absolutely continuous with
respect to~$\mu$.

\subsection{Fractal dimensions and holomorphic removability of Julia sets}
\label{ss:further results} In this section we state a result related
to fractal dimensions (Theorem~\ref{t:fractal dimensions}), and
another related to holomorphic removability of Julia sets in the
complex setting (Theorem~\ref{t:holomorphic removability}). Both are
independent of the Main Theorem and are shown in~\S\ref{s:regularity
density}.

To state our result on the equality of fractal dimensions, we make
the following definition. Given~$f \in \sA$, $s> 0$ and a point~$x_0
\in \dom(f)$ we define the \emph{Poincar{\'e} series of~$f$ at~$x_0$
with exponent~$s$} \index{Poincar\'e series} \index{$\cP(x_0;s)$}, as
$$ \cP(x_0; s) \= \sum_{n = 1}^{\infty} \sum_{x \in f^{-n}(x_0)} | Df^n(x)|^{-s}. $$
We say that a point~$x$ is \emph{exceptional} \index{exceptional}
if the set
$\bigcup_{n=0}^\infty f^{-n}(x)$ is finite, and we say that~$x$ is
\emph{asymptotically exceptional} \index{asymptotically exceptional}
if its $\alpha$-limit set is
finite. The \emph{Poincar{\'e} exponent of~$f$} \index{Poincar\'e exponent}
is by definition,
\begin{multline*}
\poincare(f)
\=
\inf \left\{ \{ 0 \} \cup \left\{ s>0: \cP(x_0;s)<\infty \text{ for some~$x_0$}
\right. \right. \\ \left. \left.
\text{that is not asymptotically exceptional} \right\} \right\}.
\end{multline*}
Note that every point in the $\alpha$-limit set of an asymptotically
exceptional point is exceptional. It is well-known that for a
rational map of degree at least~$2$ each asymptotically exceptional
point is exceptional, that there are at most~$2$ exceptional points,
and that they are not in the Julia set.
Note however that for~$f\in\sA_\R$, any point in~$\dom(f)\setminus X_0(f)$ is
asymptotically exceptional, where~$X_0(f)$ is the minimal forward
invariant closed interval that contains~$\Crit(f)$.

\begin{theoalph}[Equality of fractal dimensions]
\label{t:fractal dimensions} If~$f \in \sA^*$ satisfies
$\gamma(f)>1$, then
$$\poincare(f) =
\overline{\text{BD}}(J(f))=\HD(J(f))=\HD_{\hyp}(f)>0.$$
\end{theoalph}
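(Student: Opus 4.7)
The strategy is to close the chain
\[
\HDhyp(f) \;\leq\; \poincare(f) \;\leq\; \HD(J(f)) \;\leq\; \BD(J(f)),
\]
combined with the separate identity $\HDhyp(f) = \HD(J(f))$ coming directly from Theorem~\ref{t:tail estimate}, and the remaining upper bound $\BD(J(f)) \leq \HD(J(f))$. The first ingredient is the tail estimate itself, whose hypothesis $\delta_{\bad}(f) < \HD(J(f))$ is guaranteed by $\gamma(f) > 1$: this at once gives $\HD(J(f)) = \HDhyp(f) > 0$ and produces, for every sufficiently small symmetric nice couple $(\hV, V)$, a canonical induced Markov map $F : D \to V$ whose level-one cells $\fD$ satisfy
\[
\sum_{W \in \fD} \diam(W)^\alpha < \infty \quad \text{for every } \alpha \in \bigl(\delta_{\bad}(f) + \beta_{\max}(f)^{-1},\ \HD(J(f))\bigr].
\]
I would also invoke Corollary~\ref{coro:inducing} to record the ergodic conformal measure $\mu$ of exponent $\HD(J(f))$ supported on $\Jcon(f)$, which is the natural reference for the estimates that follow.

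For the Poincar\'e exponent, the lower bound $\poincare(f) \geq \HDhyp(f)$ is classical in spirit: given $\eps > 0$, take a transitive hyperbolic set $A \subset J(f)$ with $\HD(A) > \HDhyp(f) - \eps$. By Bowen's formula, the Perron--Frobenius behaviour of $f|_A$ yields $\sum_{y \in A \cap f^{-n}(x_0)} |Df^n(y)|^{-\HD(A)} \asymp 1$ for every non-asymptotically-exceptional base point $x_0$ reached by preimages of $A$, forcing divergence of $\cP(x_0, s)$ for all $s < \HD(A)$; the (essential) topological exactness of $f$ on $J(f)$ built into $\sA^*$ is what supplies enough preimages to handle an arbitrary non-AE $x_0$, and letting $\eps \to 0$ closes the bound. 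For the upper bound $\poincare(f) \leq \HD(J(f))$, fix $s > \HD(J(f))$ and a generic $x_0 \in V$. For each preimage $x \in f^{-n}(x_0)$, let $W$ be the pull-back through $x$ of a fixed small ball around $x_0$. Diffeomorphic pull-backs obey $|Df^n(x)|^{-s} \asymp \diam(W)^s$ by Koebe distortion, and organising them by the inducing structure of $F$ bounds their total contribution by the finite tail sum above. Bad pull-backs are summed using the multiplicity $d(W)$ and the definition of $\delta_{\bad}(f)$, which directly yields convergence of $\sum_W d(W)\, \diam(W)^s$ since $s > \HD(J(f)) > \delta_{\bad}(f)$.

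The main obstacle is the box-dimension estimate $\BD(J(f)) \leq \HD(J(f))$, because $\BD$ demands covers at a uniform scale, not a Hausdorff-style varying-scale cover, so the tail estimate cannot be used off the shelf. My plan is to feed the Poincar\'e bound just proved into a covering argument: at scale $\delta$, every $z \in J(f)$ whose forward orbit eventually enters $V$ is $\delta$-close to some backward iterate $y \in f^{-n}(x_0)$ with $|Df^n(y)|^{-1}$ comparable to $\delta$, via a pull-back of $B(x_0, \text{const})$ with bounded Koebe distortion; the number of such preimages is at most $K^s \delta^{-s}\, \cP(x_0, s)$ for any $s > \HD(J(f))$. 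Points whose forward orbit avoids a small neighborhood of $\CJ$ lie on a uniformly hyperbolic subset of dimension at most $\HDhyp(f) = \HD(J(f))$, contributing $O(\delta^{-\HD(J(f))-o(1)})$ further balls. The delicate step is the organisation near critical points: one must use the inducing times relative to $(\hV, V)$, together with the polynomial shrinking of Theorem~\ref{t:polynomial shrinking} and the Whitney-type decomposition from~\cite{PrzRiv0806} that underlies Theorem~\ref{t:tail estimate}, to ensure that every preimage at the prescribed scale is genuinely diffeomorphic and inherits Koebe distortion. Letting $s \to \HD(J(f))^+$ then gives $\BD(J(f)) \leq \HD(J(f))$ and closes the chain, forcing all four quantities to coincide.
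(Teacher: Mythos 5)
Your overall architecture --- $\HD(J(f))=\HDhyp(f)$ from Theorem~\ref{t:tail estimate}, $\poincare(f)=\HD(J(f))$ from convergence/divergence of the Poincar\'e series, then a box-counting bound --- matches the paper's, but two steps are not actually carried out. First, for the convergence of $\cP(x_0,s)$ with $s>\HD(J(f))$, ``a generic $x_0\in V$'' hides the real difficulty: the estimate the inducing structure yields has the form
$$\cP(x_0;s)\le C\sum_{m}\sL_m^{\bad}(\hV;t)\,\xi_m(x_0)^{s-t}\Delta_m(x_0)^{-s},$$
and the factor $\Delta_m(x_0)^{-s}$ blows up when $x_0$ is too well approximated by the forward critical orbits. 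The paper (Proposition~\ref{p:Poincare series}) must remove an explicit exceptional set built from $\bigcup_{n\ge n_0}B(f^n(c),n^{-1/h})$, and it is exactly here that the hypothesis $\gamma(f)>1$ (not merely $\delta_{\bad}(f)<\HD(J(f))$) enters: it makes $\HD(E)<\HD(J(f))$ and lets the polynomial shrinking $\diam(\tY)^{t-t_0}\lesssim m^{-\beta(t-t_0)}$ beat the polynomial loss $\Delta_m(x_0)^{-s}\lesssim m^{s/h}$. Your sketch never confronts this, and since the definition of $\poincare(f)$ only requires one good base point, specifying which points are good (and why a full-dimension set of them exists) is the substance of the step.

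Second, and more seriously, the box-dimension bound is a genuine gap. You correctly flag it as the main obstacle, but your proposed fix --- cover $J(f)$ at scale $\delta$ by pull-backs of a ball around a base point $x_0\in J(f)$, and ``organise the preimages near critical points via the inducing times and the Whitney decomposition'' --- is precisely the part that does not work off the shelf: pull-backs through critical points are not diffeomorphic, Koebe fails there, and the correspondence between points of $J(f)$ and preimages of $x_0$ at a prescribed derivative scale is what needs proving. The paper avoids this entirely: it first disposes of the case where $J(f)$ has interior, notes that otherwise $J(f)$ has zero Lebesgue measure (part~1 of Corollary~\ref{coro:inducing}), and then invokes the Graczyk--Smirnov argument \cite[Fact~8.1 and Lemma~8.2]{GraSmi09}, which counts preimages of base points $z_j$ chosen in the periodic Fatou components; for such points $\dist(y,J(f))\ge C^{-1}|Df^n(y)|^{-1}$ for $y\in f^{-n}(z_j)$, so the backward orbit packs a neighbourhood of $J(f)$ and convergence of $\cP(z_j,s)$ directly bounds the $\delta$-covering number. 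Choosing base points off the Julia set is what sidesteps the critical-point problem your version runs into; to make your proof self-contained you should either adopt that device or supply the missing organisation of critical pull-backs, which would amount to reproving the Graczyk--Smirnov lemma.
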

See~\eqref{eqn:pf} in~\S\ref{ss:reduced statements} for the definition of~$\gamma(f)$ and Proposition~\ref{p:Poincare series} for some divergence/convergence properties of the Poincar{\'e} series.

Equalities of dimensions were shown in~\cite{LiShe08a} for backward
contracting rational maps without parabolic cycles, in~\cite{Prz98} for rational maps whose
derivatives at critical values grow at least as a stretched
exponential function, in~\cite[Theorem~$7$]{GraSmi09} for
rational maps satisfying a summability condition with a small
exponent and without parabolic cycles, and in~\cite{Dob06} for interval maps without
recurrent critical points.
These equalities were shown for a class of
infinitely renormalizable quadratic polynomials in~\cite{AviLyu08}.

We will say that a compact subset~$J$ of the Riemann sphere is
\emph{holomorphically removable} \index{holomorphically removable}
if every homeomorphism $\varphi :
\CC \to \CC$ that is holomorphic outside~$J$ is a M{\"o}bius
transformation.

\begin{theoalph}[Holomorphic removability]\label{t:holomorphic removability}
If $f\in \sA_{\C}^*$ is a polynomial such that
$$ \beta_{\max}(f)\left(2-\delta_{\bad}(f)\right)>1, $$
then the Julia set of~$f$ is holomorphically removable.
In particular, for every integer $\ell \ge 2$, there is a
constant~$r > 1$ such that the Julia set of a complex
polynomial~$f \in \sA_{\C}(\ell)$ that is backward contracting with
constant~$r$ is holomorphically
removable.
\end{theoalph}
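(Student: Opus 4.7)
The plan is to convert the hypothesis $\beta_{\max}(f)(2-\delta_{\bad}(f))>1$ into a Whitney-type cover of $J(f)$ by quasidisks whose diameters are summable at an exponent $\alpha<2$, and then feed this into a Sobolev-space removability argument in the spirit of Jones--Smirnov.

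\emph{Step 1 (Summable pull-back estimate).} Using the definitions of $\beta_{\max}(f)$ and $\delta_{\bad}(f)$, I pick $\beta<\beta_{\max}(f)$, $t>\delta_{\bad}(f)$ and $\alpha<2$ with $\alpha>t$ and $\beta(\alpha-t)>1$; the hypothesis makes this possible. Then $f$ satisfies the $\Theta$-Shrinking Condition for the slowly varying and monotone decreasing sequence $\Theta=\{n^{-\beta}\}_{n\ge1}$. In the main case $\delta_{\bad}(f)<\HD(J(f))$, I fix a sufficiently small symmetric nice couple $(\hV,V)$ and apply Theorem~\ref{t:tail estimate} with these $\Theta,t$, $\sigma=0$, and $\alpha$ enlarged slightly so that $\alpha\ge\alpha_0$, obtaining
\begin{equation*}
\sum_{W\in\fD}\diam(W)^\alpha \le C\sum_{n=1}^\infty n^{-\beta(\alpha-t)}<\infty,
\end{equation*}
where $\fD$ is the family of components of the canonical inducing domain. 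Combined with $\HD(J(F)\cap V^c)=\HD(J(f))$ from the same theorem, this gives $\HD(J(f))\le\alpha<2$, so in particular $J(f)$ has Lebesgue measure zero. (If instead $\delta_{\bad}(f)\ge\HD(J(f))$, the hypothesis alone forces $\HD(J(f))<2$ and a direct diameter estimate supplies the analogous summability.)

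\emph{Step 2 (Whitney decomposition by quasidisks).} For each $W\in\fD$ the branch of the induced map extends diffeomorphically over the pull-back $\hW$ of $\hV$ containing $W$. By the Koebe distortion theorem each $\hW$ is a topological disk of uniformly bounded eccentricity with $\diam(\hW)\asymp\diam(W)$, so $\sum_{W\in\fD}\diam(\hW)^\alpha<\infty$. The remaining points of $J(f)$ reach $V$ in a uniformly bounded number of forward iterates, thanks to the expanding-away-from-critical-points property together with (essential) topological exactness; pulling $\{\hW\}$ back along these initial steps and applying Koebe once more produces a covering $\{\hW_i\}$ of $J(f)$ by quasidisks of uniformly bounded eccentricity and bounded overlap, for which the summability $\sum_i\diam(\hW_i)^\alpha<\infty$ is preserved. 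This is the geometric Whitney-type decomposition to be fed into the removability argument.

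\emph{Step 3 (Sobolev bound and M\"obius conclusion).} Let $\varphi:\CC\to\CC$ be any homeomorphism that is holomorphic on $\CC\setminus J(f)$. Since $J(f)$ has Lebesgue measure zero, it suffices to show $\varphi\in W^{1,2}_{\mathrm{loc}}(\CC)$: then $\varphi$ is $1$-quasiconformal and, by Weyl's lemma, a M\"obius transformation, giving removability. On each quasidisk $\hW_i$ the area theorem yields
\begin{equation*}
\int_{\hW_i}|D\varphi|^2 \lesssim \Area(\varphi(\hW_i)) \lesssim \diam(\varphi(\hW_i))^2,
\end{equation*}
and a two-sided packing argument using the bounded eccentricity of both $\{\hW_i\}$ and $\{\varphi(\hW_i)\}$ reduces $\sum_i\diam(\varphi(\hW_i))^2$ to a finite multiple of $\sum_i\diam(\hW_i)^\alpha<\infty$, the positive gap $2-\alpha>0$ absorbing the scale distortion between source and image. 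The principal obstacle is precisely this multi-scale packing step, which must be uniform across all generations of pull-backs; the Koebe distortion bounds on the diffeomorphic branches of $\hV$ are indispensable here. The ``in particular'' clause is then immediate: by Theorems~\ref{t:polynomial shrinking}--\ref{t:finiteness of bad} and the results cited in \S\ref{ss:backward contracting maps}, once $r$ is taken large enough, a backward contracting complex polynomial in $\sA(\ell)$ without parabolic periodic points lies in $\sA_\C^*$ with $\beta_{\max}(f)=+\infty$ and $\delta_{\bad}(f)=0$, so the quantitative hypothesis is trivially met.
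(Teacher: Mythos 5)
There is a genuine gap, in fact two. First, your Whitney cover in Step~2 does not exist as described. The components of~$\fD$ cover only the points of~$V$ that have a good time; the set $K(V)\cap J(f)$ of points that \emph{never} enter~$V$ is nonempty (it is a hyperbolic set of full Hausdorff dimension arbitrarily close to $\HDhyp(f)$), points outside $K(V)$ may take arbitrarily many iterates to land in~$V$ (not a ``uniformly bounded number''), and the bad set $(V\cap J(f))\setminus J(F)$ is not covered at all. Second, and more seriously, Step~3 is not an argument. The estimate $\int_{\hW_i}|D\varphi|^2\lesssim\Area(\varphi(\hW_i))$ requires $\varphi$ to be conformal on $\hW_i$, but your quasidisks meet $J(f)$, where $\varphi$ is merely a homeomorphism and $D\varphi$ is undefined; and the claim that a ``two-sided packing argument'' bounds $\sum_i\diam(\varphi(\hW_i))^2$ by a multiple of $\sum_i\diam(\hW_i)^\alpha$ is precisely the point at issue: $\varphi$ has no a priori modulus of continuity on $J(f)$, so no ``gap $2-\alpha$'' can relate diameters of images to diameters of sources. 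This is why removability is hard, and why one cannot simply re-derive the Jones--Smirnov theorem in three lines. (A further, smaller issue: your Step~1 needs $\delta_{\bad}(f)<\HD(J(f))$ to invoke Theorem~\ref{t:tail estimate}, which the hypothesis $\beta_{\max}(f)(2-\delta_{\bad}(f))>1$ does not give; your parenthetical fallback is unsubstantiated.)

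The paper's proof avoids all of this by quoting the dynamical form of the Jones--Smirnov criterion (\cite[Theorem 5]{JonSmi00}, see also \cite[Fact 9.1]{GraSmi09}): it suffices to show that for every $x\in J(f)$ there is $\delta_0>0$ with $\sL(B(x,\delta_0);2)<\infty$, i.e.\ the degree-weighted sum of $\diam(P)^2$ over \emph{all} pull-backs $P$ of $B(x,\delta_0)$ converges. This is exactly what Proposition~\ref{prop:sLall} delivers with $h_0=2$, taking $\mu$ to be normalized Lebesgue measure (a conformal measure of exponent~$2$) and using $\mu(\Jcon(f))=0$; the hypothesis $\beta_{\max}(f)(2-\delta_{\bad}(f))>1$ enters there, via the decomposition of an arbitrary pull-back into a diffeomorphic piece, a bad pull-back of~$\hV$, and a landing piece (Lemmas~\ref{lem:diffpbfinite}--\ref{lem:diffbad2all}). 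If you want to salvage your approach, you must either cite and verify the hypotheses of the Jones--Smirnov criterion honestly, or reproduce the paper's Poincar\'e-series estimates; the inducing-scheme tail bound alone does not suffice.
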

See~\cite{Jon95,Kahremovability} and~\cite[Theorem~$8$]{GraSmi09} for
other removability results of Julia sets.

\section{Preliminaries}

\subsection{Notation}
\label{ss:notation}
For~$f \in \sA$ and for a neighborhood~$V$ of~$\Crit'(f)$ we put
\begin{equation}
  \label{eqn:defKV}
  K(V) \= \{ x \in \dom(f) : \text{ for every integer $n \ge 0$, } f^n(x) \not \in V \}.
\end{equation}
\index{$K(V)$}
Denote by~$\fL_V$ the collection of connected components of~$\dom(f) \setminus K(V)$.
\index{$\fL_V$}

For $V\subset \dom (f)$ and an integer $m\ge 0$, let $\sM_m(V)$ \index{$\sM_m(\cdot)$, $\sM(\cdot)$} denote
the collection of all components of $f^{-m}(V)$. Moreover, let
$\sM(V) \= \bigcup_{m=0}^\infty \sM_m(V)$.

\subsection{Koebe distortion lemma}
We shall frequently use the following Koebe distortion lemma.
\begin{lemm}\label{lem:koebeC}
For every~$f\in \sA$ there is a constant~$\eta_* > 0$ such that for every~$\varepsilon\in (0, 1)$ there exists a constant~$K(\eps)>1$ such that the following holds.
Given $x\in \dom(f)$, $\eta\in (0, \eta_*)$, and $n \ge 1$, let~$W$ (resp.~$W(\eps)$) be the component of $f^{-n}(B(f^n(x), \eta))$ (resp. $f^{-n}(B(f^n(x), \eps \eta)$) that contains~$x$.
Suppose that $f^n:W\to B(f^n(x),\eta)$ is a diffeomorphism. Suppose also that $\dist(f^n(x), J(f))\le \eta_*$ in the case $f\in\sA_\R$.
Then the distortion of~$f^n$ on~$W(\eps)$ is bounded by $K(\eps)$.
That is, for every $z_1, z_2\in W(\eps)$,
$$|Df^n(z_1)|/|Df^n(z_2)|\le K(\eps).$$
Moreover, $K(\eps)= 1+O(\eps)$ as $\eps\to 0$.
\end{lemm}
\begin{proof}
For the case $f\in \sA_{\C}$, see for
example~\cite[\S$2.4$]{PrzRiv07}. For the case $f\in \sA_{\R}$, see
~\cite[Theorem C ($2$)(ii)]{vStVar04}.
Recall that, by definition, maps in~$\sA_{\R}$ have no neutral cycles.
\end{proof}
\subsection{Modulus}\label{ss:modulus}
We shall use moduli of annuli to compare the size of nested sets.
This method is popular in the complex setting. Recall that if $A
\subset \CC$ is an annulus that is conformally isomorphic to the
round annulus $\{z\in \C: 1<|z|<R\}$, then $\mmod(A)\=\log R$. More
generally, if~$V\subsetneq \CC$ is open and~$U\Subset V$, then we
define~$\mmod(V;U)$ as the supremum of the moduli of those annuli
contained in~$V$ that separate $U$ from $\CC\setminus V$. \index{$\mmod(V;U)$}

In order to deal with interval maps, let us introduce a similar
notion in the real setting. If $J\Subset I$ are bounded intervals in
$\R$, then we define $\mmod(I;J) \= \mmod(D_*(I); D_*(J))$, where $D_*(I)$ denotes the
round disk in~$\C$ that has~$I$ as a diameter and~$D_*(J)$ the
corresponding disk for~$J$.
More generally, if~$V$ is a bounded open subset of~$\R$, $U \Subset V$ is an interval, and if we denote by~$V_0$ the connected component of~$V$ containing~$U$, then we put
$$ \mmod(V; U) \= \mmod(V_0;U). $$

\begin{lemm}
\label{lem:grotzch}
Let $V_2\Supset V_1 \Supset V_0$ be either bounded intervals
of~$\R$, or open and connected proper subsets of~$\C$. Then
$$\mmod(V_2; V_0)
\ge \mmod(V_2; V_1) + \mmod(V_1; V_0).$$
\end{lemm}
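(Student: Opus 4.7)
The plan is to establish the complex case first using the classical Grötzsch inequality for disjoint essential sub-annuli, and then to reduce the real case to the complex case via the round-disk completion $D_*(\cdot)$.

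For the complex case I will argue as follows. Fix $\varepsilon>0$ and choose an annulus $A^{(1)}\subset V_2$ separating $V_1$ from $\CC\setminus V_2$ with $\mmod(A^{(1)})>\mmod(V_2;V_1)-\varepsilon$, together with an annulus $A^{(0)}\subset V_1$ separating $V_0$ from $\CC\setminus V_1$ with $\mmod(A^{(0)})>\mmod(V_1;V_0)-\varepsilon$. Since $A^{(1)}$ separates the connected set $V_1$ from $\CC\setminus V_2$, the whole of $V_1$ lies in the bounded complementary component $B_1$ of $A^{(1)}$, and $B_1\subset V_2$; in particular $A^{(0)}\subset V_1\subset B_1$ is disjoint from $A^{(1)}$ and is nested inside it. The set $A$ obtained by removing from $\CC$ the closure of the bounded complementary component of $A^{(0)}$ and the closure of the unbounded complementary component of $A^{(1)}$ is then an open annulus contained in $V_2$, it separates $V_0$ from $\CC\setminus V_2$, and both $A^{(0)}$ and $A^{(1)}$ are disjoint essential sub-annuli of $A$. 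The classical Grötzsch inequality (superadditivity of modulus under disjoint essential sub-annuli) yields
$$ \mmod(A^{(0)})+\mmod(A^{(1)})\le \mmod(A)\le \mmod(V_2;V_0), $$
and letting $\varepsilon\to 0$ gives the desired inequality.

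For the real case, $V_0\Subset V_1\Subset V_2$ are bounded intervals and by definition $\mmod(V_i;V_j)=\mmod(D_*(V_i);D_*(V_j))$. A short triangle-inequality check comparing the centres and radii of the circumscribing disks shows that $V_j\Subset V_i$ as intervals forces $D_*(V_j)\Subset D_*(V_i)$ as open disks in $\C$. Therefore $D_*(V_0)\Subset D_*(V_1)\Subset D_*(V_2)$, and the already proven complex case applies verbatim to these three disks.

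The main technical point I expect to have to be careful with is the construction of the common enclosing annulus $A$ in the complex step: one must verify that $A$ is genuinely a doubly connected open set inside $V_2$ and that each $A^{(i)}$ is essential in $A$, which amounts to checking that the separation hypotheses on $A^{(0)}$ and $A^{(1)}$ place them in the correct relative position inside $\CC$. Everything else, including the real-to-complex reduction, is elementary.
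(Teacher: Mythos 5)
Your proof is correct and follows essentially the same route as the paper: the paper disposes of the complex case by citing it as Gr\"otzsch's inequality (Milnor's Corollary~A.5), which is exactly the statement you derive from superadditivity of the modulus over disjoint essential sub-annuli, and it handles the real case by the same reduction to the round disks $D_*(\cdot)$ ``by definition''. Your extra verifications --- that nested intervals give compactly nested disks, and that the enclosing annulus $A$ is a genuine doubly connected subdomain of $V_2$ containing $A^{(0)}$ and $A^{(1)}$ essentially --- are the right points to check and they go through.
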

\begin{proof} In the complex case, this lemma is known as Grotzch's
inequality, see for example \cite[Corollary~A.$5$]{Mil00b}. The real
case follows from the complex one by definition.
\end{proof}
The following lemma relates modulus to diameter of sets.
\begin{lemm}\label{lem:modsize}
There exists a constant~$C_0>0$ such that the following property
holds.
Let $U \Subset V$ be either bounded intervals contained in~$\R$, or open and connected subsets of~$\CC$,
such that $\diam(V)\le \diam(\CC)/2$.
Then, letting $\mu=\mmod(V; U)$,
$$\diam (U)\le C_0\exp(-\mu) \diam (V).$$
\end{lemm}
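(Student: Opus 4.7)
My plan is as follows.

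First I would reduce the real case to the complex case directly from the definition. For bounded intervals $U \Subset V$ in $\R$, the definition gives $\mmod(V;U) = \mmod(D_*(V_0); D_*(U))$, where $V_0$ is the component of $V$ containing $U$ and $D_*(I)$ denotes the round disk in $\C$ having the interval $I$ as a diameter. Since $\diam(I) = \diam(D_*(I))$ by construction, the real statement drops out of the complex one applied to $D_*(U) \Subset D_*(V_0)$.

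Next I would exploit the hypothesis $\diam(V) \le \diam(\CC)/2$ to reduce to a Euclidean problem. Since $V$ avoids a spherical ball of definite radius, after composing with a suitable M\"obius transformation (a spherical isometry) we may assume $V$ is contained in a fixed Euclidean ball bounded away from $\infty$. On such a ball the spherical and Euclidean metrics are bi-Lipschitz with a universal constant, so it suffices to prove: if $U \Subset V$ are open and connected in $\C$ with $V$ bounded and $\mu = \mmod(V;U)$, then $\diam(U) \le C_0 e^{-\mu} \diam(V)$ in the Euclidean sense, possibly after absorbing the distortion factor into $C_0$.

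For this Euclidean statement, for every $\varepsilon > 0$ I would select an essential annulus $A \subset V$ separating $U$ from $\CC \setminus V$ with $\mmod(A) \ge \mu - \varepsilon$. Let $K_1$ be the bounded complementary component of $A$ (so $U \subset K_1$) and $K_2$ the other complementary component (so $\CC \setminus V \subset K_2$, and $\infty \in K_2$ once we are in the Euclidean picture). The essential input is the classical ring-domain inequality
\begin{equation*}
\diam(K_1) \le C_1 \, e^{-\mmod(A)} \, \diam(K_1 \cup A),
\end{equation*}
for a universal constant $C_1$. Once this is granted, using $\diam(K_1 \cup A) \le \diam(V)$, $\diam(U) \le \diam(K_1)$, and letting $\varepsilon \to 0$ gives the desired bound.

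The only non-trivial step is the ring-domain inequality, which I regard as the main obstacle only in that one has to cite it correctly. It follows from Teichm\"uller's modulus theorem for the extremal ring $\C \setminus ([-1,0] \cup [t,\infty))$ together with the asymptotic $\mu_T(t) = \log t + O(1)$ as $t \to \infty$, or, more directly, from the area theorem applied to the Laurent expansion of a conformal map $\{1 < |z| < R\} \to A$ (see for instance Ahlfors' \emph{Conformal Invariants} or Lehto--Virtanen). I expect no other technical difficulties; note that the hypothesis $\diam(V) \le \diam(\CC)/2$ is precisely what prevents the degenerate situation $V = \CC$ in which no such estimate can hold.
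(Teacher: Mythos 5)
Your proof is correct and follows essentially the same route as the paper's: reduce the real case to the complex one via the definition of $\mmod$ for intervals, extract an annulus realizing the modulus up to $\varepsilon$, and apply a classical modulus--diameter estimate to its complementary components. The only difference is the classical input cited --- the paper uses McMullen's round-annulus theorem where you use the Teichm\"uller ring estimate, which are interchangeable here --- and your explicit spherical-to-Euclidean reduction makes precise a normalization step the paper leaves implicit.
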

\begin{proof} We only need to consider the complex case as the real
case will then follow by definition. We may assume that $\mu$ is
large.
In this case, $V\setminus U$ contains a round annulus~$A$
with $\mmod(A)=\mu-O(1)$. See for example \cite[Theorem~$2.1$]{McM94}.
The lemma follows.
\end{proof}

We shall now consider distortion of modulus under pull-back. In the
complex case, we have the following well-known lemma, see for
example~\cite[Lemma~$4.1.1$]{GraSwi98}.
A sequence $\{U_j\}_{j=0}^s$ of simply connected open sets is
called \emph{a chain} \index{chain}
if for each $0\le j<s$, the set~$U_j$ is a component of
$f^{-1}(U_{j+1})$ and $U_j\cap J(f)\not=\emptyset$.

\begin{lemm}\label{lem:Cmodulus}
Consider $f\in\sA_{\C}$.
Let $\{\tU_j\}_{j=0}^s$ and
$\{U_j\}_{j=0}^s$ be chains of topological disks such that $U_j
\Subset \tU_j$, and let
$$\nu
=
\#\{0\le j<s: \tU_j \text{ intersects } \Crit'(f)\}.$$
Then
$$\mmod (\tU_0; U_0)\ge \ell_{\max}(f)^{-\nu}
\mmod(\tU_s; U_s).$$
\end{lemm}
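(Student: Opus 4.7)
The plan is to iterate a one-step pull-back inequality for the modulus and then bound the accumulated degree in terms of $\nu$. For each $j \in \{0, \ldots, s-1\}$, since $\tU_j$ is a connected component of $f^{-1}(\tU_{j+1})$ with $\tU_j \cap J(f) \neq \emptyset$, the restriction $f : \tU_j \to \tU_{j+1}$ is a proper holomorphic map of some degree $d_j \ge 1$. The single-step estimate I will establish is
\begin{equation*}
\mmod(\tU_j; U_j) \ge d_j^{-1}\, \mmod(\tU_{j+1}; U_{j+1}).
\end{equation*}

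To prove this, fix $\eps > 0$ and choose an annulus $A \subset \tU_{j+1} \setminus \overline{U_{j+1}}$ whose essential curves separate $U_{j+1}$ from $\partial \tU_{j+1}$ and with $\mmod(A) \ge \mmod(\tU_{j+1}; U_{j+1}) - \eps$. After a small deformation, I may arrange that $A$ avoids the finitely many critical values of $f|_{\tU_j}$, so the component $A'$ of $f^{-1}(A) \cap \tU_j$ that separates $U_j$ from $\partial \tU_j$ is an unbranched holomorphic cover of $A$ of some degree $d' \le d_j$. Hence $\mmod(A') = \mmod(A)/d' \ge \mmod(A)/d_j$; letting $\eps \to 0$ and taking the supremum over all such $A$ yields the one-step bound. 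Iterating the bound across $j = 0, \ldots, s-1$ produces
\begin{equation*}
\mmod(\tU_0; U_0) \ge \Big(\prod_{j=0}^{s-1} d_j\Big)^{-1} \mmod(\tU_s; U_s).
\end{equation*}

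To conclude, I will bound $\prod_j d_j \le \ell_{\max}(f)^{\nu}$. Because $f : \tU_j \to \tU_{j+1}$ is a proper map between topological disks, the Riemann--Hurwitz formula gives $d_j - 1 = \sum_{c \in \Crit(f) \cap \tU_j}(\ell_c - 1)$. Thus $d_j = 1$ whenever $\tU_j \cap \Crit(f) = \emptyset$, and $d_j \le \ell_{\max}(f)$ whenever $\tU_j$ meets $\Crit'(f)$, under the standard reduction that each such $\tU_j$ contains at most one critical point of $f$ and that critical point lies in $\Crit'(f)$. Substituting into the iterated inequality gives $\prod_j d_j \le \ell_{\max}(f)^{\nu}$ and the stated conclusion follows. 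The main subtlety is precisely this degree estimate: in principle critical points in the Fatou part of $\tU_j$, or several Julia critical points in a single level, could inflate $d_j$ beyond $\ell_{\max}(f)$ while not being counted in $\nu$. This is handled by restricting attention, as is standard in the applications of the lemma within this paper, to chains of topological disks small enough that only one $\Crit'(f)$-critical point can appear at a time.
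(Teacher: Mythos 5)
The paper does not actually prove this lemma --- it is quoted as well known, with a pointer to \cite[Lemma~4.1.1]{GraSwi98} --- so your argument has to stand on its own. Its architecture (a one-step pull-back inequality for the modulus, iteration, and a Riemann--Hurwitz count of the accumulated degree) is the standard and correct route. There is, however, one concrete gap in your justification of the one-step inequality $\mmod(\tU_j;U_j)\ge d_j^{-1}\mmod(\tU_{j+1};U_{j+1})$: you cannot in general ``slightly deform'' a near-extremal annulus $A\subset\tU_{j+1}\setminus\overline{U_{j+1}}$ so that it avoids the critical values of $f|_{\tU_j}$ without losing a definite fraction of its modulus. A critical value sitting deep inside $A$ is a genuine obstruction: for the round annulus $\{1<|z|<R\}$ with a marked point at $|z|=\sqrt{R}$, every essential annulus avoiding that point has modulus at most $\tfrac{1}{2}\log R+O(1)$, not $\log R-\eps$. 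If this loss entered at each critical level the final constant would be much worse than $\ell_{\max}(f)^{-\nu}$. The correct proof keeps the original $A$, takes the component $A'$ of $f^{-1}(A)\cap\tU_j$ separating $U_j$ from $\partial\tU_j$ (a planar domain, possibly of connectivity greater than $2$ when $A$ contains critical values), fills in the inessential complementary components, and applies the transformation rule for extremal length under a proper holomorphic map of degree $d'\le d_j$ to obtain modulus at least $\mmod(A)/d_j$; this is precisely the content of the cited lemma.

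Your discussion of the degree count is accurate and worth making explicit rather than deferring to a ``standard reduction'': the bound $\prod_j d_j\le\ell_{\max}(f)^{\nu}$ genuinely fails if some $\tU_j$ contains a critical point outside $J(f)$ (such a level is not counted in $\nu$ yet has $d_j\ge 2$) or contains two critical points of $\Crit'(f)$ (then $d_j$ can exceed $\ell_{\max}(f)$ by Riemann--Hurwitz). The statement as printed carries no smallness hypothesis, so what you prove is the lemma under the implicit convention that each $\tU_j$ meets at most one critical point of $f$ and that this critical point lies in $\Crit'(f)$. That convention does hold in every application made in the paper (pull-backs of small neighborhoods of $\Crit'(f)$), but a complete write-up should state it as a hypothesis, since without it the conclusion is false as literally stated.
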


In the real case, modulus is similarly distorted under a diffeomorphic
pull-back, but the situation can be much worse under critical
pull-back.
For example, let $f(x)=x^2+a$ be a real quadratic polynomial, and
$$ \tU_1 \=(-\delta^2+a, \delta^2+a) \supset U_1 \= (-\delta^2 \eps +a, \delta^2 (1-\eps) +a), $$
where $\delta>0$ and $\eps\in (0,1)$.
Then a direct computation shows that, as $\eps\to 0$,
$$ \mmod \left( f^{-1}(\tU_1); f^{-1}(U_1) \right) \asymp \eps
\text{ and }
\mmod(\tU_1; U_1)\asymp \eps^{\tfrac{1}{2}}. $$
Nevertheless, the following Lemma~\ref{lem:Rmodulus} will be enough for our application.

Given~$f \in \sA_\R$, if a bounded open interval~$I$ contains a unique critical value~$v$ of~$f$, we define
\begin{equation}\label{eqn:sharp}
I^{\sharp}=(v - |I|, v +  |I|);
\end{equation}
otherwise, we write $I^\sharp =I$. Moreover, we say that a map
$f\in\sA_\R$ is \emph{normalized near critical points} \index{normalized near critical points}, if for each
$c\in\CJ$, the equation $|f(x)-f(c)|=|x-c|^{\ell_c}$ holds in a
neighborhood of $c$.
\begin{lemm}\label{lem:Rmodulus}
Consider $f\in\sA_{\R}(\ell)$.
\begin{enumerate}
\item[1.] For any $\lambda\in (0,1)$ there exists a constant~$\eta>0$ such that if
$\{\tU_j\}_{j=0}^s$ and $\{U_j\}_{j=0}^s$ are chains of intervals
such that $U_j \Subset \tU_j$, such that
$\tU_j\cap\Crit(f)=\emptyset$ for all $j=1,\ldots, s-1$ and such
that $|\tU_s|<\eta$, then
\begin{equation*}
\mmod (\tU_1; U_1) \ge \lambda \mmod(\tU_s; U_s).
\end{equation*}
Moreover, there exists a constant~$K_0 > 0$ depending only on~$\ell$ such that
\begin{equation*}
\mmod(\tU_0; U_0) + K_0
\ge
\ell^{-1} \mmod(\tU_1; U_1)
\ge
\lambda\ell^{-1} \mmod(\tU_s; U_s).
\end{equation*}
\item[2.] Assume that $f$ is normalized near critical points and let $c\in \Crit'(f)$. Let $V_1\Supset U_1$ be intervals
and let $V_0$ (resp. $U_0$) be a component of $f^{-1}(V^\sharp_1)$
(resp. $f^{-1}(U^\sharp_1)$) such that $U_0\subset V_0$. If $c\in
V_0$ and $|V_1|<\eta$, then
$$\mmod(V_0; U_0)\ge \ell_c^{-1} \mmod(V_1; U_1),$$
where $\ell_c$ is the order of~$f$ at~$c$.
\end{enumerate}
\end{lemm}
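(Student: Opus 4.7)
I will treat the two parts separately, since they require different techniques.

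For the first inequality in Part 1, I would first note that by the chain hypothesis, the intermediate sets $\tU_2, \ldots, \tU_{s-1}$ avoid $\Crit(f)$, so $f^{s-1}\colon \tU_1 \to \tU_s$ is a diffeomorphism. The plan is then to invoke the $C^3$ real cross-ratio distortion theorem (as used in Lemma~\ref{lem:koebeC} and the references therein): the cross-ratio of any quadruple in $\tU_1$ is distorted by a factor of $1+o(1)$ as $|\tU_s| \to 0$. Since the real modulus $\mmod(\tU_j; U_j) = \mmod(D_*(\tU_j); D_*(U_j))$ is determined, up to M\"obius normalization in $\C$, by the cross-ratio of the four endpoints of $U_j$ relative to $\tU_j$, this translates to $\mmod(\tU_1; U_1) \ge \lambda\,\mmod(\tU_s; U_s)$ for any fixed $\lambda < 1$, provided $|\tU_s|$ is small enough.

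For the second inequality in Part 1, which allows $\tU_0$ to contain a critical point $c$, I would use the non-flatness normal form $|\psi\circ f(x)| = |\phi(x)|^{\ell_c}$ near $c$. This lets me factor $f|_{\tU_0} = \psi^{-1}\circ g\circ \phi$ with $g(y) = \pm y^{\ell_c}$. The chart diffeomorphisms $\phi$ and $\psi$ distort the modulus by a factor close to $1$ with a bounded additive error, which can be absorbed into $K_0$ (they are $C^3$ on a fixed scale, so the Koebe-type control is multiplicative only asymptotically, leaving a bounded error at fixed scale). The monomial $g$ contributes the factor $\ell^{-1} \le \ell_c^{-1}$, by the argument I sketch for Part 2 below. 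In the case $\tU_0\cap\Crit(f)=\emptyset$ the pull-back is diffeomorphic and the first inequality of Part 1 applies directly, yielding a strictly better estimate when $\lambda \ge \ell^{-1}$.

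For Part 2 the normalization $|f(x) - v| = |x-c|^{\ell_c}$ makes everything explicit. The sharp intervals $V_1^\sharp$ and $U_1^\sharp$ are symmetric about $v$, so their pull-back components through $c$, namely $V_0$ and $U_0$, are symmetric intervals about $c$ with half-lengths $|V_1|^{1/\ell_c}$ and $|U_1|^{1/\ell_c}$ (using that $|V_1|<\eta$ keeps us in the normal-form domain). Hence $D_*(V_0)$ and $D_*(U_0)$ are concentric round disks centered at $c$, giving
\begin{equation*}
\mmod(V_0; U_0) = \frac{1}{\ell_c}\log\frac{|V_1|}{|U_1|} = \frac{1}{\ell_c}\mmod(V_1^\sharp; U_1^\sharp).
\end{equation*}
Since $D_*(U_1)\subset D_*(V_1)$ are (possibly non-concentric) Euclidean disks of radii $|U_1|/2$ and $|V_1|/2$, the extremal property that the modulus of a nested pair of disks is maximized in the concentric case gives $\mmod(V_1;U_1)\le \log(|V_1|/|U_1|)$, and combining with the display yields the claim.

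The main obstacle is the second inequality in Part 1: here no sharp construction is available, and one must simultaneously handle the chart distortion of $\phi,\psi$ (which is only asymptotically multiplicatively trivial) and the fact that $U_1$ need not be symmetric about the critical value $v$. The additive constant $K_0$ is introduced precisely to absorb both sources of error, and verifying that a single constant depending only on $\ell$ works uniformly is the delicate bookkeeping step.
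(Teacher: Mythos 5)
Your handling of the first inequality of Part~1 coincides with the paper's (cross-ratio distortion for $C^3$ maps with non-flat critical points, translated into moduli via the explicit formula of Lemma~\ref{lem:realmodulus}), and your observation that the concentric position maximizes the modulus among nested disks of fixed radii is exactly the paper's inequality~\eqref{eqn:mod+}. The genuine gap is in the rest of Part~2. By the definition~\eqref{eqn:sharp}, $U_1^\sharp$ is the symmetrized interval $(f(c)-|U_1|,\,f(c)+|U_1|)$ \emph{only when} $f(c)\in U_1$; otherwise $U_1^\sharp=U_1$. Your concentric-disk computation assumes both $V_1^\sharp$ and $U_1^\sharp$ are symmetric about the critical value, so it only covers the case $f(c)\in U_1$ (where the paper also gets equality by inspection). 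In the remaining case $f(c)\notin U_1$ the component $U_0$ does not contain $c$ and sits off-center inside $V_0$, and the inequality $\mmod(V_0;U_0)\ge \ell_c^{-1}\mmod(V_1^\sharp;U_1^\sharp)$ becomes precisely the paper's Lemma~\ref{lem:realmcriti}, namely $\mmod((-1,1);(a,b))\ge \ell^{-1}\mmod((-1,1);(a^\ell,b^\ell))$ for $0\le a<b<1$, whose proof needs a real argument (splitting off a concentric piece, the power mean inequality, and monotonicity of the cross-ratio under the power map). This case cannot be dismissed: in the application (Lemma~\ref{lem:quasichain}) the inner interval $W_1$ has no reason to contain the critical value.

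A related weakness affects the second inequality of Part~1: you justify the factor $\ell^{-1}$ for the monomial by appealing to "the argument for Part~2," but that argument works for the sharp (symmetrized) preimages, whereas here $\tU_0$ and $U_0$ are components of the preimages of the non-symmetric intervals $\tU_1\supset U_1$. The paper's example preceding the lemma shows the multiplicative loss can be unbounded ($\mmod\asymp\eps$ downstairs versus $\eps^{1/2}$ upstairs), so the key point — which you defer as "bookkeeping" — is the case split: when $\mmod(\tU_1;U_1)$ is bounded the additive constant $K_0$ absorbs everything trivially, and when it is large, i.e.\ $\Cr(\tU_1,U_1)$ is small, non-flatness yields $\Cr(\tU_0,U_0)\le K_0'\,\Cr(\tU_1,U_1)^{1/\ell_c}$ and one concludes via Lemma~\ref{lem:realmodulus}. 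That dichotomy is the idea of the proof, not a routine verification, and it is missing from your sketch.
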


We need two preparatory lemmas to prove this result. Recall that the
\emph{cross-ratio} \index{cross-ratio} of bounded intervals~$J \Subset I$ of~$\R$ is
defined as
$$\Cr(I, J) \= \frac{|I||J|}{|L||R|},$$
where $L, R$ are the components of $I\setminus J$.
\begin{lemm}\label{lem:realmodulus}
For bounded intervals $J\Subset I$ of~$\R$,
$$\mmod (I;J)
=
2\log \left( \sqrt{\Cr(I,J)^{-1}}+\sqrt{1+\Cr(I,J)^{-1}}\right).$$
\end{lemm}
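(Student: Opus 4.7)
The plan is to exploit the Möbius invariance of both sides of the identity and reduce to the symmetric model $I=(-1,1)$, $J=(-t,t)$ with $t\in(0,1)$, where everything can be computed explicitly.

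First I would verify that $\Cr(I,J)$ is Möbius invariant: written as a ratio of products of pairwise differences of the four ordered endpoints of $I$ and $J$, each endpoint appears once in the numerator and once in the denominator, so the $(cz+d)$ factors arising from a Möbius change of variable all cancel. Next I would show that any real Möbius transformation $\phi$ carrying $I$ to a bounded interval sends $D_*(I)$ onto $D_*(\phi(I))$, and likewise for $J$: the circle $\partial D_*(I)$ is characterized as the unique circle through $\partial I$ that meets $\R$ orthogonally, and the family of such circles is preserved by real Möbius transformations. Conformal invariance of modulus then gives $\mmod(\phi(I);\phi(J))=\mmod(I;J)$. Combined with the elementary computation $\Cr((-1,1);(-t,t))=4t/(1-t)^{2}$, which is a strictly increasing bijection from $t\in(0,1)$ onto $(0,\infty)$, the three-transitivity of the real Möbius group on $\hat\R$ lets me choose a transformation carrying $(I,J)$ to $((-1,1),(-t,t))$ for the unique $t$ matching $\Cr(I,J)$.

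In this reduced model $D_*(I)=\D$ and $D_*(J)=t\D$, so by definition $\mmod(I;J)$ is the modulus of the standard round annulus $\D\setminus\overline{t\D}$, which equals $\log(1/t)$. Setting $c\=\Cr(I,J)^{-1}=(1-t)^{2}/(4t)$, one computes $1+c=(1+t)^{2}/(4t)$, so
$$\sqrt{c}+\sqrt{1+c}=\frac{(1-t)+(1+t)}{2\sqrt{t}}=\frac{1}{\sqrt{t}},$$
and $2\log(\sqrt{c}+\sqrt{1+c})=\log(1/t)$, which matches $\mmod(I;J)$. The only content-bearing step is the Möbius reduction; once it is in hand the verification collapses to a purely algebraic identity, and no serious obstacle is anticipated.
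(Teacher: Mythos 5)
Your proof is correct and follows essentially the same route as the paper: the paper also normalizes by a M\"obius transformation to a concentric pair (it uses $\sigma(I)=(-T,T)$, $\sigma(J)=(-1,1)$ with $T=\exp(\mmod(I;J))$, equivalent to your model via $t=1/T$), invokes M\"obius invariance of the cross-ratio, and checks the resulting algebraic identity. Your write-up merely spells out the invariance of $\mmod(\,\cdot\,;\,\cdot\,)$ under real M\"obius maps (via orthogonal circles), which the paper leaves implicit.
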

\begin{proof} There exists a M{\"o}bius transformation~$\sigma$ such that
$\sigma(I)=(-T, T)$ and $\sigma(J)=(-1,1)$, where $T=\exp(\mmod
(I;J))$. Since $$\Cr(I, J)=\Cr(\sigma(I), \sigma(J))=4T/(T-1)^2,$$
the lemma follows.
\end{proof}

\begin{lemm}\label{lem:realmcriti}
For each $\ell>1$ and $0 \le  a < b < 1$,
$$\mmod((-1,1); (a, b))\ge \ell^{-1}\mmod ((-1,1); (a^\ell, b^{\ell})). $$
\end{lemm}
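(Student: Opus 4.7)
The plan is to exhibit an $\ell$-quasiconformal self-homeomorphism $\Phi$ of $\C$ preserving the unit disk and carrying $D_*(a^\ell,b^\ell)$ to a superset of $\Phi^{-1}(D_*(a,b))$; the lemma then follows from monotonicity of modulus combined with the standard quasiconformal distortion estimate. Concretely, define $\Phi:\C\to\C$ in polar coordinates by $\Phi(re^{i\theta}):=r^{1/\ell}e^{i\theta}$ (with $\Phi(0):=0$). In log-polar coordinates $(\log r,\theta)$ this is the linear stretch $(u,\theta)\mapsto(u/\ell,\theta)$, so $\Phi$ is a quasiconformal homeomorphism of $\C$ with dilatation $\ell$, and it sends the unit disk $\D=D_*(-1,1)$ onto itself.

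The crux is the geometric inclusion
$$\Phi^{-1}(D_*(a,b))\subseteq D_*(a^\ell,b^\ell). \qquad (*)$$
Granted $(*)$, since $\Phi^{-1}(\D)=\D$, the annulus $\Phi^{-1}(\D\setminus\overline{D_*(a,b)})=\D\setminus\overline{\Phi^{-1}(D_*(a,b))}$ contains $\D\setminus\overline{D_*(a^\ell,b^\ell)}$, hence
$$\mmod((-1,1);(a^\ell,b^\ell))\le\mmod(\Phi^{-1}(\D\setminus\overline{D_*(a,b)}))\le\ell\cdot\mmod((-1,1);(a,b)),$$
where the first inequality is by monotonicity of modulus and the second is the standard quasiconformal distortion bound. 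This gives the desired conclusion.

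To verify $(*)$, write $w=\rho e^{i\phi}$; then $w\in D_*(a,b)$ iff $\rho$ lies in the interval $[\rho_-(\phi),\rho_+(\phi)]$ of positive roots of $t^2-(a+b)t\cos\phi+ab=0$. Since $\Phi^{-1}(w)=\rho^\ell e^{i\phi}$ and the analogous roots $\sigma_\pm(\phi)$ for $D_*(a^\ell,b^\ell)$ satisfy $\sigma_-\sigma_+=(ab)^\ell=\rho_-^\ell\rho_+^\ell$, the inclusion $[\rho_-^\ell,\rho_+^\ell]\subseteq[\sigma_-,\sigma_+]$ is equivalent to the scalar inequality
$$\rho_-^\ell+\rho_+^\ell\le(a^\ell+b^\ell)\cos\phi. \qquad (**)$$

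The main obstacle is $(**)$. Assume $a>0$ (the case $a=0$ reduces to $\cos^\ell\phi\le\cos\phi$). Parametrize $\rho_\pm=\sqrt{ab}\,e^{\pm s}$ for $s\in[0,s_1]$, where $s_1:=\tfrac{1}{2}\log(b/a)$, so that $\cos\phi=2\sqrt{ab}\cosh(s)/(a+b)$. Then $(**)$ becomes $G(s)\ge 0$ on $[0,s_1]$ for
$$G(s):=\frac{(a^\ell+b^\ell)\sqrt{ab}}{a+b}\cosh(s)-(ab)^{\ell/2}\cosh(\ell s).$$
A direct computation using $\cosh(s_1)=(a+b)/(2\sqrt{ab})$ and $\cosh(\ell s_1)=(a^\ell+b^\ell)/(2(ab)^{\ell/2})$ gives $G(s_1)=0$, while $G(0)\ge 0$ reduces to the elementary factorization $1+u^{2\ell}-u^{\ell-1}-u^{\ell+1}=(u^{\ell-1}-1)(u^{\ell+1}-1)\ge 0$ with $u:=\sqrt{b/a}$. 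Since $\cosh(\ell s)/\cosh(s)$ is strictly increasing on $[0,\infty)$ for $\ell>1$, the equation $G=0$ has a unique positive solution, which by the above must be $s_1$; consequently $G\ge 0$ on $[0,s_1]$, as required.
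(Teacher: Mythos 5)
Your argument is correct, but it takes a genuinely different route from the paper's. The paper works entirely with the elementary toolkit of \S\ref{ss:modulus}: for $a=0$ it moves the configuration to a symmetric one $(-\beta,\beta)$ via the explicit cross-ratio formula of Lemma~\ref{lem:realmodulus}, where the modulus scales \emph{exactly} by $\ell$ under $\beta\mapsto\beta^\ell$, and then invokes the power-mean inequality to compare $\hat b$ with $b^\ell$; for $a>0$ it splits the annulus with Lemma~\ref{lem:grotzch} and uses monotonicity of the cross-ratio under $x\mapsto x^\ell$. You instead use a single radial stretch $\Phi(re^{i\theta})=r^{1/\ell}e^{i\theta}$, which is an $\ell$-quasiconformal self-map of $\D$, together with the geometric inclusion $(*)$ that the radial $\ell$-th power map carries $D_*(a,b)$ into $D_*(a^\ell,b^\ell)$; the factor $\ell^{-1}$ then comes from the quasiconformal distortion of modulus rather than from Lemma~\ref{lem:realmcriti}-type scaling identities. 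I checked the details: $\Phi$ does have constant dilatation $\ell$ (the origin is removable); the reduction of $(*)$ to $(**)$ is valid because $\rho_-^\ell\rho_+^\ell=\sigma_-\sigma_+$ forces the interval inclusion to be equivalent to the inequality between the sums (the map $x\mapsto x+P/x$ is decreasing on $(0,\sqrt P\,]$); $G(s_1)=0$ and $G(0)\ge 0$ follow from the stated identities and the factorization $(u^{\ell-1}-1)(u^{\ell+1}-1)\ge 0$; and $\cosh(\ell s)/\cosh(s)$ is indeed strictly increasing since $\ell\tanh(\ell s)>\tanh(s)$ for $s>0$, so $G$ has a single sign change and is nonnegative on $[0,s_1]$. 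What your approach buys is a uniform, conceptually transparent source for the constant $\ell^{-1}$ and no case split at the level of the modulus estimate; what it costs is importing the quasiconformal modulus-distortion theorem (which the paper does not otherwise need in this section) and a nontrivial computation to establish $(*)$, whereas the paper's proof stays strictly elementary.
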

\begin{proof} Let us first consider the case $a=0$.
Let~$\beta \in (0, 1)$ be such that~$\beta + \beta^{-1} = 2b^{-1}$, so that
$$ \mmod((-1, 1);(-\beta, \beta)) = \mmod((-1, 1);(0, b)). $$
Thus, if we let~$\hat{b} \in (0, 1)$ be defined by~$2 \hat{b}^{-1} = \beta^{-\ell} + \beta^\ell$, then,
\begin{align*}
\mmod((-1, 1); (0, \hat{b}))
& =
\mmod((-1, 1);(-\beta^{\ell}, \beta^\ell))
\\ & =
\ell \mmod((-1, 1);(-\beta, \beta))
\\ & =
\ell \mmod((-1, 1);(0, b)).
\end{align*}
So we just need to show that~$\hat{b} \le b^{\ell}$.
This follows from the power mean inequality,
$$ \hat{b}^{-1} = \frac{\beta^{-\ell} + \beta^\ell}{2}
\ge
\left( \frac{\beta^{-1} + \beta}{2} \right)^\ell = b^{-\ell}, $$
see for example~\cite[$\mathbf{16}$]{HarLitPol52}.

Now let us consider the case $a>0$.
Let~$t$ be the unique number in~$(b,1)$ such that
$$\mmod((-1,1); (a^\ell, b^\ell))
=
\mmod((-1,1); (0, t^\ell))+\mmod((0, t^\ell); (a^\ell, b^\ell)).$$
Then as above,
$$\mmod((-1,1); (0,t))\ge \ell^{-1} \mmod((-1,1); (0,t^\ell)).$$
Note that
$$ \Cr((0, t); (a, b))
=
\frac{\frac{b}{a} - 1}{1 - \frac{b}{t}}
\le
\frac{\left(\frac{b}{a} \right)^{\ell} - 1}{1 - \left( \frac{b}{t} \right)^{\ell}}
=
\Cr ((0, t^\ell),(a^\ell,
b^\ell)), $$
hence
$$\mmod((0, t); (a, b))
\ge
\mmod((0, t^\ell); (a^\ell, b^\ell))
\ge
\ell^{-1} \mmod((0, t^\ell); (a^\ell, b^\ell)). $$
Finally, by Lemma~\ref{lem:grotzch},
$$\mmod((-1,1); (a, b))
\ge
\mmod((-1,1); (0,t))+\mmod((0,t); (a, b)). $$
Combining these estimates, we complete the proof of the lemma.
\end{proof}

\begin{proof}[Proof of Lemma~\ref{lem:Rmodulus}]
\

\partn{1}
For the first inequality, by Lemma~\ref{lem:realmodulus}, it
suffices to prove that for any constant $\lambda' \in (0, 1)$, we
have $\Cr(\tU_s, U_s)\ge \lambda' \Cr(\tU_1, U_1)$, provided that
$\diam (\tU_s)$ is sufficiently small.
But this is well-known: if $f$ has negative Schwarzian derivative, we actually have
$\Cr(\tU_s, U_s)\ge \Cr(\tU_1, U_1)$; otherwise, we may apply \cite[Theorem C(3)]{vStVar04} which claims that the first entry map to a small neighborhood of $f(\Crit'(f))$ has negative Schwarzian derivative,
see the proof of~\cite[Proposition~$1$]{BruRivShevSt08}, for details.

For the
second inequality we may assume $\tU_0$ contains a critical point
$c$ and $\mmod(\tU_1; U_1)$ is large, \emph{i.e.}, $\Cr(\tU_1, U_1)$ is small. Note that $|\tU_s|$ small
implies $|\tU_0|$ small since $f$ has no wandering interval. So by the non-flatness of critical points,
we have $\Cr(\tU_0, U_0)\le K_0'\Cr(\tU_1, U_1)^{1/\ell_c}$ for
some~$K_0'$ depending only on~$\ell_c$. The second inequality
follows by Lemma~\ref{lem:realmodulus} again.

\partn{2} It suffices to prove the following two
inequalities:
\begin{equation}\label{eqn:mod+}
\mmod(V_1^\sharp; U_1^\sharp)\ge \mmod(V_1; U_1);
\end{equation}
\begin{equation}\label{eqn:realmodreg}
\mmod( V_0; U_0)\ge \ell_{c}^{-1} \mmod (V_1^\sharp; U_1^\sharp).
\end{equation}

Let us prove the inequality~\eqref{eqn:mod+}.
If $f(c) \not \in U_1$, then $V_1^\sharp\supset V^1$, $U_1^\sharp=U_1$, so this inequality
clearly holds.
Assume $f(c) \in U_1$ and let $L, R$ denote the
components of $V_1 \setminus U_1$.
Then
$$\Cr(V_1, U_1)=\frac{|V_1||U_1|}{|L||R|}\ge
\frac{4|V_1||U_1|}{(|L|+|R|)^2}=\frac{4|V_1||U_1|}{(|V_1|-|U_1|)^2}=
\Cr(V_1^\sharp, U_1^\sharp),$$
which implies the inequality~\eqref{eqn:mod+} by Lemma~\ref{lem:realmodulus}.

The inequality~\eqref{eqn:realmodreg} follows from the local behavior of~$f$ near~$c$.
If~$U_1 \ni f(c)$, then both~$V_1^\sharp$ and~$U_1^\sharp$ are centered at~$f(c)$ and, by definition of modulus, we see that~\eqref{eqn:realmodreg} holds with equality.
If $U_1\not\ni f(c)$, then the statement follows from Lemma~\ref{lem:realmcriti}.
\end{proof}

\subsection{Bad pull-backs of a nice set}\label{ss:badpullback}
Let~$f \in \sA$ and let~$V$ be a nice set for~$f$. It is easy to see
that for each $W\in \sM(V)$, either $W\cap V=\emptyset$ or $W\subset
V$.
Moreover, for any integers $0\le m_1\le m_2$ and $W_1\in\sM_{m_1}(V)$, $W_2\in\sM_{m_2}(V)$,
$$\mbox{either } W_1\cap W_2=\emptyset\mbox{ or }W_2\subset W_1.$$
Recall that $W\in\sM_m(V)$, $m\ge 1$ is called a bad pull-back~of~$V$ by~$f^m$ if, for every integer~$m' \in \{ 1, \ldots, m \}$ such that~$f^{m'}(W)\subset V$, the pull-back of~$V$ by~$f^{m'}$ containing~$W$ is not diffeomorphic. As before, we use
$\fB_m(V)$ to denote the collection of all bad pull-backs of $V$ by
$f^m$.
\begin{lemm}\label{lem:db0}
Let $V$ be a nice set of $f\in\sA$ and let
$W\in\sM_m(V)$ with $m\ge 1$.
Then the following are equivalent:
\begin{enumerate}
\item[1.] $W\in\fB_m(V)$;
\item[2.] for any $1\le m'\le m$, $W$ is not contained in any
diffeomorphic pull-back of $V$ by $f^{m'}$;
\item[3.] for any $1\le m'\le m$, $W$ is disjoint from any
diffeomorphic pull-back of $V$ by $f^{m'}$.
\end{enumerate}
\end{lemm}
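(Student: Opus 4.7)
The plan is to prove $(2)\Leftrightarrow(3)$ first, using only the basic dichotomy for pull-backs of a nice set, and then to prove $(1)\Leftrightarrow(2)$ by observing that the phrase ``the pull-back of $V$ by $f^{m'}$ containing $W$'' in the definition of $\fB_m(V)$ refers to a \emph{unique} pull-back, whenever it exists.

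For $(2)\Leftrightarrow(3)$, I will invoke the property recalled at the start of \S\ref{ss:badpullback}: for all $0\le m_1\le m_2$ and any $W_1\in\sM_{m_1}(V)$, $W_2\in\sM_{m_2}(V)$, either $W_1\cap W_2=\emptyset$ or $W_2\subset W_1$. Applying this with $m_1=m'\le m=m_2$, $W_2=W$ and $W_1=\tW$ any diffeomorphic pull-back of $V$ by $f^{m'}$, the two alternatives ``$W$ is disjoint from $\tW$'' and ``$W$ is contained in $\tW$'' exhaust all possibilities, so ``$W$ is not contained in any diffeomorphic pull-back'' is equivalent to ``$W$ is disjoint from every diffeomorphic pull-back''.

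For $(1)\Leftrightarrow(2)$, the first step is to show that for $1\le m'\le m$, the condition $f^{m'}(W)\subset V$ is equivalent to $W$ being contained in some pull-back of $V$ by $f^{m'}$, and that in this case the containing pull-back, which I will denote by $\tW_{m'}$, is unique. The forward implication uses that $W$ is connected and lies in $f^{-m'}(V)$, so it sits in a single component $\tW_{m'}$ of $f^{-m'}(V)$. The reverse implication is immediate from $\tW_{m'}\in\sM_{m'}(V)$. Uniqueness is again the dichotomy. Now $(1)\Rightarrow(2)$ follows, because if $W$ were contained in a diffeomorphic pull-back $\tW$ of $V$ by some $f^{m'}$ with $1\le m'\le m$, then $\tW$ would coincide with $\tW_{m'}$, contradicting $(1)$; and $(2)\Rightarrow(1)$ is immediate, since whenever $f^{m'}(W)\subset V$ the unique containing pull-back $\tW_{m'}$ cannot be diffeomorphic by $(2)$.

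I do not expect any real obstacle here: the proof is a formal unpacking of the definition of a bad pull-back together with the disjointness-or-nesting dichotomy for pull-backs of a nice set. The only point to state carefully is the uniqueness of the containing pull-back, so that the ``the'' in the definition of $\fB_m(V)$ matches the existential form used in $(2)$ and $(3)$.
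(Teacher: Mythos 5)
Your proof is correct and follows the same route as the paper: the equivalence $1\Leftrightarrow 2$ is the definitional unpacking (using that the containing pull-back, when it exists, is unique), and $2\Leftrightarrow 3$ follows from the disjoint-or-nested dichotomy for pull-backs of a nice set recalled at the start of \S\ref{ss:badpullback}. The paper's proof is just a terser version of exactly this argument.
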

\begin{proof}
By definition, $1~\Leftrightarrow~2$.
The assertion $3~\Rightarrow~2$
is trivial, while $2~\Rightarrow~3$ holds since any pull-back of $V$
by $f^{m'}$ is either disjoint from $W$ or contains $W$.
\end{proof}
\begin{lemm}\label{lem:db}
For nice sets $V'\supset V$ for~$f$, the following properties hold.
\begin{itemize}
\item[1.]
For every integer~$m \ge 1$ and every bad pull-back~$W$ of~$V$
by~$f^m$, the pull-back of~$V'$ by~$f^m$ containing~$W$ is bad.
\item[2.]
$\delta_{\bad} (V')\ge \delta_{\bad}(V)$.
\end{itemize}
\end{lemm}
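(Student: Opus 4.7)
The plan is to deduce part~2 from part~1, and to establish part~1 by its contrapositive using Lemma~\ref{lem:db0}.

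For part~1, I would suppose that $W'$, the pull-back of $V'$ by $f^m$ containing $W$, is not bad. By Lemma~\ref{lem:db0} there is then some $m' \in \{1, \ldots, m\}$ and a diffeomorphic pull-back $U'$ of $V'$ by $f^{m'}$ with $W \subset W' \subset U'$. Let $U$ be the component of $f^{-m'}(V)$ containing $W$. Since $V \subset V'$, the connected set $U$ lies in the unique component of $f^{-m'}(V')$ containing $W$, which is $U'$; so $U \subset U'$. Injectivity of $f^{m'}$ on $U'$ forces $f^{m'}|_U$ to be a diffeomorphism, and applying Lemma~\ref{lem:db0} in the reverse direction gives $W \notin \fB_m(V)$.

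For part~2, part~1 supplies a well-defined map $\phi \colon \fB_m(V) \to \fB_m(V')$ sending $W$ to the pull-back of $V'$ by $f^m$ containing it; the inclusion $W \subset \phi(W)$ gives $\diam(W) \le \diam(\phi(W))$. The argument then reduces to the pointwise estimate
\[ \sum_{\substack{W \in \sM_m(V) \\ W \subset W'}} d_V(W) \le d_{V'}(W'), \qquad (\ast) \]
valid for every $W' \in \sM_m(V')$. Granting $(\ast)$ and grouping by fibres of $\phi$,
\[ \sum_{W \in \fB_m(V)} d_V(W) \diam(W)^t \le \sum_{W' \in \fB_m(V')} d_{V'}(W') \diam(W')^t, \]
and summing over $m$ yields $\delta_{\bad}(V) \le \delta_{\bad}(V')$.

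I expect $(\ast)$, and in particular its real-case verification, to be the main obstacle. The complex case is immediate: the nice-couple property forces $V \cap f^m(W')$ to equal a single component $V_c$ of $V$, and summing $d_V(W) = \deg(f^m|_W)$ over the components $W$ of $(f^m|_{W'})^{-1}(V_c)$ recovers $\deg(f^m|_{W'}) = d_{V'}(W')$. In the real case, where $d_V(W) = 2^{N_V(W)}$ is combinatorial, I would prove $(\ast)$ by induction on $m$, setting $U' = f(W')$. If $W'$ contains no critical point, then $f|_{W'}$ is a homeomorphism, $W \mapsto f(W)$ is a weight-preserving bijection between pull-backs of $V$ inside $W'$ and inside $U'$, and $d_{V'}(W') = d_{V'}(U')$, so $(\ast)$ reduces to the inductive hypothesis. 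If $W'$ contains a critical point, then $d_{V'}(W') = 2\, d_{V'}(U')$, and for each $U \subset U'$ the pull-backs of $U$ inside $W'$ contribute exactly $2\, d_V(U)$ to the left-hand side of $(\ast)$---either a single folded component with doubled weight, or two homeomorphically mapped components each of weight $d_V(U)$. The factor of two matches the doubling on the right, closing the induction.
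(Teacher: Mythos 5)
Part~2 of your proposal is the paper's argument almost verbatim: the same assignment $W \mapsto \phi(W)$, the same grouping by fibres, and the same degree inequality $(\ast)$ (which the paper states without proof here and records again as a display in \S\ref{s:finiteness of bad}). Your real-case verification of $(\ast)$ is sound except that ``exactly $2\,d_V(U)$'' should be ``at most $2\,d_V(U)$'' --- a component $U\subset U'$ avoiding the critical value may have only one preimage component inside $W'$ --- but only the upper bound is needed, so this is harmless.

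The genuine gap is in part~1. From $W'\notin\fB_m(V')$ you correctly extract $m'$ and a diffeomorphic pull-back $U'$ of $V'$ by $f^{m'}$ with $W\subset U'$, but you then invoke ``the component $U$ of $f^{-m'}(V)$ containing $W$,'' and this set need not exist. All you know is $f^{m'}(W)\subset f^{m'}(U')\subset V'$. Since $f^{m'}(W)$ is itself a pull-back of $V$ (by $f^{m-m'}$), niceness of $V$ gives the dichotomy $f^{m'}(W)\subset V$ or $f^{m'}(W)\cap V=\emptyset$, and when $V\subsetneq V'$ the second alternative certainly occurs (e.g.\ $f^{m'}(W)$ sitting in $V'\setminus V$). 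In that case $W\cap f^{-m'}(V)=\emptyset$, so there is no diffeomorphic pull-back of $V$ by $f^{m'}$ containing $W$ to be found, and your appeal to Lemma~\ref{lem:db0} cannot be made at time $m'$. Your argument only covers the sub-case $f^{m'}(W)\subset V$, where it is correct. The paper closes the other sub-case with a first-landing argument: take $m_1\in\{m'+1,\dots,m\}$ minimal with $f^{m_1}(W)\subset V$ (it exists because $f^m(W)$ is a component of $V$); by minimality and the fact that every critical point met by a pull-back of $V$ lies in $V$, the map $f^{m_1-m'}$ carries a set $U\supset f^{m'}(W)$ diffeomorphically onto a component of $V$; niceness of $V'$ gives $U\subset f^{m'}(U')$, so composing with $(f^{m'}|_{U'})^{-1}$ produces a diffeomorphic pull-back of $V$ by $f^{m_1}$ containing $W$, contradicting $W\in\fB_m(V)$. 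That landing-time step is the missing idea in your write-up.
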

\begin{proof}
\

\partn{1}
Let~$W'$ be the pull-back of~$V'$ by~$f^m$ that contains~$W$.
Arguing by contradiction, assume that~ $W'\not\in\fB_m(V')$.
Then there exists a~$m_0\in \{1,2,\ldots, m\}$ such that $W'$ is contained in a diffeomorphic pull-back~$W_0$ of~$V'$ by~$f^{m_0}$.
Then $f^{m_0}(W)\not\subset V$, so $m_0<m$ and there exists a minimal $m_1\in \{m_0+1, m'+2, \ldots, m\}$ such that~$f^{m_1}(W)\subset V$.
By the minimality of~$m_1$ and niceness of~$V$, we obtain that $f^{m_1-m_0}$ maps a simply connected set~$U \supset f^{m_0}(W)$ diffeomorphically onto a component of~$V$.
By the niceness of~$V'$ we obtain that $U \subset V'$.
So the pull-back of~$U$ by~$f^{m_0}$ that contains~$W$ is diffeomorphic.
It follows that the pull-back of~$V$ by~$f^{m_1}$ that contains~$W$ is diffeomorphic, contradicting the assumption that~$W\in\fB_m(V)$.

\partn{2}
By part~$1$ each $W\in \fB_m(V)$ is contained in an element of~$\fB_m(V')$.
Clearly, for each $W'\in \fB_m(V')$, and any $t\ge 0$,
$$ \sum_{W\in \fB_m(V) \, : \, W \subset W'} d_V(W)\diam (W)^t
\le
d_{V'}(W')\diam (W')^t.$$
It follows that
$$\sum_{m=1}^\infty\sum_{W\in \fB_m(V)} d_V(W)\diam (W)^t
\le
\sum_{m=1}^\infty \sum_{W'\in \fB_m(V')} d_{V'}(W')\diam (W')^t,$$
hence
$\delta_{\bad}(V')\ge \delta_{\bad}(V)$.
\end{proof}

\subsection{Expansion away from critical points}
\label{ss:expansion away}
\begin{lemm}\label{lem:definitesize}
Let $f\in\sA$ be expanding away from critical points, let~$\rho > 0$ be given, and let~$(\hV, V)$ be a nice couple such that for each~$c \in \Crit'(f)$ we have~$\diam(\hV^c) < \rho$.
Then there are constants~$\kappa_0 > 1$, $K_0 > 1$ and $\rho_0 \in (0, \rho)$, such that the following property holds.
Let~$D$ be the domain of the canonical induced map associated to~$(\hV, V)$.
Then for each~$x \in J(f)$ and $\delta \in (0, \rho/2)$ there is an integer~$n \ge 0$ such that one of the following properties holds:
\begin{enumerate}
\item[1.] the distortion of~$f^n$ on~$B(x, \delta)$ is bounded by~$K_0$, and
$$ \rho_0 < \diam(f^n(B(x, \delta))) < \rho; $$
\item[2.] $f^n(B(x, \kappa_0 \delta)) \subset V$, $|Df^n(x)| \ge \rho_0$, the distortion of~$f^n$ on~$B(x, \kappa_0 \delta)$ is bounded by~$K_0$, and every connected component of~$D$ intersecting~$f^n(B(x, \delta))$ is contained in~$f^n(B(x, \kappa_0 \delta))$.
\end{enumerate}
\end{lemm}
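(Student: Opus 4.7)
The plan is to iterate forward from $x$ to a carefully chosen stopping time $n$ at which either $f^n(B(x,\delta))$ has grown to a definite size (yielding case~1) or the orbit has landed inside $V$ with enough geometric control (yielding case~2). The three ingredients are expansion away from critical points on $K(V)\cap J(f)$ (giving constants $C>0$, $\lambda>1$ with $|Df^k(z)|\ge C\lambda^k$ for $z$ whose orbit avoids $V$), the Koebe distortion lemma~\ref{lem:koebeC}, and the niceness property of $V$, which forces any pull-back of $\hV$ meeting $V$ to lie inside $V$.

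First fix $\eta_*$ from Lemma~\ref{lem:koebeC} and choose $\eta\in(0,\min(\eta_*,\rho))$ so small that $B(y,\eta)\subset \hV^c$ whenever $y\in V^c$ and $c\in\Crit'(f)$. Let $K_0=K(1/2)$ from Koebe, and set $\rho_0=\eta/(4K_0)$. For $x\in J(f)$ and $\delta\in(0,\rho/2)$, let $n=n(x,\delta)\ge 0$ be the least integer such that either $(a)$ $\diam(f^n(B(x,\delta)))\ge\rho_0$, or $(b)$ $f^n(x)\in V$. Existence of $n$ follows by contradiction: otherwise the orbit of $x$ lies in $K(V)\cap J(f)$, so $|Df^k(x)|\ge C\lambda^k\to\infty$, while the distortion control below forces $\diam(f^k(B(x,\delta)))$ to grow like $\delta\, |Df^k(x)|$ and eventually exceed $\rho_0$.

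The crucial claim is that for every $k\le n$, the pull-back $W_k$ of $B(f^k(x),\eta)$ by $f^k$ around $x$ is diffeomorphic. An obstruction would require some intermediate image $f^j(W_k)$ to contain a point of $\Crit'(f)$; by the choice of $\eta$ the image $f^{j+1}(W_k)$ would then sit inside the corresponding component of $\hV$, and niceness of $(\hV,V)$ would force $f^j(x)\in V$, contradicting the minimality of $n$ (with a mild adjustment for $k=n$ in case~$(b)$). Once diffeomorphic pull-back is established, Koebe controls the distortion of $f^k$ on the half-Koebe sub-pull-back containing $B(x,\delta)$ by $K_0$, since $\diam(f^k(B(x,\delta)))<\rho_0<\eta/4$ comfortably fits inside the $1/2$-Koebe radius.

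In case~$(a)$, we immediately obtain case~1 of the lemma: distortion at most $K_0$ and $\rho_0\le \diam(f^n(B(x,\delta)))\le K_0\rho_0<\rho$. In case~$(b)$, $f^n(x)\in V^c$ for some $c\in\Crit'(f)$, minimality of $n$ together with Koebe gives $\diam(f^n(B(x,\delta)))\le K_0\rho_0$ and $|Df^n(x)|\gtrsim \rho_0/\delta\ge \rho_0$ after adjusting constants; choosing $\kappa_0>1$ small enough that $f^n(B(x,\kappa_0\delta))$ still fits inside $V^c$ (a Koebe calculation, using $\delta<\rho/2$ and the definite amount of room inside $V^c$ afforded by the choice of $\eta$) produces the required inclusion $f^n(B(x,\kappa_0\delta))\subset V$. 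The last property in case~2, that every component of $D$ meeting $f^n(B(x,\delta))$ is contained in $f^n(B(x,\kappa_0\delta))$, follows because such components are pull-backs of components of $V$ which by niceness are nested inside $V^c$; those meeting $f^n(B(x,\delta))$ therefore have diameter at most $\diam(V^c)$ and, by the size control, must fit inside the slightly enlarged image. The main obstacle is the diffeomorphic pull-back claim: interweaving niceness with the minimality of $n$ to exclude critical points from the pull-back must be carried out cleanly. Calibrating $\eta,\rho_0,\kappa_0,K_0$ consistently is then a routine constant-chasing step.
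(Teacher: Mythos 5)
Your overall strategy (iterate to a stopping time; use uniform expansion on $K(V)$, Koebe, and niceness) is in the right spirit, but two essential steps do not hold up. The first is your ``crucial claim'' that for every $k\le n$ the pull-back $W_k$ of $B(f^k(x),\eta)$ by $f^k$ containing $x$ is diffeomorphic. Your obstruction argument needs the presence of some $c\in\Crit'(f)$ in an intermediate set $f^j(W_k)$ to force $f^j(x)\in V$; but $f^j(W_k)$ is a pull-back of a \emph{ball}, not of $\hV$, so the nice-couple property does not apply to it, and your choice of $\eta$ (balls of radius $\eta$ about points of $V$ lie in $\hV$) says nothing about it. Since you have no a priori bound on $\diam(f^j(W_k))$ --- establishing such shrinking is essentially Theorem~\ref{t:polynomial shrinking}, proved later and independently --- this set can perfectly well reach from $f^j(x)\notin V$ to a critical point inside $V$ without any contradiction. (The claim is salvageable only for $k=n$ in your case~(b), where $n$ is the first entry time to $V$ and the first-entry pull-back of $\hV$ is diffeomorphic.) The paper sidesteps this entirely: it never pulls balls back along the orbit, but instead either works near $K(V)$, where uniform expansion gives bounded distortion directly, or takes $m$ to be the \emph{largest} time at which $f^m$ maps a neighborhood of $B(x,\kappa_0\delta)$ diffeomorphically onto a component of $\hV$ with $f^m(B(x,\kappa_0\delta))\subset V$.

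The second gap is the last property of alternative~2, which is where the real content of the lemma lies. A component $W$ of $D$ meeting $f^n(B(x,\delta))$ is indeed contained in some $V^c$, but that only bounds $\diam(W)$ by the fixed constant $\diam(V^c)$, whereas $f^n(B(x,\kappa_0\delta))$ can be arbitrarily small at the first entry time; so such a $W$ need not fit inside the enlarged image, and your ``size control'' does not exist. The paper's proof is built around exactly this contingency: if some component $W$ of $D$ meets $f^m(B(x,\delta))$ but is not contained in $f^m(B(x,\kappa_0\delta))$, it pulls $W$ back to $W'$ through the chosen diffeomorphic branch and shows that in either sub-case ($\diam(W')\le(\kappa_0^2-1)\delta$ or not) the \emph{first} alternative holds at time $m+m(W)$ --- either because $f^{m+m(W)}(W')$ is a full component of $V$, forcing $f^{m+m(W)}(B(x,\delta))$ to have definite size, or because $f^{m+m(W)}(B(x,\kappa_0\delta))$ reaches $\partial V\subset K(V)$. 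This loop back into case~1 is missing from your argument. Relatedly, at your stopping time the inclusion $f^n(B(x,\kappa_0\delta))\subset V$ can simply fail when $f^n(x)$ lies near $\partial V$; there is no ``definite amount of room inside $V^c$,'' and the paper's initial case split on whether $B(x,\kappa_0\delta)$ meets $K(V)$ is what absorbs this situation.
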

\begin{proof}
Fix a compact neighborhood~$\tV$ of~$\overline{V}$ contained
in~$\hV$. For each $m\ge 1$ and each component $W$ of $f^{-m}(V)$,
let $\hW\supset \tW$ be the components of $f^{-m}(\hV)\supset
f^{-m}(\tV)$ that contain $W$.
By the Koebe principle there are
constants~$\kappa_0 > 1$, $K_1 > 1$ and~$\rho_1 > 0$ such that if~$f^m|\hW$ is a diffeomorphism onto
a connected component of~$\hV$, then the following properties hold:
\begin{itemize}
\item
the distortion of~$f^m|\tW$ is bounded by~$K_1$;
\item
for each $y \in W$ we have~$|Df^m(y)| \ge \rho_1$;
\item
$\dist (\partial \tW, W)\ge 2(\kappa_0-1)^{-1} \diam (W)$.
\end{itemize}

Since~$f$ is uniformly expanding on~$K(V) \cap J(f)$, it follows
that there is a constant~$\rho_2 > 0$ such that for each~$x' \in J(f)$ and
$\delta' \in (0, \rho/2)$ such that~$B(x', \kappa_0 \delta')$
intersects~$K(V)$, there is an integer~$n \ge 0$ such
that~$f^n(B(x', \delta')) \supset B(f^n(x'), \rho_2)$, such
that~$\diam(f^n(x', \delta')) < \rho$, and such that the distortion
of~$f^n$ on~$B(x', \kappa_0\delta')$ is bounded by~$2$.
Replacing~$\rho_2$ by a smaller constant if necessary, we assume that
$(\kappa_0^2-1) \rho_2$ is less than the minimal
diameter of the components of~$V$.

We prove the assertion of the lemma with
$$ K_0 = 2K_1
\text{ and }
\rho_0 = \min \{ \rho_1, K_1^{-1} \rho_2 \}. $$
Let~$x \in J(f)$ and~$\delta > 0$ be given.
If~$B(x, \kappa_0\delta)$ intersects~$K(V)$, then assertion~$1$ holds by definition of~$\rho_2$.
So we assume that~$B(x, \kappa_0\delta)$ does not intersect~$K(V)$.
If
$$ B(x, \kappa_0\delta) \subset V
\text{ and }
B(x, \kappa_0\delta) \not\subset D, $$
then we put~$m = 0$, and we denote by~$W_0$ the connected component of~$V$ containing~$B(x, \kappa_0 \delta)$.
Otherwise there is an integer $m\ge 1$ such that $f^m(B(x,\kappa_0\delta))\subset V$ and such that~$f^m$ maps a neighborhood of~$B(x, \kappa_0 \delta)$ diffeomorphically onto a connected component of~$\hV$.
We assume that~$m$ is the largest integer with this property, and let~$W_0$ be
the pull-back of~$V$ by~$f^m$ that contains~$x$.
In both cases the distortion of~$f^m$ on~$B(x, \kappa_0 \delta)$ is bounded by~$K_1$, and
$$ f^m(B(x, \kappa_0 \delta)) \subset V
\text{ and }
f^m(B(x, \kappa_0 \delta)) \not \subset D. $$
If every connected component
of~$D$ intersecting~$f^m(B(x, \delta))$ is contained in~$f^m(B(x, \kappa_0 \delta))$, then assertion~$2$ holds with~$n = m$.
Otherwise there is a connected component~$W$ of~$D$ that intersects~$f^m(B(x, \delta))$ but is not contained in~$f^m(B(x, \kappa_0 \delta))$.
Let us prove that assertion~$1$ holds for some $n\ge m+m(W)$ in this case, where~$m(W)$ is the canonical inducing time of~$W$ with respect to~$(\hV, V)$.
Indeed, $W' \= \left(f^m|{W_0} \right)^{-1}(W)$ is a pull-back of~$V$ by~$f^{m + m(W)}$ such that~$f^{m+m(W)}$ maps a neighborhood of~$W'$ diffeomorphically onto a connected component of~$\hV$.
Since $W'\cap B(x,\delta)\not=\emptyset$, from the definition of~$\kappa_0$ it follows that~$B(x, \delta)\subset \tW'$, so the distortion of $f^{m+m(W)}|B(x, \delta)$ is bounded by~$K_1$.
On the other hand, by the choice of~$m$, we have $B(x, \kappa_0\delta)\not\subset W'$.
Suppose $\diam (W')\le (\kappa_0^2-1) \delta$, so that
$$ \frac{\diam(B(x, \delta))}{\diam(W')}
\ge
\frac{1}{\kappa_0^2 - 1}. $$
Letting~$n \= m + m(W)$, the set~$f^n(W')$ is a connected component of~$V$, so
$$ \diam(f^n(B(x, \delta)))
\ge
K_1^{-1} \frac{1}{\kappa_0^2 - 1} \diam(f^n(W'))
\ge
K_1^{-1} \rho_2. $$
This proves assertion~$1$ with~$n = m + m(W)$ in the case $\diam (W')\le (\kappa_0^2-1) \delta$.
Suppose now $\diam (W')> (\kappa_0^2-1) \delta$.
Then $B(x,\kappa_0\delta)\subset \tW'$, so $f^{m+m(W)}|B(x,\kappa_0\delta)$ has distortion bounded by $K_1$.
Moreover, the set $f^{m+m(W)}(B(x,\kappa_0\delta))$ intersects $\partial V\subset K(V)$, so assertion~$1$ holds by the choice of~$\rho_2$.
\end{proof}

In the case of complex rational maps, the following lemma is an easy
consequence of~\cite[Lemma~$6.3$]{PrzRiv07}.
The proof extends to the case of interval maps without change.
Recall that for a nice set~$V$ we denote by~$\fL_V$ the collection of components of $\dom(f) \setminus K(V)$, where~$K(V)$ is as in~\eqref{eqn:defKV}.
For each element~$U$ of~$\fL_V$, there exists a unique integer~$l(U)\ge 0$ \index{$l(\cdot)$} such that~$f^{l(U)}$ maps~$U$ diffeomorphically onto a component of~$V$.
\begin{lemm}\label{lem:finitelanding}
Let~$f \in \sA$ be expanding away from critical points. Then for
each nice set~$V$ for~$f$ there exist constants~$\alpha_0 \in (0, \HDhyp(f))$, $C_0$ and~$\varepsilon_0 > 0$ such that, for every integer~$m \ge 0$,
\begin{equation}\label{eqn:finitelanding}
\sum_{U \in \fL_V \, : \, l(U) \ge m} \diam (U)^{\alpha_0}
<
C_0 \exp(-\varepsilon_0 m).
\end{equation}
Moreover, if $\Crit'(f)\not=\emptyset$, then for each conformal measure~$\mu$ supported on~$J(f)$, there exist constants~$C' > 0$ and~$\kappa \in (0, 1)$
such that for each integer~$m \ge 0$,
$$ \mu\left(\{z\in J(f): z, f(z), \ldots, f^{m-1}(z)\not\in V \}\right)
\le C' \kappa^m.$$
\end{lemm}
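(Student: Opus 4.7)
The plan is to use that $A \= K(V) \cap J(f)$ is a compact hyperbolic set for $f$. Since the nice set $V$ is a neighborhood of $\Crit'(f)$, the expanding-away-from-critical-points hypothesis yields that $f|_A$ is uniformly expanding, so $\HD(A) \le \HDhyp(f)$; by adding controlled short excursions through $V$ one can exhibit hyperbolic sets of strictly larger dimension, so in fact $\HD(A) < \HDhyp(f)$ and one may fix $\alpha_0$ with $\HD(A) < \alpha_0 < \HDhyp(f)$. A standard Ma\~{n}\'{e}-type upgrade, using a strictly smaller neighborhood $V' \Subset V$ of $\Crit'(f)$ together with compactness, will furnish uniform constants $C, \lambda > 1$ such that $|Df^m(x)| \ge C \lambda^m$ whenever $x, f(x), \ldots, f^{m-1}(x) \in J(f) \setminus V$.

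For $U \in \fL_V$ with $l(U) = m$, Koebe distortion applied to the diffeomorphism $f^m \colon U \to V_{c(U)}$ --- with Koebe space supplied by pulling back, along the same inverse branch, a ball of definite radius around a point of $V_{c(U)} \cap J(f)$ --- yields $\diam(U) \le K_1 |Df^m(x_U)|^{-1}$ for any chosen $x_U \in U \cap J(f)$. The core of the argument is then the transfer-operator estimate
\begin{equation*}
\sum_{U \in \fL_V,\, l(U) = m} |Df^m(x_U)|^{-\alpha_0} \le C_1 e^{-\varepsilon_0 m},
\end{equation*}
which follows from thermodynamic formalism on the hyperbolic set $A$: Bowen's formula guarantees that the topological pressure $P_A(-\alpha_0 \log |Df|)$ is strictly negative because $\alpha_0 > \HD(A)$, and this pressure governs exactly such sums once preimages of a reference point in $V$ are grouped by their first entry time. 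Combining the Koebe bound with the pressure bound yields $\sum_{l(U) = m} \diam(U)^{\alpha_0} \le C_0 e^{-\varepsilon_0 m}$; for any $\alpha \ge \alpha_0$ the stated inequality then follows by the elementary comparison $\diam(U)^\alpha \le \diam(\dom(f))^{\alpha - \alpha_0} \diam(U)^{\alpha_0}$ and a geometric sum in $m$.

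For the measure estimate, let $\mu$ be conformal of exponent $\alpha \ge \alpha_0$. Conformality on $f^m \colon U \to V_{c(U)}$ combined with Koebe gives $\mu(U \cap J(f)) \lesssim \diam(U)^\alpha$, while $\mu(A) = 0$ because the local regularity $\mu(B(x, r)) \lesssim r^\alpha$ valid on $A$ (standard from conformality and uniform expansion) combined with $\HD(A) < \alpha$ forces $\mu|_A = 0$. Therefore
\begin{equation*}
\mu\bigl(\{z \in J(f) : z, f(z), \ldots, f^{m-1}(z) \notin V\}\bigr)
= \sum_{U \in \fL_V,\, l(U) \ge m} \mu(U \cap J(f)) \le C' \kappa^m.
\end{equation*}
The main obstacle is the sharp transfer-operator/pressure estimate on $A$, together with the strict inequality $\HD(A) < \HDhyp(f)$; once these are in hand, the rest is Koebe distortion, conformality, and summation.
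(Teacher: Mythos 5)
Your argument is correct in outline but takes a genuinely different route from the paper's. The paper's proof has three short steps: (a) each $U \in \fL_V$ meets the hyperbolic set $K(V_0) \cap J(f)$ for a smaller nice set $V_0 \Subset V$, whence $\diam(U) \le C_1 \exp(-\varepsilon_1 l(U))$; (b) it \emph{cites} \cite[Lemma~6.3]{PrzRiv07} for the summability $\sum_{U \in \fL_V} \diam(U)^{\alpha_1} < \infty$ at some $\alpha_1$ below $\HDhyp(f)$; and (c) it interpolates, writing $\sum_{l(U) \ge m} \diam(U)^{\alpha} \le \big(\max_{l(U) \ge m} \diam(U)\big)^{\alpha - \alpha_1} \sum_U \diam(U)^{\alpha_1}$, so that exponential smallness of individual diameters plus total summability at a lower exponent immediately give the exponential tail at any $\alpha_0 > \alpha_1$. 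This interpolation is the trick worth learning: it avoids having to estimate the level sets $\{l(U) = m\}$ directly. You instead attack those level sets head-on with a partition-function/pressure bound on $A \= K(V) \cap J(f)$. That is viable, but be aware that the two steps you treat as known are precisely the content of the citation you are replacing: the strict inequality $\HD(A) < \HDhyp(f)$ is \cite[Lemma~6.2]{PrzRiv07} and genuinely uses (essential) topological exactness, not just expansion away from critical points; and the assertion that the pressure of $A$ ``governs exactly'' the sums over first-entry branches needs an argument, since the points $x_U$ are not in $A$ (their orbits terminate in $V$) and $A$ need not be transitive. The standard repair is the paper's own observation in step (a): tag each $U$ by a point of $U \cap K(V_0) \cap J(f)$, so the first-entry sum is dominated by a partition function of the genuine hyperbolic set $K(V_0) \cap J(f)$, whose dimension is still below $\HDhyp(f)$. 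With those two points supplied, your proof goes through; your treatment of the measure estimate ($\mu(A) = 0$ from local regularity plus $\HD(A) < \alpha$, then conformality and Koebe on each $U$) matches the paper's.
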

\begin{proof}
Let~$V_0$ be a sufficiently small neighborhood of~$\Crit'(f)$
contained in~$V$, such that for each~$c \in \Crit'(f)$ the set $K \=
K(V_0) \cap J(f)$ intersects~$V^c$. Thus each element of~$\fL_V$
intersects~$K$.
Since by hypothesis~$f$ is uniformly expanding on~$K$, it follows that there are constants~$C_1 > 0$ and~$\varepsilon_1 > 0$ such that for each $U \in \fL_V$ we have~$\diam(U) \le C_1 \exp(-\varepsilon_1 l(U))$.

By~\cite[Lemma~$6.3$]{PrzRiv07} there is a constant~$\alpha_1 \in (0, \HDhyp(J(f)))$ such that,
$$ C_2
\=
\sum_{U \in \fL_V} \diam (U)^{\alpha_1}, $$
is finite.
Fix~$\alpha_0 \in (\alpha_1, \HDhyp(J(f)))$ and put~$\varepsilon_0 \= \varepsilon_1 (\alpha_0 - \alpha_1)$.
We thus have for every integer~$m \ge 0$,
\begin{align*}
\sum_{\substack{U \in \fL_V \\  l(U) \ge m}} \diam (U)^{\alpha_0}
& \le
\max_{\substack{U \in \fL_V\\ l(U)\ge m}} \diam(U)^{\alpha_0 - \alpha_1} \sum_{\substack{U \in \fL_V \\  l(U) \ge m}} \diam (U)^{\alpha_1}
\\ & \le
C_2 C_1^{\alpha_0 - \alpha_1} \exp(-\varepsilon_0 m).
\end{align*}

To prove the last assertion of the lemma, notice first that the exponent~$\alpha$ of~$\mu$ satisfies~$\alpha \ge \HDhyp(f)$~\cite{McM00}, so~$\alpha \ge \alpha_0$.
Thus for every~$m \ge 1$
$$ \sum_{U \in \fL_V \, : \, l(U) \ge m} \diam(U)^\alpha
\le
\max_{U\in\fL_V} \diam (U)^{\alpha - \alpha_0} \sum_{U \in \fL_V  \, : \, l(U) \ge m} \diam(U)^{\alpha_0} $$
is exponentially small in~$m$.
Moreover~$\mu(K(V)) = 0$ because~$f$ is uniformly expanding on~$K(V) \cap J(f)$.
On the other hand, by the Koebe principle
there is a constant~$C_3 > 0$ such that for each~$U \in \fL_V$ we
have $\mu(U) \le C_3 \diam(U)^\alpha$.
Thus, the last assertion of the lemma follows from the first from the inclusion,
$$ \{z\in J(f): z, f(z), \ldots, f^{m-1}(z)\not\in V \}
\subset
\left( K(V) \cup \bigcup_{U \in \fL_V \, : \, l(U) \ge m} W \right). $$
\end{proof}

\subsection{Backward contracting maps}
\label{ss:properties of bc maps}
The following lemma is simple consequence of~\cite[Proposition~$2$]{LiShe08a}.
\begin{lemm} \label{lem:blsquantify}
For each $\ell>1$ and $\kappa\in (0, \ell^{-1})$ there exists a constant~$r>1$
such that, if $f\in\sA^*(\ell)$ is backward contracting with
constant~$r$, then there is a constant~$C > 0$ such that for each
subset~$Q$ of~$\dom(f)$ intersecting $J(f)$ and each pull-back~$P$
of~$Q$,
$$ \diam(P)
\le
C \diam (f(Q))^\kappa. $$
\end{lemm}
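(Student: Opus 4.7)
The plan is to reduce the statement to a direct application of \cite[Proposition~2]{LiShe08}, which in our notation reads: for each $\ell > 1$ and each $\kappa \in (0, \ell^{-1})$ there exist $r > 1$, $\delta_1 > 0$ and $C_0 > 0$ such that if $f \in \sA^*(\ell)$ is backward contracting with constant $r$, then every pull-back $W$ of a ball $B(y, \delta)$ by $f^{n}$, with $y \in J(f)$, $\delta \in (0, \delta_1)$ and $n \ge 1$, satisfies $\diam(W) \le C_0 \delta^{\kappa}$.

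First I would reduce to the case in which $Q$ is connected. Any pull-back $P$ of $Q$ is, by definition, a connected component of $f^{-m}(Q)$ for some $m \ge 0$, and by continuity $f^m(P)$ lies in a single connected component $Q_0$ of $Q$, so $P$ is in fact a pull-back of $Q_0$. Since $f(Q_0) \subset f(Q)$ gives $\diam(f(Q_0)) \le \diam(f(Q))$, it suffices to prove the estimate under the extra assumption that $Q$ is connected.

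Assume then that $Q$ is connected, and choose a point $x_0 \in Q \cap J(f)$. Set $y \= f(x_0)$ and $\delta \= \diam(f(Q))$, so that $y \in J(f)$ and $f(Q) \subset B(y, \delta)$. Let $\hQ$ denote the connected component of $f^{-1}(B(y, \delta))$ that contains $x_0$. Since $Q$ is connected and meets $x_0$, we have $Q \subset \hQ$, hence every pull-back $P$ of $Q$ by $f^m$ is contained in a pull-back $\hP$ of $\hQ$ by $f^m$. A short connectedness argument, using that $\hQ$ is itself a component of $f^{-1}(B(y,\delta))$, identifies $\hP$ as a pull-back of $B(y,\delta)$ by $f^{m+1}$.

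Applying the quoted proposition to $\hP$ yields $\diam(P) \le \diam(\hP) \le C_0 \delta^{\kappa} = C_0 \diam(f(Q))^{\kappa}$ as long as $\delta < \delta_1$, and the complementary range $\delta \ge \delta_1$ is absorbed into the final constant $C$ by the boundedness of $\dom(f)$. The substance of the argument is entirely contained in \cite[Proposition~2]{LiShe08}; the only new ingredient is the bookkeeping step of inserting one extra iterate in order to convert a pull-back of the arbitrary set $Q$ into a pull-back of a Julia-centered ball whose radius is precisely $\diam(f(Q))$. The mild obstacle is merely to verify that the component $\hP$ singled out above really is an $f^{m+1}$\nobreakdash-pull-back of $B(y,\delta)$; once this is checked the conclusion is immediate.
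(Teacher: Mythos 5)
Your proposal rests entirely on the claim that \cite[Proposition~2]{LiShe08} bounds the diameter of pull-backs of an \emph{arbitrary} Julia-centered ball $B(y,\delta)$, $y\in J(f)$, by $C_0\delta^{\kappa}$. That is not what that proposition gives: it applies only to pull-backs of the critical sets $\tB(c,\varepsilon)$, $c\in\Crit'(f)$ (this is exactly how the paper quotes it: ``when $Q=\tB(c,\varepsilon)$ for some $c\in\Crit'(f)$\ldots each pull-back $P$ of $Q$ satisfies $\diam(P)\le C'\varepsilon^{\kappa}$''). The statement you attribute to it --- the same bound for arbitrary Julia-centered balls --- is, after your (correct) one-extra-iterate bookkeeping, essentially equivalent to the lemma you are trying to prove, so the argument is circular: the entire difficulty has been pushed into a misquoted external reference. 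The reductions you do carry out (to connected $Q$, and the identification of $\hP$ as an $f^{m+1}$-pull-back of $B(y,\delta)$) are fine but are not where the substance lies.

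The missing content is precisely the passage from the $\tB(c,\varepsilon)$ case to general $Q$, which the paper handles as follows. Using uniform expansion away from critical points one reduces to $Q$ connected and contained in some $\tB(c,\varepsilon_0)$; one then takes the minimal $\varepsilon_1$ with $Q\subset\overline{\tB(c,\varepsilon_1)}$ and exploits the comparison $\diam(f(Q))/\diam(Q)\asymp\diam(f(\tB(c,\varepsilon_1)))/\diam(\tB(c,\varepsilon_1))\asymp\varepsilon_1^{\ell_c-1}$. If the whole pull-back chain of $\tB(c,2\varepsilon_1)$ is diffeomorphic, Koebe distortion transfers the $\tB(c,\varepsilon_1)$ estimate to $Q$ with the exponent expressed in terms of $\diam(f(Q))$ rather than $\varepsilon_1$; otherwise one locates the last critical passage, uses backward contraction (the largeness of $r$) to shrink the corresponding pull-back below $\diam(f(Q))$, and restarts the argument there by induction on the length of the chain. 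None of this can be bypassed by citing the proposition for balls centered at arbitrary Julia points, because the backward contraction hypothesis is formulated only for the sets $\tB(c,r\delta)$ and the shrinking estimates of \cite{LiShe08} are derived only for those.
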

\begin{proof} Fix $\ell$ and $\kappa$ and assume that $f\in \sA^*(\ell)$ is backward contracting
with a sufficiently large constant $r$. By (the proof
of)~\cite[Proposition~$2$]{LiShe08a}, there exists a constant~$C'>0$ such that
when $Q=\tB(c, \varepsilon)$ for some $c \in \Crit'(f)$
and~$\varepsilon > 0$, then each pull-back $P$ of $Q$ satisfies
$\diam (P)\le C' \eps^\kappa$.

For the general case, let us fix a small constant $\eps_0>0$.
Since~$f$ is uniformly expanding on~$J(f)\cap
K(\tB(\Crit'(f);\eps_0/2))$, we may assume without loss of
generality that~$Q$ is connected and contained in~$\tB(c; \eps_0)$,
for some $c\in\Crit'(f)$.
Let $\eps_1 \in (0, \eps_0]$ be minimal such that~$Q$ is contained in the closure of $\tB(c,
\eps_1)$, and write $Q'=\tB(c,\eps_1)$, $Q''=\tB(c, 2\eps_1)$.
Provided that~$\eps_0$ was chosen small enough,
\begin{equation}\label{eqn:Qfarawaycrt}
\frac{\diam (f(Q))}{\diam (Q)}\asymp \frac{\diam (f(Q'))}{\diam (Q')}\asymp \eps_1^{\ell_c-1}.
\end{equation}

Let $P\in \sM_n(Q)$ for some $n\ge 1$, and consider the chains
$\{P_j\}_{j=0}^n$, $\{P_j'\}_{j=0}^n$ and $\{P_j''\}_{j=0}^n$ with
$P=P_0\subset P_0'\subset P_0''$ and $P_n''=Q''$, $P_n'=Q'$,
$P_n=Q$.  If $f^n: P_0''\to Q''$ is a diffeomorphism, then
$f^n|P_0'$ has bounded distortion, so
$$\frac{\diam (P)}{\diam (P_0')}\asymp \frac{\diam (Q)}{\diam (Q')}\asymp \frac{\diam (f(Q))}{\diam (f(Q'))}\le \left(\frac{\diam (f(Q))}{\diam (f(Q'))}\right)^\kappa,$$
which implies that $\diam (P)\le C\diam (f(Q))^\kappa$ since $\diam (P_0')\le C' \eps_1^\kappa. $
Otherwise, let $m<n$ be maximal such that $P_m''$ contains a
critical point, say $c'$. By the backward contracting property, we have $\diam(P_{m+1}'')\le 2\eps_1 r^{-1}<\eps_0$.
Since~$f^{n-m-1}|P_{m+1}'$ has bounded distortion, using~\eqref{eqn:Qfarawaycrt} we obtain
$$\diam (P_{m+1})\asymp \diam (P_{m+1}')\frac{\diam (f(Q))}{\diam (f(Q'))}\le \diam (f(Q)) r^{-1},$$
which implies that $\diam (P_{m+1})< \diam (f(Q))$ provided that $r$ is large enough.
Since $P_m\subset \tB(c', \eps_0)$, we may repeat the above argument with $Q$ replaced by $P_m$. By an induction on $n$, we complete the proof of the lemma.
\end{proof}

Given a nice set $V$, a component of the set~$f^{-1}(\dom(f)
\setminus K(V))\cap V$
is called a \emph{return domain} \index{return domain}.
These are maximal pull-backs of~$V$ that are contained in~$V$.
Given $\lambda > 0$ we will say that~$V$ is
\emph{$\lambda$\nobreakdash-nice} \index{$\lambda$\nobreakdash-nice}
if for return domain~$W$ of~$V$
we have~$\overline{W} \subset V$ and,
$$ \mmod(V; W)\ge  \lambda. $$
The following is essentially a combination of~\cite[Proposition~$6.6$]{Riv07} and \cite[Lemma~$3$]{BruRivShevSt08}.
\begin{lemm}\label{l:very nice}
Given $\ell > 1$ and $\lambda>0$, there is a constant~$r > 4$ such that for every~$f \in \sA(\ell)$ that is backward contracting with constant~$r$, the
following property holds.
For every sufficiently small~$\delta > 0$, there is a symmetric nice couple $(\hV, V)$, such that~$V$ is $\lambda$-nice, and such that for each~$c \in \CJ$,
$$ \tB(c,r\delta/2)\subset \hV^c\subset
\tB(c,r\delta),\hspace{2mm}\tB(c, \delta) \subset V^c \subset \tB(c,
2\delta). $$
\end{lemm}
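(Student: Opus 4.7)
The plan is to combine and adapt the constructions from \cite[Proposition~6.6]{Riv07} (complex case) and \cite[Lemma~3]{BruRivShevSt08} (real case). For each $c\in\CJ$, I would build $V^c$ and $\hV^c$ as the connected components of $f^{-1}(B(f(c),\rho_c))$ and $f^{-1}(B(f(c),\hat\rho_c))$ containing $c$, for suitably chosen radii $\rho_c\in[\delta,2\delta]$ and $\hat\rho_c\in[r\delta/2,r\delta]$. This construction automatically produces sets satisfying the size constraints and the symmetry condition $f(\partial V^c)\subset\partial f(V^c)$, and makes the critical points the unique critical points inside the components.

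First I would establish the niceness of $V$ and $\hV$ by choosing the radii carefully. For a fixed $c$, the set of \emph{bad} radii $\rho$ (those for which some forward iterate of $\partial\tB(c,\rho)$ lands in some $\tB(c',\rho_{c'})$ or $\tB(c',\hat\rho_{c'})$) is a countable union of sets, each parametrizing a potential landing; by backward contraction with a large constant $r$, the total measure of the bad set is small since all relevant pull-backs have diameter vanishing in $r$. A standard Fubini/measure-theoretic selection (as in \cite{Riv07}) then produces simultaneous choices of $\rho_c$ and $\hat\rho_c$ for all $c\in\CJ$ making both $V$ and $\hV$ nice. The nice couple property follows from backward contraction: any pull-back $\hW$ of $\hV$ with $\hW\cap V\ne\emptyset$ lies within distance $2\delta$ of $f(\Crit(f))$, and Definition~\ref{def:bc} applied with $\delta' = 2\delta$ (after mild adjustment) forces $\diam(\hW)<\delta$, hence $\hW\subset V^c$ for the appropriate $c$, provided $r$ is chosen large in terms of the admissible constants.

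For the $\lambda$-niceness of $V$: given a return domain $W\subset V^c$ of $V$, arising as a pull-back $f^m(W)=V^{c'}$, let $\hW$ be the component of $f^{-m}(\hV^{c'})$ containing $W$. By the first-return property, $f^m\colon\hW\to\hV^{c'}$ has exactly one critical factor at the last step (namely the one generating $V^{c'}$) and is otherwise a diffeomorphism onto a first-return path. By the nice-couple condition already established, $\hW\subset V^c$, so $\mmod(V^c;W)\ge\mmod(\hW;W)$. I would then estimate $\mmod(\hW;W)$ by pulling back the annulus $\hV^{c'}\setminus V^{c'}$, using Lemma~\ref{lem:Cmodulus} in the complex case and Lemma~\ref{lem:Rmodulus} in the real case to obtain $\mmod(\hW;W)\ge \ell^{-1}\mmod(\hV^{c'};V^{c'})$ up to an additive constant. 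Since $\mmod(\hV^{c'};V^{c'})\asymp\log(r/2)$ by the choice of radii and the non-flatness of critical points, taking $r$ large enough makes this exceed~$\lambda$.

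The main obstacle is the simultaneous control: choosing the radii so that (a) both families of sets are nice, (b) the nice couple condition holds, and (c) the $\lambda$-nice estimate is strong enough, while (d) keeping the symmetric structure. The choice of $r$ must be made carefully because the three conditions feed back into one another: a large $r$ helps with the modulus estimate in (c) and with the diameter control in (b), but it also enlarges the family of potential bad radii in (a). The real case introduces additional bookkeeping because Lemma~\ref{lem:Rmodulus} loses a factor $\ell^{-1}$ under each critical pull-back and requires the $V^\sharp$ correction of~\eqref{eqn:sharp}; nevertheless, since first-return paths have only one critical factor, a single application of part~2 of that lemma suffices.
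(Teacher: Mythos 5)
Your construction of the inner and outer sets as pull-backs $\tB(c,\rho_c)$, $\tB(c,\hat\rho_c)$ of round balls with generically chosen radii is not the paper's route, and the selection step on which it rests has a genuine gap. To make $V=\bigcup_c\tB(c,\rho_c)$ nice you must arrange that for every $n\ge 1$ and all $c,c'\in\CJ$ the set $\partial\tB(c,\rho_c)$ is disjoint from $f^{-n}(\tB(c',\rho_{c'}))$ and from $f^{-n}(\tB(c',\hat\rho_{c'}))$. For fixed $n$ and $c'$ the bad set of radii $\rho_c$ has Lebesgue measure bounded by the \emph{sum} of the diameters of the components of $f^{-(n-1)}(\tB(c',\cdot))$ meeting the annulus $\{\delta\le|z-f(c)|\le 2\delta\}$. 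Backward contraction bounds each individual diameter by $O(\delta/r)$ but gives no control on the number of such components, so the total over all $n$ and all components need not be small; summability of $\sum_n\sum_W\diam(W)$ over pull-backs is essentially the content of the much harder badness-exponent and Poincar\'e-series estimates proved later in the paper and cannot be invoked at this stage. Your fallback argument for the nice-couple property is also quantitatively insufficient: for a pull-back $\hW$ of $\hV$ meeting $V^c$, backward contraction applied at scale $2\delta$ only yields $\diam(f(\hW))<2\delta$, which places $f(\hW)$ inside $B(f(c),\rho_c+2\delta)$ but not inside $B(f(c),\rho_c)$, so $\hW\subset V^c$ does not follow.

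The paper avoids both problems. It quotes the existence of the outer nice set $\hV$ with $\tB(c,r\delta/2)\subset\hV^c\subset\tB(c,r\delta)$ from \cite[Proposition~6.5]{Riv07} and \cite[Proposition~3]{BruRivShevSt08}, and then builds $V^c$ combinatorially as the union of $\tB(c,\delta)$ with all return domains of $\hV$ that intersect $\tB(c,\delta)$ (filled in the complex case); backward contraction keeps this union inside $\tB(c,2\delta)$. With this definition every point of $\partial V$ has forward orbit disjoint from $\hV$, so the niceness of $V$ and the nice-couple property hold by construction, and $\lambda$-niceness reduces, as in your last step, to the single estimate $\mmod(\hV^c;V^c)\ge(2\ell_{\max}(f))^{-1}\log(r/4)$ pushed through an at most unicritical pull-back via Lemma~\ref{lem:Cmodulus} or Lemma~\ref{lem:Rmodulus}. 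Your modulus argument for return domains is essentially the paper's (note only that the critical passage, when present, occurs at the domain end of the chain, at the critical point of the component $V^c$ containing the return domain, not \textquotedblleft at the last step generating $V^{c'}$\textquotedblright); the part that must be replaced is the construction of $V$ itself.
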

\begin{proof}
We assume that~$f$ is backward contracting with constant~$r \ge 2$.
Then there is a symmetric nice set~$\hV$ for~$f$ such that
$$\tB(c,r\delta/2)\subset \hV^c\subset \tB(c,r\delta),$$
see \cite[Proposition~$3$]{BruRivShevSt08} in the
case of interval maps, and \cite[Proposition~$6.5$]{Riv07} in the case
of rational maps.
For each $c\in \Crit'(f)$, let~$V_{c, *}$ be the union of $\tB(c,\delta)$ and all the return domains of~$\hV$ that intersect~$\tB(c,\delta)$.
By definition, $f(\partial V_{c, *}) \subset \partial f(V_{c, *})$, and by the backward contraction assumption, $V_{c, *}\subset \tB(c, 2\delta)$.
In the real case, let $V^c=V_{c,*}$ and in the complex case, let~$V^c$ be the filling of~$V_{c, *}$, \emph{i.e.}, the union of~$V_{c, *}$
and the components of $\ov{\C}\setminus V_{c,*}$ that are contained
in~$\hV^c$.
Then, in both cases, $V^c$ is simply connected,
$$ f(\partial V^c) \subset \partial f(V^c),
\text{ and }
V^c \subset \tB(c, 2\delta) \Subset \hV^c. $$
Let $V \= \bigcup_{c \in \Crit'(f)} V^c$.
Note that for each $x\in\partial V$ and $k\ge 1$,
$f^k(x)\not\in \hV$, hence~$(\hV, V)$ is a symmetric nice couple.
Provided~$r$ is large enough,
$$\mmod(\hV^c; V^c)\ge (2\ell_{\max}(f))^{-1}\log
(r/4)$$ is large. It follows that $V$ is a $\la$-nice set for a
large~$\la$. Indeed, if $U$ is a return domain of $V$ with return
time~$s$, then the pull-back of~$\hV$ by~$f^{s}$ that contains~$U$
is either diffeomorphic or unicritical, and it is contained in~$V$.
By either Lemma~\ref{lem:Cmodulus} or Lemma~\ref{lem:Rmodulus}, we
obtain that $\mmod(V; U)$ is large.
\end{proof}

\begin{defi}\label{def:child}
For a map~$f \in \sA$ and an integer~$m \ge 1$ we will say that a
pull-back~$W$ of an open set~$V$ by $f^m$ is a \emph{child of~$V$} \index{child}
if it contains precisely one critical point of~$f$, and if $f^{m-1}$
maps a neighborhood of $f(W)$ diffeomorphically onto a component of
of~$V$. We shall write $m_V(W)= m$. \index{$m_\cdot(\cdot)$}
\end{defi}
In the case of interval maps the following lemma is a variant of~\cite[Lemma~$4$]{BruRivShevSt08}.
\begin{lemm}\label{l:children}
For each~$s > 0$ and $\ell > 1$ there is a constant~$r > 4$ such that for every~$f \in \sA(\ell)$ that is backward contracting with constant~$r$, the following property holds.
For each sufficiently small $\delta > 0$ there is a nice set $V = \bigcup_{c \in \CJ} V^c$ such that for each~$c \in \CJ$,
$$ \tB(c, \delta) \subset V^c \subset \tB(c, 2\delta), $$
and such that
$$ \sum_{\text{children $Y$ of } V} \diam(f(Y))^s
\le
\delta^s. $$
\end{lemm}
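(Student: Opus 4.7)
The plan is to construct the nice set $V$ using a variant of Lemma~\ref{l:very nice} with a large modulus parameter $\lambda$, and then bound the sum of child diameters by exploiting the $\lambda$-nice structure together with backward contraction. Fix $\lambda > 0$ large (to be chosen depending on $s$, $\ell$, and $|\CJ|$) and apply Lemma~\ref{l:very nice} to obtain $r > 4$ such that, for each small $\delta > 0$, there is a symmetric nice couple $(\hV, V)$ with $\tB(c, r\delta/2) \subset \hV^c \subset \tB(c, r\delta)$ and $\tB(c, \delta) \subset V^c \subset \tB(c, 2\delta)$, and such that $V$ is $\lambda$-nice. For each child $Y$ of $V$ with distinguished critical point $c' \in Y$ and canonical time $m = m_V(Y)$, the image $f(Y)$ is a diffeomorphic pull-back of some component $V^c$ by $f^{m-1}$; let $\widehat{f(Y)}$ denote the corresponding pull-back of $\hV^c$ by $f^{m-1}$ containing $f(Y)$.

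My main estimate splits into two cases. If $\widehat{f(Y)}$ is diffeomorphic, then Lemmas~\ref{lem:Cmodulus} and~\ref{lem:Rmodulus} give $\mmod(\widehat{f(Y)}; f(Y)) \ge \mmod(\hV^c; V^c) \ge \lambda$, so by Lemma~\ref{lem:modsize}, $\diam(f(Y)) \le C_0 e^{-\lambda}\diam(\widehat{f(Y)})$; the backward contracting property with $\delta' = \delta$, applied to $\widehat{f(Y)}$ (a component of $f^{-(m-1)}(\tB(c, r\delta))$ meeting $f(\Crit(f))$), yields $\diam(\widehat{f(Y)}) < \delta$, and therefore $\diam(f(Y)) \le C_0 e^{-\lambda}\delta$. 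If $\widehat{f(Y)}$ is not diffeomorphic, I locate the last critical pull-back in the chain and observe that, by niceness of $(\hV, V)$, this critical pull-back is contained in $V$, producing an intermediate child of $V$ of strictly smaller canonical time; the argument then iterates on this smaller child. After $k$ such iterations one obtains a bound of the form $\diam(f(Y)) \le C_0^k e^{-k\lambda}\delta$, where $k$ records the ``depth'' of $Y$ in the tree of nested intermediate children.

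To conclude, I organize the children according to their depth $k \ge 1$. The number of depth-$k$ children is bounded by $|\CJ|^k$ times a polynomial combinatorial factor, since each intermediate critical ancestor is determined by a choice of critical point in $\CJ$ together with a uniquely-specified crossing time. The total sum is therefore dominated by a geometric series,
\[
\sum_{Y : \text{child of } V} \diam(f(Y))^s
\;\le\;
\delta^s \sum_{k=1}^{\infty} (|\CJ| C_0^s e^{-s\lambda})^{k} \cdot \mathrm{poly}(k),
\]
which is at most $\delta^s$ provided $\lambda$ is chosen so large that $|\CJ|C_0^s e^{-s\lambda} < 1/2$; this fixes the required $r$. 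The main obstacle lies in the combinatorial bookkeeping for the non-diffeomorphic case: one must ensure that the decomposition of each child $Y$ into a chain of nested intermediate children is unique, so that no over-counting occurs. This uniqueness rests on the niceness of $(\hV, V)$ together with the pull-back uniqueness arguments already used in the proof of Lemma~\ref{lem:db}, together with a careful choice of the branching data at each depth level.
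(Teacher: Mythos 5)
Your construction of $V$ via Lemma~\ref{l:very nice} is the right starting point, and the per-child estimate in your ``diffeomorphic case'' is correct as far as it goes: each such child satisfies $\diam(f(Y))\le C_0e^{-\lambda}\delta$. But that bound is \emph{uniform} over what is in general an infinite family, so it cannot by itself make $\sum_Y\diam(f(Y))^s$ finite, and the two devices you introduce to repair this both have genuine gaps. First, the claimed decay $\diam(f(Y))\le C_0^k e^{-k\lambda}\delta$ in the number $k$ of critical passages is unjustified: the intermediate critical pull-backs you extract from the chain over $f(Y)$ sit at different times and around possibly different critical points, so they are not nested subsets of the domain and Lemma~\ref{lem:grotzch} does not let you add their moduli. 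What the chain actually gives (Lemma~\ref{lem:Cmodulus} or~\ref{lem:Rmodulus}) is a multiplicative loss of $\ell^{-1}$ at each critical passage, i.e.\ $\mmod(\widehat{f(Y)};f(Y))\gtrsim\ell^{-k}\lambda$ and hence only $\diam(f(Y))\lesssim e^{-\ell^{-k}\lambda}\delta$, which does not decay in $k$. Second, the count ``$|\CJ|^k$ times a polynomial'' for depth-$k$ children is not established and is not true in general: the branching of your tree at a given intermediate child is not finite, since arbitrarily many children can share the same last critical ancestor while entering it at different times. The failure is already visible for a unicritical map in which the $\hV$-extensions of all children happen to be diffeomorphic: then every child lies at depth one of your tree, all carrying the same bound $C_0e^{-\lambda}\delta$, and your series becomes $\infty\cdot(C_0e^{-\lambda}\delta)^s$.

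The missing idea is the one the paper's proof is built on: since children are pull-backs of the nice set $V$, any two children containing the same $c\in\CJ$ are nested, so for each $c$ they form a decreasing chain $Y_1(c)\supset Y_2(c)\supset\cdots$ with $Y_1(c)\subset\tB(c,2r^{-1}\delta)$ by backward contraction. Writing $s_k=m_V(Y_k(c))$, the set $f^{s_k}(Y_{k+1}(c))$ is contained in a return domain of $V$, so $\lambda$-niceness gives $\mmod(V;f^{s_k}(Y_{k+1}(c)))\ge\lambda$, and pulling back through the unicritical map $f^{s_k}|Y_k(c)$ (a single application of Lemma~\ref{lem:Cmodulus} or~\ref{lem:Rmodulus}) gives $\mmod(Y_k(c);Y_{k+1}(c))\ge\lambda'$ with $\lambda'$ large when $\lambda$ is. Here the consecutive sets \emph{are} nested, so Lemma~\ref{lem:modsize} makes $\diam(Y_{k+1}(c))/\diam(Y_k(c))$ uniformly small, the sum over $k$ is geometric, and summing over the finitely many $c\in\CJ$ yields the lemma. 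Your argument never invokes this nestedness, and without it there is no mechanism forcing the diameters of the (infinitely many) children to be summable.
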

\begin{proof}
Assume that~$f$ is backward
contracting with a large constant~$r$.
By Lemma~\ref{l:very nice}, for each sufficiently small $\delta > 0$ there exists a $\lambda$-nice set $V = \bigcup_{c \in \CJ} V^c$  such that for each~$c \in \CJ$,
$$ \tB(c, \delta) \subset V^c \subset \tB(c, 2\delta), $$
where $\lambda>0$ is a large constant.
Let us prove that the conclusion of the lemma holds for this choice of~$V$.

Take $c\in \CJ$ and let~$Y_k(c)$ be the $k$-th largest child of~$V$ containing~$c$.
By the backward contracting property, we have $Y_1(c)\subset \tB(c, 2r ^{-1}\delta)$.
Let $s_k=m_V(Y_k(c))$.
Then~$f^{s_k}(Y_{k+1}(c))$ is contained in a return domain of~$V$, hence $\mmod(V; f^{s_k}(Y_{k+1}(c)))\ge \lambda$.
By the definition of child, $Y_k(c)$ is a unicritical
pull-back of $V$. Thus by Lemma~\ref{lem:Cmodulus} or
Lemma~\ref{lem:Rmodulus}, we obtain that $\mmod(Y_k(c);
Y_{k+1}(c))\ge \lambda'$, where $\lambda'\to \infty$ as $\lambda\to
\infty$. By Lemma~\ref{lem:modsize}, it follows that
$\diam(Y_{k+1}(c))/\diam(Y_k(c))$ is small provided that $r$ is
large. The conclusion of the lemma follows.
\end{proof}

\section{Polynomial shrinking of components}
\label{s:polynomial shrinking}

In this section, we study the size of pull-backs of a small set. The
main result is the following proposition, from which we shall derive
Theorem~\ref{t:polynomial shrinking}.

\begin{prop}\label{prop:shrink}
For each $\ell>1$ and $\kappa_0 \in (0, \ell^{-1})$, there exists a constant~$R>3$ such that if $f\in\sA(\ell)$ is expanding away from critical
points and backward contracting with a constant $r>R$, then the
following holds.
For any $\eta_0>0$ sufficiently small there exist constants $C_0, A_0>0$ and for any chain $\{W_j\}_{j=0}^s$ with
$W_s\subset \tB(\Crit'(f),\eta_0)$, there exists an integer $\nu\ge
0$ such that
\begin{equation}\label{eqn:West2}
\diam (W_0) \le C_0 \min \left\{ r^{-\kappa_0 \nu}, \exp \left(- \kappa_0^{\nu} (A_0s + \mu) \right) \right\},
\end{equation}
where $\mu=\mmod(\tB(\Crit'(f),3\eta_0); W_s)$.
\end{prop}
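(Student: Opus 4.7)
The approach is to track how modulus and diameter evolve along an enlarged version of the chain.
I would set $\tW_s := \tB(\Crit'(f), 3\eta_0)$ and, inductively for $j = s-1, \dots, 0$, let $\tW_j$ be the connected component of $f^{-1}(\tW_{j+1})$ containing $W_j$; by construction $\mmod(\tW_s; W_s) = \mu$.
The integer $\nu$ is taken to be the number of indices $j \in \{0, \dots, s-1\}$ for which $W_j$ meets $\Crit(f)$.
Provided $r$ is sufficiently large and $\eta_0$ small, backward contraction ensures that the enlargement factor $3$ (versus $1$) does not create new critical pull-backs in $\{\tW_j\}$, so the two chains share the same critical indices.

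\textbf{Bound $r^{-\kappa_0\nu}$.}
At each critical index $j$, the set $W_{j+1} \supset f(W_j)$ contains a critical value of $f$ and is therefore at distance $0$ from $f(\Crit(f))$; backward contraction applied to the appropriate pull-back of $\tB(c, r\delta)$ containing $W_{j+1}$ then forces a factor-$r^{-1}$ reduction in scale, which combined with the $\ell_c$-power critical behaviour of $f$ yields $\diam(W_j) \le C r^{-1/\ell_c}\le C r^{-\kappa_0}$ times the current scale since $\kappa_0 < \ell^{-1}$.
Koebe distortion on the diffeomorphic segments (controlled by the nested enlargement $\tW$) transfers these estimates between consecutive critical indices.
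Iterating over all $\nu$ critical indices gives $\diam(W_0) \le C_0 r^{-\kappa_0\nu}$.

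\textbf{Bound $\exp(-\kappa_0^\nu(A_0 s + \mu))$.}
Letting $\mu_j := \mmod(\tW_j; W_j)$, I plan to establish the step-wise recursion
$\mu_{j-1} \ge \kappa_0\mu_j - K_0$ at a critical step (from Lemma~\ref{lem:Cmodulus} in the complex case, or Lemma~\ref{lem:Rmodulus}(2) in the real one, using $\ell^{-1} \ge \kappa_0$), and
$\mu_{j-1} \ge \mu_j + A_0$ at a non-critical step (using expansion away from $\Crit'(f)$ together with the Koebe principle to produce a definite modulus gain in the annular region $\tW_{j-1} \setminus W_{j-1}$).
A standard bookkeeping argument with the invariant $\mu_j + C \ge \kappa_0^{\nu(j)}(A_0(s-j) + \mu)$, where $\nu(j)$ counts critical indices in $\{j, \dots, s-1\}$ and $C$ is chosen so that $(1-\kappa_0)C \ge K_0 + \kappa_0 A_0$, closes the induction and yields $\mu_0 \ge \kappa_0^\nu(A_0 s + \mu) - C$.
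Combining with Lemma~\ref{lem:modsize} and the trivial upper bound $\diam(\tW_0) \le 6\eta_0$ then yields the second bound.

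\textbf{Main obstacle.}
The principal technical hurdle is justifying the non-critical modulus gain $+A_0$: since conformality preserves modulus exactly, the gain must arise from the genuine expansion of $f$ on the surrounding annulus, and in the real case Lemma~\ref{lem:Rmodulus}(1) only preserves modulus up to a multiplicative factor close to~$1$, so the accumulated losses over long diffeomorphic blocks must be offset against the expansion-generated gain.
The strict inequality $\kappa_0 < \ell^{-1}$ (rather than equality) is exactly the slack needed to absorb the additive constants $K_0$ at each critical step into a geometrically convergent series, and $r$ must be chosen large enough (depending on $\kappa_0$, $A_0$ and $\ell$) so that the enlarged chain $\{\tW_j\}$ remains faithful to $\{W_j\}$ and so that the quantitative form of backward contraction (essentially Lemma~\ref{lem:blsquantify}) can be applied with exponent $\kappa_0$.
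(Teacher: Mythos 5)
Your overall architecture (two separate bounds: backward contraction at critical passages for the first, and a modulus bookkeeping with loss factor $\kappa_0$ at critical passages and gain $A_0$ elsewhere for the second) matches the paper's, but two steps do not go through as written. The first and more serious one is the non-critical gain $\mu_{j-1}\ge\mu_j+A_0$: this is not available for a single enlarged chain. If $f:\tW_{j-1}\to\tW_j$ is univalent, the modulus of the pulled-back annulus is \emph{exactly} preserved in the complex case, and preserved only up to a factor close to $1$ in the real case (part 1 of Lemma~\ref{lem:Rmodulus}); expansion of $f$ does not create modulus in a fixed annulus under pull-back. You correctly flag this as the main obstacle but offer no resolution, and the step-wise recursion cannot be repaired. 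The paper's mechanism is structurally different: letting $t_1<\dots<t_k=s$ be the times at which the orbit of $W_0$ enters $\tB(\CJ,\eta_0)$, one pulls back a \emph{fresh} copy of $\tB(c_i,3\eta_0)$ from each time $t_i$. Uniform expansion off the critical set shows that the pull-back $\hD_i$ of the $(i+1)$-st target into time $t_i$ is exponentially small in $t_{i+1}-t_i$, so $\mu_i=\mmod(\tB(\CJ,3\eta_0);\hD_i)\ge A_0(t_{i+1}-t_i)$; each such modulus is transported to time $0$ with loss factor $\kappa_0^{\nu}$ (Lemma~\ref{lem:Cmodulus}, resp.\ the quasi-chain version via Lemma~\ref{lem:quasichain}), and the resulting nested annuli at time $0$ are summed by Gr\"otzsch's inequality (Lemma~\ref{lem:grotzch}) to give $\mmod(\hY_1;\hW_0)\ge\kappa_0^{\nu}(A_0(s-t_1)+\mu)$. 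The term $A_0 s$ comes from this summation over returns, not from a per-step gain.

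Second, your $\nu$ (the critical indices of the original chain $\{W_j\}$) is not the right count, and the claim that the enlarged chain shares the same critical indices is unjustified and false in general: backward contraction bounds the diameters of enlarged pull-backs near critical values, but does not prevent them from swallowing critical points that the smaller sets miss. The modulus losses occur at the critical passages of the \emph{enlarged} chains, so your recursion only yields $\mu_0\ge\kappa_0^{\nu'}(A_0s+\mu)-C$ with $\nu'$ the (possibly larger) enlarged count. Since the two bounds in \eqref{eqn:West2} move in opposite directions in $\nu$ (as $\nu$ grows, $r^{-\kappa_0\nu}$ becomes a stronger claim while $\exp(-\kappa_0^{\nu}(\cdot))$ becomes weaker), you cannot use the smaller count for one bound and the larger for the other. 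The paper takes $\nu=\max_i\nu_i$ over the orders of the enlarged (quasi-)chains and proves the first bound for that $\nu$ too, by iterating backward contraction to obtain $\hY_{i_0}^{j_0}\subset\tB(\CJ,3r^{-\nu}\eta_0)$ and then applying Lemma~\ref{lem:blsquantify} \emph{once}; this also avoids the compounding of multiplicative constants ($C^{\nu}r^{-\kappa_0\nu}$) that your per-critical-index iteration would produce.
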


The proof of this proposition in the real case is more complicated
than in the complex case. We shall state and prove a preparatory
lemma for the real case. The readers who are only interested in the
complex case may skip this part.

\begin{defi}
Consider $f\in\sA_\R$.
A sequence $\{U_j\}_{j=0}^s$ of open intervals is called a \emph{quasi-chain} \index{quasi-chain}
if for each $0\le j<s$,
the set~$U_j$ contains a component of~$f^{-1}(U_{j+1})$.
The \emph{order} of the
quasi-chain is the number of $j\in\{0,1,\ldots, s-1\}$ such that
$U_j$ contains a critical point.
\end{defi}

Given a chain $\{V_j\}_{j=0}^t$, we can construct a quasi-chain
$\{\hV_j\}_{j=0}^t$ with $\hV_j\supset V_j$ as follows.
First of all, $\hV_t=V_t$.
Once $\hV_j$ has been defined for some $1\le j\le t$, let~$V_{j-1}'$ be the component of $f^{-1}(\hV_j)$ that contains~$V_{j-1}$, and let
$$
\hV_{j-1}=
\begin{cases}
\tB(c; |\hV_{j}|) & \text{ if }V_{j-1}'\text{ contains a unique
critical point } c; \\
V_{j-1}' & \text{ otherwise.}
\end{cases}
$$
Note that $\hV_{j-1}$ contains a component of $f^{-1}(\hV_j)$ and in
the former case, $\hV_{j-1}$ is the component of
$f^{-1}((\hV_j)^\sharp)$ that contains~$c$, where $\hV_j^\sharp$ is
as in~\eqref{eqn:sharp}. We shall say that $\{\hV_j\}_{j=0}^t$ is
the \emph{preferred quasi-chain} for the chain $\{V_j\}_{j=0}^t$.
\index{preferred quasi-chain}

\begin{lemm}\label{lem:quasichain}
Consider a map $f\in\sA_{\R}(\ell)$ that is normalized near
critical points and fix $\kappa_0\in (0,\ell^{-1})$.
For each~$\eta_0>0$ small enough the following holds.
Let~$\{V_j\}_{j=0}^s$ and~$\{W_j\}_{j=0}^s$ be chains with $V_j \Supset
W_j$, for $j=0,1,\ldots, s$, and such that $V_s\subset \tB(\CJ,
3\eta_0)$, and let $\{\hV_j\}_{j=0}^s$ and $\{\hW_j\}_{j=0}^s$ be
the corresponding preferred quasi-chains. Assume that $V_j\cap
\Crit(f)=\emptyset$ for all $1\le j<s$. Then
$$\mmod(\hV_0; \hW_0)
\ge \kappa_0 \mmod (V_s; W_s).$$
\end{lemm}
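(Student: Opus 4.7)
The plan is to use the hypothesis $V_j \cap \Crit(f) = \emptyset$ for $1 \le j < s$ to collapse the preferred quasi-chain to the chain itself on the intermediate range, propagate modulus along this diffeomorphic portion via Lemma~\ref{lem:Rmodulus}(1), and then pay at most a factor of $\ell_c^{-1}$ at the single possibly-critical step from $V_1$ to $V_0$ by invoking Lemma~\ref{lem:Rmodulus}(2).

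First, by backward induction on $j$, I would show $\hV_j = V_j$ and $\hW_j = W_j$ for $1 \le j \le s$. The base $j = s$ is by definition. For the inductive step, once $\hV_j = V_j$, the auxiliary set $V'_{j-1}$ appearing in the quasi-chain construction equals $V_{j-1}$ itself (because $V_{j-1}$ is already a component of $f^{-1}(V_j)$), and for $1 \le j-1 \le s-1$ the set $V_{j-1}$ contains no critical point by hypothesis, so $\hV_{j-1} = V'_{j-1} = V_{j-1}$; the argument for $\hW$ is identical. Fixing $\lambda \in (\kappa_0 \ell, 1)$---a nonempty interval since $\kappa_0 < \ell^{-1}$---and shrinking $\eta_0$ so that $|V_s|$ is sufficiently small, the first inequality of Lemma~\ref{lem:Rmodulus}(1), applied to $\{V_j\}_{j=0}^s$, yields
$$\mmod(V_1; W_1) \ge \lambda \mmod(V_s; W_s).$$

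For the step from $V_1$ to $V_0$, consider cases. If $V_0$ contains no critical point, then $\hV_0 = V_0$ and $\hW_0 = W_0$, and the same cross-ratio argument underlying Lemma~\ref{lem:Rmodulus}(1) extends one further step to give $\mmod(\hV_0; \hW_0) \ge \kappa_0 \mmod(V_s; W_s)$ (with room to spare as $\kappa_0 < 1$). If $V_0$ contains a unique critical point $c$, then $\hV_0$ is the component of $f^{-1}(V_1^\sharp)$ containing $c$, and there are two subcases. In \emph{Subcase~A} ($W_0 \ni c$), the set $\hW_0$ is the analogous component of $f^{-1}(W_1^\sharp)$, and Lemma~\ref{lem:Rmodulus}(2) applies directly to give $\mmod(\hV_0; \hW_0) \ge \ell_c^{-1} \mmod(V_1; W_1)$. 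In \emph{Subcase~B} ($W_0 \not\ni c$), one has $\hW_0 = W_0$, and I would introduce the auxiliary set $\tilde{W}_0$ defined as the component of $f^{-1}(W_1^\sharp)$ containing $c$, verify using the normalization of $f$ near $c$ that $W_0 \subset \tilde{W}_0 \subset \hV_0$, apply Lemma~\ref{lem:Rmodulus}(2) to obtain $\mmod(\hV_0; \tilde{W}_0) \ge \ell_c^{-1} \mmod(V_1; W_1)$, and invoke Lemma~\ref{lem:grotzch} (monotonicity of modulus under inclusion) to get $\mmod(\hV_0; \hW_0) \ge \mmod(\hV_0; \tilde{W}_0)$. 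In every critical case, chaining with the previous inequality yields
$$\mmod(\hV_0; \hW_0) \ge \ell_c^{-1} \lambda \mmod(V_s; W_s) \ge \ell^{-1} \lambda \mmod(V_s; W_s) = \kappa_0 \mmod(V_s; W_s),$$
upon taking $\lambda = \kappa_0 \ell$.

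The main obstacle is Subcase~B: the quasi-chain construction forces $\hV_0$ to be symmetric around $c$ while $\hW_0 = W_0$ sits on one side of $c$, so Lemma~\ref{lem:Rmodulus}(2) is not directly applicable. The resolution is to pass through the symmetric intermediate set $\tilde{W}_0$, which is where the normalization of $f$ near critical points is essential: it guarantees that $\tilde{W}_0$ is a single interval in $\hV_0$ containing $W_0$, so the monotonicity step is valid. A subsidiary detail is ensuring that $V_0$ contains at most one critical point, which is secured by taking $\eta_0$ smaller than half the minimum distance between distinct critical values, together with the fact that pull-backs of a small set through a diffeomorphic chain stay small.
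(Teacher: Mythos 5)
Your proposal is correct and follows essentially the same route as the paper: reduce to $\hV_j=V_j$, $\hW_j=W_j$ for $j\ge 1$, push the modulus along the critical-point-free stretch with part~1 of Lemma~\ref{lem:Rmodulus} (with $\lambda=\kappa_0\ell$), and absorb the single critical step via part~2. The one place you deviate, Subcase~B, is both unnecessary and slightly flawed as written: part~2 of Lemma~\ref{lem:Rmodulus} only requires $c\in V_0$, not $c\in U_0$, so it applies directly with $U_0=W_0=\hW_0$ (note that when $W_1\ni f(c)$ and $V_0$ lies in the normalization neighborhood of $c$, any component of $f^{-1}(W_1)$ inside $V_0$ must contain $c$, so Subcase~B forces $f(c)\notin W_1$); meanwhile your auxiliary $\tilde{W}_0$, the component of $f^{-1}(W_1^\sharp)$ containing $c$, need not exist precisely in that situation, since then $W_1^\sharp=W_1\not\ni f(c)$. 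Dropping the auxiliary set and citing part~2 directly recovers the paper's argument.
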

\begin{proof}
Let $\lambda = \ell \kappa_0\in (0,1)$ and let $\eta>0$ be the
constant given by part~$1$ of Lemma~\ref{lem:Rmodulus}. Assuming
that $\eta_0$ is sufficiently small, we have $|V_s|, |V_1|<\eta$
since $f$ has no wandering interval. By construction, $\hV_j=V_j$,
$\hW_j=W_j$ for all $j=1,2,\ldots, s$. If $V_0\cap
\Crit(f)=\emptyset$, then $\hV_0=V_0$, $\hW_0=W_0$, and $f^s: V_0\to
V_s$ is a diffeomorphism, so the desired inequality follows from
part~$1$ of Lemma~\ref{lem:Rmodulus}.
In the case $V_0\cap\Crit(f) \neq \emptyset$,
\[
\mmod(V_1; W_1)\ge \lambda \mmod(V_s; W_s).
\]
Moreover, by part~$2$ of Lemma~\ref{lem:Rmodulus},
\begin{equation}\label{eqn:stepi}
\mmod(\hV_0; \hW_0)\ge \ell_c^{-1} \mmod (V_1; W_1).
\end{equation}
Combining these two inequalities above gives us the desired
estimate.
\end{proof}

\begin{proof}[Proof of Proposition~\ref{prop:shrink}]
Fix~$\kappa_0 \in (0, \ell^{-1})$ and assume~$f$ is backward contracting with a sufficiently large constant~$r$ that the conclusion of Lemma~\ref{lem:blsquantify} holds with $\kappa=\kappa_0$.
Let $\eta_0>0$ be a small constant such that for all $\delta\in (0,\eta_0)$, each pull-back $W$ of $\tB(\CJ, r\delta)$ with $\dist(W, f(\CJ))<\delta$ satisfies $\diam(W)<\delta$.
Moreover, when considering an interval map, we assume that~$f$ is normalized near critical points (after a $C^3$ conjugacy) and reduce~$\eta_0$ if necessary so that Lemma~\ref{lem:quasichain} holds.

Let $t_1<t_2<\ldots<t_k=s$ be all the positive integers such that $f^{t_i}(W_0)\cap \tB(\CJ,\eta_0)\not=\emptyset$.
By the backward contraction property, $f^{t_i}(W_0) \subset \tB(c_i, 3 \eta_0)$ for~$i = 1, \ldots, k$.
For each $i$, let~$c_i$ be the critical point in~$\Crit'(f)$ closest to~$f^{t_i}(W)$ and let $\{Y_i^j\}_{j=0}^{t_i}$ be the chain with
$Y_i^{t_i}=\tB(c_i, 3\eta_0)$ and $Y_i^0\supset W_0$ and write
$Y_i=Y_i^0$. For $1\le i\le k$ and $0\le j\le t_i$, let
$\hY_i^j=Y_i^j$ in the complex case; and let
$\{\hY_i^j\}_{j=0}^{t_i}$ be the preferred quasi-chain for
$\{Y_i^j\}_{j=0}^{t_i}$ in the real case. Moreover, let $\hW_j=W_j$,
$j=0,1,\ldots, s$ in the complex case;  and let $\{\hW_j\}_{j=0}^s$
be the preferred quasi-chain for $\{W_j\}_{j=0}^s$ in the real case.

Let $\nu_i$ be the order of the (quasi-)chain
$\{\hY_i^j\}_{j=0}^{t_i}$ and let $\nu=\max_{i=1}^k \nu_i$.
We first prove there exists a constant~$C_1>0$ such that
\begin{equation}\label{eqn:CshrinkW0nd}
\diam (W_0)\le\diam(\hW_0) \le  C_1 r^{-\kappa_0 \nu}.
\end{equation}
Indeed, take $i_0\in \{1,2,\ldots, k\}$ such that $\nu=\nu_{i_0}$ and let $0\le j_0< j_1<\cdots< j_{\nu-1}< j_{\nu}= t_{i_0}$ be the integers such that $\hY_{i_0}^{j_m}$ intersects $\CJ$, $m=0,1, \ldots, \nu$.
Then by the backward contracting property (and
the construction of the quasi-chains in the real case), we prove
inductively that $\hY_{i_0}^{j_{\nu-m}}\subset \tB(\CJ, 3 r^{-m}
\eta_0)$. In particular, $\hY_{i_0}^{j_0}\subset \tB(\CJ, 3r^{-\nu}
\eta_0)$. By Lemma~\ref{lem:blsquantify}, we obtain~\eqref{eqn:CshrinkW0nd}.

Next, let us prove there exist constants $C_2, A_0 > 0$ such that
\begin{equation}\label{eqn:CshrinkW0st}
\diam (W_0)\le C_2 \exp\{-\kappa_0^{\nu}(A_0s + \mu)\}.
\end{equation}

To this end, let $\hD_i\=\hY_{i+1}^{t_i}$ for $i=1,2,\ldots, k-1$,
let $\hD_k=W_s$, and put
\[
\mu_i=\mmod(\tB(\CJ, 3\eta_0); \hD_i), \text{ for }i=1,2,\ldots, k.
\]
So $\mu_k=\mu$.

For $i \in \{1, \ldots, k - 1\}$ and $t_i < j < t_{i+1}$, the set~$Y_{i+1}^j$ is
disjoint from $\tB(\CJ,\eta_0)$ so we have $\hY_{i+1}^j=Y_{i+1}^j$.
Hence by the backward contraction property,
$$ \hD_i\subset \tB(\CJ, 2\eta_0) \Subset \tB(\CJ, 3\eta_0). $$
Moreover, since~$f$ is uniformly expanding outside $\tB(\CJ,\eta_0)$, $\diam (\hD_i)$ is exponentially small in terms of $t_{i+1}-t_i$. For a similar reason,
$\diam (\hY_1)$ is exponentially small in terms of $t_1$. Thus there
are constants $A_0>0$ and $C_3>0$ such that
\begin{equation}\label{eqn:Cshrinkmod}
\mu_i\ge A_0(t_{i+1}-t_i), i=1,2,\ldots, k-1
\end{equation}
and
\begin{equation}\label{eqn:CshrinkY1}
\diam (\hY_1)\le C_3 \exp(-A_0t_1).
\end{equation}

In the complex case, by Lemma~\ref{lem:Cmodulus}  we obtain that
$$\mmod (\hY_i; \hY_{i+1})
\ge
\kappa_0^{\nu_i} \mmod (\tB(\CJ, 3\eta_0); \hD_i)\ge \kappa_0^{\nu} \mu_i,$$
where $\hY_{k+1}\=\hW_0$. This equality also holds in the real case
by repeatedly applying Lemma~\ref{lem:quasichain}. Indeed, if
$$ 0 \le j_0< j_1<\cdots< j_{\nu_i-1}< j_{\nu_i}= t_i $$
are the integers such that~$\hY_{i}^{j_m}$ intersects~$\CJ$, then for any $m=1,2,\ldots,
\nu_i$, $\hY_i^{j_m}\subset \tB(c_0,3\eta_0)$ so that
$$\mmod(\hY_i^{j_{m}}; \hY_{i+1}^{j_{m}})\ge \kappa_0
\mmod(\hY_i^{j_{m-1}}; \hY_{i+1}^{j_{m-1}}).$$
Thus, in both cases we have by Lemma~\ref{lem:grotzch} that
\[
\mmod (\hY_1; \hY_{k+1})\ge \sum_{i=1}^{k} \mmod (\hY_i; \hY_{i+1})
\ge \kappa_0^{\nu} \sum_{i=1}^{k} \mu_i.
\]
By~\eqref{eqn:Cshrinkmod}, this implies
\[
\mmod (\hY_1; \hW_0)=\mmod (\hY_1; \hY_{k+1})\ge \kappa_0^{\nu}
(A_0(s-t_1) +\mu).
\]
Using~\eqref{eqn:CshrinkY1} and applying Lemma~\ref{lem:modsize}, we
obtain~\eqref{eqn:CshrinkW0st}.

Combining the inequalities~\eqref{eqn:CshrinkW0st} and~\eqref{eqn:CshrinkW0nd}, we obtain the inequality~\eqref{eqn:West2}
with $C_0=\max(C_1, C_2)$.
\end{proof}

\begin{proof}[Proof of Theorem~\ref{t:polynomial shrinking}]
Fix a small constant $\eta_0>0$. By assumption, $f$ is uniformly expanding on the maximal invariant set~$K$ of~$f$ in~$J(f) \cap \tB(\Crit'(f), \eta_0/2)$.
It follows that there are constants~$\rho>0$ and~$A_1 > 0$ so that the following property holds: for every $y \in J(f)$, every
integer $t \ge 1$ and every chain $\{ W_j' \}_{j = 0}^t$ satisfying
$$ W_t' = B(y, \rho),
W_0' \cap \tB(\Crit'(f), \eta_0/2) \neq \emptyset, $$
and such that for every~$j \in \{ 1, \ldots, t - 1 \}$
the set~$W_j'$ is disjoint from~$\tB(\Crit'(f), \eta_0/2)$, we have
$$ W_0' \subset \tB(\Crit'(f), \eta_0),
\text{ and } \mmod(\tB(\Crit'(f), 3 \eta_0); W_0') \ge A_1 t. $$

Let $x\in J(f)$, $m\ge 1$ and let~$W$ be the component of
$f^{-m}(B(f^m(x),\rho))$ containing~$x$. We shall prove that
\begin{equation}\label{eqn:W}
\diam(W)
\le
C \min \left\{ r^{- \kappa_0 \nu}, \exp \left( - \kappa_0^{\nu} A m \right)\right\},
\end{equation}
holds for some integer $\nu\ge 0$, where $C, A > 0$ are constants.

Consider the chain $\{W_j\}_{j=0}^m$ with $W_m=B(f^m(x), \rho)$ and $W_0\ni x$.
If for every~$s \in \{ 0, \ldots, m \}$ the set~$W_s$ is disjoint from~$\tB(\Crit'(f), \eta_0/2)$, then the desired inequality follows with~$\nu = 0$ from the assumption that~$f$ is uniformly expanding on~$K$.
So we suppose that there is an integer $s \in \{0, \ldots, m \}$ such that~$W_s$ intersects $\tB(\Crit'(f), \eta_0/2)$, and assume that~$s$ is maximal with this property.
By our choice of~$\rho$ we have~$W_s \subset \tB(\Crit'(f), \eta_0)$, and
\begin{equation}\label{eqn:mu}
 \mu \= \mmod(\tB(\Crit'(f), 3\eta_0); W_s) \ge A_1(m - s).
\end{equation}
Applying Proposition~\ref{prop:shrink} to the chain
$\{W_j\}_{j=0}^s$, we obtain a non-negative integer~$\nu$ such that
\begin{equation*}
\diam (W)
\le
C_0 \min \left\{ r^{-\kappa_0 \nu}, \exp \left( - \kappa_0^{\nu} (A_0 s + \mu) \right) \right\},
\end{equation*}
which together with~\eqref{eqn:mu} implies that~\eqref{eqn:W} holds
with $A=\min (A_0, A_1)$.

To conclude the proof, let~$\beta > 0$ satisfy~$\beta < \log r/ (\kappa_0^{-1}\log \kappa_0^{-1})$, so that
$$ \varepsilon \= 1 - \beta (\kappa_0^{-1} \log \kappa_0^{-1})/ \log r > 0. $$
If~$\nu$ is such that~$r^{-\kappa_0 \nu} \ge m^{-\beta}$, then
$$ \nu \le \beta \log m / (\kappa_0 \log r)
\text{ and }
\exp(-\kappa_0^{\nu} A m) \le \exp(- m^{\varepsilon} A). $$
Thus, \eqref{eqn:W} implies that~$f$ satisfies the Polynomial Shrinking Condition with exponent~$\beta$.
\end{proof}

\section{Bounding the badness exponent}\label{s:finiteness of bad}
This section is devoted to the proof of Theorem~\ref{t:finiteness of
bad}. We shall prove a recursive formula for the size of \emph{relatively bad} pull-backs.

\subsection{Relatively bad pull-backs}\label{ss:relatively bad}
Let~$f \in \sA$ and let~$V_0$ be a nice set for~$f$. Recall that
given an integer~$m \ge 1$ we say that $W\in\sM_m(V_0)$ is a bad
pull-back~of~$V_0$ by~$f^m$, if for every integer~$m' \in \{ 1,
\ldots, m \}$ such that~$f^{m'}(W) \subset~V_0$ the pull-back
of~$V_0$ by~$f^{m'}$ containing~$W$ is not diffeomorphic.

For a subset~$V$ of~$V_0$ and an integer~$m \ge 1$, we say that a
pull-back~$W$ of~$V$ by~$f^m$ is \emph{bad relative to~$V_0$}, \index{relatively bad pull-back}
if the pull-back of~$V_0$ by~$f^m$ containing~$W$ is bad.
Denote by~$\fBr_0(V) \=\sM_0(V)$ the collection of components of~$V$; and for $m\ge 1$, denote by~$\fBr_m(V)$ the collection of all pull-backs of~$V$ by~$f^m$ that are bad relative to~$V_0$.
Moreover, denote by~$\fBr_{m,o}(V)$ the collection of
all elements~$W$ of~$\fBr_m(V)$ for which~$f^m$ maps~$W$
diffeomorphically onto a component of~$V$. Clearly, for any integer
$m\ge 1$ the following properties hold:
\begin{itemize}
\item $W\in \fBr_m(V_0)$ if and only if $W$ is a bad pull-back of $V_0$ by $f^m$;
\item for $V\subset V_0$ and $W\in\sM_m(V)$, $W\in \fBr_m(V)$
if and only if for any $m'\in\{1,2,\ldots, m\}$, $W$ is not
contained in any diffeomorphic pull-back of $V_0$ by $f^{m'}$;
\item if $\tV\subset V\subset V_0$ and $\tW\in \fBr_m(\tV)$, then the component of~$f^{-m}(V)$ containing~$\tW$ belongs to~$\fBr_m(V)$.
\end{itemize}

\begin{lemm}\label{l:child decomposition}
For every~$V\subset V_0$ and every $m\ge 1$,
$$ \fBr_m(V)
=
\fBr_{m,o}(V) \cup \left( \bigcup_{\text{children~$Y$ of } V} \fBr_{m-m(Y)}(Y) \right), $$
where $m(Y)=m_V(Y)$ is as in Definition~\ref{def:child}.
\end{lemm}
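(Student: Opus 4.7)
The plan is to verify both inclusions, the key tools being niceness of~$V_0$ (which lets me compare pull-back chains of~$V_0$ at different levels via the principle that a pull-back of~$V_0$ meeting~$V_0$ sits inside~$V_0$) and the defining structure of a child (it carries a critical point of~$f$ and its forward chain to~$V$ is diffeomorphic).

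For~$\supseteq$, I will fix a child~$Y$ of~$V$ with $k = m_V(Y)$ and take $W' \in \fBr_{m-k}(Y)$. Then $W'$ is a component of $f^{-m}(V)$. Let $W^*$ denote the pull-back of~$V_0$ by~$f^m$ containing~$W'$ and $W^{**}$ the pull-back of~$V_0$ by~$f^{m-k}$ containing~$W'$ (which is bad by hypothesis). Niceness of~$V_0$ applied to the image $f^{m-k}(W^*)$, a component of $f^{-k}(V_0)$ that meets~$V_0$ because it contains $f^{m-k}(W') = Y \ni c$ for the critical point $c$ of~$Y$, yields $W^* \subset W^{**}$. To show $W^*$ is bad I examine, for each $j' \in \{1, \ldots, m\}$ with $f^{j'}(W^*) \subset V_0$, the pull-back of~$V_0$ by~$f^{j'}$ containing~$W^*$. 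For $j' \le m-k$, the same niceness argument gives $f^{j'}(W^{**}) \subset V_0$ and identifies the two pull-back components, so non-diffeomorphism follows from badness of~$W^{**}$. For $j' > m-k$, writing $j' = (m-k) + j''$ and assuming the candidate pull-back $Q^*$ is diffeomorphic, the factorization $f^{j'} = f^{j''} \circ f^{m-k}$ would make $f^{j''}$ a diffeomorphism on $R = f^{m-k}(Q^*)$; but $R \supset Y \ni c$ and $j'' \ge 1$ force $Df^{j''}(c) = 0$, a contradiction.

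For~$\subseteq$, I will take $W \in \fBr_m(V)$ and set $W_j = f^j(W)$. If $f^m|W$ is a diffeomorphism, then $W \in \fBr_{m,o}(V)$ and we are done; otherwise let $\tilde{j}$ be the largest index in $\{0, \ldots, m-1\}$ with $W_{\tilde{j}} \cap \Crit(f) \neq \emptyset$. By maximality, $f^{m-\tilde{j}-1}$ is a diffeomorphism on the open neighborhood $W_{\tilde{j}+1}$ of $f(W_{\tilde{j}})$, mapping it onto the component $W_m$ of~$V$, so $Y = W_{\tilde{j}}$ is a child of~$V$ with $m_V(Y) = m - \tilde{j}$, and $W$ is a component of $f^{-\tilde{j}}(Y)$. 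Letting $P^*$ be the pull-back of~$V_0$ by~$f^{\tilde{j}}$ containing~$W$ and $W^*$ the bad pull-back of~$V_0$ by~$f^m$ containing~$W$, niceness of~$V_0$ yields $W^* \subset P^*$; for each $j \le \tilde{j}$ with $f^j(P^*) \subset V_0$, the pull-backs of~$V_0$ by~$f^j$ containing~$P^*$ and~$W^*$ agree, so badness of~$W^*$ forces non-diffeomorphism and $P^*$ is bad. Thus $W \in \fBr_{\tilde{j}}(Y)$.

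The main obstacle lies in ensuring that $Y = W_{\tilde{j}}$ genuinely qualifies as a child of~$V$, since the definition demands \emph{precisely} one critical point of~$f$ in~$Y$, whereas a pull-back of a nice set could a priori carry several. In the regime of interest (backward-contracting maps with sufficiently small nice sets) unicriticality of the relevant pull-backs is automatic, but in full generality one may need to refine the choice of~$Y$, for example by tracking a single critical orbit forward from~$W$ and selecting the smallest pull-back of~$V$ capturing that critical point.
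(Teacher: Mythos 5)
Your proof is correct and follows essentially the same route as the paper's: both directions decompose along the last critical passage (your maximal $\tilde{j}$ is the paper's minimal $m'$), and badness of the relevant pull-back of $V_0$ is verified by splitting the return times at the child's depth and using niceness of $V_0$ to identify the nested pull-back components, exactly as in the paper. The unicriticality caveat you flag at the end is likewise left implicit in the paper's own proof (which simply asserts that the minimal-$m'$ pull-back ``is a child''), so it is not a gap relative to the published argument.
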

\begin{proof}
For each $W \in \fBr_m(V) \setminus \fBr_{m,o}(V)$, there is~$m' \in
\{1, \ldots, m \}$ such that the connected component~$Y$
of~$f^{-m'}(V)$ containing~$f^{m - m'}(W)$ contains a critical point
of~$f$. If~$m'$ is the minimal integer with this property, then~$Y$
is a child of~$V$ and $m'=m(Y)$. If $m'=m$ then $W=Y$; otherwise, we
have $W \in \fBr_{m-m'}(Y)$ since $Y\subset V_0$.
This proves that the set on the left-hand side is contained in the set on the right-hand side.

To prove  the other direction, we first note that by
definition~$\fBr_{m,o}(V) \subset \fBr_m(V)$. It remains to show
that for a child~$Y$ of~$V$ we have~$\fBr_{m-m(Y)}(Y) \subset
\fBr_m(V)$.
Indeed, since~$Y$ contains a critical point of~$f$ we have~$Y \in \fBr_{m(Y)}(V)$, so the conclusion holds if $m(Y)=m$.
Now assume that $m_0\=m-m(Y)>0$ and consider~$W\in\fBr_{m_0}(Y)$. To
prove $W\in \fBr_m(V)$ let $W_0$ be the pull-back of $V_0$ by $f^m$
that contains $W$. Let~$m' \in \{ 1, \ldots, m \}$ be such
that~$f^{m'}(W_0) \subset~V_0$. If~$m' \le m_0$ then the
pull-back~$V_0$ by~$f^{m'}$ containing~$W_0$ is not univalent
because~$W \in \fBr_{m_0}(Y)$ and $Y\subset V_0$. If~$m' \ge m_0 +
1$, then the pull-back of~$V_0$ by~$f^{m' - m_0}$
containing~$f^{m_0}(W)$ is not diffeomorphic because it
contains~$Y$. This shows that~$W \in \fBr_m(V)$.
Thus~$\fBr_{m-m(Y)}(Y) \subset \fBr_m(V)$.
\end{proof}

\subsection{Proof of Theorem~\ref{t:finiteness of bad}}
Fix~$f \in \sA$, put~$\tau \= \taubase^{-\ell_{\max}(f)}$ and
fix~$\delta_0 > 0$ sufficiently small so that for every~$\delta \in
(0, \delta_0]$, every integer~$i \ge 1$, if $W$ is a
pull-back~of~$\tB(\CJ, 2\tau^i\delta)$ by $f^m$ for some $m\ge 1$
and the pull-back~$\tW$ of~$\tB(\CJ, \delta)$ by~$f^{m}$
containing~$W$ is diffeomorphic, then we have $\diam (\tW) \ge
\koebefactor^{-1} \cdot \taubase^i\diam (W)$, where $\koebefactor >
1$ is a universal (Koebe) constant.

In the rest of this section we fix~$t > 0$, and put $\tilde{\ell} =
\ell_{\max}(f)$ in the complex case and $\tilde{\ell}=2$ in the real
case. We assume that~$f$ is backward contracting with a large
constant~$r$ so that the conclusion of Lemma~\ref{l:children} holds
with~\begin{equation}\label{eqn:constants} s = t/(4\ell_{\max}(f))
\end{equation}
 and so that
\begin{equation}\label{eqn:smalleps}
(2r^{-1})^{s}
\le
\varepsilon \= \tilde{\ell}^{-1} \koebefactor^{- t} \taubase^{- t} \left(1 - \taubase^{-t/2} \right).
\end{equation}
So, reducing~$\delta_0 > 0$ if necessary, for each integer~$n \ge 0$ there exists a nice set~$V_n = \bigcup_{c \in \CJ} V_n^c$ such that for each~$c \in \CJ$,
$$ \tB(c, \tau^n\delta_0)
\subset
V_n^c
\subset
\tB(c, 2\tau^n \delta_0), $$
and
$$ \sum_{\text{children~$Y$ of } V_n} \diam(f(Y))^{s}
\le
(\tau^n \delta_0)^s. $$
Note that for each integer~$n \ge 0$, and each child~$Y$
of~$V_n$, we have~$\diam(f(Y)) \le (2r^{-1}) \tau^n \delta_0$, hence
\[\sum_{\text{children~$Y$ of } V_n} \diam(f(Y))^{2s}
\le (2r^{-1}\tau^n\delta_0)^{s}\sum_{\text{children~$Y$ of } V_n}
\diam(f(Y))^{s}.\] It follows that for each integer~$n \ge 0$,
\begin{equation}
  \label{e:children contribution}
\sum_{\text{children~$Y$ of } V_n} \diam(f(Y))^{2s}
\le \varepsilon (\tau^n \delta_0)^{2s}.
\end{equation}

Given an integer~$m \ge 0$ and a subset~$V$ of~$V_0$ we put
$$ \Xi_t(V, m) \= \sum_{j=0}^{m-1}\sum_{W \in \fBr_j(V)} d_V(W) \diam(W)^t,$$
and
$$\Xi^*_t(V, m)\= \sum_{j=0}^{m-1} \sum_{W\in\fBr_j(V)\setminus \fBr_{j,o}(V)} d_V(W)\diam (W)^t.
$$
Note that by definition~$\Xi_t(V, 0) = 0$ and that for~$\tV\subset
V\subset V_0$ such that every connected component of~$V$ contains at
most one connected component of~$\tV$, we have that for each $j\ge
0$ and $W\in\sM_j(V)$,
\begin{equation}\label{eqn:degreecomp}
\sum_{\tW \in \sM_j(\tV) \, : \, \tW\subset W} d_{\tV}(\tW)\le d_{V}(W).
\end{equation}
In particular, we have that for each $m\ge 0$,
$$ \Xi_t(\tV, m) \le \Xi_t(V, m). $$

\begin{lemm}
Under the above circumstances, for each integer~$m \ge 1$,
\begin{equation}\label{e:zero level}
\Xi_t(V_0, m)
\le
\sum_{c\in\Crit'(f)}\diam(V_0^c)^t + \tilde{\ell} \sum_{\text{children~$Y$ of } V_0} \Xi_t(Y, m - 1)
\end{equation}
and, for each~$n \ge 1$,
\begin{equation}\label{e:higher levels}
\Xi_t(V_n, m)
\le
\koebefactor^{t} \tilde{\ell} \sum_{i = 0}^{n - 1} \taubase^{(i + 1 - n) t} \sum_{\text{children~$Y$ of } V_{i}} \Xi_t(Y, m - 1).
\end{equation}
\end{lemm}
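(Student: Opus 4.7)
Both inequalities proceed by applying the child decomposition from Lemma~\ref{l:child decomposition} and then tracking, for each bad pull-back, a degree factor together with a Koebe-style diameter estimate relating the various levels~$V_n$.

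Inequality (\ref{e:zero level}) is the easier one. The key observation is that $\fBr_{j,o}(V_0) = \emptyset$ for every $j \ge 1$: a diffeomorphic pull-back of $V_0$ by $f^j$ would directly violate the definition of a bad pull-back, on setting $m' = m = j$. Isolating the $j = 0$ term of $\Xi_t(V_0, m)$ produces the boundary contribution $\sum_{c} \diam(V_0^c)^t$; and for $j \ge 1$ Lemma~\ref{l:child decomposition} furnishes a disjoint decomposition $\fBr_j(V_0) = \bigsqcup_{Y} \fBr_{j - m(Y)}(Y)$ indexed by children $Y$ of $V_0$. A brief check gives $d_{V_0}(W) \le \tilde{\ell}\, d_Y(W)$ for every $W$ in the piece corresponding to a child~$Y$: in the complex case this is multiplicativity of degrees through the unique critical step of~$Y$ (factor $\le \ell_{\max}(f) = \tilde{\ell}$), and in the real case it is the single additional critical preimage that enters the counting when passing from~$Y$ to~$V_0$. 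Reindexing the double sum yields (\ref{e:zero level}).

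For (\ref{e:higher levels}) the obstruction is that, once $n \ge 1$, the set $\fBr_{j,o}(V_n)$ is typically non-empty: a pull-back of~$V_n$ can be diffeomorphic while the companion pull-back of~$V_0$ is bad, the extra critical structure being hidden in $V_0 \setminus V_n$. My plan is to associate to each $W \in \fBr_j(V_n)$ a triple $(i, Y, W^\sharp)$ with $i \in \{0, \ldots, n-1\}$, $Y$ a child of $V_i$, and $W^\sharp \in \fBr_{j - m(Y)}(Y) \cup \{Y\}$, satisfying
\[
d_{V_n}(W)\, \diam(W)^t \le \koebefactor^t\, \tilde{\ell}\, \taubase^{(i+1-n)t}\, d_Y(W^\sharp)\, \diam(W^\sharp)^t,
\]
so that resumming reproduces the right-hand side of (\ref{e:higher levels}). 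To construct the triple I first reduce to the case $W \in \fBr_{j,o}(V_n)$ by iterating the $V_n$-level child decomposition; then I consider the nested chain $W = W_n \subset W_{n-1} \subset \cdots \subset W_0 = \tW$, where $W_k$ is the pull-back of~$V_k$ by $f^j$ containing~$W$. Since~$\tW$ is bad, there exists a minimal $i^* \in \{1, \ldots, n\}$ with $f^j | W_{i^*}$ diffeomorphic, and setting $i = i^* - 1$, the first critical step $k^* \le j$ along the pull-back chain of $V_i^c$ down to~$W_i$ identifies a child~$Y$ of~$V_i$ (minimality of~$k^*$ and simple connectivity of~$V_i^c$, plus the $\sharp$-construction in the real case, supply the univalent extension of~$f^{k^*-1}$ required by the definition of a child).

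The main obstacle is the Koebe bookkeeping on the diameter side. Once $(i, Y, W^\sharp)$ is in place one must relate $\diam(W)$ to $\diam(W^\sharp)$ by comparing, inside~$Y$, the univalent pull-back of~$V_n^c$ (which contains~$W$ via $f^{j - k^*}$) with that of~$V_{i+1}^c$. This is precisely the estimate $\diam(\tW) \ge \koebefactor^{-1} \taubase^i \diam(W)$ recorded at the beginning of the proof, applied with $\delta = \tau^{i+1}\delta_0$; iterating it across the $n - i - 1$ intermediate levels produces the scaling $\taubase^{(i+1-n)t}$, while the remaining distortion is absorbed into~$\koebefactor^t$. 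The degree bound $d_{V_n}(W) \le \tilde{\ell}\, d_Y(W^\sharp)$ is as in the $n = 0$ case, coming from the single critical step at position~$k^*$.
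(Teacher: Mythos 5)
Your treatment of \eqref{e:zero level} matches the paper's: the observation that $\fBr_{j,o}(V_0)=\emptyset$ for $j\ge 1$, the child decomposition of Lemma~\ref{l:child decomposition}, and the degree factor $\tilde{\ell}$ coming from the single critical passage of a child are exactly what is needed. For \eqref{e:higher levels} your architecture (locate the lowest level at which the pull-back chain $W=W_n\subset\cdots\subset W_0$ becomes diffeomorphic, apply a Koebe-type scaling between that level and level~$n$, then feed the result into the child decomposition one level below) is also essentially the paper's, but two steps of your execution have genuine problems.

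First, the resummation. Your per-term inequality $d_{V_n}(W)\,\diam(W)^t \le \koebefactor^t\,\tilde{\ell}\,\taubase^{(i+1-n)t}\, d_Y(W^\sharp)\,\diam(W^\sharp)^t$ cannot simply be summed over $W$, because the assignment $W\mapsto(i,Y,W^\sharp)$ is far from injective: a single component of $f^{-j}(V_i)$ typically contains many components of $f^{-j}(V_n)$ (one for each sheet of $f^j$ over the relevant component of $V_n$, so their number is of the order of $d_{V_i}(W^\sharp)$, which is unbounded over the family). Summing your per-term bound therefore yields the right-hand side of \eqref{e:higher levels} multiplied by these unbounded multiplicities. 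What is needed is the summed degree inequality $\sum_{W\subset W'} d_{V_n}(W)\le d_{V_i}(W')$ (inequality \eqref{eqn:degreecomp}), combined with the single diameter bound $\diam(W)\le \koebefactor\,\taubase^{i+1-n}\diam(W')$ valid for every such $W$.

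Second, the preliminary ``reduction to $W\in\fBr_{j,o}(V_n)$ by iterating the $V_n$-level child decomposition'' does not lead to \eqref{e:higher levels}: the child decomposition at level $n$ produces sums over children of $V_n$, and only children of $V_0,\dots,V_{n-1}$ occur on the right-hand side. No reduction is needed: when no $W_k$ is diffeomorphic you set $i=n-1$ and use the trivial containment $W\subset W_{n-1}$ (the factor $\taubase^{(i+1-n)t}$ is then $1$); the Koebe-type estimate is only invoked when $i<n-1$, in which case $W_{i+1}$ is diffeomorphic. Relatedly, that estimate must be applied \emph{once}, comparing the pull-back of $V_n\subset\tB(\Crit'(f),2\tau^{\,n-i-1}\cdot\tau^{i+1}\delta_0)$ directly with the diffeomorphic pull-back of $\tB(\Crit'(f),\tau^{i+1}\delta_0)$; iterating it level by level, as your wording suggests, accumulates a factor $\koebefactor^{\,n-i-1}$ that cannot be absorbed into the single $\koebefactor^t$ of the statement and would break the geometric-series step in the proof of Theorem~\ref{t:finiteness of bad}.
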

\begin{proof}
\

\partn{1}
To prove the first assertion observe that by definition no univalent
pull-back of $V_0$ is bad relative to~$V_0$,
so~$\fBr_{j,o}(V_0)=\emptyset$  for all $j\ge 1$.
Thus, by Lemma~\ref{l:child decomposition},
\begin{equation*}
\Xi_t(V_0, m)
\le
\sum_{W\in\fBr_0(V_0)}  \diam (W)^t +\sum_{\text{children~$Y$ of } V_0} d(Y) \Xi_t(Y, m - m_{V_0}(Y)),
\end{equation*}
where $m_{V_0}(Y)\ge 1$ is as in Definition~\ref{def:child}.
Since~$\fBr_0(V_0)$ is the collection of components of~$V_0$, the desired inequality follows.

\partn{2}
To prove the second assertion fix~$n \ge 1$. For $W \in \fBr_j(V_n)$ and~$i \in \{ 0, \ldots, n - 1 \}$, denote by~$W^i$ the component of~$f^{-j}(V_{i})$ containing~$W$.
Thus $W^{i} \in \fBr_j(V_{i})$, and by definition~$W^0 \not \in \fBr_{j,o}(V_0)$. We denote by~$i(W)$ the largest integer~$i \in \{ 0, \ldots, n - 1 \}$ such that~$W^{i} \not \in \fBr_{j,o}(V_{i})$.

\partn{2.1}
Let us prove that for each $i\in\{0,1,\ldots, n-1\}$,
\begin{equation}\label{e:diffeomorphic estimate}
\sum_{j=0}^{m-1}\sum_{W \in \fBr_j(V_n)  \, : \, i(W) = i} d_{V_n}(W) \diam(W)^t
\\ \le
\koebefactor^{t} \cdot \taubase^{(i + 1 - n) t} \Xi^*_t(V_i, m).
\end{equation}
To this end, we first prove that for each~$W \in \fBr_j(V_n)$ with~$i(W) = i$,
\begin{equation}\label{eqn:diamwwi}
\diam(W)
\le
\koebefactor \cdot \taubase^{i + 1 - n} \diam(W^{i}).
\end{equation}
Indeed, this is trivial if $i=n-1$.
If $i<n-1$, then $W^{i+1}$ is a diffeomorphic pull-back of $V_{i+1}$, so, by the definition of~$\koebefactor$ and the inclusion~$W^{i + 1} \subset W^i$,
$$ \diam(W)
\le
\koebefactor \cdot \taubase^{i + 1 - n} \diam(W^{i + 1})
\le
\koebefactor \cdot \taubase^{i + 1 - n} \diam(W^{i}). $$
Together with~\eqref{eqn:degreecomp}, the inequality~\eqref{eqn:diamwwi} implies that for each $j\ge 0$ and each $W'\in \fBr_j(V_i)$,
$$ \sum_{\substack{W \in\fBr_j(V_n) \, : \\ i(W) = i, W^i= W'}} d_{V_n}(W) \diam(W)^t
\le
d_{V_i}(W')\diam(W')^t \koebefactor^{t} \cdot \taubase^{(i + 1 - n) t}. $$
Summing over all $W'\subset  \fBr_j(V_n)\setminus\fBr_{j,o}(V_n)$, we obtain
 the inequality~\eqref{e:diffeomorphic estimate}.

\partn{2.2}
In view of Lemma~\ref{l:child decomposition}, for each~$i \in \{0, \ldots, n - 1 \}$,
\begin{align*}
\Xi^*_t(V_i, m)
& \le
\sum_{\text{children~$Y$ of } V_i} d_{V_i}(Y) \Xi_t(Y, m - m(Y))
\\ & \le
\tilde{\ell} \sum_{\text{children~$Y$ of } V_i} \Xi_t(Y, m - 1).
\end{align*}
Together with~\eqref{e:diffeomorphic estimate}, this implies,
$$
\Xi_t(V_n, m)
\le
\koebefactor^{t} \tilde{\ell} \sum_{i = 0}^{n - 1} \taubase^{(i + 1 - n) t} \sum_{\text{children~$Y$ of } V_{i}} \Xi_t(Y, m - 1).
$$
\end{proof}

\begin{proof}[Proof of Theorem~\ref{t:finiteness of bad}]
Let~$C > 0$ be a sufficiently large constant that
\begin{equation}\label{e:for zero level}
\sum_{c\in\Crit'(f)}\diam(V_0^c)^t
\le
C \delta_0^{2s} \left( 1 - \varepsilon \tilde{\ell} \tau^{-2s} \right).
\end{equation}

With the notation introduced above we need to show that
$$ \lim_{m \to \infty} \Xi_t(V_0, m) < \infty. $$
We will prove by induction in~$m \ge 0$ that for every~$n \ge 0$,
\begin{equation}
  \label{e:controlling bad}
\Xi_t(V_n, m)
\le C (\tau^n \delta_0)^{2s},
\end{equation}
which clearly implies the desired assertion.

Since for each integer~$n \ge 0$ we have~$\Xi_t(V_n, 0) = 0$, when~$m = 0$ inequality~\eqref{e:controlling bad} holds trivially for every~$n \ge 0$.
Let~$m \ge 1$ be given and assume by induction that inequality~\eqref{e:controlling bad} holds for every~$n \ge 0$, replacing~$m$ by~$m - 1$.

Fix~$i \ge 0$. Let us prove that
\begin{equation}\label{e:level i}
\sum_{\text{children~$Y$ of } V_i} \Xi_t(Y, m - 1)
\le
C \varepsilon (\tau^{i-1} \delta_0)^{2s}.
\end{equation}
Indeed, for a child~$Y$ of~$V_i$, letting~$k$ be the largest integer such that~$Y \subset V_k$, we have
$\diam (f(Y))\ge \tau^{k+1}\delta_0$.
By the induction hypothesis, it follows that,
\begin{equation*}
\Xi_t(Y, m - 1)
\le
\Xi_t(V_k, m - 1)
\le
C (\tau^k \delta_0)^{2s}
\le
C ( \tau^{-1}\diam(f(Y)))^{2s},
\end{equation*}
thus
\begin{equation*}
\sum_{\text{children~$Y$ of } V_i} \Xi_t(Y, m - 1)
\le
C \sum_{\text{children~$Y$ of } V_i} (\tau^{-1} \diam(f(Y)))^{2s},
\end{equation*}
which implies~\eqref{e:level i} by~\eqref{e:children contribution}.

Taking~$i = 0$ in~\eqref{e:level i}, we obtain by~\eqref{e:for zero level} and~\eqref{e:zero level},
$$ \Xi_t(V_0, m)
\le
C \delta_0^{2s}, $$
which proves~\eqref{e:controlling bad} for~$n = 0$.
By~\eqref{e:level i} and~\eqref{e:higher levels}, for a given integer~$n \ge 1$ we obtain,
\begin{align*}
\Xi_t(V_n, m)
& \le
\koebefactor^t \tilde{\ell} \sum_{i=0}^{n-1} \taubase^{(i+1-n)t} C\eps(\tau^{(i - 1)} \delta_0)^{2s}
\\ & \le
C(\tau^n \delta_0)^{2s} \eps \koebefactor^t \tilde{\ell}\tau^{-4s}\sum_{i=0}^{n-1} (\taubase ^t \tau^{2s})^{i+1-n}
\\ & \le
C(\tau^n \delta_0)^{2s} \eps \koebefactor^t \tilde{\ell}\taubase^{-t} (1-\taubase^{-t/2})^{-1},
\end{align*}
where we used $\tau=\taubase^{-\ell_{\max}(f)}$ and $s=t/(4\ell_{\max}(f)).$
Inequality~\eqref{e:controlling bad} follows by applying~\eqref{eqn:smalleps}, thus completing the proof of the theorem.
\end{proof}

\section{Induced Markov maps}
\label{s:tail estimate} This section is devoted to
the proof of Theorem~\ref{t:tail estimate} and
Corollary~\ref{coro:inducing}. We shall first prove
in~\S\ref{ss:dimensionestimate} the desired dimension estimate
applying arguments in~\cite{PrzRiv07}, see
Proposition~\ref{prop:dimensionestimate}. Then we proceed to the
tail estimate, where a Whitney decomposition type argument
originated in~\cite{PrzRiv11} plays an important role. We first
reduce the proof of Theorem~\ref{t:tail estimate} to
Proposition~\ref{prop:whitney} in~\S\ref{ss:tail estimate}, and then
give the proof of this proposition
in~\S\ref{ss:proof of whitney}. We also deduce
Corollary~\ref{coro:inducing} from Theorem~\ref{t:tail estimate} in
\S\ref{ss:proof of inducing}.

We fix throughout this section a map~$f$ in the class~$\sA^*$ defined in~\S\ref{sss:inducing}.
Moreover, we denote by~$\dom(f)$ the domain of~$f$.
In this section we use the hyperbolic dimension~$\HDhyp(f)$ and the conical Julia set~$\Jcon(f)$ defined in~\S\ref{ss:backward contracting maps}, as well as the badness exponent~$\delta_{\bad}(f)$ defined in Definition~\ref{def:badpb}.
\subsection{Dimension estimate}\label{ss:dimensionestimate}
Let us first prove the following:
\begin{prop}\label{prop:dimensionestimate}
Assume that $f\in \sA^*$ satisfies $\delta_{\bad}(f)< \HD(J(f))$.
Then,
$$ \HD(J(f)) = \HD(\Jcon(f)) = \HDhyp(f), $$
and for each sufficiently small nice couple $(\hV, V)$ and each $c \in \Crit'(f)$,
$$\HD(J(F)\cap V^c)=\HD(J(f)),$$
where $F$ denotes the canonical induced map associated with $(\hV, V)$, defined in~\S\ref{sss:inducing}.
\end{prop}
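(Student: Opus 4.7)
The plan is to establish the chain
\[
\HDhyp(f) \le \HD(\Jcon(f)) \le \HD(J(f)) \le \max_{c\in\Crit'(f)} \HD(J(F) \cap V^c) \le \HDhyp(f),
\]
so that all equalities follow simultaneously, and then upgrade the maximum over $c$ to the pointwise statement. The first two inequalities are standard. For the fourth, I would observe that the induced map $F$ on $J(F)$ is a countable-branch conformal Markov iterated function system whose branches of $F^{-1}$ are diffeomorphic pull-backs of components of $\hV$, having uniformly bounded Koebe distortion on the inner $V$ by Lemma~\ref{lem:koebeC} applied to $V\Subset\hV$. A Mauldin--Urbanski-type finite-subsystem approximation then exhibits compact $F$-invariant hyperbolic subsets of $J(F) \cap V^c$ with Hausdorff dimension arbitrarily close to $\HD(J(F)\cap V^c)$, giving $\HDhyp(f) \ge \HD(J(F)\cap V^c)$.

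The heart of the proof is the third inequality. Fix $t \in (\delta_{\bad}(f),\HD(J(f)))$; by assumption (A2), Lemma~\ref{lem:db}, and the definition of $\delta_{\bad}(f)$, I would choose a symmetric nice couple $(\hV, V)$ so small that $\delta_{\bad}(\hV)<t$, so that
\[
\sum_{m\ge 1}\sum_{W\in\fB_m(\hV)}\diam(W)^t<\infty.
\]
Decompose $J(f)$ by landing domains (Lemma~\ref{lem:finitelanding}): $J(f)=(K(V)\cap J(f))\cup\bigcup_{U\in\fL_V} U$, where $K(V)\cap J(f)$ is uniformly hyperbolic by (A1) (so has dimension at most $\HDhyp(f)$) and each $U\cap J(f)$ is the bi-Lipschitz image of some $V^{c_U}\cap J(f)$ under the Koebe-controlled inverse of $f^{l(U)}|U$. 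It therefore suffices to control $\HD(V^c\cap J(f))$ for each $c$.

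Next I would bound the complementary dimension. Let $\mathrm{bad}$ be the set of points in $V\cap J(f)$ having no good time for $(\hV,V)$. Any $x\in V\cap J(f)\setminus J(F)$ has only finitely many good times, and hence after finitely many applications of $F$ (each a bi-Lipschitz diffeomorphism by Koebe) lands in $\mathrm{bad}$; countable stability of Hausdorff dimension then gives $\HD(V\cap J(f)\setminus J(F))\le \HD(\mathrm{bad})$. A point $x\in\mathrm{bad}$ either visits $V$ only finitely often, hence eventually enters $K(V)\cap J(f)$ (contributing at most $\HDhyp(f)$), or it visits $V$ infinitely often with every such return realized via a non-diffeomorphic pull-back of $\hV$. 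For the latter orbits, combining non-diffeomorphicity at every $V$-return with the nice-couple property $V\Subset \hV$, the pull-back $W_k$ of $\hV$ at each $V$-return time $R_k$ lies in $\fB_{R_k}(\hV)$ in the sense of Definition~\ref{def:badpb}. Hence $x$ is in the $\limsup$ of $\bigcup_m \fB_m(\hV)$, a set of Hausdorff $t$-content zero by the convergence above; consequently $\HD(\mathrm{bad})\le\max(\HDhyp(f),t)$.

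Assembling the bounds yields $\HD(J(f))\le \max(\HDhyp(f),\max_c \HD(J(F)\cap V^c),t)=\max(\HDhyp(f),t)$ by the Mauldin--Urbanski bound, and letting $t\downarrow \delta_{\bad}(f)<\HD(J(f))$ forces $\HDhyp(f)\ge \HD(J(f))$; thus all four displayed dimensions coincide and equal $\HD(J(F))=\max_c\HD(J(F)\cap V^c)$. To promote this maximum to a pointwise equality I would invoke topological transitivity of $F$ on $J(F)$, inherited from the topological exactness of $f$ on $J(f)$ (assumption (A3) in the real case) together with the Markov structure: for any two components $V^c,V^{c'}$ there is an inverse branch of some iterate $F^n$ mapping a subset of $V^{c'}$ diffeomorphically, with Koebe-bounded distortion, into $V^c$, transferring full Hausdorff dimension. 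The main obstacle I anticipate is justifying the cover of $\mathrm{bad}$ by genuinely bad pull-backs of $\hV$: the definition of $\mathrm{bad}$ only directly controls returns to $V$, whereas the definition of a bad pull-back of $\hV$ involves \emph{all} intermediate returns to $\hV$; handling the intermediate times $m'<R_k$ at which $f^{m'}(x)\in\hV\setminus V$ will require a nested argument exploiting the nice-couple hierarchy (and possibly an intermediate reduction through the relatively-bad classes $\fBr$ of \S\ref{ss:relatively bad}).
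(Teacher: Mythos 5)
Your proposal follows essentially the same route as the paper: the paper's Lemma~\ref{lemm:dimensionD} is precisely your decomposition of $(V\cap J(f))\setminus J(F)$ into points with finitely many returns to~$V$ (absorbed into the hyperbolic set $K(V)\cap J(f)$ and its countably many smooth preimages) and points covered by the tails of $\bigcup_m\fB_m(\hV)$, whose Hausdorff $t$-measure vanishes for $t>\delta_{\bad}(\hV)$; and the inequality $\HDhyp(f)\ge\HD(J(F))$ is obtained there by the same finite-subsystem approximation via~\cite[Theorem~4.2.13]{MauUrb03}. One small difference: the paper gets the pointwise statement for each $V^c$ directly from $\HD(J(f)\cap V^c)=\HD(J(f))$ (topological exactness) together with $\HD((J(f)\cap V^c)\setminus J(F))<\HD(J(f))$, which avoids having to verify that the component transition graph of $F$ is strongly connected, as your transitivity argument would require.

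The obstacle you flag at the end is real, but it closes exactly as you suspect, using the nice-\emph{couple} property rather than mere nestedness. Suppose $y$ has no good time, $f^m(y)\in V$, and the pull-back $\tW$ of $\hV$ by $f^m$ containing $y$ fails to be bad; by Lemma~\ref{lem:db0} then $\tW\subset P$ for some diffeomorphic pull-back $P$ of $\hV$ by $f^{m'}$ with $m'\le m$. Let $m''\in\{m',\ldots,m\}$ be the first time $\ge m'$ at which $f^{m''}(y)\in V$. The landing domain $U\in\fL_V$ containing $f^{m'}(y)$ extends to a set $\hU$ that $f^{m''-m'}$ maps diffeomorphically onto a component of $\hV$ (a critical point in an intermediate pull-back of $\hU$ would lie in $\Crit'(f)\subset V$, and the nice-couple property would then force that pull-back, hence an earlier image of $U$, into $V$, contradicting minimality of the landing time); since $\hU$ is a pull-back of $\hV$ meeting the component $f^{m'}(P)$ of $\hV$, it is contained in it, and composing with $(f^{m'}|P)^{-1}$ exhibits $m''$ as a good time for $y$ --- a contradiction. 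Note also that your preliminary reduction, pushing forward by finitely many iterates of $F$ so as to reach a point with \emph{no} good time before invoking badness, is genuinely needed and should be kept: a return time $m$ of a point that still has an earlier good time $m_1$ yields a pull-back contained in the diffeomorphic pull-back of $\hV$ by $f^{m_1}$, hence not a bad one.
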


For an open neighborhood $V$ of $\Crit'(f)$, let $K(V)$ be as in
\S\ref{ss:notation}. It follows from the definition of~$\sA^*$ that
$K(V)\cap J(f)$ is a hyperbolic set for~$f$. For a nice set~$V$ and
$m\ge 1$, we use~$\fB_m(V)$ to denote the collection of all bad pull-backs of~$V$ by $f^m$, see Definition~\ref{def:badpb}.

We need the following lemma.
\begin{lemm}\label{lemm:dimensionD}
For each $f \in \sA^*$,
$$ \HD(J(f)\setminus \Jcon (f))
\le
\delta_{\bad}(f). $$
Furthermore, for each nice couple $(\hV, V)$ of~$f$ such that $\delta_{\bad}(\hV)<\HD(J(f))$,
$$\HD((J(f)\cap V)\setminus J(F))
<
\HD(J(f)),$$
where~$F$ denotes the canonical induced map associated with $(\hV, V)$.
\end{lemm}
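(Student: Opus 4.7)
Plan. For the first inequality I fix a nice set $V$ for $f$ and set
\[
S_V := \bigcap_{N \ge 1} \bigcup_{m \ge N} \bigcup_{W \in \fB_m(V)} W;
\]
the definition of $\delta_{\bad}(V)$ together with a standard Hausdorff-measure Borel--Cantelli estimate yields $\HD(S_V) \le \delta_{\bad}(V)$. I then aim to show
\[
J(f) \setminus \Jcon(f) \;\subset\; E \,\cup\, \bigcup_{n \ge 0} f^{-n}(S_V),
\]
where $E$ is the countable set of points whose forward orbit meets $\Crit(f)$; since $f^{-n}$ preserves Hausdorff dimension, this gives $\HD(J(f)\setminus\Jcon(f)) \le \delta_{\bad}(V)$, and the first assertion follows upon taking the infimum over~$V$. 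To prove the containment, I use that $f\in\sA^*$ is expanding away from critical points, so $K(V)\cap J(f)$ is a hyperbolic set at positive distance from $\Crit(f)$, and hence every one of its points is conical; thus any $x\in J(f)\setminus\Jcon(f)$ outside $E$ must visit $V$ infinitely often, at times $m_1<m_2<\cdots$. If infinitely many of the pullbacks $W_i$ of $V$ by $f^{m_i}$ through $x$ were diffeomorphic, pulling back a definite-size ball inside the relevant component of $V$ would force $x\in\Jcon(f)$; so only finitely many are, and I let $m_{i_0}$ be the largest return time with $W_{i_0}$ diffeomorphic (setting $m_{i_0}=0$ if none) and put $y:=f^{m_{i_0}}(x)$. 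The key step is that for each $j>i_0$ the image $Y_j := f^{m_{i_0}}(W_j)$ is the pullback of $V$ by $f^{m_j - m_{i_0}}$ containing~$y$, and the diffeomorphism $f^{m_{i_0}}|_{W_{i_0}}$ identifies each intermediate $V$-pullback through~$y$ with some $W_k$ for $i_0<k\le j$, which is non-diffeomorphic; hence $Y_j \in \fB_{m_j-m_{i_0}}(V)$ and $y\in S_V$.

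For the second inequality I first reduce to bounding $\HD(V\cap J(f)\setminus D)$ via
\[
(V \cap J(f)) \setminus J(F) \;\subset\; \bigcup_{k \ge 0} F^{-k}\bigl(V \cap J(f) \setminus D\bigr),
\]
which holds because any $x$ with only finitely many good times has $F^{k^*}(x)\notin D$ for some $k^*$, and $F^{-k}$ preserves Hausdorff dimension (each of its branches is a diffeomorphic pullback of some iterate of~$f$). I decompose $V\cap J(f)\setminus D$ into the points with only finitely many $V$-returns (which then lie in $\bigcup_n f^{-n}(K(V)\cap J(f))$) and the points with infinitely many $V$-returns, every such return coming with a non-diffeomorphic $\hV$-pullback through~$x$ because $x\notin D$. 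The second piece is handled by the $\hV$-analog of the Part~1 argument: letting $m^*$ be the largest $m\ge 0$ with $f^m(x)\in\hV$ and diffeomorphic $\hV$-pullback through~$x$, I verify that $f^{m^*}(x)$ lies in $\fB_{m_j-m^*}(\hV)$ for each $V$-return $m_j>m^*$. The nice couple condition (any pullback of $\hV$ meeting $V$ lies in $V$) together with the maximality of $m^*$ guarantees that every intermediate $\hV$-return of $f^{m^*}(x)$ carries a non-diffeomorphic pullback, by composing with the diffeomorphism $f^{m^*}|_{U_{m^*}}$. This piece therefore has dimension at most $\delta_{\bad}(\hV)$, which is strictly less than $\HD(J(f))$ by the hypothesis of the lemma.

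The main obstacle I anticipate is controlling the first piece, namely showing $\HD(K(V)\cap J(f)) < \HD(J(f))$: the Borel--Cantelli estimate yields the desired strict bound $\le\delta_{\bad}(\hV)$ only on the bad-pullback branch, while the hyperbolic-set branch must be treated separately. My plan is to use that $V\supset\Crit'(f)$, so $K(V)\cap J(f)$ is a proper uniformly expanding invariant subsystem of the non-uniformly hyperbolic Julia dynamics; by Bowen's formula its Hausdorff dimension is the zero of the pressure function of $-t\log|Df|$ restricted to $K(V)\cap J(f)$, and this is strictly less than $\HDhyp(f)\le\HD(J(f))$ because $\HDhyp(f)$ is only reached as $V$ is shrunk to $\Crit'(f)$ and the inclusions $K(V_1)\subsetneq K(V_2)$ for $V_2\subsetneq V_1$ are strict. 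A secondary delicate point is the $\hV$-analog of Part~1: one must shift by the last $\hV$-good time rather than by the last good time of the induced map, since the orbit of $x$ may visit $\hV\setminus V$ with diffeomorphic $\hV$-pullback at intermediate times, and the nice couple condition is precisely what ensures this shift is compatible with the bad-pullback structure of $\hV$.
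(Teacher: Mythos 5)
Your overall strategy coincides with the paper's: split the relevant set into the points with only finitely many returns (handled through the hyperbolic set $K(V)\cap J(f)$ and its preimages) and the points whose returns all pass through bad pull-backs (handled by the definition of the badness exponent), and your reduction of $(V\cap J(f))\setminus J(F)$ to $V\cap J(f)\setminus D$ via branches of $F^{-k}$, as well as the ``last diffeomorphic time'' bookkeeping showing that the shifted pull-backs are bad, are correct and in fact spelled out more carefully than in the paper. However, there are two genuine gaps.

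First, in Part~1 you work with a single nice set $V$ and claim that infinitely many diffeomorphic returns to $V$ force $x\in\Jcon(f)$ by ``pulling back a definite-size ball inside the relevant component of $V$.'' Conicality requires balls of a \emph{uniform} radius \emph{centered at} $f^{m_i}(x)$, and the return points $f^{m_i}(x)$ may accumulate on $\partial V$, in which case the diffeomorphic pull-back of the component of $V$ gives you no ball of definite radius around $f^{m_i}(x)$. This is precisely why the paper phrases everything in terms of a nice couple $(\hV,V)$ and ``good times'' (return to $V$ together with a diffeomorphic pull-back of the \emph{larger} set $\hV$): then $B(f^m(x),\delta)\subset\hV$ for the uniform $\delta=\dist(\overline V,\partial\hV)>0$, and one bounds the dimension by $\delta_{\bad}(\hV)$, recovering $\delta_{\bad}(f)$ afterwards from the monotonicity of $\delta_{\bad}$ (Lemma~\ref{lem:db}) and the existence of arbitrarily small nice couples (condition (A2)). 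Your argument should be run with good times relative to $(\hV,V)$ and with $\fB_m(\hV)$ in place of $\fB_m(V)$.

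Second, and more seriously, your treatment of $\HD(K(V)\cap J(f))<\HD(J(f))$ is not a proof. Strict inclusion of hyperbolic sets ($K(V_1)\subsetneq K(V_2)$ for $V_2\subsetneq V_1$) does not imply strict inequality of their Hausdorff dimensions, and Bowen's formula by itself only identifies each dimension as a pressure zero; it does not separate them, nor does it separate $\HD(K(V)\cap J(f))$ from the supremum $\HDhyp(f)$. The fact actually needed is that \emph{every} hyperbolic set $A$ of $f$ satisfies $\HD(A)<\HDhyp(f)$, which the paper imports from~\cite[Lemma~6.2]{PrzRiv07}; its proof uses the (essential) topological exactness of $f$ on $J(f)$ in an essential way, by enlarging any given hyperbolic set to a hyperbolic set of strictly larger dimension. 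You correctly flag this as the main obstacle, but the resolution you propose does not close it; you should either reproduce that enlargement argument or cite the lemma.
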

\begin{proof}
To prove the first inequality it is enough to prove that for each nice couple~$(\hV, V)$ for~$f$,
\begin{equation}\label{eqn:jf-jcon}
\HD(J(f)\setminus \Jcon (f))\le \delta_{\bad}(\hV).
\end{equation}

To this end, let $N$ be the subset of $(V \cap J(f)) \setminus J(F)$
of those points that return at most finitely many times to~$V$ under
forward iteration, and let
$$ I \= ((V \cap J(f)) \setminus J(F)) \setminus N. $$
To estimate~$\HD(I)$, let~$\tI$ be the subset of~$I$ of those points~$x$
such that for every integer~$m \ge 1$ with~$f^m(x) \in V$,
the pull-back of~$\hV$ by~$f^m$ containing~$x$ is not diffeomorphic.
This implies that every pull-back of~$\hV$ containing~$x$ is bad.
Therefore, for every integer~$m \ge 1$,
$$ \tI
\subset
\bigcup_{j=m}^\infty\bigcup_{\tW \in \fB_j(\hV)} \tW. $$
The definition of badness exponent implies that~$\HD(\tI) \le
\delta_{\bad}(\hV)$.
Noting that for every~$y$ in~$I$ there is an integer~$n \ge 0$ such
that~$f^n(y)$ is in~$\tI$, we conclude that~$\HD(I) \le \HD(\tI) \le
\delta_{\bad}(\hV)$.

Let us prove that $N\setminus \Jcon (f)$ is at most countable. Indeed, $K(V)\cap J(f)$ is a hyperbolic set, hence
$K(V)\cap J(f)\subset \Jcon (f)$.
Since
\begin{equation}\label{eqn:setN}
N \subset \bigcup_{n=1}^\infty f^{-n}(K(V)\cap J(f))
\end{equation}
and since
$f^{-1}(\Jcon
(f))\subset \Jcon (f)\cup \Crit'(f)$, we conclude that
$N\setminus \Jcon(f)\subset \bigcup_{n=0}^\infty f^{-n}(\Crit'(f))$ is at most countable.

Since $J(F)\subset \Jcon (f)$
and $K(V)\cap J(f)\subset \Jcon (f)$, it follows that
$$\HD(J(f)\setminus \Jcon(f))=\HD((J(f)\setminus \Jcon(f))\cap V)
\le
\HD(I)\le \delta_{\bad}(\hV).$$
This proves~\eqref{eqn:jf-jcon}, hence the first equality of the lemma.

To prove the last inequality we need the following result: for any
hyperbolic set $A$ of $f$, $\HD(A)< \HD_{\hyp}(f)$. This is proved
in \cite[Lemmas~$6.2$]{PrzRiv07} in the complex case, as a consequence
of the (essentially) topologically exact
property of the Julia set.
The proof works without change for maps $f\in \sA^*_{\R}$.
Since $K(V)\cap J(f)$ is a hyperbolic set, by~\eqref{eqn:setN} we conclude that~$\HD(N) < \HD(J(f))$. Since
$$ \HD((J(f) \cap V) \setminus J(F))
=
\HD(N \cup I)
\le
\max \{ \HD(N), \delta_{\bad}(\hV) \}, $$
$\delta_{\bad}(\hV) < \HD(J(f))$ implies $ \HD((J(f) \cap V) \setminus J(F)) < \HD(J(f))$.
\end{proof}

\begin{proof}[Proof of Proposition~\ref{prop:dimensionestimate}]
By Lemma~\ref{lem:db}, for a sufficiently small nice couple $(\hV, V)$ we have $\delta_{\bad}(\hV)< \HD(J(f))$.
By Lemma~\ref{lemm:dimensionD}, it follows that~$\HD(J(f)) = \HD(\Jcon(f))$, and that
\begin{equation}\label{eqn:dimcan}
\HD((J(f)\cap V)\setminus J(F))<\HD(J(f)).
\end{equation}
Hence $\HD(J(F)\cap V^c) = \HD(J(f))$ for each $c \in \Crit'(f)$.

It remains to show that~$\HDhyp(f) \ge \HD(J(f))$.
To do this let $D$ be the domain of $F$ and consider an enumeration~$( W_n )_{n \ge 1}$ of the connected components of~$D$.
For each integer~$n_0 \ge 1$ let~$F_{n_0}$ be the restriction of~$F$ to~$\bigcup_{n = 1}^{n_0} W_n$.
Then the maximal invariant set~$J(F_n)$ of~$F_n$ is contained in a uniformly hyperbolic set of~$f$.
Together with~\cite[Theorem~$4.2.13$]{MauUrb03} this implies that
$$ \HDhyp(f) \ge \lim_{n \to \infty} \HD(J(F_n)) = \HD(J(F)) = \HD(J(f)). $$
\end{proof}
Let us mention the following consequence of
Lemma~\ref{lemm:dimensionD} and Theorem~\ref{t:finiteness of bad} to
conclude this section.
\begin{coro}\label{cor:HDhyp}
Assume that $f\in \sA$ is expanding away from critical points and backward contracting.
In the real case, assume furthermore that $f$ is essentially topologically exact on the Julia set.
Then
$$\HD(J(f)\setminus \Jcon (f))=0.$$
\end{coro}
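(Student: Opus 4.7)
The plan is to combine Theorem~\ref{t:finiteness of bad} with the first assertion of Lemma~\ref{lemm:dimensionD}, after first verifying that $f \in \sA^*$. Condition~(A1) of the definition of $\sA^*$ is part of the hypothesis; condition~(A2) holds because $\Crit'(f) \neq \emptyset$ is the standing implicit assumption and Lemma~\ref{l:very nice} (applied with $\ell = \ell_{\max}(f)$ and the backward contraction hypothesis) provides arbitrarily small symmetric nice couples; and condition~(A3) is exactly the essential topological exactness assumption added in the real case. In particular Lemma~\ref{lemm:dimensionD} is applicable.

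Next I would observe that since $f$ is backward contracting with \emph{every} constant $r > 1$, Theorem~\ref{t:finiteness of bad} (applied with $\ell = \ell_{\max}(f)$) gives $\delta_{\bad}(f) < t$ for every $t > 0$, hence $\delta_{\bad}(f) = 0$. Invoking Lemma~\ref{l:very nice} again for a sequence $\delta_n \searrow 0$, I obtain a sequence of symmetric nice couples $(\hV_n, V_n)_{n \ge 1}$ with $\hV_{n+1} \subset \hV_n$ and $\bigcap_n \hV_n = \Crit'(f)$. By the monotonicity statement in Lemma~\ref{lem:db}(2), the sequence $\{\delta_{\bad}(\hV_n)\}$ is non-increasing, and by the remark following Lemma~\ref{lem:db} in~\S\ref{subsec:be} we have
\[
\lim_{n \to \infty} \delta_{\bad}(\hV_n) = \delta_{\bad}(f) = 0.
\]

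Finally, applying the first inequality in Lemma~\ref{lemm:dimensionD} to each nice couple $(\hV_n, V_n)$ yields
\[
\HD(J(f) \setminus \Jcon(f)) \le \delta_{\bad}(\hV_n),
\]
and letting $n \to \infty$ gives $\HD(J(f) \setminus \Jcon(f)) = 0$, as desired.

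The argument is essentially a direct assembly of facts already established in the excerpt, and I do not expect any substantive obstacle. The only subtlety is that Theorem~\ref{t:finiteness of bad} controls the \emph{global} badness exponent $\delta_{\bad}(f)$, whereas Lemma~\ref{lemm:dimensionD} needs $\delta_{\bad}(\hV)$ for the outer set of a concrete nice couple; this gap is bridged by Lemma~\ref{lem:db} together with the availability of arbitrarily small nice couples from Lemma~\ref{l:very nice}.
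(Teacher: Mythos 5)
Your proposal is correct and follows essentially the same route as the paper's proof: verify $f \in \sA^*$ (the paper cites Fact~\ref{f:nice couples} for condition (A2), which is the same content as Lemma~\ref{l:very nice}), deduce $\delta_{\bad}(f)=0$ from Theorem~\ref{t:finiteness of bad}, and conclude by Lemma~\ref{lemm:dimensionD}. The limiting argument over a shrinking sequence of nice couples is harmless but redundant, since the first assertion of Lemma~\ref{lemm:dimensionD} is already stated in terms of the global exponent $\delta_{\bad}(f)$.
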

\begin{proof}
If~$\Crit'(f) = \emptyset$ then~$f$ is uniformly hyperbolic and the
result is immediate. Otherwise the assumptions imply that $f\in
\sA^*$ by Fact~\ref{f:nice couples}. By Theorem~\ref{t:finiteness of
bad}, $\delta_{\bad}(f)=0$ so the assertion follows by
Lemma~\ref{lemm:dimensionD}.
\end{proof}

This was shown in~\cite[Proposition~$7.3$]{GraSmi09} for rational maps satisfying the summability condition with each positive exponent, and in~\cite[\S$1.4$]{PrzRiv07} for rational maps satisfying the TCE condition.
See also~\cite{Sen03} for related results in the case of Collet-Eckmann interval maps.

\begin{rema}
A direct consequence of Corollary~\ref{cor:HDhyp} and of~\cite[Theorem~$0.2$]{Hai01} is that a backward contracting rational map that is expanding away from critical points, and that is not a Latt{\`e}s example, has no invariant line fields and is quasi-conformally rigid.
In fact, the conclusion of Corollary~\ref{cor:HDhyp} shows that such a map is ``uniformly weakly hyperbolic'' in the sense of~\cite{Hai01}.
\end{rema}

\subsection{Tail estimate}\label{ss:tail estimate}
Let us start with some notation.
Let~$(\hV, V)$ be a nice couple for~$f$.
Recall that for each integer~$m \ge 0$, we denote by~$\sM_m(\hV)$ the collection of connected components of~$f^{-m}(\hV)$ (\S\ref{ss:notation}).
Moreover, for~$m \ge 1$ we denote by~$\fB_m(\hV)$ the collection of bad pull-backs of~$\hV$ by~$f^m$ (Definition~\ref{def:badpb}).
In what follows, $\fB_0(\hV) \= \sM_0(\hV)$.

Let~$\fL_V$ be the collection of components of $\dom(f) \setminus K(V)$.
For $U\in \fL_V$, let~$l(U)=l_V(U)$ denote the landing time of~$U$ into~$V$.
Then for each $U\in \fL_V$, there exists a set~$\hU\supset U$ such that~$f^{l(U)}$ maps~$\hU$ diffeomorphically onto a component of~$\hV$.
Moreover, if $U\not\subset V$, then $\hU\cap V=\emptyset$.

For each $\tY\in \sM_{\wtm}(\hV)$ with $\wtm\ge 0$, we use~$\fD_{\tY}$
to denote the collection of all simply connected sets~$W$ for which
the following holds: there exist $\tY \supset \hW \supset W$ and $U \in \fL_V$ such that $U\subset f(V)$ and such that~$f^{\wtm + 1}$
maps~$W$ diffeomorphically onto~$U$ and maps~$\hW$ diffeomorphically
onto~$\hU$.

We will need the following lemma, which is~\cite[Lemma~$3.5$]{PrzRiv11}.
It is worth noticing that this is the only place where we use a nice couple, as opposed to a nested pair of nice sets.
\begin{lemm}\label{l:decompfD}
Let~$F : D \to V$ be the canonical induced map associated to~$(\hV, V)$, let~$\fD$ be the collection of all connected components of~$D$ and let~$m(x)$ be the canonical inducing time of $x\in D$.
Then
\begin{equation*}
\fD = \bigcup_{\wtm=0}^\infty \bigcup_{\tY\in \fB_{\wtm}(\hV)}
\fD_{\tY},
\end{equation*}
and for each $\wtm\ge 0$, $\tY\in \fB_{\wtm}(\hV)$ and $x\in D\cap
\tY$,
\begin{equation}\label{eqn:mltime}
m(x)=\wtm+ 1+l(f^{\wtm+1}(x)).
\end{equation}
\end{lemm}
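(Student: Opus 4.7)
My plan is to prove both parts of the lemma simultaneously by identifying, for each component $W \in \fD$, a canonical pair $(\tm, \tY)$ with $\tY \in \fB_{\tm}(\hV)$ such that $W \in \fD_{\tY}$; the formula $m(x) = \tm + 1 + l(f^{\tm+1}(x))$ will then drop out of the construction, and will moreover hold for any pair $(\tm', \tY')$ with $\tY' \in \fB_{\tm'}(\hV)$ containing $x$.

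For the forward inclusion, fix $W \in \fD$ with inducing time $n = m(W)$, a point $x \in W$, and let $\hW$ be the diffeomorphic pull-back of $\hV$ by $f^n$ containing $x$. I will define $\tm$ as the largest integer in $\{0, 1, \ldots, n-1\}$ for which the pull-back of $\hV$ by $f^{\tm}$ containing $x$ lies in $\fB_{\tm}(\hV)$; this is well-defined since $\tm = 0$ always works via the convention $\fB_0(\hV) = \sM_0(\hV)$. Call this pull-back $\tY$. The nice-couple property forces $\tY \subset V$, and tracking the diffeomorphism $f^n\colon \hW \to (\text{component of } \hV)$ through the factorization $f^n = f^{n-\tm} \circ f^{\tm}$ yields $\hW \subset \tY$. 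Maximality of $\tm$ then forces $f^j(x) \notin V$ for every $\tm + 1 \le j \le n-1$: any such intermediate return would give a non-diffeomorphic pull-back of $\hV$ at time $j$ (by minimality of $n = m(W)$), and this pull-back would moreover be bad, since every earlier return time $j' < j$ with $f^{j'}(x) \in V$ likewise has non-diffeomorphic pull-back by the same minimality, so the defining condition of $\fB_j(\hV)$ is satisfied; this contradicts the maximality of $\tm$. Consequently $f^{\tm+1}(x)$ lies in a component $U \in \fL_V$ with $l(U) = n - \tm - 1$, the extension $\hU$ of $U$ is the forward image of $\hW$, and $U \subset f(V)$ follows from $\tY \subset V$. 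The diffeomorphisms $f^{\tm+1}\colon W \to U$ and $f^{\tm+1}\colon \hW \to \hU$ witness $W \in \fD_{\tY}$ and simultaneously give $m(W) = \tm + 1 + l(f^{\tm+1}(x))$.

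For the converse inclusion and the general formula, take any $\tm \ge 0$, $\tY \in \fB_{\tm}(\hV)$, and $x \in D \cap \tY$, and set $k = l(f^{\tm+1}(x))$. The defining data of $\fD_{\tY}$ produces a diffeomorphic pull-back $\hW$ of $\hV$ by $f^{\tm+1+k}$ through $x$, exhibiting $\tm + 1 + k$ as a good time, so $m(x) \le \tm + 1 + k$. For the reverse inequality, suppose $j$ is a good time with $1 \le j < \tm + 1 + k$; then $f^j(x) \in V$. Since the orbit segment $f^{\tm+1}(x), \ldots, f^{\tm+k}(x)$ traverses $U$ and thus avoids $V$, we must have $j \le \tm$. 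By the nice-couple property, $f^j(\tY) \subset V \subset \hV$, so $j$ is a valid intermediate index in the definition of $\fB_{\tm}(\hV)$, and the badness of $\tY$ forbids a diffeomorphic pull-back of $\hV$ at time $j$ through $x$, contradicting that $j$ is a good time. Hence $m(x) = \tm + 1 + k$, which gives both the desired formula and the reverse inclusion $\fD_{\tY} \subset \fD$.

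The main technical obstacle is the bookkeeping of the pull-back containments, in particular $\hW \subset \tY$ and $f^j(\tY) \subset \hV$ whenever $f^j(x) \in V$, together with the upgrade from ``non-diffeomorphic'' to ``bad'' at each intermediate return time. All of these reduce to the defining property of a nice couple combined with the minimality of the canonical inducing time $m(x)$; there is no essential analytic input, and the argument works uniformly in the real and complex settings.
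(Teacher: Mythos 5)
Your converse direction is sound, but the forward inclusion has a genuine gap, concentrated in your choice of $\tm$ and in the step ``this pull-back would moreover be bad.'' You take $\tm$ to be the largest index for which the pull-back $P_{\tm}$ of $\hV$ by $f^{\tm}$ through $x$ is bad, whereas the paper takes $\tm$ to be the last return time of $W$ to $V$ before $n=m(W)$. These need not agree: if, say, $x$ lies near a critical point $c\in V^c$, the sets $f^{m'}(P_{n-1})$ are disjoint from $\hV$ for $1\le m'\le n-2$, and $f^{n-1}(x)\in\hV\setminus V$, then $P_{n-1}$ contains $c$, is non-diffeomorphic, and the badness condition is vacuous at every intermediate index, so $P_{n-1}\in\fB_{n-1}(\hV)$ even though the last return time is $0$. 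For such a $\tm$ one has $f^{\tm}(W)\not\subset V$, and your inference ``$U\subset f(V)$ follows from $\tY\subset V$'' collapses: $U=f\bigl(f^{\tm}(W)\bigr)$, and $\tY\subset V$ only gives $W\subset V$, not $f^{\tm}(W)\subset V$. Since $U\subset f(V)$ is part of the definition of $\fD_{\tY}$ and genuinely fails here, your construction does not place $W$ in any $\fD_{\tY}$.

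Second, even with the correct $\tm$, your upgrade from ``non-diffeomorphic'' to ``bad'' at an intermediate return time $j$ is incomplete. By Lemma~\ref{lem:db0}, badness of $P_j$ means $P_j$ is contained in no diffeomorphic pull-back of $\hV$ by $f^{j'}$ for \emph{any} $j'\in\{1,\dots,j\}$ --- not only for those $j'$ with $f^{j'}(x)\in V$, which are the only ones you treat. If $P_j$ sits inside a diffeomorphic pull-back of $\hV$ by $f^{j'}$ with $f^{j'}(x)\notin V$, then $j'$ is not a candidate good time and minimality of $m(x)$ yields no contradiction by itself. The missing step --- and the heart of the paper's proof --- is to compose with the landing map: $f^{j'}(W)$ lies in some $U'\in\fL_V$ with $j'+l(U')\le j$, and composing the time-$j'$ diffeomorphism with the diffeomorphic extension $f^{l(U')}:\widehat{U'}\to\hV$ exhibits $j'+l(U')<m(W)$ as a good time, a contradiction. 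Replacing your $\tm$ by the last return time and inserting this landing argument repairs the proof and recovers the paper's.
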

\begin{proof}
Clearly for each $\tY\in\sM_{\wtm}(\hV)$ and $x\in W\in \fD_{\tY}$ we
have $x\in D$ and $m(x)\le \wtm+1+l(f^{\wtm+1}(x))$.
Moreover, if $\tY\in\fB_{\wtm}(\hV)$, then $m(x)>\wtm$, since~$\tY$ is disjoint
from any diffeomorphic pull-back of $\hV$ by $f^{m'}$ for any $m'\le
\wtm$. It follows that for $\tY\in \fB_{\wtm}(\hV)$,
\eqref{eqn:mltime} holds for all $x\in D\cap \tY$ and
$\fD_{\tY}\subset \fD$.

It remains to prove that a connected component~$W$ of~$D$ belongs to~$\fD_{\tY}$ for some $\tY\in \fB_{\wtm}(\hV)$.
If~the canonical inducing time $m(W)$ is the first return time
of~$W$ to~$V$, then~$f(W) \in \fL_V$ and, if we denote by~$\tY$ the
connected component of~$\hV$ containing~$W$, then~$W \in \fD_{\tY}$.
Suppose now that~$m(W)$ is not the first return time of~$W$ to~$V$,
let~$n \in \{1, \ldots, m(W) - 1 \}$ be the penultimate return
time of~$W$ to~$V$, and put~$W' \= f^n(W)$.
As we clearly have~$f(W') \in \fL_V$, we just need to show that the
pull-back~$\tY$ of~$\hV$ by~$f^n$ containing~$W$ is bad. Arguing by
contradiction, assume the contrary.
Then by Lemma~\ref{lem:db0}, $\tY$ is contained in a diffeomorphic pull-back of~$\hV$ by~$f^j$ for some~$j \in \{ 1, \ldots, n \}$.
Then~$f^j(W)$ is contained in a component~$U$ of~$\dom (f)\setminus K(V)$.
Clearly~$j+l(U)\le n$, $f^{j+l(U)}(W)\subset V$ and~$f^{j+l(U)}$ maps a neighborhood of $W$ diffeomorphically onto a
component $\hV$. This implies that the canonical time of $W$ is not
greater than $n$, which is a contradiction.
\end{proof}

The following proposition is a crucial estimate.

\begin{prop} \label{prop:whitney}
Assume that $f\in\sA^*$ satisfies the $\Theta$-Shrinking Condition
for some slowly varying and monotone decreasing sequence of positive
numbers $\Theta=\{\theta_n\}_{n=1}^\infty$. Then for each
sufficiently small symmetric nice couple $(\hV, V)$ for~$f$, with
$\delta_{\bad}(\hV)< \HDhyp(f)$, there exists a constant~$\alpha_0\in
(\delta_{\bad}(\hV), \HDhyp(f))$ such that, for real numbers
$\alpha, t,$ with
$$\alpha\ge \alpha_0, \,\, t\in (\delta_{\bad}(\hV),\alpha),$$
the following holds: there is a constant $C_1 > 0$ such that for
$Y\subset\tY\in \sM_{\wtm}(\hV)$ with $\wtm\ge 0$ and each integer~$m \ge 1$, if we put
$$ D(\tY) \= d_{\hV}(\tY)\left(\log d_{\hV}(\tY)+1\right), $$
then
\begin{equation}\label{eqn:pbD0s}
\sum_{W\in \fD_{\tY} \, : \, W \subset Y, m(W) \ge m} \diam (W)^\alpha
\le
C_1D(\tY)\diam(Y)^{t}\left( \sum_{i=m}^\infty \theta_i^{\alpha-t}\right),
\end{equation}
where~$m(W)$ is the canonical inducing time on~$W$ with respect to~$(\hV, V)$.
\end{prop}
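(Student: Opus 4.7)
The plan is to implement the Whitney-decomposition strategy of~\cite{PrzRiv0806} in three stages: transfer each $W\in\fD_{\tY}$ to a corresponding $U\in\fL_V$, split the $\alpha$\nobreakdash-sum into a $t$\nobreakdash-part and a tail, and decompose $\tY$ into at most $D(\tY)$ pieces of bounded distortion.

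In the transfer stage I use that $(\hV,V)$ is a nice couple, so $\mmod(\hV_c;V_c)$ is bounded below. Pulling back along the diffeomorphism $f^{\tm+1}\colon\hW\to\hU$ and applying Lemma~\ref{lem:koebeC} yields
\[
\frac{\diam(W)}{\diam(\hW)}\asymp \frac{\diam(U)}{\diam(\hU)}\asymp \frac{\diam(V_c)}{\diam(\hV_c)},
\]
with constants independent of the pull-back. Combining this with the $\Theta$\nobreakdash-Shrinking Condition applied to a ball around $c\in\Crit'(f)$ containing $\hV_c$ gives the pointwise size bound $\diam(W)\le C\theta_{m(W)}$, using the identity $m(W)=\tm+1+l(U)$ from Lemma~\ref{l:decompfD}.

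In the splitting stage I write $\diam(W)^\alpha=\diam(W)^{\alpha-t}\cdot\diam(W)^t$, bound the first factor by $C\theta_{m(W)}^{\alpha-t}$, and reorganize by level $n=m(W)$:
\[
\sum_{\substack{W\in\fD_{\tY},\,W\subset Y\\ m(W)\ge m}}\diam(W)^\alpha \le C\sum_{n\ge m}\theta_n^{\alpha-t}\sum_{\substack{W\in\fD_{\tY},\,W\subset Y\\ m(W)=n}}\diam(W)^t.
\]
The proposition thus reduces to the level estimate $\sum_{m(W)=n,\,W\subset Y}\diam(W)^t\le C'D(\tY)\diam(Y)^t$, uniformly in $n\ge 1$. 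In the decomposition stage I build the Whitney family by enumerating the critical levels $0\le k_1<\cdots<k_p\le\tm$ at which the forward images $f^{k_j}(\tY)$ meet $\Crit(f)$, and iteratively applying the modulus comparison Lemmas~\ref{lem:Cmodulus} and~\ref{lem:Rmodulus} to telescope across these levels. This produces a cover of $\tY$ by at most $D(\tY)=d_{\hV}(\tY)(\log d_{\hV}(\tY)+1)$ simply connected pieces $\{Q_i\}$ on each of which $f^{\tm+1}$ has uniformly bounded distortion; the factor $d_{\hV}(\tY)$ counts sheets while the logarithm arises from geometric-series contributions of the at most $O(\log d_{\hV}(\tY))$ critical levels. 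On each such piece, Koebe gives
\[
\sum_{W\subset Q_i\cap Y,\,m(W)=n}\diam(W)^t\le C_0\biggl(\frac{\diam(Q_i\cap Y)}{\diam(f^{\tm+1}(Q_i\cap Y))}\biggr)^t\sum_{\substack{U\in\fL_V,\,l(U)=n-\tm-1\\ U\subset f^{\tm+1}(Q_i\cap Y)}}\diam(U)^t,
\]
with $\alpha_0$ chosen sufficiently above $\delta_{\bad}(\hV)$ so that Lemma~\ref{lem:finitelanding} bounds the inner sum by a constant independent of $n$. Summing over pieces and using $\diam(Q_i\cap Y)\le \diam(Y)$ together with the piece count $D(\tY)$ then yields the level estimate.

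The main obstacle is the Whitney counting $D(\tY)=d(\log d+1)$. It requires a careful inductive bookkeeping of moduli across critical levels of heterogeneous orders, with the symmetric property of the nice couple essential for applying Lemma~\ref{lem:Rmodulus}~(2) on both sides of each interval critical point without destroying the logarithmic dependence.
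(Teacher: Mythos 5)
Your overall architecture (a Whitney family, transfer to landing domains, an exponent split) is the right one, but the way you distribute the exponents breaks the argument at its most delicate point. You reorganize the sum by the inducing time $n=m(W)$ and reduce everything to the level estimate $\sum_{m(W)=n,\,W\subset Y}\diam(W)^t\le C'D(\tY)\diam(Y)^t$, which you justify by bounding the inner landing sum $\sum_{l(U)=n-\tm-1}\diam(U)^t$ by a constant via Lemma~\ref{lem:finitelanding}. That lemma, however, only controls $\sum_U\diam(U)^\alpha$ for exponents $\alpha\ge\alpha_0$, where $\alpha_0$ is dictated by the hyperbolic set $K(V)$ and is in general bounded away from~$0$; it gives nothing for the exponent $t$, which the proposition allows to be arbitrarily close to $\delta_{\bad}(\hV)$, hence (by Theorem~\ref{t:finiteness of bad}) arbitrarily close to~$0$. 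For such $t$ the sum $\sum_{l(U)=k}\diam(U)^t$ diverges as $k\to\infty$ --- the number of landing domains with landing time $k$ grows exponentially while their diameters shrink only exponentially --- and since a definite proportion of the corresponding $W$ satisfy $\diam(W)\gtrsim\diam(U)$ (those whose images lie at definite distance from the critical values of $f^{\tm+1}|\tY$), your level estimate is actually false for large $n$. The split has to be performed the other way around: keep the full exponent $\alpha\ge\alpha_0$ on the landing domains $U$, so that Lemma~\ref{lem:finitelanding} applies and yields a factor $\exp(-\eps_0(s_Q-n_Q))$, and extract $\theta^{\alpha-t}$ not from $\diam(W)$ via $m(W)$ but from the diameter of the Whitney piece $P^*_Y$ via a hyperbolic time $n_Q$ supplied by Lemma~\ref{lem:definitesize}, writing $\diam(P^*_Y)^\alpha\le (C\theta_{n_Q+\tm+1})^{\alpha-t}\,(C\diam(Y))^{t}$.

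The second gap is the claimed cover of $\tY$ by at most $D(\tY)$ pieces on each of which $f^{\tm+1}$ has uniformly bounded distortion. No finite cover of this kind exists: the landing domains accumulate on the set $E$ of critical values of $f^{\tm+1}|\tY$, so one needs Whitney pieces of all scales near $E$, i.e.\ infinitely many of them. What is true, and what $D(\tY)=d(\log d+1)$ actually encodes, is that $\#E\le C(\log d+1)$ because $f^{\tm+1}|\tY$ is a composition of unicritical maps, that each Whitney square has at most $d$ preimage components in $\tY$, and --- the crucial counting step --- that for each fixed hyperbolic time $n$ the number of Whitney squares $Q$ with $n_Q=n$ carrying landing domains is at most $C(\#E+1)$. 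Summing $\theta_{n+\tm+1}^{\alpha-t}$ over $n$ against this count, and using that $\Theta$ is slowly varying, is what produces $D(\tY)\sum_{i\ge m}\theta_i^{\alpha-t}$; the logarithm counts critical values per hyperbolic time, not bounded-distortion pieces.
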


To prove this proposition, we will apply a technique based on a \emph{Whitney decomposition} of the complement of the critical values of
$f^{\wtm+1}:\tY\to f(\hV)$.
The proof is rather long and we suspend it to~\S\ref{ss:proof of whitney} and complete the proof of Theorem~\ref{t:tail estimate} now.

\begin{proof}[Proof of Theorem~\ref{t:tail estimate}]
By Proposition~\ref{prop:dimensionestimate}, the first part of the theorem holds.
Now fix $t \in (\delta_{\bad}(f), \HD(J(f)))$, and assume that $f$
satisfies the $\Theta$-Shrinking Condition for some slowly varying
and monotone decreasing sequence of positive numbers
$\Theta=\{\theta_n\}_{n=1}^\infty$. Let $(\hV, V)$ be a sufficiently
small nice couple so that the conclusion of
Proposition~\ref{prop:whitney} holds and such that $\delta_{\bad}
(\hV) < t$.
Such a nice couple exists by Lemma~\ref{lem:db}.
Then there exists~$\eta > 0$ such that
$$ \sum_{\wtm=0}^\infty \sum_{\tY\in \fB_{\wtm}(\hV)} d_{\hV}(\tY) \diam(\tY)^{t-\eta}<\infty.$$
As $D(\tY)/d_{\tV}(\tY)^{t/(t-\eta)}$ is bounded from above, it
follows that
$$ C_0
\=
\sum_{\wtm=0}^\infty \sum_{\tY\in \fB_{\wtm}(\hV)} D(\tY) \diam(\tY)^t
<
\infty.$$
Fix an integer~$m \ge 1$, let~$D$ be the domain of the canonical induced map associated to
$(\hV, V)$, and let~$\fD$ be the collection of its connected
components.
By Lemma~\ref{l:decompfD},
\begin{equation}\label{e:bad decomposition}
\sum_{\substack{W\in\fD \, : \\ W \subset Y,  m(W) \ge m}}\diam (W)^\alpha
=
\sum_{\wtm=0}^\infty\sum_{\tY\in\fB_{\wtm}(\hV)} \sum_{\substack{W \in \fD_{\tY} \, : \\ W \subset Y,  m(W) \ge m}}\diam (W)^\alpha.
\end{equation}
Applying Proposition~\ref{prop:whitney} with~$t$ replaced by~$t +
\sigma$, we obtain that there is a constant~$C_1 > 0$ such that for
each integer~$\wtm \ge 0$ and each~$\tY \in \fB_{\wtm}(\hV)$,
\begin{equation*}
\sum_{\substack{W\in \fD_{\tY} \, : \\  W \subset Y,  m(W) \ge m}} \diam (W)^\alpha
\le
C_1 D(\tY) \diam(Y \cap \tY)^{t + \sigma} \left(\sum_{i=m}^\infty \theta_i^{\alpha-t-\sigma}\right).
\end{equation*}
Combined with~\eqref{e:bad decomposition} and the inequality $\diam(Y \cap \tY)^{t + \sigma} \le \diam(\tY)^t \diam(Y)^\sigma$, we obtain,
\begin{multline*}
\sum_{\substack{W\in\fD \, : \\ W \subset Y, m(W) \ge m}}\diam (W)^\alpha
\\ \le
C_1 \left( \sum_{\wtm=0}^\infty\sum_{\tY\in\fB_{\wtm}(\hV)} D(\tY)
\diam(\tY)^t \right) \diam(Y)^\sigma \left(\sum_{i=m}^\infty
\theta_i^{\alpha-t-\sigma}\right).
\end{multline*}
This proves the desired upper bound with $C=C_0C_1$.
\end{proof}

\subsection{Proof of Proposition~\ref{prop:whitney}}
\label{ss:proof of whitney}
The whole section is devoted to the proof of
Proposition~\ref{prop:whitney}.
By assumption, there exist constants~$C_1 > 0$ and~$\rho>0$ such that for any $x\in J(f)$ and any $n\ge 1$, the
component of $f^{-n}(B(f^n(x), \rho))$ that contains~$x$ has
diameter not greater than $C_1 \theta_n$. Let $(\hV, V)$ be a
symmetric nice couple for~$f$ so that
$$ \delta_{\bad}(\hV) < \HDhyp(f)
\text{ and }
\hV \subset \tB(\Crit'(f),\rho/4). $$
Furthermore, let~$K_0 > 1$ and $\rho_0 \in (0, \rho)$ be the constants given by
Lemma~\ref{lem:definitesize} for this choice of~$(\hV, V)$
and~$\rho$ and let~$\alpha_0 \in (0, \HDhyp(f))$ and~$C_0,
\varepsilon_0 >0$ be the constants given by
Lemma~\ref{lem:finitelanding} for the choice of~$V$. We fix~$\alpha
\ge \alpha_0$, $t \in (\delta_{\bad}(\hV), \alpha)$, an integer~$\wtm
\ge 0$, a connected component~$\tY$ of~$f^{-\wtm}(V)$, a subset~$Y$
of~$\tY$, and an integer~$m \ge 1$. Put~$s \= \min \{ m - \wtm - 1,
0\}$.

Let~$E$ be the set of all critical values of $f^{\wtm+1}: \tY\to
f(\hV)$. Since this map is a composition of unicritical maps, we
have that for some~$C_2 > 0$ independent of~$\tY$,
\begin{equation}\label{eqn:carE}
\# E \le C_2(\log d(\tY)+1).
\end{equation}

We shall define a family  $\sQ$ of intervals/squares (of Whitney
type) that cover $J(f)\cap f(V)\setminus E$ and then pull it back
by~$f^{\wtm + 1}$ to obtain a family $\sP$ of subsets of $\tY$.
For each $P\in \sP$, we shall estimate the total size of elements of~$\fD_{\tY}$ contained in~$Y$, which are roughly contained in~$P$.
For technical reasons, in the case that~$f$ is a rational map, we shall assume
that $f(\hV)$ is bounded in $\C$.
This assumption causes no loss of
generality since we may conjugate $f$ by a rotation in such a
way that~$\infty$ is not in the closure of~$f(\hV)$.

We identify~$\C$ with~$\R^2$ in the usual way. For an integer $n$,
by a \emph{dyadic interval of (geometric) depth~$n$}, we mean an
interval of~$\R$ of the following form: $[k\cdot 2^{-n}, (k+1)\cdot
2^{-n})$, where~$k$ is an integer.
A \emph{dyadic square of
(geometric) depth~$n$} is the product of two dyadic intervals of the
same depth in~$\C$. For a dyadic interval (resp. square)~$Q$, we use
$\dep(Q)$ to denote its depth. Moreover, we use $Q''$ and~$Q'$ to denote the
closed concentric interval (resp. square) such that
$$ \diam (Q'') = 2 \diam(Q') = 4\diam (Q),$$
and use~$\hQ$ to denote the smallest dyadic interval/square with
$\hQ\supsetneq Q$.

\partn{1}
In the real (resp. complex) case,  let $\sQ$ be the collection of
all dyadic intervals (resp. squares) $Q$ such that
$$ {Q}\cap f(V) \cap J(f) \not=\emptyset, Q''\subset f(\hV)\setminus E, $$
and such that~$\hQ$ does not satisfy these properties.
Note that the elements of~$\sQ$ are pairwise disjoint and that
\begin{equation}\label{eqn:sQE}
\bigcup_{Q\in\sQ} {Q}\supset (f(V)\cap J(f))\setminus E.
\end{equation}
On the other hand, by the maximality in the definition of~$\sQ$, it follows that there is a constant~$C_3 > 0$ independent of~$\tY$ such that for each~$Q \in \sQ$ we have either
\begin{equation}
  \label{e:Whitney}
  \hQ'' \cap E \neq \emptyset
\text{ or }
\diam(Q) \ge C_3 \min_{c \in \Crit'(f)} \diam(\hV^c).
\end{equation}

For each $Q\in\sQ$, let $\sP(Q)$ be the collection of all components
of $f^{-\wtm-1}(Q)\cap \tY$ and let $\sP=\bigcup_{Q\in\sQ} \sP(Q)$.
Furthermore, for each~$P \in \sP(Q)$ we denote by~$P'$ the pull-back of~$Q'$ by~$f^{\wtm + 1}$ containing~$P$.

\partn{2}
We will now complete the proof of the proposition in the special
case where there exist~$Q \in \sQ$ and~$P \in \sP(Q)$ such that~$Y
\subset P'$.
We assume that there is at least one element
of~$\fD_{\tY}$ contained in~$Y$, otherwise the desired estimate is
trivial. Let~$n$ be given by Lemma~\ref{lem:definitesize} with some
$x \in Z\=f^{\wtm + 1}(Y)$ and with~$\delta = \diam(Z)$. Since there
is at least one element of~$\fD_{\tY}$ contained in~$Y$, it follows
that~$Z$ contains an element of~$\fL_V$. So we must fall into the
first case of this lemma. Thus, the distortion of~$f^n$ on~$f^{\wtm +
1}(Y) \subset B(x, \diam(Z))$ is bounded by~$K_0$, and
$$ \rho_0/(2K_0) < \diam(f^{n + \wtm + 1}(Y)) < \rho. $$
Since our hypotheses imply that the distortion of~$f^{\wtm + 1}$ on~$Y$ is uniformly bounded,
it follows that there is a constant~$C_4 > 0$ independent of~$\tY$ such that
$$ \sum_{\substack{W \in \fD_{\tY} \, : \\ W \subset Y, m(W) \ge m}} \diam(W)^\alpha
\le
C_4 \diam(Y)^\alpha \sum_{\substack{U \in \fL_V \, : \\ U \subset f^{n + \wtm + 1}(Y), \\ l(U) \ge m - (n + \wtm + 1)}} \diam(U)^\alpha. $$
By Lemma~\ref{lem:finitelanding}, if we put~$m_0 \= \max \{ m - (n + \wtm + 1), 0 \}$, then,
$$ \sum_{W \in \fD_{\tY} \, : \, W \subset Y, m(W) \ge m} \diam(W)^\alpha
\le
C_4C_0 \diam(Y)^\alpha \exp(- \varepsilon_0 m_0). $$
Since~$\diam(Y) \le C_1 \theta_{n + \wtm + 1}$, the desired estimate follows in this case from the inequality~$\diam(Y)^\alpha \le C_1^{\alpha - t} \diam(Y)^t \theta_{n + \wtm + 1}^{\alpha - t}$, and from the fact that~$\Theta$ is slowly varying.

\partn{3}
From now on we assume that for each~$P \in \sP$ the set~$Y$ is not
contained in~$P'$. This implies that there is a constant~$C_5 > 0$
independent of~$\tY$ such that for each~$P \in \sP$ intersecting~$Y$,
\begin{equation}
  \label{e:Psize}
  \diam(P) \le C_5 \diam(Y).
\end{equation}

Fix a neighborhood $V_0$ of $\Crit'(f)$ with $V_0\Subset V$.
For each~$U\in \fL_V$, choose a point $z_U\in U\setminus E$ with~$f^{l(U)}(z_U)\in V_0$.
By the Koebe principle, there exists a constant~$\kappa>0$ such that for all~$U \in \fL_V$,
\begin{equation}\label{eqn:Ushape}
U\supset B(z_U, \kappa\diam (U)).
\end{equation}

Recall that we have fixed an integer~$m \ge 1$ and that $s = \min\{m
- \wtm - 1, 0 \}$.
For $Q\in\sQ$ and
$P\in\sP(Q)$, let
\begin{align*}
\fL(Q;s)
& \=
\{U \in \fL_V: z_U \in {Q}, U\subset f(V), l(U) \ge s \},
\\
\fD_{Y}(P;s)
& \=
\{W\in \fD_{\tY}: W\subset Y, W\cap P\not=\emptyset, f^{\wtm+1}(W)\in \fL(Q;s)\},
\end{align*}
\begin{equation*}
Q^* \= Q \cup \left(\bigcup_{U\in \fL(Q; s)} U\right),
\text{ and }
P^*_Y \= P \cup \left(\bigcup_{W\in \fD_{Y}(P; s)} W\right).
\end{equation*}
Clearly~$f^{\wtm + 1}(P_Y^*) \subset Q^*$ and by~\eqref{e:Psize},
\begin{equation}\label{e:P*size}
\diam (P_Y^*)\le (C_5+1) \diam (Y).
\end{equation}
Furthermore, we put
$$ \sQ^{\sharp}
\= \{ Q \in \sQ : \text{ there is $P \in \sP(Q)$ such that $\fD_Y(P;
s) \neq \emptyset$} \}. $$
Clearly each~$Q \in \sQ^{\sharp}$ is such that~$\fL(Q; s) \neq \emptyset$.
On the other hand, by~\eqref{eqn:sQE} for each~$U \in \fL_V$ contained in~$f(V)$ and with~$l(U) \ge s$, the point~$z_U$ is contained in a unique~$Q \in \sQ$.
Therefore
\begin{equation}
  \label{e:decomposition}
  \{ W \in \fD_{\tY}: W \subset Y, m(W)\ge m \}
=
\bigcup_{Q \in \sQ^{\sharp}} \bigcup_{P \in \sP(Q)} \fD_Y(P; s).
\end{equation}

\partn{4}
For each~$Q \in \sQ^{\sharp}$ fix~$x_Q \in Q$ and let~$n_Q \ge 0$ be the integer given by Lemma~\ref{lem:definitesize} with~$x = x_Q$ and~$\delta = \diam(\hQ'' \cup Q^*)$. Since $Q^*$ contains an element of $\fL_V$, we must fall into the first case of the lemma.
So the distortion of~$f^{n_Q}$ on the ball~$B_Q \= B(x_Q, \diam(\hQ'' \cup Q^*))$, and hence on~$\hQ'' \cup Q^*$, is bounded by~$K_0$ and we have,
$$ \rho > \diam(f^{n_Q}(B_Q)) > \rho_0.$$
Since~$\diam(Q') / \diam(\hQ'')$ is bounded independently of~$\tY$,
it follows that there is a constant $\rho_1
> 0$ independent of~$\tY$ such that,
\begin{equation}
  \label{eqn:nQsize}
  \rho > \diam(f^{n_Q}(Q^*)) > \rho_1.
\end{equation}

\partn{5}
For each~$n \ge 0$ let $\sQ_n^\sharp \= \{Q\in\sQ^\sharp: n_Q=n\}$.
We will prove that there is a constant~$C_6 > 0$ independent of~$\tY$ such that for each integer~$n$,
\begin{equation}\label{e:QEn}
\# \sQ_n^\sharp \le C_6 (\# E+1).
\end{equation}
To prove this, we decompose $\sQ_n^\sharp$ into the following subsets:
\begin{align*}
\sQ^1_n
& \=
\{Q\in \sQ^\sharp_n: \hQ''\cap E=\emptyset\},\\
\sQ^2_n
& \=
\{Q\in\sQ^\sharp_n\setminus\sQ_n^1: \text{there is } U\in\fL(Q;s) \text{ such that } U\supset \hQ''\},\\
\sQ^3_n
& \=
\sQ_n^\sharp\setminus (\sQ_n^1\cup\sQ_n^2).
\end{align*}

We first observe that from~\eqref{e:Whitney}, and from the fact that the elements of~$\sQ$ are pairwise disjoint, it follows that $\#\sQ_n^1$ is bounded from above by a constant independent of~$\tY$.

For $Q \in \sQ_n \setminus \sQ_n^1$, there exists an~$e\in E\cap \hQ''$.
Clearly, for each $e\in E$, $\sQ_n^2$ contains at most one element~$Q$ with $\hQ''\ni e$.
Thus~$\#\sQ_n^2\le \# E$.

To complete the proof of~\eqref{e:QEn}, it suffices to prove that for
each $e\in E$, the cardinality of $\sQ_n^3(e)=\{Q\in\sQ_n^3:
\hQ''\ni e\}$ is bounded from above independently of~$\tY$. Since
$\dist(e, Q)/\diam (Q)\le \diam (\hQ'')/\diam (Q)$ is uniformly
bounded for $Q\in\sQ_n^3(e)$, the statement follows once we prove
that any two elements~$Q_1$, $Q_2$ of~$\sQ_n^3(e)$ have comparable
diameters.
To prove this we first observe that, by~\eqref{eqn:Ushape}, for each~$Q \in \sQ_n^3$ the quotient~$\diam(Q^*) / \diam(Q)$ is uniformly bounded.
On the other hand, for $Q_1, Q_2 \in \sQ_n^3(e)$ the sets~$\hQ_1''$ and~$\hQ_1''$
both contain~$e$, so the distortion of~$f^n$ on~$\hQ_1''\cup Q_1^*\cup\hQ_2''\cup Q_2^*$ is uniformly bounded.
Since furthermore, $\diam(f^{n}(Q_1^*))\asymp \diam (f^n(Q_2^*))\asymp 1$, we have
$\diam (Q_1^*)\asymp \diam (Q_2^*)$, and hence~$\diam(Q_1) \asymp
\diam(Q_2)$.
This completes the proof of~\eqref{e:QEn}.

\partn{6}
For~$Q \in \sQ^\sharp$, put
\begin{equation}\label{eqn:sQ}
s_Q \= \inf \{l(U): U\in \fL(Q;s)\}\in \{s, s+1, \ldots\}.
\end{equation}
For each $U\in \fL(Q; s)$, we have $l(U)\ge n_Q$, since $f^{l(U)}(U)$
contains a critical point, while $U\subset Q^*$, so $f^{n_Q} : U \to
f^{n_Q}(U)$ is a diffeomorphism.
Thus
$$ s_Q\ge n_Q,
f^{n_Q}(U) \in \fL_V
\text{ and }
l(f^{n_Q}(U))\ge s_Q-n_Q. $$

Let us prove that there exists a constant~$C_7>0$ such
that for each~$Q\in\sQ^\sharp$ and~$P\in \sP(Q)$,
\begin{equation}
\label{eqn:pullbackQlarges}
\sum_{W \in \fD_{Y}(P;s)} \diam(W)^\alpha
\le
C_7 \diam(P^*_Y)^\alpha \exp(-\eps_0(s_Q-n_Q)).
\end{equation}

To this end, we first show that $f^{n_Q + \wtm + 1}|P^*_Y$ has
uniformly bounded distortion. Indeed, since $f^{\wtm+1}(P^*_Y)\subset
Q^*$, and $f^{n_Q}|Q^*$ has bounded distortion, it suffices to prove
that $f^{\wtm+1}|P^*_Y$ has bounded distortion. Since $Q'' \cap E =
\emptyset$, the pull-back of~$Q''$ by $f^{\wtm+1}$ that contains $P$
is diffeomorphic, so by the Koebe principle, $f^{\wtm + 1}|P$ has
uniformly bounded distortion.
Moreover, for each $W\in \fD_{\tY}$, we have
$U\=f^{\wtm+1}(W)\in\fL_V$ and $f^{\wtm +1+l(U)-j}|f^j(W)$ has uniformly
bounded distortion for $j=0,1,\ldots, \wtm+1+l(U)$, since it extends
to a diffeomorphism onto the component of $\hV$ that contains
$f^{l(U)}(U)(\subset V)$.
Therefore, $f^{\wtm+1}|W$ has uniformly bounded distortion. It follows that $f^{\wtm+1}|P^*_Y$ has uniform bounded distortion.

Consequently, there is a constant~$C_8 > 0$ independent of~$\tY$ such that
\begin{equation*}
  \sum_{W\in\fD_{Y}(P;s)}\diam (W)^\alpha
 \le C_8 \frac{\diam (P^*_Y)^\alpha}{\diam
(f^{n_Q}(Q^*))^\alpha} \sum_{U\in\fL(Q;s)}\diam (f^{n_Q}(U))^\alpha.
\end{equation*}
Together with~\eqref{eqn:nQsize} and Lemma~\ref{lem:finitelanding}, this implies~\eqref{eqn:pullbackQlarges}.

\partn{7}
We are ready to complete the proof of the proposition. For $P\in\sP$
with $\fD_{Y}(P;s)\not=\emptyset$, let
$Q=f^{\wtm+1}(P)\in\sQ^\sharp$.
Since
$$ f^{n_Q + \wtm + 1}(P^*_Y)\subset f^{n_Q}(Q^*)
\text{ and }
\diam(f^{n_Q}(Q^*))<\rho, $$
we have
\begin{equation}\label{eqn:p*1}
\diam (P^*_Y) \le C_1\theta_{n_Q + \wtm + 1}.
\end{equation}
So by~\eqref{e:P*size} and \eqref{eqn:pullbackQlarges}, if we put~$C_9 = C_7 (C_5+1)^t C_1^{\alpha - t}$, then
\begin{equation*}
\sum_{W\in \fD_Y(P;s)}\diam (W)^\alpha
\le
C_9 \diam(Y)^t \theta_{n_Q + \wtm + 1}^{\alpha - t} \exp (-\eps_0 (s_Q-n_Q)).
\end{equation*}
This inequality certainly holds also in the case~$\fD_{Y}(P;s)=\emptyset$.
For each $Q\in\sQ^\sharp$, we have
$\#\sP(Q)\le d_{\hV}(\tY)$, so
\begin{multline*}
 \sum_{P\in\sP(Q)}\sum_{W\in\fD_Y(P;s)} \diam (W)^\alpha
\\ \le
C_9 d_{\hV}(\tY) \diam(Y)^t \theta_{n_Q + \wtm + 1}^{\alpha - t} \exp
(-\eps_0 (s_Q-n_Q)).
\end{multline*}
Recall that for each~$Q \in \sQ^{\sharp}$, we have~$s_Q \ge \max (n_Q,s)$.
Thus, by~\eqref{e:decomposition},
\begin{multline*}
\sum_{W \in \fD_{\tY}(s) \, : \, W \subset Y}\diam (W)^\alpha
=
\sum_{n=0}^\infty \sum_{Q\in \sQ_n^\sharp}
\sum_{P\in\sP(Q)}\sum_{W\in\fD_Y(P;s)} \diam (W)^\alpha
\\
\begin{aligned}
& \le
C_9 d(\tY) \diam(Y)^t \sum_{n = 0}^{\infty} \#\sQ_n^{\sharp} \theta_{n+\wtm+1}^{\alpha - t} \exp (-\eps_0 \max(0, s-n)).
\\ & \le
C_9 C_6 (C_2+1) D(\tY) \diam(Y)^t \sum_{n = 0}^{\infty}
\theta_{n+\wtm+1}^{\alpha - t} \exp (-\eps_0 \max(0, s-n)),
\end{aligned}
\end{multline*}
where in the last inequality we used~\eqref{e:QEn} and~\eqref{eqn:carE}.
The desired inequality follows from the fact that the sequence~$\{ \theta_n \}_{n = 1}^\infty$ is slowly varying.

The proof of Proposition~\ref{prop:whitney} is completed.

\subsection{Proof of Corollary~\ref{coro:inducing}}\label{ss:proof of inducing}
The whole section is devoted to the proof of
Corollary~\ref{coro:inducing}.
The crucial step is to prove existence of a conformal measure
supported on~$\Jcon(f)$ and the uniform estimate on its local
dimension.
The rest is a rather simple application of Young's result.
We shall use the following lemma, which is proved in~\cite[Theorem~$2$]{PrzRiv07} in the complex case.
\begin{lemm}
\label{lem:confmeas}
Assume that $f \in \sA^*$ has a nice couple $(\hV, V)$ such that the
associated canonical induced map $F: D\to V$ satisfies the
following:
\begin{enumerate}
\item[1.]
For every $c\in \Crit'(f)$, we have $\HD(J(F)\cap V^c)=\HD(J(f))$.
\item[2.]
There exists a constant~$\alpha\in (0, \HD(J(f)))$ such that $\sum_{W\in\fD} \diam (W)^\alpha < \infty$, where $\fD$ is the collection of components of $D$.
\end{enumerate}
Then there is a conformal measure~$\mu$ of exponent $\HD(J(f))$ for~$f$ that is ergodic, supported on $\Jcon(f)$, and satisfies
\begin{equation}\label{eqn:propertymu}
 \HD(\mu)=\HD(J(f))
\text{ and }
\mu(V \setminus D) = 0.
\end{equation}
Furthermore, any other conformal measure for~$f$ supported on~$J(f)$ is of exponent strictly larger than~$\HD(J(f))$ and supported on~$J(f) \setminus \Jcon(f)$.
\end{lemm}

\begin{proof}
First of all, it suffices to prove that there exists a conformal measure~$\mu$ of exponent~$\HD(J(f))$ that is supported on $\Jcon(f)$ and that satisfies \eqref{eqn:propertymu}.
The ergodicity of~$\mu$, as well as the assertions concerning the other conformal measures, follow from the fact that~$\mu$ is supported on~$\Jcon(f)$, see~\cite{DenMauNitUrb98} or~\cite{McM00}, where only the complex case was considered, but the proof extends without change to the real case.

The following is a slight modification of the proof
of~\cite[Theorem~$2$]{PrzRiv07}, given for rational maps. The
modification is necessary for the real case since it is \emph{a
priori} unknown whether $f$ has a conformal measure of exponent $\HD(J(f))$.
As in the complex case, the assumptions imply that~$F$ has a conformal measure~$\nu$ of exponent $t_0\=\HD(J(F))=\HD(J(f))$, with $\nu(J(F))=1$, and $\HD(\nu)=\HD(J(F))$.
Note that we do not need~$F$ to be topologically mixing since $\HD(J(F)\cap V^c)$ does not depend on~$c$.

Let $G: \dom(f)\setminus K(V)\to V$ denote the first landing map
to~$V$, \emph{i.e.}, for each $x\in \dom(f) \setminus K(V) $,
$G(x)=f^s(x)$, where $s$ is the minimal non-negative integer such
that $f^s(x)\in V$. For each component $W$ of $\dom(f)\setminus
K(V)$, we define a measure $\nu_W$ as follows:
$$\nu_W (E)=\int_{G(E)} |D(G|W)^{-1}|^{t_0} d\mu, \text{ for }
E\subset W.$$ Since $f$ is expanding outside the critical points,
the distortion of $G|W$ is bounded from above by a constant
independent of $W$.
Thus there is a constant $C>0$ such that $\nu_W(W)\le C\diam (W)^{t_0}$ for every component $W$ of $\dom
(f)\setminus K(V)$. Since $\sum_W \diam (W)^\alpha < \infty$ holds
for some $\alpha< \HD_{\hyp}(f)$
(Lemma~\ref{lem:finitelanding} with $m = 1$), the measure
$\mu_0\=\sum_W \nu_W$ is finite. Let $\mu$ be the normalization of $\mu_0$.
Then $\mu$ is supported on $\Jcon(f)$, satisfying~\eqref{eqn:propertymu}.
It remains to show that~$\mu$ is a conformal measure of exponent $t_0$: for any Borel set $A\subset J(f)$ for which $f|A$ is injective,
$$\mu(f(A))=\int_A |Df|^{t_0}d\mu.$$

Indeed, by writing~$A$ as a finite union of subsets, we only need to
consider the following cases:

\noindent {\bf Case 1.}
$A\cap (K(V)\cup (V\setminus D))=\emptyset$.
Then this equality holds, as shown in Cases~$1$ and~$2$ of the proof of
Proposition B.$2$ of~\cite{PrzRiv07}.

\noindent {\bf Case 2.}
$A\subset K(V)$.
Then the equality holds since both sides are equal to~$0$.

\noindent {\bf Case 3.}
$A$ is a finite set.
As clearly~$\mu$ has no atom, the equality holds.

\noindent {\bf Case 4.}
$A\subset V\setminus (D\cup \Crit'(f))$.
In this case, $\mu(A)=0$, so we only need to prove $\mu(f(A))=0$.
Since $x\in A$ has no good time, $f(x)$ can only have at most
finitely many good times, so either $f(x)\in K(V)$ or
$G(f(x))\not\in J(F)$.
Thus $G(f(A)\setminus K(V))\subset V\setminus J(F)$, so~$\mu(f(A))=0$.

This proof is completed.
\end{proof}

\begin{proof}[Proof of Corollary~\ref{coro:inducing}]

\

\partn{1}
Assume $\gamma(f)>1$.
Choose
$$ \sigma_0 \in (\HD(J(f)) - \varepsilon, \HD(J(f)) - \delta_{\bad}(f) -
\beta_{\max}(f)^{-1}), $$
$t \in (\delta_{\bad}(f), \HD(J(f)))$ and
$\beta \in (0, \beta_{\max}(f))$, so that
$$ \beta (\HD(J(f)) - t - \sigma_0) > 1. $$
Let~$(\hV, V)$ be a nice couple for~$f$ given by Theorem~\ref{t:tail
estimate} for this choice of~$t$ and for~$\Theta = \{ n^{- \beta}
\}_{n = 1}^\infty$.
Applying this theorem with~$\alpha = \HD(J(f)), \sigma = 0, m = 1$, and with $Y$ equal to each of the connected
components of~$V$, we conclude that the nice couple~$(\hV, V)$
satisfies the hypotheses of Lemma~\ref{lem:confmeas}.
So, there exists a conformal measure~$\mu$ of exponent~$\HD(J(f))$ satisfying
all the desired properties except that~\eqref{e:upper regularity conformal} is to be shown.
To do this, take $\rho>0$ and let $\rho_0$, $\kappa_0$ and $K_0$ be given by
Lemma~\ref{lem:definitesize} for this choice of $\rho$ and $(\hV,
V)$. Given~$\delta
> 0$ and $x \in J(f)$, let~$n \ge 0$ be given by this lemma. In the first case of this lemma, it follows
from the conformality of~$\mu$ and the distortion bound of~$f^n$
on~$B(x, \delta)$, that there is a constant~$C_0 > 0$ independent
of~$\delta$ and~$x$ such that,
$$ \mu(B(x, \delta)) \le C_0 \delta^{\HD(J(f))}. $$
Suppose now that we are in the second case of Lemma~\ref{lem:definitesize}, and denote by~$\fD$ the collection of connected components of~$D$.
Then, using~$\mu(V \setminus D) = 0$ (Lemma~\ref{lem:confmeas}),
$$ \mu(f^n(B(x, \delta)))
\le
\sum_{W \in \fD \, : \, W \cap f^n(B(x, \delta)) \neq \emptyset} \mu(W)
\le
\sum_{W \in \fD \, : \, W \subset f^n(B(x, \kappa_0 \delta))} \mu(W). $$
Since for each~$W \in \fD$ the map~$f^{m(W)}|W$ extends to a diffeomorphism onto a connected component of~$\hV$, it follows from the Koebe principle that there is a constant $C_1 > 0$ such that for each~$W \in \fD$,
\begin{equation}
  \label{e:bound for good}
  \mu(W) \le C_1 \diam(W)^{\HD(J(f))}.
\end{equation}
We thus have,
$$ \mu(f^n(B(x, \delta)))
\le C_1 \sum_{W \in \fD \, : \, W \subset f^n(B(x, \kappa_0 \delta))}
\diam(W)^{\HD(J(f))}. $$
Applying Theorem \ref{t:tail estimate} with
$\alpha = \HD(J(f))$, $\sigma = \sigma_0$, $Y = f^n(B(x,
\kappa_0\delta))$, and $m = 1$, we conclude that there is a
constant~$C_2
> 0$ independent of~$\delta$ and~$x$ such that,
\begin{equation*}
\mu(f^n(B(x, \delta))) \le C_2 \diam(f^n(B(x, \kappa_0
\delta)))^{\sigma_0}\le C_2'\diam (f^n(B(x, \delta)))^{\sigma_0},
\end{equation*}
where $C_2'=C_2(\kappa_0 K_0)^{\sigma_0}$. By the conformality
of~$\mu$, the distortion bound of~$f^n$ on~$B(x, \delta)$, and the
fact that $|Df(y)|\ge \rho_0$ for all $y\in B(x,\delta)$, we
conclude that there is a constant~$C_3
> 0$ independent of~$\delta$ and~$x$ such that $\mu(B(x, \delta))
\le C_3 \delta^{\sigma_0}$. This completes the proof
of~\eqref{e:upper regularity conformal}.

The fact that either $\HD(J(f)) < \HD(\dom(f))$ or~$J(f)$ has
nonempty interior follows from the existence of a conformal measure
supported on~$\Jcon(f)$, see for example~\cite[\S$8.2$]{GraSmi09}.

The assertions concerning conformal measures that are not proportional to~$\mu$ follow from Lemma~\ref{lemm:dimensionD} and Lemma~\ref{lem:confmeas}.

\partn{2}
Assume $\gamma(f)>2$, fix $\gamma\in
(0,\gamma(f)-2)$, and put $\tgamma = \gamma + 2$. Taking $t >
\delta_{\bad}(f)$ closer to $\delta_{\bad}(f)$, and $\beta \in (0,
\beta_{\max}(f))$ closer to~$\beta_{\max}(f)$ if necessary, we
assume that~$\beta(\HD(J(f)) - t) > \tgamma$.
Applying Theorem~\ref{t:tail estimate} with~$\alpha = \HD(J(f))$, $\sigma = 0$ and with~$Y$ equal to each of the connected components of~$V$, we conclude that there exists a constant~$C_4 > 0$ such that for each~$m \ge 1$,
\begin{multline*}
\sum_{W\in \fD \, : \, m(W)\ge m}\diam (W)^{\HD(J(f))}
\le
C_4 \sum_{n=m}^\infty n^{-\beta(\HD(J(f))-t)}
\\ \le
C_4 \sum_{n=m}^\infty n^{-\tgamma}
\le
2 C_4 m^{-\tgamma+1}.
\end{multline*}
Thus by~\eqref{e:bound for good} we obtain,
\begin{equation*}
\mu( \{ x \in D : m(x) \ge m \})
\le
2 C_1 C_4 m^{-\tgamma+1}.
\end{equation*}
Taking $(\hV, V)$ smaller if necessary, we may assume that for
some~$\widetilde{c} \in \Crit'(f)$ the set
$$ \{ m(W) : W \text{ connected component of $D \cap V^{\widetilde{c}}$ such that $F(W) = V^{\widetilde{c}}$} \},$$
is nonempty and its greatest common divisor is equal to~$1$. This
last result is proven in \cite[Lemma~$4.1$]{PrzRiv07} for rational
maps and its proof works without change for interval maps
in~$\sA^*$.
Then we proceed in a similar way
as in the proof of Theorem~B and Theorem~C of~\cite{PrzRiv07},
applying L.S.~Young's results in~\cite{You99} to the first return
map of~$F$ to~$V^{\widetilde{c}}$.
\end{proof}

\section{Poincar{\'e} series}\label{s:regularity density}
In this section we give the proofs of Theorems~\ref{t:regularity density}, \ref{t:fractal dimensions} and~\ref{t:holomorphic removability}, based on estimates of the Poincar{\'e} series and their integrated versions, that we state as Propositions~\ref{prop:cPzinV} and~\ref{prop:sLall}.

We fix throughout this section a map~$f$ in~$\sA$ and denote by~$\dom(f)$ its domain.
Recall that for~$s> 0$ and for a point~$x_0 \in \dom(f)$, the Poincar{\'e} series of~$f$ at~$x_0$ with exponent~$s$, is defined as
$$\cP(x_0;s)=\sum_{m=0}^\infty \cP_m(x_0;s),$$
where
$$\cP_m(x_0;s) = \sum_{x\in f^{-m}(x_0)}|Df^m(x)|^{-s}.$$
\index{$\cP_m(x_0;s)$}
Clearly, if $\mu$ is a conformal measure of exponent~$s$ without
an atom, then $d((f^m)_*\mu)/d\mu =\cP_m(\, \cdot \, ; s)$ on a set of full measure with respect to~$\mu$.

Recall that for a subset~$Q$ of~$\dom(f)$ and an integer $m\ge 0$, we denote by~$\sM_m(Q)$ the collection of all pull-backs of~$Q$ by~$f^n$.
Let
$$\theta_m(Q)
\=
\sup\{\diam (P): P\in\sM_m(Q)\},
\text{ and }
\theta(Q)
\=
\sup_{m=0}^\infty \theta_m(Q).$$
\index{$\theta_m(\cdot)$, $\theta(\cdot)$}
Moreover, for $s\ge 0$ we let
$$\sL_m(Q; s)=\sum_{P\in \sM_m(Q)} d_Q(P) \diam (P)^s,
\text{ and }
\sL(Q; s)=\sum_{m=0}^\infty \sL_m(Q;s),$$
\index{$\sL_m$, $\sL$}
where~$d_Q(P)$ is defined as in~\S\ref{subsec:be}.

Note that if $x\in J(f)$ is disjoint from the critical
orbits, then $$\cP_m(x;s)=\lim_{\eps\to 0} \frac{\sL_m(B(x, \eps);
s)}{\diam (B(x,\eps))^s}.$$

For $z\in J(f)$ and $m\ge 0$, let
$$\Delta_m(z)
\=
\dist\left(z, \bigcup_{j=0}^m f^j(\Crit'(f))\right).$$
Let~$\varepsilon_0 \in (0, 1/2)$ be sufficiently small so that the constant~$K(2\varepsilon_0)$ given by the Koebe principle (Lemma~\ref{lem:koebeC}) satisfies~$K(2\varepsilon_0) \le 2$, and put
$$\xi_m(z)
\=
\theta_m \left(B \left(z, \varepsilon_0 \Delta_{m}(z) \right) \right).$$
\index{$\Delta_m(\cdot)$}
\index{$\xi_m(\cdot)$}

Given a nice set~$\hV$, let $\fB_0(\hV) = \sM_0(\hV)$, and for $m\ge 1$,
let~$\fB_m(\hV)$ denote the collection of all elements $\tY\in
\sM_m(\hV)$ that are bad pull-backs of~$\hV$.
Moreover, for $s\ge 0$, let
$$\sL_m^{\bad} (\hV; s)
\=
\sum_{\tY\in \fB_m(\hV)} d_{\hV}(\tY) \diam (\tY)^s
\text{ and }
\sL^{\bad}(\hV;s)
\=
\sum_{m=0}^\infty \sL_m^{\bad}(\hV; s).$$
\index{$\sL_m^{\bad}, \sL^{\bad}$}

Our main technical results of this section are the following:
\begin{prop}\label{prop:cPzinV}
Assume that $f\in\sA^*$ has a conformal measure of exponent
$h_0>\delta_{\bad}(f)$. Then for each sufficiently small nice couple
$(\hV, V)$, the following hold:
\begin{enumerate}
\item[1.]
For any $s > h_0$ and~$t \in (0, s)$, there exists a constant $C>0$ such that for each $z\in V\cap J(f)$,
\begin{equation}\label{eqn:cPzs}
\cP(z;s)
\le
C \sum_{m=0}^\infty \sL_m^{\bad}(\hV; t) \xi_m(z)^{s-t}\Delta_m(z)^{-s}.
\end{equation}
\item[2.]
For each $t\in (0, h_0)$ there exists a constant~$C>0$ such
that for each $z\in V \cap J(f)$ and each integer~$n \ge 1$,
\begin{equation}\label{eqn:cPnzh}
\cP_n (z;h_0)
\le
C\sum_{m=0}^n \sL_m^{\bad}(\hV; t) \xi_m(z)^{h_0-t} \Delta_m(z)^{-h_0}.
\end{equation}
\end{enumerate}
 \end{prop}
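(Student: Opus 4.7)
The plan is to prove Proposition~\ref{prop:cPzinV} by a \emph{Koebe reduction} that converts $\cP_m(z;s)$ into a diameter sum over pullbacks of the ball $B_m:=B(z,\varepsilon\Delta_m(z))$, followed by a ``last-bad-ancestor'' decomposition of those pullbacks along the $\hV$-tower. First I shrink $(\hV,V)$ and use the fixed $\varepsilon>0$ so that $K(2\varepsilon)\le 2$ in Lemma~\ref{lem:koebeC}. The definition of $\Delta_m(z)$ forces $B(z,\Delta_m(z))$ to avoid $\bigcup_{j=0}^m f^j(\Crit'(f))$, and a short induction on $j$ then shows every $W\in\sM_m(B_m)$ is a diffeomorphic pullback. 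Koebe gives $|Df^m(x)|^{-1}\asymp\diam(W)/\Delta_m(z)$ on $W$, and since each $W$ carries a unique preimage of $z$,
\[
\cP_m(z;s)\le C\,\Delta_m(z)^{-s}\sum_{W\in\sM_m(B_m)}\diam(W)^s,
\]
with the analogous inequality for exponent $h_0$ serving Part~(2).

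Next I decompose by dynamical history. For $W\in\sM_m(B_m)$ sitting in $\tW\in\sM_m(\hV)$ (a unique ancestor, after shrinking $\hV$ so $B_m$ lies in one component), let $m^*(W)\in\{0,\dots,m\}$ be the largest $j$ with $\tY:=f^{m-j}(\tW)\in\fB_j$; this is well-defined because $\fB_0$ is the set of components of $\hV$. By maximality and Lemma~\ref{lem:db0}, every ancestor of $\tW$ strictly deeper than $\tY$ is \emph{not} bad, hence contained in a diffeomorphic pullback of $\hV$; pulling back an $\fL_V$-landing of length $m-m^*$ through this chain realizes $f^{m-m^*}$ with a uniform Koebe extension from a neighborhood of $\tW$ onto one of $\tY$. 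Inside $\tY\in\fB_{m^*}$ there are at most $d_{\hV}(\tY)$ pullbacks $P$ of $B_m$ by $f^{m^*}$, each of diameter at most $\theta_{m^*}(B_m)\le\xi_{m^*}(z)$ (using $\Delta_m\le\Delta_{m^*}$), so the elementary interpolation
\[
\sum_{P\subset\tY,\,P\in\sM_{m^*}(B_m)}\diam(P)^s\le\xi_{m^*}(z)^{s-t}\sum_{P}\diam(P)^t\le d_{\hV}(\tY)\diam(\tY)^t\xi_{m^*}(z)^{s-t}
\]
bounds the bad-chunk contribution. The Koebe tail converts $\diam(W)^s$ into $\diam(P)^s|Df^{m-m^*}|^{-s}$, and summing over descendants of a fixed $P$ reduces to a landing Poincar\'e sum $\sum_{U\in\fL_V}|Df^{l(U)}|^{-s}$; Lemma~\ref{lem:finitelanding} applied with $\alpha$ close to $h_0$ gives geometric decay for $s>h_0$ (Part~(1)), while for Part~(2) with $s=h_0$ the conformal measure $\mu$ of exponent $h_0$ closes the argument at a single fixed $n$. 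Re-indexing $m^*\mapsto m$ and summing over $\tY\in\fB_m$ and over the appropriate range of $m$ then produces the claimed bounds, and the hypothesis $h_0>\delta_{\bad}(f)$ ensures the outer series are finite for $t$ in the admissible range.

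The main obstacle is the ``Koebe tail factorization'' justifying Step~2: Lemma~\ref{lem:db0}(3) only provides, for each strictly deeper ancestor $\tW^{(j)}$, \emph{some} diffeomorphic containing pullback of $\hV$, and these containers are not a priori nested into a single diffeomorphic extension of $f^{m-m^*}$. I plan to resolve this by choosing the minimal such diffeomorphic pullback and showing that it realizes an $\fL_V$-landing with a uniform Koebe space around $\tW$; the maximality of $m^*$ rules out critical detours along the orbit, which is essential for controlling the landing Poincar\'e sum by Lemma~\ref{lem:finitelanding}. A subsidiary bookkeeping matter is that the decomposition naturally outputs $\xi_{m^*}(z)$ at the bad-ancestor depth rather than at the pullback depth $m$, which is absorbed into the outer summation by the re-indexing described above.
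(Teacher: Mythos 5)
Your architecture is recognizably the one the paper uses (Koebe reduction to diameter sums, a ``last bad ancestor'' splitting as in Lemma~\ref{lem:diffbad2all}, and the interpolation $\diam(P)^s\le\diam(P)^t\,\theta^{s-t}$ of Lemma~\ref{lem:sLmQV}), but two steps as written do not reach the stated inequality. First, the order of operations in your Koebe reduction puts the factor $\Delta_m(z)^{-s}$ at the \emph{full} depth $m$, whereas the proposition requires it at the \emph{bad} depth $m^*$. Since $\Delta_m(z)\le\Delta_{m^*}(z)$ for $m^*\le m$, your prefactor $\Delta_m(z)^{-s}$ dominates the target prefactor $\Delta_{m^*}(z)^{-s}$, and no re-indexing absorbs the discrepancy: after renaming $m^*\mapsto m$ your bound contains $\sum_{n\ge m}\Delta_n(z)^{-s}R_{n-m}$, which cannot be majorized by $C\Delta_m(z)^{-s}$ because $\Delta_n(z)^{-s}$ is nondecreasing in $n$ and tends to infinity whenever $z$ accumulates on the forward critical orbit (and the estimate must hold for all $z\in V\cap J(f)$). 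The paper avoids this by splitting the derivative $|Df^n(y)|=|Df^{n-k}(y)|\cdot|Df^{k}(f^{n-k}(y))|$ \emph{before} invoking the ball estimate, applying the bound $|Df^{k}|^{-1}\lesssim\diam(P)\Delta_k(z)^{-1}$ only to the deep factor inside the bad pull-back $\tY\in\fB_k(\hV)$ (this is exactly~\eqref{eqn:cPmtYz}), and bounding the shallow factor by $\diam(U)^s$ via Koebe on the diffeomorphic pull-back $U$ of $\hV$.

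Second, your control of the diffeomorphic tail is insufficient. The complementary segment from $W$ to the bad ancestor is an arbitrary diffeomorphic pull-back of $\hV$ by $f^{n-m}$, i.e.\ an arbitrary concatenation of returns through the induced map, not a single first landing; so Lemma~\ref{lem:finitelanding}, which only bounds $\sum_{U\in\fL_V}\diam(U)^\alpha$, does not control the quantity actually needed, namely $\sum_{U\in\fG_{n-m}(\hV)}\diam(U)^s$ summed over all levels. This is precisely where the hypothesis that $f$ carries a conformal measure of exponent $h_0$ enters: the paper's Lemma~\ref{lem:diffpbfinite} shows $\sup_n\sL_n^o(V;h_0)<\infty$ (each diffeomorphic branch has $\mu$-mass comparable to $\diam^{h_0}$, and the total mass is bounded), which closes Part~(2) at fixed $n$, and upgrades to $\sL^o(V;s)<\infty$ for $s>h_0$ via the exponential decay of diameters along the tower of the induced map, which is what makes the sum over $n$ converge in Part~(1). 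Your threshold $s>h_0$ is never actually used in your sketch, which is a sign the convergence mechanism is missing. Finally, your choice of cut point (largest $j$ with $f^{m-j}(\tW)$ bad) creates the ``Koebe tail factorization'' problem you flag; it is resolvable (by Lemma~\ref{lem:db0} a bad pull-back is disjoint from every diffeomorphic pull-back, so the chain of containers telescopes), but the paper's definition --- the minimal $k$ such that $f^{n-k}$ is a diffeomorphism on a neighborhood of $W$ --- makes the diffeomorphicity automatic and reduces the verification to showing the target at depth $k$ is bad, which follows from minimality.
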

In the following proposition we use the conical Julia set~$\Jcon(f)$ defined in~\S\ref{ss:backward contracting maps}, the best polynomial shrinking exponent~$\beta_{\max}(f)$ defined in~\S\ref{def:psc}, and the badness exponent~$\delta_{\bad}(f)$ defined in Definition~\ref{def:badpb}.
\begin{prop}\label{prop:sLall}
Assume that $f\in\sA^*$ has a conformal measure~$\mu$ of exponent
$h_0 >\delta_{\bad}(f)$ such that
$\beta_{\max}(f)(h_0-\delta_{\bad}(f))>1$ and such that for each
open set $U$ intersecting $\Crit'(f)$ we have $\mu(U)>0$.
Then there exists a constant~$\delta_0>0$ such that for each $z\in J(f)$ and each $s>h_0$,
$$ \sL(B(z,\delta_0); s)<\infty. $$
Moreover, if $\mu(\Jcon(f))=0$, then
we also have $\sL(B(z,\delta_0); h_0)<\infty$ for each $z\in J(f)$.
\end{prop}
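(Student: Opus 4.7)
The plan is to derive Proposition~\ref{prop:sLall} from the pointwise Poincar\'e estimates in Proposition~\ref{prop:cPzinV}, by combining them with Koebe distortion and the conformality of~$\mu$ to convert bounds on~$\cP$ into bounds on the integrated quantity~$\sL$.

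First, I would set the parameters. The hypothesis $\beta_{\max}(f)(h_0-\delta_{\bad}(f))>1$ lets me pick $\beta\in(0,\beta_{\max}(f))$ and $t\in(\delta_{\bad}(f),h_0)$ with $\beta(h_0-t)>1$. By Lemma~\ref{lem:db}, shrinking a symmetric nice couple $(\hat V,V)$ I arrange $\delta_{\bad}(\hat V)<t$, which by definition of the badness exponent ensures $\sum_m\sL_m^{\bad}(\hat V;t)<\infty$, while keeping $(\hat V,V)$ small enough for Proposition~\ref{prop:cPzinV} to apply. I then pick $\delta_0>0$ so that for each $z\in J(f)$, the ball $B(z,\delta_0)$ either lies in one component of~$V$ or in the region where $f$ is uniformly expanding away from critical points; in the second case a bounded number of Koebe-controlled iterates lands one in~$V$, so I may assume $z\in V\cap J(f)$.

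Second, the main estimate. I would classify each pull-back $P\in\sM_m(B(z,\delta_0))$ by the last $k\in\{0,\dots,m\}$ for which the pull-back $\tY$ of $\hat V$ by $f^{m-k}$ containing $f^k(P)$ is bad relative to $\hat V$. Koebe distortion on the diffeomorphic piece $f^k\!\mid\!P$ gives $d_{B(z,\delta_0)}(P)\diam(P)^s\le C\diam(B(z,\delta_0))^{s}|Df^k(x_P)|^{-s}d_{\hat V}(\tY)\diam(\tY)^s$, and summing over~$P$ above a fixed~$\tY$ converts the $|Df^k|^{-s}$ factor into a piece of the pointwise series $\cP(\cdot;s)$ evaluated at a point of $\tY\cap J(f)$. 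Plugging in the bound of Proposition~\ref{prop:cPzinV}(1) and using the Polynomial Shrinking Condition $\xi_m(x)\le Cm^{-\beta}$ together with the trivial $\xi_m(x)\le\varepsilon\Delta_m(x)$, the resulting double sum is controlled by the convergent series $\sum_m\sL_m^{\bad}(\hat V;t)$ and $\sum_m m^{-\beta(s-t)}$ (the latter by $s>h_0$ and $\beta(h_0-t)>1$). The hypothesis that $\mu(U)>0$ for every open~$U$ meeting $\Crit'(f)$ supplies the uniform integrability of $\Delta_m(\cdot)^{-t}$ against~$\mu$ needed to close the argument.

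For the critical exponent $s=h_0$ I would apply Proposition~\ref{prop:cPzinV}(2) in place of part~(1), and exploit $\mu(\Jcon(f))=0$: under this hypothesis the borderline contribution, which formally corresponds to the mass of~$\mu$ on the conical set, vanishes, so the finite-$n$ bound upgrades to a summable one. The main obstacle I anticipate is the conversion from pointwise $\cP$-bounds to $\sL$-bounds for \emph{bad} pull-backs, where Koebe fails and $\diam(P)$ is not controlled by $|Df^m|^{-1}$; the decomposition at the last bad ancestor -- the same Whitney-type philosophy used in the proof of Theorem~\ref{t:tail estimate} -- is what resolves this. A secondary subtlety is uniformity in $z\in J(f)$, including points on the forward critical orbit where $\Delta_m(z)\to 0$; this is handled by the positivity of $\mu$ near $\Crit'(f)$ together with the essential topological exactness inherent in the definition of $\sA^*$.
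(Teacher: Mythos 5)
Your proposal is in the right spirit at the level of the decomposition (split each pull-back at its last bad ancestor, use $\sum_m \sL_m^{\bad}(\hV;t)<\infty$ from $\delta_{\bad}(\hV)<t$, and use the Polynomial Shrinking Condition together with $\beta(h_0-t)>1$ to sum a series $\sum_n n^{-\beta(s-t)}$), but it has two genuine gaps. First, the reduction ``I may assume $z\in V\cap J(f)$'' fails: for $z\in K(V)\cap J(f)$, i.e.\ points whose entire forward orbit avoids $V$, no iterate of $B(z,\delta_0)$ ever lands in $V$, so neither of your two alternatives applies. This is the hardest case in the actual proof: one must split the pull-backs of $B(z,\delta_0)$ into those extending diffeomorphically over $B(z,2\delta_0)$ (controlled by Koebe against a reference pull-back $W_*\subset B(z,\delta_0)$ of $V$) and the non-diffeomorphic ones, each of which funnels through a critical pull-back $Q\subset V$ of $B(z,2\delta_0)$ at some depth $n$ with $\theta(Q)\le C n^{-\beta}$; only then does the master bound $\sL(Q;s)\le C\theta(Q)^{s-t}$ combined with $\sum_n n^{-\beta(s-t)}<\infty$ close the argument. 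Your proposal never confronts this case, and it is precisely where the hypothesis $\beta_{\max}(f)(h_0-\delta_{\bad}(f))>1$ is consumed.

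Second, routing the estimate through Proposition~\ref{prop:cPzinV} both misplaces the hypotheses and leaves the key finiteness unproved. The conclusion of Proposition~\ref{prop:sLall} involves no integration against $\mu$; the hypothesis that $\mu$ charges every open set meeting $\Crit'(f)$ is not used for any ``integrability of $\Delta_m^{-t}$'' (that ingredient belongs to the proof of Theorem~\ref{t:regularity density}), but rather to obtain $\mu(W)\asymp\diam(W)^{h_0}$ for diffeomorphic pull-backs of $V$ and hence the finiteness of the diffeomorphic series $\sL^o(V;s)$ for $s>h_0$ --- and, via $\mu(\Jcon(f))=0\Rightarrow\mu(V^c\setminus D)>0\Rightarrow\mu(\dom(F^n))$ decays exponentially, for $s=h_0$ as well (Lemma~\ref{lem:diffpbfinite}). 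Without this step the sum over the diffeomorphic factors in your decomposition is not controlled. Moreover, once the last-bad-ancestor decomposition is performed one obtains the bound $\sL_n(Q;s)\le C\sum_{m\le n}\sL^o_{n-m}(V;s)\,\sL^{\bad}_m(\hV;t)\,\theta_m(Q)^{s-t}$ directly on diameters (this is the $\sL$-half of Lemma~\ref{lem:diffbad2all}); passing through the pointwise Poincar\'e series and converting back, as you propose, is circular --- Proposition~\ref{prop:cPzinV} is itself a corollary of the same decomposition --- and the conversion from $\cP$-bounds to $\sL$-bounds on bad pull-backs that you flag as the main obstacle simply never needs to be made.
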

Notice that in the proposition above the conformal measure~$\mu$ might not charge~$J(f)$.

We shall suspend the proof of these propositions
until~\S\ref{ss:poincare series}.
Let us now deduce from them Theorems~\ref{t:regularity density}, \ref{t:fractal dimensions} and~\ref{t:holomorphic removability}, in \S\S\ref{ss:proof of regularity density}, \ref{ss:proof of fractal dimensions} and~\ref{ss:proof of holomorphic removability}, respectively.
\subsection{Proof of Theorem~\ref{t:regularity density}}
\label{ss:proof of regularity density}
This section is devoted to the proof of Theorem~\ref{t:regularity density}.
So assume that $f\in\sA^*$ satisfies $\gamma(f)>2$.
We put~$h_0 = \HD(J(f))$.

Let $\beta \in (0, \beta_{\max}(f))$, $t > \delta_{\bad}(f)$, $q > q(f)$ and~$q' < p/(p - 1)$ be such that $t + 2 \beta^{-1} < \HD(J(f))$, and
\begin{equation*}
h
\=
\frac{q'}{q' - 1} \left(h_0 - (h_0 - t - 2 \beta^{-1})/ q \right)
<
h_0 - \delta_{\bad}(f) - \beta_{\max}(f)^{-1}.
\end{equation*}

We will prove that there is a constant $C_0 > 0$ such that for each
Borel subset~$A$ of~$J(f)$, and each integer~$n \ge 0$,
\begin{equation}\label{eqn:uniformac}
\mu(f^{-n}(A)) \le C_0 \mu(f(A))^{1/q'}.
\end{equation}
Note that this will complete the proof of the theorem. Indeed, this
implies that any accumulation point~$\nu'$ of the sequence of
measures $\left\{ \tfrac{1}{n} \sum_{i=0}^{n-1} f^i_*\mu \right\}_{n
= 0}^\infty$ is such that for each Borel subset~$A$ of~$J(f)$, we
have $\nu'(A)\le C_0\mu(A)^{1/q'}$.
Thus~$\nu'$ is an invariant probability measure that is absolutely continuous with respect to~$\mu$, and since~$p < q'/(q' - 1)$ by our choice of~$q'$, we
also have that the density of~$\nu'$ with respect to~$\mu$ belongs
to~$L^p(\mu)$.
By ergodicity of~$\mu$, we have $\nu=\nu'$.

\partn{1}
We first prove that there exists a constant a~$C_1>0$ such that for each Borel subset~$B$ of~$\dom(f)$,
\begin{equation}\label{eqn:integcond}
 \int_B \Delta_{m}^{-h(1 - 1/q')} d\mu
\le
C_1 (m + 1) \mu(B)^{1/q'}.
\end{equation}
Indeed, since~$h < h_0 - \delta_{\bad}(f) -
\beta_{\max}(f)^{-1}$, by part~$1$ of Corollary~\ref{coro:inducing}
there exists a constant~$C_2 > 0$ such that for each $z_0\in J(f)$,
$$\int_{J(f)} \dist(z_0, z)^{-h} d\mu(z) \le C_2.$$
By H{\"o}lder inequality, for each Borel subset~$B$ of~$\dom(f)$,
$$\int_{B} \dist (z_0,z)^{-h(1 - 1/q')} d\mu(z)
\le
C_2^{1 - 1/q'} \mu(B)^{1/q'}.$$
Thus, the desired inequality holds with $C_1 = \# \Crit'(f) C_2^{1 - 1/q'}$.

\partn{2}
Now let $(\hV, V)$ be a sufficiently small nice couple so that $\delta_{\bad}(\hV)<t$. Since $f$ has a conformal measure of exponent $h_0 = \HD(J(f))$, by Proposition~\ref{prop:cPzinV} there exists a constant~$C_3>0$ such that for each~$z \in V$
\begin{equation}
\cP_n (z;h_0)
\le
C_3\sum_{m=0}^n \sL_m^{\bad}(\hV; t+2\beta^{-1}) \xi_m(z)^{h_0-t-2\beta^{-1}} \Delta_m(z)^{-h_0}.
\end{equation}
Let us prove that there exists a constant~$C_4>0$ such that
\begin{equation} \label{eqn:poinestinV}
\frac{\cP_n(z; h_0)}{|Df(z)|^{h_0}}
\le
C_4\sum_{m=0}^n \sL_m^{\bad} (\hV; t+2\beta^{-1}) \Delta_{m+1}(f(z))^{-h(1 - 1/q')}.
\end{equation}

Indeed, there exists a constant~$C_5 > 1$ such that for each~$z \in
V$,
\begin{equation}\label{e:critical distance}
\Delta_{m+1}(f(z)) \le C_5 |Df(z)|\Delta_m(z).
\end{equation}
Since $q > q(f)$, by our choice of~$\varepsilon_0$, we have for some
constant $C_6>0$,
$$\xi_m(z)
\le \theta_{m + 1} (B(f(z), 2 \varepsilon_0 |Df(z)| \Delta_{m}(z)))
\le C_6 (|Df(z)| \Delta_{m}(z))^{1/q}.$$
Inequality~\eqref{eqn:poinestinV} follows using~\eqref{e:critical distance}, and the definition of~$h$.

\partn{3} Let $(\hV, V)$ be as above.
We prove that~\eqref{eqn:uniformac} holds for all Borel sets $A\subset V$.
Without loss of generality, we may assume that $f|A$ is injective.
Then
$$\mu(f^{-n}(A))=\int_A \cP_n(z; h_0)d\mu(z)= \int_{f(A)} \frac{\cP_n(z; h_0)}{|Df(z)|^{h_0}} d\mu(w),$$
where $w=f(z)$.
By~\eqref{eqn:poinestinV} and~\eqref{eqn:integcond}, this implies
\begin{align*}
\mu(f^{-n}(A))
& \le
C_4 \sum_{m=0}^n \sL_m^{\bad} (\hV, t+2\beta^{-1})\int_{f(A)} \Delta_{m+1}(w)^{-h(1-1/q')}d\mu(w)
\\ & \le
C_4 C_1\sum_{m=0}^n \sL_m^{\bad} (\hV, t+2\beta^{-1})(m+2) \mu(f(A))^{1/q'}.
\end{align*}

Now fix~$\beta' \in (\beta, \beta_{\max}(f))$.
Then there is a constant~$C_7 > 0$ such that for each~$\tY\in
\fB_m(\hV)$,
$$d(\tY) \diam(\tY)^{ t + 2 \beta^{-1}}
\le
C_7 d(\tY) \diam(\tY)^t (m+2)^{-2\beta'\beta^{-1}},  $$
which implies
\begin{align*}
C_8
\=
\sum_{m=0}^\infty \sL_m^{\bad} (\hV; t + 2 \beta^{-1}) (m+2)
& \le
C_7 \sum_{m=0}^\infty \sL_m^{\bad} (\hV; t) (m + 2)^{1 - 2 \beta' \beta^{-1}}
\\ & \le
C_7 \sL^{\bad} (\hV; t) \sum_{m = 0}^\infty (m + 2)^{1 - 2 \beta' \beta^{-1}}
\\ & <
\infty.
\end{align*}
This proves that for Borel sets $A\subset V$, the
inequality~\eqref{eqn:uniformac} holds with $C_0=C_1C_4 C_8$.

\partn{4}
It remains to prove~\eqref{eqn:uniformac} holds for all Borel sets
$A\subset J(f)\setminus V$. The case $n=0$ is trivial, so we shall
assume $n\ge 1$. For $m\ge 1$, let
$$ X_m
\= \{z\in J(f): z, f(z), \ldots, f^{m-1}(z)\not\in V\} $$
and $A_m=\{z\in X_m: f^{m-1}(z) \in A\}$.
Then for any $n\ge 1$,
$$ f^{-n}(A)
=
A_{n+1} \cup \left(\bigcup_{m=1}^{n} f^{-(n-m)} ( f^{-1} (A_m)\cap V)\right). $$
By what we have proved in part~$3$, it suffices to show there exist constants~$C_9>0$ and~$\kappa\in (0,1)$ such that
\begin{equation}\label{eqn:muam}
\mu(A_m)\le C_9\kappa^m \mu(f(A)).
\end{equation}
To this end, let $V'\Subset V$ be a nice set and let
$$ X_m'
\=
\{z\in J(f): z, f(z), \ldots, f^{m-1}(z)\not\in V'\}. $$
Then by the latter part of Lemma~\ref{lem:finitelanding}, $\mu(X_m')$ is exponentially small in~$m$.
Clearly, there exists a small constant $\rho>0$ such
that for each $z\in X_m$, the map~$f^m$ maps a neighborhood~$U(z)$ of~$z$ diffeomorphically onto~$B(f^m(z),\rho)$ with uniformly bounded
distortion, and such that $U(z)\cap J(f)\subset X_m'$. It follows
that for $w\in J(f)$,
$$\cP_m^*(w; h_0)
\=
\sum_{z\in X_m \, : \, f^m(z)=w} |Df^m(z)|^{-h_0} \asymp \sum_{z\in X_m \, : \, f^m(z)=w} \mu (U(z))
\le
\mu (X_m').$$ Since
$$\mu(A_m)
=
\int_{f(A)} \cP_m^*(w; h_0) d\mu(w),$$
inequality~\eqref{eqn:muam} follows.

We have completed the proof of Theorem~\ref{t:regularity density}.

\subsection{Proof of Theorem~\ref{t:fractal dimensions}}
\label{ss:proof of fractal dimensions}

This section is devoted to the proof of Theorem~\ref{t:fractal dimensions}, which is based on the following proposition; see~\S\ref{sss:conformal and invariant} for the definition of~$\gamma(f)$.
\begin{prop}[Poincar{\'e} series]
\label{p:Poincare series}
Assume that~$f \in \sA^*$ satisfies $\gamma(f)>1$.
Then~$\poincare(f) = \HD(J(f))$.
More precisely,
\begin{enumerate}
\item[1.]
For every $x_0 \in \dom(f)$ that is not asymptotically exceptional, we have $\cP(x_0; \HD(J(f))) = \infty$.
\item[2.] There is a subset~$E$ of~$J(f)$ with $\HD(E)< \HD(J(f))$ and a neighborhood~$U$ of~$J(f)$ such that for every $x_0 \in U \setminus E$, and every~$s
> \HD(J(f))$, the Poincar{\'e} series~$\cP(x_0; s)$ converges.
\end{enumerate}
\end{prop}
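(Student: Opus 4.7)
The plan is to combine the $h_0$-conformal probability measure $\mu$ on $\Jcon(f)$ from Corollary~\ref{coro:inducing} (with $h_0 := \HD(J(f))$; it exists because $\gamma(f)>1$ forces both $\delta_{\bad}(f)<h_0$ and $\beta_{\max}(f)>0$) with the Poincaré-series estimates of Propositions~\ref{prop:cPzinV} and~\ref{prop:sLall}. The measure $\mu$ is ergodic, nonatomic, satisfies $\HD(\mu)=h_0$, and by Lemma~\ref{lem:confmeas} is the unique conformal measure on $J(f)$ of exponent $\le h_0$.

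For part (1), I would first argue $\poincare(f)\ge h_0$ via a Patterson--Sullivan construction. Suppose, for contradiction, that for some non-asymptotically-exceptional $x_0$ and some $s<h_0$ one has $\cP(x_0,s)<\infty$. Then, for $s'$ slightly above the Poincaré exponent at $x_0$, the atomic normalised measures
$$
\mu_{s'} \;=\; \cP(x_0,s')^{-1}\sum_{m\ge 0}\sum_{x\in f^{-m}(x_0)} |Df^m(x)|^{-s'}\,\delta_x
$$
converge weakly (after inserting a Patterson function if necessary) to a conformal probability measure supported on $\alpha(x_0)\subset J(f)$ of exponent at most $s<h_0$; since $x_0$ is not asymptotically exceptional, $\alpha(x_0)$ is infinite and the limit is nonatomic, contradicting the uniqueness statement in Lemma~\ref{lem:confmeas}. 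Divergence at the critical exponent, $\cP(x_0,h_0)=\infty$, follows from the same construction: if $\cP(x_0,h_0)$ were finite, the Patterson measure at $h_0$ would be an atomic probability concentrated on the countable set $\bigcup_m f^{-m}(x_0)$; being also $h_0$-conformal and supported in $J(f)$, it would have to coincide with $\mu$ by uniqueness, which is impossible because $\mu$ is nonatomic.

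For part (2), fix $s>h_0$, pick $t\in(\delta_{\bad}(f),h_0)$, and by Lemma~\ref{lem:db} choose a nice couple $(\hV, V)$ small enough that $\delta_{\bad}(\hV)<t$, so $\sum_m \sL_m^{\bad}(\hV,t)<\infty$. Proposition~\ref{prop:cPzinV}(1) yields, for each $z\in V\cap J(f)$,
$$
\cP(z,s)\le C\sum_{m\ge 0} \sL_m^{\bad}(\hV,t)\,\xi_m(z)^{s-t}\,\Delta_m(z)^{-s}.
$$
The polynomial shrinking (Theorem~\ref{t:polynomial shrinking}, applicable because $\beta_{\max}(f)>0$) makes $\xi_m(z)$ decay polynomially in $m$ uniformly on $J(f)$. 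The exceptional set $E\subset J(f)$ is defined as a limsup of shrinking neighbourhoods of the critical orbit: with a summable sequence $a_m>0$ and radii $r_m$ chosen so that $\sL_m^{\bad}(\hV,t)\,\xi_m^{s-t}\le a_m\,r_m^s$ (uniform upper bound on $\xi_m$), set
$$
E \;=\; \bigcap_{N\ge 1}\bigcup_{m\ge N}\bigcup_{\substack{c\in\Crit'(f)\\ 0\le j\le m}} B\!\left(f^j(c),\, r_m\right).
$$
For $z\in V\cap J(f)\setminus E$ we have $\Delta_m(z)\ge r_m$ eventually, so the displayed series is term-wise bounded by the summable $a_m$, giving $\cP(z,s)<\infty$. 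The bound $\HD(E)<h_0$ follows from a Borel--Cantelli dimension estimate using the upper regularity $\mu(B(x,\delta))\le \delta^{h_0-\varepsilon}$ from Corollary~\ref{coro:inducing}(1) together with the linear-in-$m$ bound $\#\{f^j(c):c\in\Crit'(f),\,j\le m\}\le m\,\#\Crit'(f)$. Finally the neighbourhood $U$ of $J(f)$ is produced via the essentially topologically exact dynamics: every $x_0$ near $J(f)$ lands in $V$ after finitely many iterates, and the identity $\cP(x_0,s)\le |Df^k(x_0)|^s\,\cP(f^k(x_0),s)$ lets convergence on $V\setminus E$ propagate to $U$ minus a slightly enlarged but still dimension-deficient exceptional set.

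The main obstacle will be the dimensional bookkeeping for $E$ in part~(2): the radii $r_m$ must be simultaneously small enough for $\sum a_m$ to converge off $E$, yet large enough for $\sum_m m\,r_m^{h_0-\varepsilon}$ to converge so that the Borel--Cantelli argument delivers $\HD(E)<h_0$. The hypothesis $\gamma(f)=\beta_{\max}(f)(h_0-\delta_{\bad}(f))>1$ provides precisely the slack between the polynomial shrinking rate of $\xi_m$ and the decay of the bad-tail $\sL_m^{\bad}$ needed to close this double estimate.
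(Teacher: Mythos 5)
Your part~(1) has a genuine gap at the divergence-at-the-critical-exponent step. Write $h_0=\HD(J(f))$. If $\cP(x_0,h_0)<\infty$, the normalized atomic measure $\cP(x_0,h_0)^{-1}\sum_{m}\sum_{x\in f^{-m}(x_0)}|Df^m(x)|^{-h_0}\delta_x$ is \emph{not} $h_0$-conformal: the conformality identity fails on Borel sets containing $x_0$ (the atom at $x_0$ has no ``parent'' $f(x_0)$ in the measure), so the uniqueness statement of Lemma~\ref{lem:confmeas} does not apply to it. The standard repair --- inserting a Patterson function and passing to a weak limit --- kills every atom on the backward orbit precisely because the modified series is made to diverge, and the resulting limit is a genuine $h_0$-conformal measure on the derived set of $\bigcup_m f^{-m}(x_0)$; by uniqueness it can only be $\mu$ itself, so no contradiction is produced. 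Divergence \emph{at} the critical exponent really does require using that $\mu$ charges the conical Julia set; this is exactly the content of \cite[Theorem~5.2]{McM00}, which is what the paper cites for part~(1). (Your lower bound $\poincare(f)\ge h_0$ via Patterson--Sullivan also needs the limit measure to be supported on $J(f)$, which is delicate for interval maps and for $x_0$ in the Fatou set; and note that divergence at $s=h_0$ does not by itself give divergence at $s<h_0$, since $|Df^m|^{-s}$ is not monotone in $s$.)

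For part~(2) your architecture (a limsup exceptional set around the forward critical orbit, fed into Proposition~\ref{prop:cPzinV}) is the paper's, but your dimension bound for $E$ does not work: a Borel--Cantelli argument based on $\mu(B(x,\delta))\le\delta^{h_0-\varepsilon}$ only yields $\mu(E)=0$, and a $\mu$-null set can perfectly well have Hausdorff dimension $h_0$. What is needed --- and what the paper does --- is the elementary covering estimate: take radii $r_n=n^{-1/h}$ with $h=h_0/\gamma$, where $\gamma=\beta\,(h_0-t_0)>1$, so that $h<h_0$; then for every $s>h$ the tail sums $\sum_{n\ge n_0}\#\Crit'(f)\,(2r_n)^{s}$ converge, whence $\HD(E_0)\le h$ directly, with no reference measure. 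The condition $\gamma(f)>1$ is used exactly to make this $h$ strictly smaller than $h_0$ while keeping $\Delta_m(x_0)^{-s}$ controllable off $E$. Separately, your construction of the neighborhood $U$ is incorrect as stated: a Fatou point near $J(f)$ need never enter $V$ (it converges to an attracting cycle), and neither do the points of $K(V)\cap J(f)$, so ``every $x_0$ near $J(f)$ lands in $V$'' fails. The paper treats Fatou points with Proposition~\ref{prop:sLall} (comparing $\cP(x,s)$ with $\sL(B(z,\delta_0);s)$ for a nearby $z\in J(f)$), and absorbs $K(V)\cap J(f)$ --- a hyperbolic set of dimension $<h_0$ --- together with all its preimages into the exceptional set $E$.
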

\begin{proof}
By Corollary~\ref{coro:inducing}, $f$ has a conformal measure~$\mu$
of exponent~$\HD(J(f))$ that is supported on~$\Jcon(f)$.

Part~$1$ follows from the existence of such a conformal measure, see
for example~\cite[Theorem~$5.2$]{McM00}.

To prove part~$2$, take $t_0 \in (\delta_{\bad}(f), \HD(J(f)))$ and
$\beta \in (0, \beta_{\max}(f))$ such that $\gamma=(\HD(J(f))-t_0)\beta>1$,
and let $h=\HD(J(f))/\gamma$.  Put
$$ E_0 \= \bigcap_{n_0 \ge 1} \left(\bigcup_{n \ge n_0}\bigcup_{c\in\Crit'(f)} B(f^n(c), n^{-1/h})\right),\, \,
E_1 \= E_0\cup \left(\bigcup_{n=1}^\infty f^n(\Crit'(f))\right).$$
Moreover, let $(\hV, V)$ be a nice couple such that
$\delta_{\bad}(\hV)<t_0$ and put
$$E = (K(V)\cap J(f)) \cup \bigcup_{n=0}^\infty f^{-n}(E_1).$$
Then $\HD(E_1)=\HD(E_0)\le h<\HD (J(f))$, so $\HD(E)<\HD(J(f))$.

By the (essentially) topologically exact property of~$J(f)$, we have
that $\mu(B(x, \delta))>0$ for every $x\in\Crit'(f)$ and every~$\delta>0$.
Let $\delta_0>0$ be the constant given by Proposition~\ref{prop:sLall} for the conformal measure~$\mu$ and~$h_0 = \HD(J(f))$.
Let~$U$ be the $\delta_0$-neighborhood of $J(f)$.
Reducing~$\delta_0$ if necessary we assume that~$U \setminus J(f)$ is disjoint from~$\bigcup_{n = 1}^\infty f^n(\Crit(f))$.

We first prove that for $x\in U\setminus J(f)$, and
$s>\HD(J(f))$, we have $\cP(x;s)<\infty$.
To this end, take $z\in J(f)$ such
that $x\in B(z,\delta_0)$, and take $\delta>0$ small such that $B(x,
2\delta) \subset B(z, \delta_0)\setminus J(f)$.
Then by the Koebe principle, we obtain
$$\cP(x; s)\asymp \sL_s(B(x, \delta))\le \sL_s(B(z,
\delta_0))<\infty.$$

To complete the proof, let us prove that $\cP(x;s)<\infty$ for all
$x\in J(f)\setminus E$ and $s>\HD(J(f))$.
Since $x\not\in K(V)$, there exists an integer~$n\ge 0$ such that $x_0\=f^n(x)\in V\setminus E_1$.
It suffices to prove that $\cP(x_0;s)<\infty$. To this end, we first
observe that there exists a constant $C(x_0)>0$ such that
$\Delta_m(x_0)\ge C(x_0) (m+1)^{-1/h}$ for all $m=0,1,\ldots$. Next,
letting $t\in (0,s)$ be such that $\beta (t-t_0)>s/h$, we have that
there is a constant~$C_0 > 0$ such that for each $\tY\in\fB_m(\hV)$,
$$\diam (\tY)^{t-t_0}\Delta_m(x_0)^{-s}
\le
C_0 C(x_0)^{-s} m^{-\beta(t-t_0)}m^{s/h}\le C_0 C(x_0)^{-s},$$
so
$$\sL_m^{\bad} (\hV; t)\Delta_m(x_0)^{-s}
\le
C_0 C(x_0)^{-s} \sL_m^{\bad} (\hV; t_0).$$
By~\eqref{eqn:cPzs}, we obtain that there are constants~$C > 0$ and~$C_1 > 0$ such that,
\begin{multline*}
\cP(x_0;s)\le C \sum_{m=0}^\infty \sL_m^{\bad} (\hV; t)\xi_m(x_0)^{s-t} \Delta_m(x_0)^{-s}\\
\le C_1 \sum_{m=0}^\infty \sL_m^{\bad} (\hV; t) \Delta_m(x_0)^{-s}
\le C_1 C_0 C(x_0)^{-s} \sum_{m=0}^\infty \sL_m^{\bad} (\hV; t_0)<
\infty.
\end{multline*}
\end{proof}

\begin{proof}[Proof of Theorem~\ref{t:fractal dimensions}]
By Theorem~\ref{t:tail estimate} and Corollary~\ref{coro:inducing}, we have $\HD(J(f))= \HD_{\hyp}(f)$, and there is a conformal measure of exponent~$\HD(J(f))$ supported on the conical Julia set of~$f$.
On the other hand, by Proposition~\ref{p:Poincare series} the Poincar{\'e} exponent of~$f$ is equal to~$\HD(J(f))$.
So, to complete the proof of the theorem it would be enough to prove
$\BD(J(f))\le \delta_{\textrm{Poin}}(f)$.
If~$J(f)$ has a nonempty
interior, then there is nothing to prove.
So let us assume the
contrary. Then the Julia set has zero Lebesgue measure by part~$1$
of Corollary~\ref{coro:inducing}.
In the case $f\in\sA_\C$, the
conclusion then follows from~\cite[Fact~$8.1$ and Lemma~$8.2$]{GraSmi09}, in which $\BD(J(f))=\delta_{\textrm{Poin}}(f)$
was proved directly. The proof extends to the case of~$f\in \sA_\R$
with the following minor modifications and gives us the desired
inequality:
\begin{itemize}
\item
Instead of taking one point~$z_j$ from each cycle of periodic components
of~$\sF$, we may need to take two points, as in the real case, we
may only find a ``fundamental domain'' that is the union of two
intervals;
\item
Instead of the displayed formula~($24$) in page~$392$ of~\cite{GraSmi09} derived from~\cite[Lemma~$7$]{GraSmi98}, we apply
the Koebe principle and obtain a one-sided inequality: $\dist (y,
J(f))\ge C^{-1} |Df^n(y)|^{-1}$ for $y\in f^{-n}(z_j)$, where $C$ is
a Koebe constant.
\end{itemize}
\end{proof}
\subsection{Proof of Theorem~\ref{t:holomorphic removability}}
\label{ss:proof of holomorphic removability}

If~$\Crit'(f) = \emptyset$ then~$f$ is uniformly hyperbolic and the
result follows easily from the removability result~\cite[Theorem~$5$]{JonSmi00}, see also~\cite[Fact~$9.1$]{GraSmi09}.
The latter statement of Theorem~\ref{t:holomorphic removability} follows from the former one by Theorems~\ref{t:polynomial
shrinking} and~\ref{t:finiteness of bad}, by Fact~\ref{f:nice couples} and by~\cite[Corollary~$8.3$]{Riv07}.
To prove the former statement of Theorem~\ref{t:holomorphic removability}, assume $\beta_{\max}(f)(2-\delta_{\bad}(f))>1$.
By~\cite[Theorem~$5$]{JonSmi00}, it suffices to prove that for every
$x \in J(f)$ there exists a constant~$\delta_0=\delta_0(x)>0$ such that
$\sL(B(x,\delta_0); 2)<\infty$.
To do this, take $\beta \in (0, \beta_{\max}(f))$ and $t > \delta_{\bad}(f)$ such that~$\beta (2-t)>1$.
Since the normalized Lebesgue measure $\mu$ is a conformal
measure of exponent~$2$ and~$\mu(\Jcon(f))=0$,
Proposition~\ref{prop:sLall} applies and gives us the desired property.

\subsection{Proof of Propositions~\ref{prop:cPzinV} and~\ref{prop:sLall}}
\label{ss:poincare series}
Throughout this section we fix a map~$f$ in the class~$\sA^*$ defined in~\S\ref{sss:inducing}.
Moreover, we fix a nice couple~$(\hV, V)$ for~$f$.
For each integer~$n \ge 1$, each $W\in\sM_n(\hV)$ and each $z\in \hV \cap J(f)$, let
$$\cP_{W}(z;s)
\=
\sum_{y\in f^{-n}(z)\cap W} |Df^n(y)|^{-s}.$$
\index{$\cP_{W}$}
Moreover, for $Q\subset \hV$ let $\sM_{W}(Q)\subset\sM_n(Q)$ be the
collection of components of $f^{-n}(Q)\cap W$ and let
$$ \sL_{W}(Q; s)
\=
\sum_{P\in\sM_{W}(Q)} d_Q(P)\diam (P)^s. $$
\index{$\sL_{W}$}
Let $\fG_0(\hV)=\sM_0(\hV)$ and for $n\ge 1$, let $\fG_n(\hV)$ be the collection of all diffeomorphic pull-backs of $\hV$
by $f^n$.\index{$\fG_n(\hV)$}
For $Q\subset \hV$, define
$$\sM_n^o(Q)
\=
\bigcup_{W\in \fG_n(\hV)} \sM_{W}(Q),
\text{ and }
\sM^o(Q)
\=
\bigcup_{n=0}^\infty \sM_n^o(Q).$$
Moreover, let~$\sL_n^o(Q;s)$, $\sL^o(Q;s),$ $\cP_n^o(z;s)$ and $\cP^o(z;s)$ be defined in a self-evident way.
\index{$\sM_n^o, \sM^o$}
\index{$\sL_n^o, \sL^o$}
\index{$\cP_n^o, \cP^o$}
Now let us prove Proposition~\ref{prop:cPzinV}.
We start with a lemma proving upper bounds for $\sL_n^o$ and
$\sL^o$.
\begin{lemm} \label{lem:diffpbfinite}
Assume that~$f$ has a conformal measure~$\mu$ of
exponent~$h_0$ that charges each open set intersecting
$\Crit'(f)$.
Then the following hold:
\begin{enumerate}
\item[1.] For $s>h_0$, we have $\sL^o (V;s)<\infty$;
\item[2.]
$\sup_{n=0}^\infty\sL_n^o (V; h_0)<\infty$;
\item[3.] If furthermore $\mu(\Jcon(f))=0$, then $\sL^o (V; h_0)<\infty$.
\end{enumerate}
\end{lemm}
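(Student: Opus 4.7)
The argument rests on a single basic comparison: for every $P \in \sM_n^o(V)$, Lemma~\ref{lem:koebeC} applied to the diffeomorphic extension $W \in \fG_n(\hV)$ of~$P$, combined with the $h_0$-conformality of~$\mu$, yields
$$
\mu(P) \;\asymp\; \diam(P)^{h_0},
$$
with constants depending only on the finitely many components of~$V$. The matching lower bound uses the hypothesis that $\mu$ charges each $V^c$, which ensures $\mu(V^c) > 0$. I will establish this comparison first.

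Part~2 then follows immediately: the elements of $\sM_n^o(V)$ are pairwise disjoint, being distinct components of $f^{-n}(V)$, so
$$
\sL_n^o(V; h_0) \;\asymp\; \sum_{P \in \sM_n^o(V)} \mu(P) \;\le\; \mu(\dom(f)) < \infty,
$$
since $\dom(f)$ is compact and $\mu$ is a locally finite Borel measure.

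For parts~1 and~3 I will decompose $\sM^o(V)$ through the landing structure. Every $P \in \sM_n^o(V)$ lies in a unique $U \in \fL_V$ with $l(U) \le n$, and the image $P' := f^{l(U)}(P) \subset V^{c(U)}$ is a connected component of $\dom(F^k)$ for some $k$, where $F$ is the canonical induced map associated with $(\hV, V)$. The correspondence $P \leftrightarrow (U, P')$ is a bijection and a Koebe computation gives the product factorization
$$
\sL^o(V;s) \;\asymp\; \Bigl(\sum_{U \in \fL_V}\diam(U)^s\Bigr)\,\sum_{k\ge 0}\sum_{P'}\diam(P')^s,
$$
where the inner sum is over connected components $P'$ of $\dom(F^k)$. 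For $s > h_0$ (part~1) the landing factor converges by Lemma~\ref{lem:finitelanding}, while the inner return factor decays exponentially in $k$: for a sufficiently small nice couple $(\hV, V)$ the induced map~$F$ is uniformly expanding with some factor $\lambda > 1$, so $\diam(P') \le C\lambda^{-k}$ for every component $P'$ of $\dom(F^k)$; combining this with the $h_0$-bound $\sum_{P'}\diam(P')^{h_0} \asymp \mu(\dom(F^k)) \le \mu(V)$ from part~2 produces the estimate $\sum_{P'}\diam(P')^s \le C'\lambda^{-(s-h_0)k}\mu(V)$, which is summable.

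For part~3, under the additional hypothesis $\mu(\Jcon(f)) = 0$ we have $\mu(J(F)) = 0$, and the conclusion reduces to showing $\sum_k \mu(\dom(F^k)) < \infty$. The pointwise decay $\mu(\dom(F^k)) \to 0$ is immediate from $\dom(F^k) \searrow J(F)$, but I must upgrade it to geometric decay. This I plan to do through a Koebe--conformality computation that identifies $\mu(P)/\mu(P_0) \asymp \mu(\dom(F)\cap V^{c(P_0)})/\mu(V^{c(P_0)})$ whenever $P$ is a component of $\dom(F^{k+1})$ contained in a component $P_0$ of $\dom(F^k)$; the hypothesis $\mu(J(F)) = 0$ forces the ratio $\mu(\dom(F)\cap V^c)/\mu(V^c)$ to be strictly less than~$1$ (if all ratios equalled~$1$, one would conclude iteratively that $\mu$-almost every point of $V$ lies in $J(F)$, contradicting $\mu(J(F))=0$ and $\mu(V^c) > 0$). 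Summing over components of $\dom(F^k)$ then yields $\mu(\dom(F^{k+1})) \le \rho\,\mu(\dom(F^k))$ for some $\rho \in (0,1)$, and summability follows. The principal technical obstacle is establishing this spectral-gap step uniformly across the finitely many critical components; once it is in place, the factorization above converts it immediately into the desired bound $\sL^o(V; h_0) < \infty$.
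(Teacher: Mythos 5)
Your proposal is correct and takes essentially the same route as the paper's proof: the comparison $\mu(P)\asymp\diam(P)^{h_0}$ via the Koebe principle and conformality, disjointness of the components for part~2, the first-landing factorization reducing $\sL^o(V;s)$ to $\bigl(\sum_{U\in\fL_V}\diam(U)^s\bigr)\cdot\sum_k\sum_{P'}\diam(P')^s$ over components $P'$ of $\dom(F^k)$, the interpolation $\diam(P')^s\le\max\diam(P')^{s-h_0}\cdot\diam(P')^{h_0}$ for part~1, and for part~3 the same Koebe/positive-gap argument showing that $\mu(\Jcon(f))=0$ forces $\mu(V^c\setminus\dom(F))>0$ and hence geometric decay of $\mu(\dom(F^k))$. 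The only cosmetic difference is that you cite Lemma~\ref{lem:finitelanding} for the convergence of the landing factor, whereas the paper derives it directly from the conformal measure by the same bounded-distortion argument.
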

\begin{proof}
\partn{1}
Let us first prove
\begin{equation}
\label{eqn:sLVin}
\sum_{W\in\sM^o(V) \, : \, W\subset V} \diam (W)^s
<
\infty.
\end{equation}
Indeed, each $W\in\sM^o(V)$ with $W\subset V$ is a component of
$\dom (F^n)$ for some $n\ge 0$, and $F^n|W$ has  uniformly bounded
distortion.
So there is a constant $C_1 > 0$ independent of~$n$, such that
$$\sum_{\text{components~$W$ of } \dom (F^n)}\diam (W)^{h_0}
\le
C_1 \mu (\dom (F^n)).$$
Since $\diam (W)$ is exponentially small in terms of $n$,  the same
sum, but with the exponent~$h_0$ replaced by~some $s>h_0$, is
exponentially small with~$n$.
Hence~\eqref{eqn:sLVin} holds.

Let us spread the estimate to all $W\in \sM^o(V)$. Let $L: \dom
(f)\setminus K(V)\to V$ denote the first landing map onto $V$. Since
the distortion of $L$ on each component of its domain is uniformly
bounded, as above we obtain that
$$ \sum_{U\in \fL_V}\diam (U)^{h_0}
<
\infty, $$
where~$\fL_V$ denotes the collection of connected components of
$\dom(f)\setminus K(V)$.
Hence
\begin{equation}
  \label{e:landing content}
\sum_{U\in\fL_V} \diam (U)^s
<
\infty.
\end{equation}
Note that each $W\in\sM^o(V)$ is contained
in some $U\in \fL_V$, and $L (W)\in \sM^o(V)$. By the bounded
distortion property of $L$, there is a constant~$C_2 > 0$ that only depends on~$(\hV, V)$, such that
$$ \sum_{W\in\sM^o(V) \, : \, W\subset U} \diam (W)^s
\le
C_2 \frac{\diam (U)^s}{\diam (L(U))^s} \sum_{W \in \sM^o (V) \, : \, W\subset L(U)}\diam (W)^s.$$
Since $L(U)$ is a component of $V$, we obtain for some constant~$C_3 > 0$,
$$ \sL^o (V;s)
\le
C_3\sum_{U\in\fL_V}\diam (U)^s \sum_{W\in\sM^o(V) \, : \, W\subset V} \diam (W)^s<\infty.$$

\partn{2}
For each $W\in \sM_n^o(V)$, $f^n$ maps $W$  diffeomorphically
onto a component of $V$ and the map has uniformly bounded distortion, so
$$C^{-1}\diam (W)^{h_0}\le \mu (W)\le C\diam (W)^{h_0},$$
where $C$ is a constant independent of $W$. It follows that
$$\sL_n^o (V; h_0)=\sum_{W\in \sM_n^o (V)} \diam (W)^{h_0}\le C \mu(\dom
(f))\le C.$$

\partn{3}
Arguing as in the proof of part~$1$, it suffices to prove that~$\mu(\dom (F^n))$ is exponentially small in terms of $n$.
But the assumption that $\mu(\Jcon(f))=0$ implies that for some $n_0 \ge 1$ and each component $V_c$ of $V$,  $\mu(V_c\setminus \dom(F^{n_0}))>0$, hence  by
the Koebe principle, $\mu(\dom (F^n))$ decreases exponentially fast.
\end{proof}
\begin{lemm}\label{lem:sLmQV}
For each $m\ge 0$, $W\in\sM_m(\hV)$, each $s\ge t>0$ and each connected $Q \subset \hV$,
\begin{equation}\label{eqn:sLmtYQ}
\sL_{W}(Q; s)\le d_{\hV}(W)\diam (W)^t \theta_m(Q)^{s-t}.
\end{equation}
Moreover, there exists a constant~$C>0$ such that for every~$z \in J(f)\cap V$,
\begin{equation}\label{eqn:cPmtYz}
\cP_{W}(z;s)\le C d_{\hV}(W)\diam (W)^t \xi_m(z)^{s-t}
\Delta_m(z)^{-s}.
\end{equation}
\end{lemm}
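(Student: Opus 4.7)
The plan is to treat \eqref{eqn:sLmtYQ} as a bookkeeping step and then derive \eqref{eqn:cPmtYz} from it via the Koebe principle applied to diffeomorphic pull-backs of the ball $B(z,\varepsilon\Delta_m(z))$.

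For \eqref{eqn:sLmtYQ}, the key observation is that every $P\in\sM_W(Q)$ is a connected component of $f^{-m}(Q)$: indeed, since $Q\subset \hV$ and $W$ is a component of $f^{-m}(\hV)$, any component of $f^{-m}(Q)\cap W$ is automatically a component of $f^{-m}(Q)$. Hence $\diam(P)\le \theta_m(Q)$, and also $\diam(P)\le \diam(W)$ since $P\subset W$. Splitting
$$\diam(P)^s = \diam(P)^{s-t}\diam(P)^t \le \theta_m(Q)^{s-t}\diam(W)^t,$$
one gets
$$\sL_W(Q;s) \le \theta_m(Q)^{s-t}\diam(W)^t \sum_{P\in\sM_W(Q)} d_Q(P).$$
The remaining sum is simply the total multiplicity of $f^m|W$ over $Q$; this is bounded by $d_{\hV}(W)$, which is an immediate instance of \eqref{eqn:degreecomp} applied to the pair $(Q,\hV)$, in both the real and complex settings.

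For \eqref{eqn:cPmtYz} I would specialise \eqref{eqn:sLmtYQ} to $Q=B(z,\varepsilon\Delta_m(z))$, so that $\theta_m(Q)=\xi_m(z)$ by the very definition of $\xi_m(z)$. The choice of $\varepsilon$ was made so that $K(2\varepsilon)\le 2$ in the Koebe distortion lemma~\ref{lem:koebeC}. Thus, provided that for every $y\in f^{-m}(z)\cap W$ the pull-back of the doubled ball $B(z,2\varepsilon\Delta_m(z))$ containing $y$ is a diffeomorphism under $f^m$, the distortion of $f^m$ on the inner pull-back $P\ni y$ of $B(z,\varepsilon\Delta_m(z))$ is bounded by $2$. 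In that case $d_Q(P)=1$, each $P$ contains a unique preimage $y$ of $z$, and
$$|Df^m(y)|^{-s} \le C'\,\diam(P)^s\,\Delta_m(z)^{-s}.$$
Summing over preimages and combining with \eqref{eqn:sLmtYQ} gives
$$\cP_W(z;s) \le C'\Delta_m(z)^{-s}\sL_W(Q;s) \le C\,d_{\hV}(W)\diam(W)^t\xi_m(z)^{s-t}\Delta_m(z)^{-s}.$$

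The only non-routine step is verifying the diffeomorphism hypothesis, which is where I expect the main (though mild) obstacle. By the very definition of $\Delta_m(z)$ the ball $B(z,2\varepsilon\Delta_m(z))$ is disjoint from $\bigcup_{j=0}^{m}f^j(\Crit'(f))$; since any pull-back component maps into this ball under $f^m$, a critical point $c\in\Crit'(f)$ inside a pull-back would force $f^m(c)\in f^m(\Crit'(f))\cap B(z,2\varepsilon\Delta_m(z))$, a contradiction. Critical points in $\Crit(f)\setminus J(f)$ must be handled separately, but the hypothesis $f\in\sA^*$ (expansion away from critical points) guarantees that their forward orbits stay uniformly away from $J(f)$, so they cannot lie in a pull-back component that meets $J(f)$ once $(\hV,V)$ has been chosen small enough. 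Given these two exclusions, the pull-backs in question are diffeomorphic, and the Koebe step goes through as stated.
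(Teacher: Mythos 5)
Your proof is correct and follows essentially the same route as the paper: the first inequality is the identical bookkeeping ($\diam(P)\le\min\{\diam(W),\theta_m(Q)\}$ plus the multiplicity bound $\sum_P d_Q(P)\le d_{\hV}(W)$), and the second is the same Koebe argument, using that pull-backs of $B(z,\Delta_m(z))$ are diffeomorphic by the definition of $\Delta_m$ and that $K(2\varepsilon)\le 2$. Your extra remark about critical points outside $J(f)$ is a point the paper silently skips (it is handled by taking the nice couple small), so no gap to report.
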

\begin{proof}
For each $P\in\sM_{W}(Q)$,
$$ \diam (P)\le \diam (W)
\text{ and }
\diam (P)\le \theta_m(Q). $$
Then inequality~\eqref{eqn:sLmtYQ} follows from,
\begin{multline*}
\sL_{W}(Q;s)
\le
\sum_{P\in\sM_{W}(Q)} d_Q(P)\sup_{P\in\sM_{W}(Q)}\diam(P)^s
\\ \le
d_{\hV}(W)\sup_{P\in\sM_{W}(Q)}\diam (P)^s.
\end{multline*}

To obtain~\eqref{eqn:cPmtYz}, observe first that each pull-back of~$B(z, \Delta_m(z))$ by~$f^{m}$ is diffeomorphic so, by the definition of~$\varepsilon_0$, for each~$P \in \sM_m(B(z, 2 \varepsilon_0 \Delta_m(z)))$,
$$|Df^m((f^m|P)^{-1}(z))|^{-1}
\le
(2 \varepsilon_0)^{-1} \diam (P)\Delta_m(z)^{-1}.$$
So inequality~\eqref{eqn:cPmtYz} follows from~\eqref{eqn:sLmtYQ} with~$Q = B(z, 2\varepsilon_0 \Delta_m(z))$.
\end{proof}

\begin{lemm} \label{lem:diffbad2all}
For any $s\ge t>0$, there exists a constant~$C' > 0$ such that for each connected $Q \subset V$ and $z\in V\cap J(f)$,
\begin{equation}\label{eqn:slQnall}
\sL_n(Q; s)
\le
C' \sum_{m=0}^n \sL_{n-m}^o (V; s)\sL_m^{\bad}(\hV; t)
\theta_m(Q)^{s-t},
\end{equation}
\begin{equation}\label{eqn:cPnall}
\cP_n(z;s)
\le
C' \sum_{m=0}^n \sL_{n-m}^o (V;s) \sL_m^{\bad} (\hV; t)\xi_m(z)^{s-t} \Delta_m(z)^{-s}.
\end{equation}
\end{lemm}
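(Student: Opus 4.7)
The plan is to decompose each pull-back $P \in \sM_n(Q)$ (resp.\ each preimage $x \in f^{-n}(z)$) as a diffeomorphic $\hV$-pull-back of a set sitting inside a bad $\hV$-pull-back, and then bound the two pieces separately using Lemma~\ref{lem:sLmQV} for the bad part and the Koebe distortion principle for the good part. Concretely, for $P \in \sM_n(Q)$ I let $m = m(P) \in \{0, 1, \ldots, n\}$ be the smallest integer such that $f^{n-m}(P) \subset V$ and the connected component $W_P$ of $f^{-(n-m)}(\hV)$ containing $P$ is diffeomorphic, i.e.\ $W_P \in \fG_{n-m}(\hV)$; existence is routine, since if $m=0$ fails because the pull-back of $\hV$ by $f^n$ containing $P$ hosts a critical point, then niceness forces that point into $\Crit'(f) \subset \hV$, hence $P \subset \hV$, after which a landing argument supplies a valid $m$. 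I set $\tY(P)$ to be the pull-back of $\hV$ by $f^m$ containing $f^{n-m}(P)$, and $V^*_P$ to be the component of $f^{-(n-m)}(V)$ containing $P$; the inclusion $V^*_P \subset W_P$ places $V^*_P \in \sM^o_{n-m}(V)$.

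\smallskip

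The key structural assertion is that $\tY(P) \in \fB_m(\hV)$. For each $\mu \in \{1, \ldots, m\}$ with $f^\mu(\tY(P)) \subset \hV$, let $A$ be the pull-back of $\hV$ by $f^\mu$ containing $\tY(P)$ and $B$ the pull-back of $\hV$ by $f^{n-m+\mu}$ containing $P$. Minimality of $m$ forbids $B$ from being diffeomorphic, but the factorization $f^{n-m+\mu}|_B = f^\mu|_A \circ f^{n-m}|_B$, where $f^{n-m}|_B$ inherits the diffeomorphism property from $f^{n-m}|_{W_P}$, localizes the obstruction to $A$; hence $A \notin \fG_\mu(\hV)$, which is the badness condition.

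\smallskip

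To assemble~\eqref{eqn:slQnall}, I fix $(m, V^*, \tY)$ and group the $P$ with $m(P) = m$, $V^*_P = V^*$, and $\tY(P) = \tY$; these are in bijection with $P' = f^{n-m}(P) \in \sM_m(Q)$ contained in $\tY$, with $d_Q(P) = d_Q(P')$. Provided $(\hV, V)$ carries the modular gap furnished by Lemma~\ref{l:very nice}, the Koebe principle on the diffeomorphism $f^{n-m}\colon V^* \to V_0 := f^{n-m}(V^*)$ gives $\diam(P) \le K \diam(V^*)\diam(P')/\diam(V_0)$, and since $\diam(V_0)$ is bounded below by a positive constant depending only on $V$, Lemma~\ref{lem:sLmQV} applied to $\tY$ supplies
\[
\sum_{P\colon (m, V^*, \tY)} d_Q(P)\diam(P)^s \le C_0\, \diam(V^*)^s\, d_{\hV}(\tY)\diam(\tY)^t \theta_m(Q)^{s-t}.
\]
Summing over $V^* \in \sM^o_{n-m}(V)$, $\tY \in \fB_m(\hV)$, and $m \in \{0, \ldots, n\}$ produces the right-hand side of~\eqref{eqn:slQnall}. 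The Poincar\'e inequality~\eqref{eqn:cPnall} follows by the identical decomposition applied to preimages of $z$: Koebe on $B(z, \varepsilon \Delta_m(z))$ bounds $|Df^{n-m}(x)|^{-s}$ by a constant multiple of $\diam(V^*)^s$, and the $\cP_W$-version of Lemma~\ref{lem:sLmQV} applied to $\tY$ supplies the factor $C d_{\hV}(\tY)\diam(\tY)^t \xi_m(z)^{s-t}\Delta_m(z)^{-s}$.

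\smallskip

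I expect the main obstacle to be the badness verification, which rests on the principle that non-diffeomorphicity of a composition must localize to one of its factors, combined with the landing argument needed to define $m$ for every $P$ including those whose orbit enters $V$ only at the final iterate.
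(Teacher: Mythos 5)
Your decomposition is exactly the paper's: the paper indexes by the $\hV$-pull-backs $W\in\sM_n(\hV)$ and lets $k(W)$ be minimal so that $f^{n-k(W)}$ maps a neighborhood of $W$ diffeomorphically onto a component of $\hV$, obtaining $W\subset U\in\fG_{n-k}(\hV)$ and $f^{n-k}(W)=\tY_W\in\fB_k(\hV)$, then combines Koebe on the diffeomorphic leg with Lemma~\ref{lem:sLmQV} on the bad leg and sums over $(k,U,\tY)$; your $(m,V^*_P,\tY(P))$ bookkeeping is the same argument seen from $P$ rather than from $W$, and your treatment of \eqref{eqn:cPnall} matches the paper's ``repeat with \eqref{eqn:cPmtYz}''.

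The one step that does not go through as written is the badness verification, precisely because you built the condition $f^{n-m}(P)\subset V$ into the minimization. If $A$ were diffeomorphic, your factorization correctly makes $B\in\fG_{n-m+\mu}(\hV)$; but to contradict the minimality of $m$ you must check \emph{both} of your defining conditions for $m-\mu$, and the first one, $f^{n-m+\mu}(P)\subset V$, is not available at that point --- you only know $f^{n-m+\mu}(P)\subset f^{\mu}(\tY)\subset\hV$, and when $m-\mu\ge 1$ the set $f^{\mu}(\tY)$ could a priori be disjoint from $V$. So ``minimality of $m$ forbids $B$ from being diffeomorphic'' is unjustified. The paper avoids this by minimizing over the diffeomorphic condition alone (equivalently, taking the \emph{longest} diffeomorphic initial stretch $U\in\fG_{n-k}(\hV)$ containing $W$); then your localization argument does yield $\tY\in\fB_k(\hV)$ with no side condition, and the containment $f^{n-k}(P)\subset\tY\subset V$ is recovered \emph{a posteriori}: a bad pull-back of $\hV$ by $f^{k}$, $k\ge 1$, must itself contain a point of $\Crit'(f)$ (otherwise its critical-free initial segment would place it inside a diffeomorphic pull-back of $\hV$, contradicting Lemma~\ref{lem:db0}), hence it meets $V$ and is contained in $V$ by the nice-couple property. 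With that reordering --- which also settles the existence of $m(P)$ more cleanly than your $m=0$ sketch --- the rest of your estimate (the bijection $P\mapsto P'$ with $d_Q(P)=d_Q(P')$, the Koebe bound $\diam(P)\lesssim\diam(V^*)\diam(P')$, and the three-fold summation) is correct and coincides with the paper's.
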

\begin{proof}
For each $W\in\sM_n(\hV)$, let~$k({W})$ be the minimal integer in $\{0,1,\ldots, n\}$ such that~$f^{n-k(W)}$ maps a neighborhood of~$W$ diffeomorphically onto a component of~$\hV$ and let~$\tY_{W}$ be the component of~$f^{-k(W)}(\hV)$ that contains~$f^{n-k(W)}(W)$.
Then $\tY_{W}\in \fB_{k(W)}(\hV)$.
Note that for each $P \in \sM_{W}(Q)$, we have $P'\=f^{n-k(W)}(P)\subset V$.
Indeed, if $k(W)=0$, then $P'=Q\subset V$; otherwise, $\tY_{W}$ is a bad pull-back of $\hV$ and hence $P'\subset \tY_{W}\subset V$.
Given $k\in\{0,1,\ldots, n\}$ and $U\in\sM_{n-k}^o(\hV)$, by Koebe
principle and~\eqref{eqn:sLmtYQ} there are constants~$C_0 > 0$
and~$C_1 > 0$ such that,
\begin{align*}
\sum_{\substack{W\in\sM_n(\hV) \, : \\ W\subset U, k(W) = k}}\sL_{W}(Q;s)
& \le
C_0 {\diam (U)^s}
\sum_{\tY \in \fB_k^{\bad}(\hV)} \sL_{\tY}(Q; s)
\\ & \le
C_1 \diam (U)^s \sL_k^{\bad}(\hV; t) \theta_k(Q)^{s-t},
\end{align*}
where in the second inequality, we used~\eqref{eqn:sLmtYQ}.
Summing over all $U \in \sM_{n-k}^o(\hV)$ and then over all $k=0,1,\ldots, n$, we obtain~\eqref{eqn:slQnall}.

Repeating the argument, using~\eqref{eqn:cPmtYz} instead of~\eqref{eqn:sLmtYQ}, we obtain~\eqref{eqn:cPnall}.
\end{proof}

\begin{proof}[Proof of Proposition~\ref{prop:cPzinV}]
Inequality~\eqref{eqn:cPnzh} follows from~\eqref{eqn:cPnall} with~$s
= h_0$, and from part~$2$ of Lemma~\ref{lem:diffpbfinite}.
Again by~\eqref{eqn:cPnall}, when~$s > h_0$,
\begin{align*}
\cP(z;s)
& =\sum_{n=0}^\infty \cP_n(z;s)
\\ & \le
C' \sum_{n=0}^\infty \sum_{m=0}^n \sL_{n-m}^o (V;s) \sL_m^{\bad} (\hV; t)\xi_m(z)^{s-t} \Delta_m(z)^{-s}
\\ & =
C' \sum_{n=0}^\infty \sL_n^o(V;s) \sum_{m=0}^\infty \sL_m^{\bad} (\hV; t)\xi_m(z)^{s-t}\Delta_m(z)^{-s},
\end{align*}
which implies~\eqref{eqn:cPzs} by part~$1$ of Lemma~\ref{lem:diffpbfinite}.
\end{proof}

\begin{proof}[Proof of Proposition~\ref{prop:sLall}] Take
$0<\beta<\beta_{\max}(f)$ and $t>\delta_{\bad}(f)$ such that $\beta
(h_0-t)>1$.
Let $(\hV, V)$ be so small such that $\delta_{\bad}
(\hV)< t$ and $\theta (V)\le 1$.
Fix a constant $s>h_0$ if
$\mu(\Jcon(f))>0$ and $s\ge h_0$ otherwise.
We first prove that
there exists a constant~$C>0$ such that for any $Q\subset V$,
\begin{equation}\label{eqn:smallsLQ}
\sL(Q;s)\le C \theta (Q)^{s-t}.
\end{equation}

Indeed, in case $s> h_0$ by part~$1$ of Lemma~\ref{lem:diffpbfinite}
and in case $\mu(\Jcon(f))=0$ by part~$3$ of that lemma, we have
$\sL^o(V;s)<\infty$.
So by~\eqref{eqn:slQnall},
\begin{align*}
\sL(Q;s)
& =
\sum_{n=0}^\infty\sL_n(Q; s)
\\ & \le
C \sum_{n=0}^\infty\sum_{m=0}^n \sL_{n-m}^o (V; s)\sL_m^{\bad}(\hV; t) \theta_m(Q)^{s-t}
\\ & =
C \sum_{k=0}^\infty \sum_{m=0}^\infty \sL_k^o(V;s) \sL_m^{\bad}(\hV; t) \theta_m(Q)^{s-t}
\\ & \le
C\sL^o(V; s) \sL^{\bad}(\hV; t)\theta(Q)^{s-t},
\end{align*}
thus~\eqref{eqn:smallsLQ} holds.

To prove the proposition, it suffices to show that each $x\in J(f)$
has a neighborhood $B_x$ such that $\sL(B_x; s)<\infty$ since $J(f)$ is compact.

By~\eqref{eqn:smallsLQ}, $\sL(V; s)<\infty$, so for $x\in V\cap
J(f)$, we may take $B_x=V$.
For~$x \in J(f)\setminus K(V)$, letting
$n\ge 0$ be such that $f^n(x)\in V$ and taking $B_x\ni x$ such that
$f^n(B_x)\subset V$, we have $\sL(B_x;s)<\infty$.
So let us assume
that~$x \in K(V)\cap J(f)$.

Let~$\delta_0 > 0$ be sufficiently small so that every pull-back
of~$B(x, 2\delta_0)$ intersecting~$\Crit'(f)$ is contained in~$V$,
and such that for every~$y \in K(V)$ and every~$n \ge 1$ the
pull-back of~$B(f^n(y), 2\delta_0)$ by~$f^n$ containing~$y$ is
diffeomorphic. Let us prove that $\sL(B(x,\delta_0);s)<\infty$.
Let~$\fH^o$ be the collection of all those pull-backs of~$B(x,
\delta_0)$ such that the corresponding pull-back of~$B(x,
2\delta_0)$ is diffeomorphic and let~$\fH'$ (resp.~$\fH''$) be the collection of all pull-backs of~$B(x, 2\delta_0)$ that are non-diffeomorphic (resp. that intersect~$\Crit'(f)$).
Let~$W_*$ be a pull-back of~$V$ contained in~$B(x, \delta_0)$.
Then there is a distortion constant~$C_0 > 1$ such that,
$$ \sum_{W \in \fH^o} d_{B(x,\delta_0)}(W)\diam (W)^{s}
\le
C_0 \sL(W_*;s)
<
\infty. $$
It is thus enough to prove that
$$ \sum_{W \in \fH'} d_{B(x, \delta_0)}(W) \diam(W)^s < \infty. $$

Not that for each integer~$n \ge 1$ there is at most one pull-back of~$B(x, 2\delta_0)$ by~$f^n$ containing a given element of~$\Crit'(f)$.
On the other hand, since~$f$ satisfies the Polynomial Shrinking Condition with
exponent~$\beta$, there is a constant~$C_1 > 0$ such that for each integer~$n \ge 1$ and each pull-back~$Q$ of~$B(x, 2\delta_0)$ by~$f^n$, we have $\theta(Q) \le C_1n^{-\beta}$.
Thus
$$\sum_{Q\in \fH'' }\theta (Q)^ {s-t}
\le
\#\Crit'(f) C_1^{s - t} \sum_{n=1}^\infty n^{-\beta (s-t)}<\infty. $$
Therefore,
\begin{multline*}
\sum_{W \in\fH'} d_{B(x, \delta_0)}(W) \diam (W)^s
\le
\sum_{Q\in\fH''}  2 \ell_{\max}(f) \sL(Q;s)
\\ \le
2 \ell_{\max}(f) C \sum_{Q\in \fH''}\theta(Q)^{s-t}
<
\infty.
\end{multline*}
\end{proof}

\bibliographystyle{alpha}

\printindex

\end{document}